\numberwithin{equation}{section}
\newcommand{\N}{{\mathbb N}}
\newcommand{\Z}{{\mathbb Z}}
\newcommand{\Q}{{\mathbb Q}}
\newcommand{\R}{{\mathbb R}}
\newcommand{\U}{{\mathcal U}}
\newcommand{\GL}{\operatorname{GL}}
\newcommand{\Aut}{\operatorname{Aut}}
\newcommand{\rank}{\operatorname{rank}\xspace}
\newcommand{\I}{\mathcal{I}}
\newcommand{\V}{\mathrm{V}}
\newcommand{\ZZ}{\mathcal{Z}}
\newcommand{\Ma}{\operatorname{M}}
\renewcommand{\O}{\mathcal{O}}
\newcommand{\LL}{\mathcal{L}}
\newcommand{\E}{\operatorname{E}\xspace}
\newcommand{\GE}{\operatorname{GE}\xspace}
\newcommand{\D}{\operatorname{D}}
\newcommand{\sm}[1]{\left(\begin{smallmatrix} #1 \end{smallmatrix}\right)}
\newcommand{\SL}{\operatorname{SL}}
\newcommand{\St}{\operatorname{St}}
\newcommand{\B}{\operatorname{B}\xspace}
\newcommand{\BE}{\operatorname{BE}\xspace}
\newcommand{\DE}{\operatorname{DE}\xspace}
\newcommand{\Tr}{\operatorname{Tr}}
\newcommand{\qa}[3]{\left(\frac{#1, #2}{#3}\right)}
\newcommand{\FA}[1][]{\ifx #1\textup{FA}\xspace \else
  \textup{FA$_{#1}$}\xspace
  \fi
}
\newcommand{\HFA}[1][]{\ifx #1\textup{HFA}\xspace \else
  \textup{HFA$_{#1}$}\xspace
  \fi
}
\newcommand{\FR}{\textup{F$\mathbb{R}$}\xspace}
\newcommand{\HFR}{\textup{HF$\mathbb{R}$}\xspace}
\newcommand{\FAb}{\textup{FAb}\xspace}
\newcommand{\T}{\textup{(T)}\xspace}
\newtheorem{lemma}{Lemma}[section]
\newtheorem{proposition}[lemma]{Proposition}
\newtheorem{theorem}[lemma]{Theorem}
\newtheorem{corollary}[lemma]{Corollary}
\newtheorem{maintheorem}{Theorem}
\theoremstyle{definition}
\newtheorem{definition}[lemma]{Definition}
\newtheorem{question}[lemma]{Question}
\newtheorem*{remark*}{Remark}
\newtheorem*{problem*}{Problem}
\newtheorem{remark}[lemma]{Remark}
\newtheorem*{notation*}{Notation}
\newtheorem*{example*}{Example}
\crefname{question}{question}{questions}
\title[Abelianization and fixed point properties]{Abelianization and fixed point properties of units in integral group rings}
\author[A.~B\"achle]{Andreas B\"achle}
\author[G.~Janssens]{Geoffrey Janssens}
\author[E.~Jespers]{Eric Jespers}
\author[A.~Kiefer]{Ann Kiefer}
\author[D.~Temmerman]{Doryan Temmerman}
\address{Vakgroep Wiskunde en Data Science, Vrije Universiteit Brussel, Pleinlaan 2, 1050 Brussels, Belgium}
\email{\href{mailto:Andreas.Bachle@vub.be}{Andreas.Bachle@vub.be}, \href{mailto:Geoffrey.Janssens@vub.be}{Geoffrey.Janssens@vub.be}, \href{mailto:Eric.Jespers@vub.be}{Eric.Jespers@vub.be}, \href{mailto:Ann.Kiefer@vub.be}{Ann.Kiefer@vub.be}, \href{mailto:Doryan.Temmerman@vub.be}{Doryan.Temmerman@vub.be}}
\thanks{The first and second author are grateful to Fonds Wetenschappelijk Onderzoek - Vlaanderen for financial support. The third, fourth and fifth author are grateful to Onderzoeksraad VUB and Fonds Wetenschappelijk Onderzoek - Vlaanderen for financial support.}
\subjclass[2010]{16H10, 20E08, 20H25, 20C05, 16U60.} 
\keywords{Abelianization, Serre's property \FA, Kazhdan's property \T, elementary matrix group, integral group ring, unit}
\begin{document}

\begin{abstract}
Let $G$ be a finite group and  $\mathcal{U} (\mathbb{Z} G)$ the unit group  of the integral group ring $\Z G$. We prove a unit theorem, namely a characterization of when  $\mathcal{U}(\mathbb{Z}G)$  satisfies Kazhdan's property $(\operatorname{T})$, both in terms of the finite group $G$ and in terms of the simple components of the semisimple algebra $\mathbb{Q}G$.
Furthermore, it is shown that for $\mathcal{U}( \mathbb{Z} G)$ this property is equivalent to the weaker property $\operatorname{FAb}$ (i.e. every subgroup of finite index has finite abelianization), and in particular also to a hereditary version of  Serre's property $\operatorname{FA}$, denoted $\operatorname{HFA}$. More precisely, it is described when all subgroups of finite index in $\mathcal{U} (\mathbb{Z} G)$ have both finite abelianization and are not a non-trivial amalgamated product.

A crucial step for this is a reduction to arithmetic groups $\operatorname{SL}_n(\mathcal{O})$, where $\mathcal{O}$ is an order in a finite dimensional semisimple $\mathbb{Q}$-algebra $D$, and finite groups $G$ which have the so-called cut property.
For such groups $G$ we describe the simple epimorphic images of $\mathbb{Q} G$.
The proof  of the unit theorem fundamentally relies on  fixed point properties and the abelianization of the elementary subgroups $\operatorname{E}_n(D)$ of $\operatorname{SL}_n(D)$.
These groups are well understood  except in the degenerate case of lower rank, i.e.\ for $\operatorname{SL}_2(\mathcal{O})$ with $\mathcal{O}$ an order in a division algebra $D$ with a finite number of units.
In this setting we determine Serre's property \FA for $\operatorname{E}_2(\mathcal{O})$ and its subgroups of finite index. We construct a generic and computable exact sequence describing its abelianization, affording a closed formula for its $\mathbb{Z}$-rank.
\end{abstract}

\maketitle

\section{Introduction}

One of the most natural and important questions in (integral) representation theory is whether a finite group  $G$ is determined by its integral group ring $\Z G$ (the so called Isomorphism problem, in short (ISO)).
Posed for the first time by Higman \cite{Higman} in 1940, popularized by Brauer \cite{BrauerSurvey} in 1963, it was only in the 1980's that firm indications for a positive solution were obtained.
Indeed, these years saw a number of major breakthroughs, starting with Roggenkamp-Scott \cite{RoggScott} who obtained an affirmative solution  to (ISO) for nilpotent groups.
In fact, not only they did prove that $G \cong H$ whenever $\Z G = \Z H$, but also that $G = H^{\alpha}$ for some unit $\alpha \in \U (\Q G)$; hence explaining why the isomorphism occurs. Here $\U (\Q G)$ denotes the unit group of $\Q G$.
In general, this stronger statement is called the second Zassenhaus conjecture (ZC2).
The third and strongest Zassenhaus conjecture (ZC3) asserts that any finite subgroup of the group of units of augmentation one of $\Z G$ should be \emph{rationally conjugated} (that is, conjugated in $\mathcal{U}(\mathbb{Q}G)$) to a subgroup of the basis $G$.
Shortly after Roggenkamp-Scott, Weiss obtained in his landmark papers \cite{WeissPGroup, WeissNilpotent} that nilpotent groups even satisfy (ZC3).
Around the same time, Roggenkamp-Scott \cite{Scott} provided a metabelian counterexample to (ZC2).
It took ten more years until Hertweck constructed, unexpectedly, in \cite{Hertweck} a counterexample to (ISO).
His construction is still the only known type of counterexample and the general philosophy remains that the ring $\Z G$ encodes a lot of information on $G$. 

A remarkable property of integral group rings is the following: if $G$ and $H$ are finite groups, then $\Z G\cong \Z H$ if and only $\U (\Z G)\cong \U (\Z H)$.
Hence (ISO) is equivalent with $G$ being determined by $\U (\Z G)$.
This is one of the reasons why the structure of $\U(\Z G)$ already receives for more than five decades tremendous attention; for an overview on the main advances and open problems we refer to \cite{EricAngel1,EricAngel2,SehgalBook93,SehgalSurvey03}.
Several main research directions emerged:
\begin{enumerate}
\item the search for generic constructions of subgroups of finite index (preferably torsion-free) in $\U(\Z G)$, 
\item the understanding of torsion units in $\Z G$ (for which the Zassenhaus conjectures were a driving force for many years; recently a counter example has been given by Eisele-Margolis \cite{EisMar} for the last of these that remained open, but still many open problems remain on the arithmetic of the torsion structure), and
\item the search for unit theorems, i.e. structure theorems for the unit group $\U (\Z G)$. 
\end{enumerate}
A fairly complete account of the first direction can be found in the recent books \cite{EricAngel1, EricAngel2} and for the second  we refer to the surveys \cite{KimmerleSurvey, MarRioSurvey} and the references therein.

This paper contributes mainly to the third direction listed above.
A very concrete idea of a unit theorem was given by Kleinert \cite{KleinertSurvey} in the context of orders:
\begin{quote}
\textit{a unit theorem for a finite dimensional semisimple rational algebra $A$ consists of the definition, 
in purely group theoretical terms, of a class of groups $C(A)$ such that almost all generic unit groups of $A$ are members of $C(A)$.}
\end{quote} 
Recall that a \emph{generic unit group} of $A$ is a subgroup of finite index in the group of reduced 
norm  $1$ elements of an order in $A$. 
So far, the finite groups $G$ for which a 
unit theorem, in the sense of Kleinert, is known for $\U(\Z G)$ are those for which the class of groups considered 
are either finite groups (Higman), abelian groups (Higman), free groups \cite{JespersFree} or direct products 
of free(-by-free) groups \cite{JesRioCrelle, JPdRRZ}. Remarkably, the latter can also be described in terms of the rational group algebra: every  simple quotient of $\Q G$ is either a field, a totally definite quaternion algebra or a $2$-by-$2$ matrix ring $\Ma_2(K)$, where $K$ is either $\Q$, $\Q(\sqrt{-1})$, $\Q(\sqrt{-2})$ or $\Q(\sqrt{-3})$.
To our knowledge these results cover all the known unit theorems on $\U (\Z G)$. Surprisingly, one obtains a unit theorem when all the non-commutative simple components of $\Q G$ are two-by-two matrices over a field with finitely many units in any order. This is in contrast with the construction of generators of a subgroup of finite index in $\U (\Z G)$, where one has shown that a collection of explicitly constructed units generate a subgroup $B$ of finite index provided
$\Q G$ does not have so called exceptional components, and $G$ does not have non-abelian \emph{fixed point free} (i.e. a group of fixed point free automorphisms of a finite group) images.
These \emph{exceptional components} (cf. \Cref{definitie exceptional component}) are the non-commutative division algebras other than a totally definite quaternion algebra over a number field (of so called \emph{type (I)}) or $\Ma_2(D)$ with $D$ a finite dimensional $\Q$-division algebra having an order $\O$ with $\U(\O)$ finite (of so called \emph{type (II)}).

Very little is known on the structure of $B$. However several authors, including Marciniak,  Sehgal, Salwa, 
Gon\c{c}alves, Passman, del R\'{\i}o   have given explicit constructions of free groups in $B$ (we refer the reader 
to \cite{delRioGonc}). More recently  Gon\c{c}alves and Passman  in \cite{GonPas} and Janssens, Jespers 
and Temmerman in \cite{JanJesTem}   gave explicit constructions (within the group $B$) that generate a free 
product $C_p *  C_p$ of two cyclic groups of order $p$ in $\U (\Z G)$ in all cases where this is possible, i.e.\,provided $G$ has a noncentral element of order $p$. 

In this paper we complement this line of research by determining when the unit group $\U (\Z G)$ or the group $B$ can be decomposed into a non-trivial \emph{amalgamated product} or an \emph{HNN extension} (see \Cref{defAmal,defHNN}) . 
In fact we show when these decompositions do not occur. In case of an HNN extension this is equivalent, by Bass-Serre theory, to $\U(\Z G)^{ab}$ being finite. Here $\U(\Z G)^{ab}$ denotes the \emph{abelianization} $\U(\Z G) / \U(\Z G)'$ of $\U(\Z G)$, where $\U(\Z G)'$ denotes the \emph{commutator subgroup}. In particular, by a classical theorem of Serre \cite{Serre}, the absence of an amalgam and HNN can be rephrased in terms of satisfying the so-called \emph{FA property}. A group is said to have \begin{itemize}
\item \emph{property FA} if every action on a simplicial tree has a global fixed point, 
\item \emph{property F$\R$} if every isometric action on a real tree has a global fixed point.
\end{itemize}
See \Cref{defFA} for more details. Serre proved that a finitely generated group has property \FA exactly when it is neither a HNN extension nor a non-trivial amalgam.

Since unit theorems concern a property on almost all subgroups of finite index, 
we will consider the hereditary properties, denoted $\HFA$ and $\HFR$, and a hereditary finite abelianization property, denoted $\FAb$. One says that a group has \begin{itemize}
\item \emph{HFA} if all its finite index subgroups have property $\FA$,
\item \emph{HF$\R$} if all its finite index subgroups have property $\FR$,
\item \emph{property FAb} if every subgroup of finite index has finite abelianization.
\end{itemize}
It is well-known that properties $\FA$, $\HFA$ and $\FAb$ follow from \emph{Kazhdan's property (T)} \cite[Theorem~2.12.6]{ValetteBook}, .

The main  result of this paper is a   characterization  of when $\U (\Z G)$ satisfies  these hereditary properties.
Surprisingly, all the mentioned hereditary fixed point properties are equivalent and are controlled both in terms of $G$ and in terms of the Wedderburn decomposition of $\Q G$.
Recall that a finite group $G$ is called a \emph{cut group} if and only if $\U(\Z G)$ has only trivial central units, i.e.\, the center of $\U(\Z G)$ is finite.
For example, rational groups are cut. Recently, cut groups gained in interest (see for example \cite{BMJM, BMP17, Trefethen}), but especially the subclass of rational groups has already a long tradition in classical representation theory (for example, see \cite{IN12, Kle84}).

\begin{maintheorem}\label{mainTheoremintro}  (Theorem~\ref{iff HFA}, Corollary~\ref{prop (T) and (HFA) equivalent for ZG} and
Corollary~\ref{odd-order cut})
Let $G$ be a finite group. The following properties are equivalent:
\begin{enumerate}
\item the group $\U(\Z G)$ has property \HFA,
\item the group $\U (\Z G)$ has property \HFR,
\item the group $\U(\Z G)$ has property \T,
\item the group $\U(\Z G)$ has property \FAb,
\item $G$ is cut and $\mathbb{Q}G$ has no exceptional components,
\item $G$ is cut and $G$ does not map onto one of $10$ explicitly described groups.
\end{enumerate}
Moreover, if $\Q G$ does not have an exceptional component of type (II), then any of the above properties also is equivalent with any of the following properties:
\begin{enumerate}[resume]
\item $\mathcal{U}(\mathbb{Z}G)^{ab}$ is finite,
\item $G$ is a cut group.
\end{enumerate}
\end{maintheorem}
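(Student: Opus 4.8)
The plan is to trade $\U(\Z G)$ for a commensurable arithmetic group, locate the only two possible obstructions to the hereditary fixed-point properties --- an infinite centre and the presence of an exceptional Wedderburn component (\Cref{definitie exceptional component}) --- and dispose of each. Write $\Q G=\prod_i M_{n_i}(D_i)$ with $D_i$ a division $\Q$-algebra of centre $K_i$; then $\Z G$ is an order of finite index in $\prod_i M_{n_i}(\O_i)$ for maximal orders $\O_i\subseteq D_i$, so $\U(\Z G)$ is commensurable with the finitely generated group $\prod_i\GL_{n_i}(\O_i)$. The reduced norm kills commutators, so up to finite index $\GL_{n_i}(\O_i)$ is an extension of $\SL_{n_i}(\O_i)$ by a subgroup of $\O_{K_i}^{\times}$, whereas $\mathcal Z(\U(\Z G))$ is, up to finite index, $\prod_i\O_{K_i}^{\times}$. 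Hence $G$ is cut precisely when every $K_i$ is $\Q$ or imaginary quadratic, and in that case $\O_{K_i}^{\times}$ is finite and $\U(\Z G)$ is commensurable with $\prod_i\SL_{n_i}(\O_i)$.

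For the circle of equivalences, recall that property \T{} passes to finite-index subgroups and implies \FA, \FR{} and finiteness of the abelianization \cite[Theorem~2.12.6]{ValetteBook}, and that for finitely generated groups property \FR{} implies \FA{} while \FA{} forces finiteness of the abelianization (Serre \cite{Serre}); since $\U(\Z G)$ is finitely generated this gives $(3)\Rightarrow(2)\Rightarrow(1)\Rightarrow(4)$. It remains to establish $(4)\Rightarrow(5)$, $(5)\Rightarrow(3)$ and $(5)\Leftrightarrow(6)$. I would prove $(5)\Rightarrow(3)$ directly: for $G$ cut without exceptional components, in $\U(\Z G)\sim\prod_i\SL_{n_i}(\O_i)$ each factor is either finite --- when $n_i=1$ and $D_i$ is $\Q$, an imaginary quadratic field, or a totally definite quaternion algebra --- or an arithmetic group with property \T. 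Indeed, for $G$ cut the components $M_2(K)$ with $K$ a field are restricted to $M_2(\Q)$, $M_2(\Q(i))$, $M_2(\Q(\sqrt{-2}))$ and $M_2(\Q(\sqrt{-3}))$ (classically the only ones occurring with $K=\Q$ or imaginary quadratic), all exceptional, hence none survives; for $n_i\geq 3$ the factor $\SL_{n_i}(\O_i)$ is a lattice in a product of copies of $\SL_m(\R)$ and $\SL_m(\C)$ with $m\geq 3$ and $\SL_m(\mathbb{H})$ with $m\geq 2$; and for $n_i=2$ over a non-commutative $D_i$ the non-exceptional cases --- quaternion algebras over $\Q$ split at the real place, quaternion algebras over imaginary quadratic fields, and division algebras of higher degree --- are again lattices in products of such higher-rank groups. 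In all these cases \T{} holds, a finite direct product of groups with \T{} (and finite groups) has \T, and \T{} is a commensurability invariant, so $\U(\Z G)$ has \T.

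For $(4)\Rightarrow(5)$ I would argue the contrapositive. If $G$ is not cut, choose a central unit $z\in\mathcal Z(\U(\Z G))$ of infinite order (one exists, $\mathcal Z(\U(\Z G))$ being infinite and finitely generated abelian); its reduced norm in the $i$-th block is a positive power of $z_i$, where $z=(z_i)$ with some $z_i$ of infinite order, so $\operatorname{nr}(\U(\Z G))$ is infinite, and since $\operatorname{nr}$ factors through $\U(\Z G)^{\mathrm{ab}}$ the latter is infinite: $\U(\Z G)$ fails \FAb. If instead $G$ is cut but $\Q G$ has an exceptional component $M_n(D)$, then $\SL_n(\O_D)$ is, up to commensurability, a direct factor of $\U(\Z G)$; for the matrix-of-field components and the exceptional division-algebra components this factor is known to have a finite-index subgroup of infinite abelianization, and for $M_2(D)$ with $D$ a totally definite quaternion algebra --- the degenerate low-rank case --- this is exactly what the exact sequence for $\operatorname{E}_2(\O)^{\mathrm{ab}}$ and its $\Z$-rank formula established later in the paper provide; pulling such a subgroup back through the commensuration, $\U(\Z G)$ again fails \FAb. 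The equivalence $(5)\Leftrightarrow(6)$ is the translation of ``$\Q G$ has an exceptional component'' into the language of quotients: an exceptional component of $\Q G$ is a faithful component of $\Q(G/N)$ for a suitable $N$, quotients of cut groups are cut, and feeding in the description of the simple epimorphic images of $\Q H$ for cut groups $H$ reduces the matter to a finite check identifying the minimal cut groups with a faithful exceptional component as the ten listed ones. Finally $(4)\Rightarrow(7)$ is trivial, $(7)\Rightarrow(8)$ is the contrapositive of the reduced-norm argument, and under the standing hypothesis that $\Q G$ has no exceptional components $(8)$ is literally $(5)$, so the extended list closes up.

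The routine part is the reduction together with the \T/Serre bookkeeping; the genuinely new and delicate input --- and the main obstacle --- is the degenerate low-rank case $\SL_2(\O)$ with $\O$ an order in a division algebra with finitely many units (a totally definite quaternion algebra), where neither higher-rank rigidity nor the classical virtually-free descriptions apply. Constructing the generic, computable exact sequence for $\operatorname{E}_2(\O)^{\mathrm{ab}}$, reading off its $\Z$-rank, and separately settling property \FA{} for $\operatorname{E}_2(\O)$ and its finite-index subgroups is where the work concentrates; everything else in the theorem is assembled around that analysis, together with the classification of cut groups and of their exceptional components.
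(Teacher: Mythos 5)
Your overall architecture coincides with the paper's: reduce to $\prod_i\SL_{n_i}(\O_i)$ via commensurability of orders, use the cut condition to kill the $1\times1$ factors (no type (I) exceptional components for cut groups), invoke Margulis for the non-exceptional factors, and translate the exceptional $2\times2$ components into forbidden quotients via the Eisele--Kiefer--Van Gelder table. The implications $(3)\Rightarrow(2)\Rightarrow(1)\Rightarrow(4)$, the reduced-norm argument for ``not cut $\Rightarrow$ $\U(\Z G)^{ab}$ infinite'' (a mild variant of the paper's $K_1$ argument in \Cref{(FA) implies cut}), and the closing of the loop through $(7)$ and $(8)$ are all sound.

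There is, however, a genuine gap in your proof of $(4)\Rightarrow(5)$ at the quaternionic exceptional components. You claim that for $\Ma_2(D)$ with $D$ a totally definite quaternion algebra, the exact sequence for $\E_2(\O)^{ab}$ and the $\Z$-rank formula $\rank_{\Z}\E_2(\O)^{ab}=\rank_{\Z}\O-\operatorname{inv}_{\O}$ supply a finite-index subgroup with infinite abelianization. But the only such components that actually occur are $\Ma_2(\mathbb{H}_2)$, $\Ma_2(\mathbb{H}_3)$ and $\Ma_2(\mathbb{H}_5)$, with unique maximal orders $\O_2$, $\O_3$, $\O_5$, and for $\O_2$ and $\O_3$ the formula gives rank $4-4=0$: their $\E_2$ has \emph{finite} abelianization, so the formula produces nothing. (It does work for $\O_5$, and for the field case $\I_3$ one can pass to the suborder $\Z[\sqrt{-3}]$; but a group such as $G_{32,50}$ has $\Ma_2(\mathbb{H}_2)$ as its \emph{only} exceptional component, so the $\mathbb{H}_2$ case cannot be dodged by \Cref{possible 2 times 2 components}.) Failure of \HFA (which the paper gets from amalgam decompositions of $\E_2(\LL)$) is also not enough here: an amalgamated product over an infinite subgroup need not have a finite-index subgroup with infinite abelianization, and \FAb is exactly about abelianizations. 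The paper closes this step with a different tool, namely the geometric argument of \Cref{remarkFAb}: $\SL_2(\O)$ acts discontinuously on hyperbolic $3$- or $5$-space, a reflection normalizes a congruence subgroup, and Lubotzky's criterion yields a virtually free quotient, hence a finite-index subgroup with infinite abelianization. You need this (or an equivalent construction) for $\O_2$ and $\O_3$; without it the implication $(4)\Rightarrow(5)$, and hence the equivalence of \FAb with the rest, is not established.
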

It is worth noticing that $\Q G$, for $G$ a group of odd order, does not have exceptional components of type (II).

From this theorem it follows that in particular, if these conditions are satisfied and the group $G$ has no non-abelian homomorphic image which is fixed point free, the group $B$ is not a non-trivial amalgamated product nor an HNN extension (see \Cref{corollarybicyclic}). 

Crucial to prove \Cref{mainTheoremintro} is to reduce the problem to \emph{$\Z$-orders} (also sometimes simply called \emph{orders}, see \Cref{orders and quaternion algebras}) in simple components of the 
semisimple rational algebra $\Q G$. One writes $\Q G =\Ma_{n_{1}}(D_1) \oplus \cdots \oplus \Ma_{{n_k}}(D_{k})$, 
with each 
$D_i$ a division algebra. Let $\O_i$ be an order in $D_i$. Then both $\Z G$ and $\Ma_{n_1}(\O_1 ) 
\oplus \cdots \oplus \Ma_{n_k}(\O_k)$ are orders in $\Q G$. Due to classical results in order theory their respective unit groups are commensurable and 
the hereditary fixed point properties of both orders are related and, as we shall prove, strongly determined by the 
groups $\E_{n_i}(\O_i)$   generated by \emph{elementary matrices} (these are matrices with $1$ on the diagonal and at most one non-zero entry elsewhere). 

The first  part of this paper is therefore dedicated to the 
(hereditary) fixed point properties of the groups $\E_{n_i}(\O_i)$ and some related groups. 

Recall that $\SL_n(D)$, for a division algebra $D$, consists of the matrices of degree $n$ of reduced norm one (see \Cref{orders and quaternion algebras} for more details).
Due to the celebrated works of many (\cite{BakReh, Bass, Tits} amongst others) on the subgroup congruence problem and the seminal work of Margulis \cite{MargulisBook}, $\SL_n(D)$ enjoys a rich theory on subgroup and rigidity results whenever certain geometric-arithmetic invariants, such as the (reductive) rank, are large enough.
More precisely, in these cases (for example when $n\geq 3$), every \emph{arithmetic subgroup} of $\SL_n(D)$ (i.e. every subgroup which is commensurable to $\SL_n(\O)$ for some order $\O$ in $D$), and consequently also $\E_n(\O)$,  has property \T, where $\O$ is any order in $D$.

On the other hand, when $\SL_2(D)$ is of so-called low rank, which amounts to say that $D$ contains an order $\O$ with $\U(\O)$ finite (see \Cref{subsectie higher rank} for more details), the machinery breaks down and the corresponding landscape reshapes. To illustrate this, if $D$ is commutative it was proven \cite{GueHigWei} that $\E_2(\O)$ and all its finitely generated subgroups have the Haagerup property which is a strong form of non-rigidity, hence opposed to property \T. 

The objects $\E_2(\O)$ with $\U(\O)$ finite are the protagonists of the first part of the  paper. 
For a unital ring $R$ we need for our investigations on abelianization to consider the group
$\GE_2(R)$ generated by $\E_2(R)$ and the group of invertible diagonal matrices over $R$. 
We extend Cohn's techniques \cite{Cohn1, Cohn2} to arbitrary finite dimensional division $\Q$-algebras $D$ containing an order $\O$ with finite unit group. In particular, we deal with orders in totally-definite quaternion algebras with center $\mathbb{Q}$.
As a first step we obtain in \Cref{subsect:almost_universalo_quaternion_orders} finite presentations for the groups $\E_2(\O)$ and $\GE_2(\O)$, which allow us to connect $\E_2(\O)^{ab}$ to the arithmetic structure of $\O$. 

\begin{maintheorem}[\Cref{theoremM} and \Cref{abelianization GE2}]\label{mainA}
Let $\O$ be an order in a finite dimensional division $\Q$-algebra with finite unit group. 
There exists an (explicit) additive subgroup $M$ of $(\O, +)$ such that 
$$\E_2(\O)^{ab} \cong (\O/ M, +),$$
and an (explicit) two-sided ideal $N$ of $\O$ such that there is a short exact sequence 
 \[1\ \longrightarrow\ ( \O/N,+)\ \longrightarrow\ \GE_2(\O)^{ab}\ \longrightarrow\ \U(\O)^{ab}\ \longrightarrow\ 1.\]
\end{maintheorem}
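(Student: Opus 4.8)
The plan is to abelianize the finite presentations of $\E_2(\O)$ and $\GE_2(\O)$ produced in \Cref{subsect:almost_universalo_quaternion_orders}. Recall that these are Cohn‑type presentations: the generators are the matrices $E(x)=\sm{x & 1 \\ -1 & 0}$, with $x$ ranging over a $\Z$-basis of $\O$, for $\E_2(\O)$, together with the invertible diagonal matrices $\operatorname{diag}(\alpha,\beta)$, $\alpha,\beta\in\U(\O)$, for $\GE_2(\O)$; the defining relations are Cohn's universal $\GE_2$-relations — principally the additivity relation $E(x)E(0)E(y)=\operatorname{diag}(-1,-1)\,E(x+y)$, the braid relation $E(1)^{3}=\operatorname{diag}(-1,-1)$, the unit‑conjugation relation $\operatorname{diag}(\alpha,\alpha)E(x)\operatorname{diag}(\alpha,\alpha)^{-1}=E(\alpha x\alpha^{-1})$, the Weyl relation $E(0)\operatorname{diag}(\alpha,\beta)E(0)^{-1}=\operatorname{diag}(\beta,\alpha)$ and the multiplicativity of the diagonal subgroup — together with the finitely many extra relations accounting for the fact that $\O$ is only \emph{almost} universal for $\GE_2$. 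Since $\U(\O)$ is finite, all of this is finite and completely explicit.

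\emph{Computing $\E_2(\O)^{ab}$.} As $\E_2(\O)$ is generated by the $E(x)$, its abelianization is generated by the classes $e_{x}:=[E(x)]$. Using $\operatorname{diag}(-1,-1)=E(0)^{2}$, the additivity relation becomes (in additive notation) $e_{x}+e_{y}-e_{x+y}=e_{0}$, so
\[
f\colon (\O,+)\longrightarrow \E_2(\O)^{ab},\qquad f(x):=e_{x}-e_{0},
\]
is a group homomorphism. The braid relation gives $3e_{1}=2e_{0}$, hence $e_{0}=f(-3)$ lies in the image of $f$; since $e_{x}=f(x)+e_{0}$ this makes $f$ surjective, and therefore $\E_2(\O)^{ab}\cong (\O,+)/M$ with $M:=\ker f$. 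It then remains to make $M$ explicit by pushing the remaining relations through $f$: the unit‑conjugation relation forces every $\alpha x\alpha^{-1}-x$ into $M$; the identity $\operatorname{diag}(\gamma,\gamma^{-1})=E(-\gamma)E(-\gamma^{-1})E(-\gamma)$ combined with multiplicativity of the diagonal subgroup forces the explicit ``trace‑type'' elements (such as $-2\gamma-\gamma^{-1}-2\delta-\delta^{-1}+2\gamma\delta+\delta^{-1}\gamma^{-1}-9$, for $\gamma,\delta\in\U(\O)$) into $M$; and the finitely many extra relations add a finite list of further elements. This presents $M$ as an explicit, finitely generated additive subgroup of $(\O,+)$.

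\emph{Computing $\GE_2(\O)^{ab}$.} First one checks that $\E_2(\O)\trianglelefteq\GE_2(\O)$ with $\GE_2(\O)/\E_2(\O)\cong\U(\O)^{ab}$: the diagonal subgroup surjects onto the quotient via $\operatorname{diag}(\alpha,\beta)\mapsto\overline{\alpha\beta}$ (using $\operatorname{diag}(\gamma,\gamma^{-1})\in\E_2(\O)$ and the Weyl relation), with kernel $[\U(\O),\U(\O)]$ by a Whitehead‑lemma argument, or directly from the presentation, which exhibits $\operatorname{diag}(\gamma,1)\in\E_2(\O)$ exactly when $\gamma\in[\U(\O),\U(\O)]$. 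In particular $[\GE_2(\O),\GE_2(\O)]\subseteq\E_2(\O)$, so the canonical map $\GE_2(\O)^{ab}\to\U(\O)^{ab}$ is onto and its kernel is the image of $\E_2(\O)^{ab}$ in $\GE_2(\O)^{ab}$; equivalently, writing $g$ for the composite $\O\xrightarrow{\,f\,}\E_2(\O)^{ab}\to\GE_2(\O)^{ab}$, the kernel equals $g(\O)$. Now $N:=\ker g$ contains $M$, and it contains every $\alpha x\beta^{-1}-x$ because $[\operatorname{diag}(\alpha,\beta),E_{12}(x)]=E_{12}(\alpha x\beta^{-1}-x)$ (with $E_{12}(x)=\sm{1 & x \\ 0 & 1}$) lies in $[\GE_2(\O),\GE_2(\O)]$; evaluating the remaining commutators of diagonal matrices with $\E_2(\O)$ identifies $N$ with the two‑sided ideal of $\O$ generated by $M$ together with these elements. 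Since $\GE_2(\O)^{ab}/g(\O)\cong\GE_2(\O)/\E_2(\O)\cong\U(\O)^{ab}$ and $g(\O)\cong\O/N$, this yields the asserted short exact sequence $1\to(\O/N,+)\to\GE_2(\O)^{ab}\to\U(\O)^{ab}\to 1$.

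The hard part is the explicit computation of $M$: one must show that the relations listed above (and the finitely many extra ones) generate all of $\ker f$, with no hidden relations. This is precisely where the ``almost universal for $\GE_2$'' property of $\O$ — equivalently, the finiteness and explicitness of the presentations of \Cref{subsect:almost_universalo_quaternion_orders} — is indispensable; without a presentation one only obtains that $\E_2(\O)^{ab}$ is a quotient of $(\O,+)/M$. A subsidiary but still delicate point is the identification $\GE_2(\O)/\E_2(\O)\cong\U(\O)^{ab}$ and the verification that $N$ is a genuine two‑sided ideal, not merely an additive subgroup closed under one‑sided multiplication by units, since $\O$ need not be generated as a ring by $\U(\O)$.
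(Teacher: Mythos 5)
Your proposal follows essentially the same route as the paper: abelianize the almost-universal Cohn presentation of \Cref{subsect:almost_universalo_quaternion_orders}, realize $\E_2(\O)^{ab}$ as $(\O,+)$ modulo the subgroup cut out by the abelianized relators (the paper's map $\tau(E(x))=x-3$ of \Cref{theoremM} is exactly the inverse of your $f$, since $e_0=f(-3)$), and obtain the $\GE_2$ sequence from $\GE_2(\O)/\E_2(\O)\cong\U(\O)^{ab}$ together with the identification $\E_2(\O)/\GE_2(\O)'\cong(\O/N,+)$ as in \Cref{abelianization GE2}. The ``hard part'' you explicitly defer --- checking that the listed elements exhaust $\ker f$, resp.\ that $N$ is the two-sided ideal generated by the elements $u-1$ --- is precisely the bulk of the paper's proofs; note only that in the $\E_2$-presentation the available conjugation relation is \eqref{R3'} with $D(\mu)=[\mu,\mu^{-1}]$, so the resulting generators of $M$ are of the form $\alpha x\alpha - x$ rather than $\alpha x\alpha^{-1}-x$ (the latter belong to $N$ but not in general to $M$).
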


More concretely, $N$ is the two-sided ideal generated by the elements $u-1$ with $u \in \U(\O)$. Therefore as a by-product, the exact sequence above yields that $\GE_2(\O)^{ab}$ is finite (\Cref{GE2Oab_finite}) for any order $\O$ as in \Cref{mainA}.
This is in sharp contrast with the elementary group case, as is shown by the following theorem where for a finitely generated abelian group \hspace{-0,01cm}$G,$ 

$$\rank_{\Z} G  := \max \{ n \mid \Z^n \text{ is, up to isomorphism, a subgroup of } G \},$$and $$\operatorname{inv}_{\O} := \max \{ \left| B \cap \U(\O) \right| \mid B \text{ a } \Z\text{-module basis of }\O\}.$$

\begin{maintheorem}[\Cref{finite_abelianization_equiv}] \label{stelling ab van E_2 intro}
Let $\O$ be an order in a finite dimensional division $\Q$-algebra with finite unit group. Then,
$$
\rank_{\Z} \E_2(\O)^{ab} = \rank_{\Z} \O - \operatorname{inv}_{\O}.
$$
\noindent Moreover, the following properties are equivalent:
\begin{enumerate}
\item $\E_2(\O)^{ab}$ is finite,
\item $\O$ has a $\Z$-basis consisting of units of $\O$,
\item $\O$ is isomorphic to a maximal order in $\Q$, $\Q(\sqrt{-1})$, $\Q(\sqrt{-3})$ or in the quaternion algebras $\qa{-1}{-1}{\Q}$,  $\qa{-1}{-3}{\Q}$ or it is the order of \emph{Lipschitz quaternions} $\qa{-1}{-1}{\Z}$.
\end{enumerate}
\end{maintheorem}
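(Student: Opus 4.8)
The plan is to first use the finiteness hypothesis to restrict the ambient division algebra $D\supseteq\O$ drastically, then prove the rank formula, and finally read off the three equivalences.

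\emph{Step 0: pinning down the algebra.} The hypothesis that $\U(\O)$ be finite is very restrictive: it forces the centre of $D$ to have finite unit group and $D\otimes_\Q\R$ to contain no matrix algebra $M_n$ with $n\ge 2$ as a factor; standard facts about units of orders then give that $D$ is $\Q$, an imaginary quadratic field, or a totally definite quaternion algebra over $\Q$. (Totally definite quaternion algebras over totally real fields larger than $\Q$ — the ones carrying the binary octahedral and icosahedral groups — are already excluded here, since their centres have infinite unit group.) Consequently every proper sub-division-algebra of $D$ is a field, necessarily $\Q$ or imaginary quadratic; I will use this repeatedly.

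\emph{Step 1: the rank formula.} By \Cref{theoremM}, $\E_2(\O)^{ab}\cong(\O/M,+)$ for an explicit additive subgroup $M\le(\O,+)$; since $\O$ is a free $\Z$-module of rank $\rank_{\Z}\O$, the free rank of $\O/M$ is $\rank_{\Z}\O-\rank_{\Z}M$, so the formula amounts to $\rank_{\Z}M=\operatorname{inv}_{\O}$. I would split this into the two claims $\operatorname{inv}_{\O}=\dim_\Q\Q[\U(\O)]$ and $\rank_{\Z}M=\dim_\Q\Q[\U(\O)]$, where $\Q[\U(\O)]$ — equal to the $\Q$-span of the group $\U(\O)$ — is the sub-division-algebra of $D$ generated by the units. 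For the first claim, $\operatorname{inv}_{\O}\le\dim_\Q\Q[\U(\O)]$ is clear (units occurring in a basis are $\Q$-independent and lie in $\Q[\U(\O)]$), and conversely $\O':=\Q[\U(\O)]\cap\O$ is a pure, full-rank sublattice of the subspace $\Q[\U(\O)]$ with $\U(\O')=\U(\O)$; reading off $\O'$ from the short lists of possibilities for the subalgebra $\Q[\U(\O)]$ and for the finite group $\U(\O)$ (as in Step 2) one sees that $\O'$ has a $\Z$-basis of units, which extends to a basis of $\O$, giving $\operatorname{inv}_{\O}\ge\rank_{\Z}\O'=\dim_\Q\Q[\U(\O)]$. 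For the second claim I would analyse the subspace $\Q M\subseteq D$ through the explicit generators of $M$ coming from the Cohn-type relations: the inclusion $\Q M\subseteq\Q[\U(\O)]$ holds because the generators $u-u^{-1}$ lie in the $\Q$-span of $\U(\O)$, the ``diagonal'' generators lie in $\Z[\U(\O)]$, and the conjugation generators $x-u^{-1}xu^{-1}$ lie in $\Q[\U(\O)]$ as well — trivially if $\Q[\U(\O)]=D$, and otherwise because then $\Q[\U(\O)]$ is a commutative imaginary quadratic subfield $C$, $D=C\oplus C\theta$ with $\theta^2\in\Q$ and $\theta c=\bar c\theta$, and the $C\theta$-component of $x-u^{-1}xu^{-1}$ vanishes since every unit of $\O$ has reduced norm $1$. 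For the reverse inclusion $\Q[\U(\O)]\subseteq\Q M$, the diagonal relations force $1\in\Q M$ (comparing the two expressions for the class of $w(1)^2=\operatorname{diag}(-1,-1)$ in $\O/M$ gives $12\in M$), and a short computation with $w(u)=e_{12}(u)e_{21}(-u^{-1})e_{12}(u)$ and the multiplicativity of $u\mapsto[w(u)]-[w(1)]$ puts every unit of order $3$, $4$ or $6$ into $\Q M$ — and, by Step 0, units of $\O$ can have no other order — so $\U(\O)\subseteq\Q M$. Together these give $\rank_{\Z}M=\operatorname{inv}_{\O}$.

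\emph{Step 2: the equivalences.} Now (1)$\Leftrightarrow$(2) is immediate: $\E_2(\O)^{ab}$ is finite iff its free rank is $0$ iff $\operatorname{inv}_{\O}=\rank_{\Z}\O$, i.e.\ iff some $\Z$-basis of $\O$ consists of units. For (3)$\Rightarrow$(2) one displays such a basis: $\{1\}$ for $\Z$; $\{1,\zeta\}$ with $\zeta$ a root of unity of order $4$, resp.\ $6$, for the maximal orders of $\Q(\sqrt{-1})$ and $\Q(\sqrt{-3})$; $\{1,i,j,k\}$ for the Lipschitz order $\qa{-1}{-1}{\Z}$; $\{1,i,j,\tfrac{1}{2}(1+i+j+k)\}$ for the Hurwitz maximal order of $\qa{-1}{-1}{\Q}$; and $\{1,\omega,i,i\omega\}$, $\omega$ of order $6$, for the maximal order of $\qa{-1}{-3}{\Q}$. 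For (2)$\Rightarrow$(3): if $\O$ has a $\Z$-basis of units then $G:=\U(\O)$ spans $D$ over $\Q$, so $D$ is a simple epimorphic image of $\Q G$ and $G$ embeds in $D^\times$. By Step 0, $D\in\{\Q\}\cup\{\text{imaginary quadratic fields}\}\cup\{\text{totally definite quaternion algebras over }\Q\}$. If $D=\Q$ then $\O=\Z$. If $D$ is imaginary quadratic, then $G$ contains two $\Q$-independent roots of unity, so $\mu_4\subseteq\O$ or $\mu_6\subseteq\O$, whence $\O=\Z[i]$ or $\Z[\zeta_6]$ (these subrings being maximal orders). If $D$ is a totally definite quaternion algebra over $\Q$, then $G$ is a non-commutative finite subgroup of $D^\times$, hence — by the classification of finite subgroups of quaternion algebras over $\Q$, a special case of Amitsur's theorem — isomorphic to $Q_8$, $Q_{12}$ or the binary tetrahedral group, with corresponding algebra $\qa{-1}{-1}{\Q}$, $\qa{-1}{-3}{\Q}$, $\qa{-1}{-1}{\Q}$; and since $\Z[G]\subseteq\O$ while $\U(\O)=G$, a direct check identifies $\O$ as the Lipschitz order, the maximal order of $\qa{-1}{-3}{\Q}$, resp.\ the Hurwitz order. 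This yields exactly the six orders in (3).

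\emph{The main obstacle.} Granting \Cref{theoremM}, the classification in Step 2 is essentially bookkeeping; the substantive point is the identity $\rank_{\Z}M=\operatorname{inv}_{\O}$ of Step 1, and inside it the inclusion $\Q M\subseteq\Q[\U(\O)]$ in the genuinely non-commutative case — i.e.\ controlling what happens when one conjugates an arbitrary element of $\O$ by a unit and subtracts. This works precisely because $D$ has no non-commutative proper sub-division-algebra (the analogue fails for totally definite quaternion algebras over proper totally real extensions of $\Q$, which is exactly why the finiteness hypothesis is indispensable), and establishing the reverse inclusion forces one to squeeze the right relations out of the finite presentation of $\E_2(\O)$ — the other place where care is required.
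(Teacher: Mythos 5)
Your proposal is correct and its skeleton coincides with the paper's proof of \Cref{finite_abelianization_equiv}: both reduce via \Cref{theoremM} to computing the free rank of $(\O/M,+)$, and both come down to the two containments you isolate, namely $\U(\O)\subseteq\Q M$ and $\Q M\subseteq\Q[\U(\O)]$. What you do differently: you package the rank count as the clean identity $\rank_{\Z}M=\dim_{\Q}\Q[\U(\O)]=\operatorname{inv}_{\O}$, whereas the paper proves the two inequalities by a case analysis on $D$; you verify $\alpha x\alpha-x\in\Q[\U(\O)]$ via the decomposition $D=C\oplus C\theta$ and the fact that units have reduced norm one, whereas the paper shows $\gamma x\gamma-x\in\mathrm{C}_{\O}(\gamma)$ from the quadratic relation $\gamma^2=t\gamma-1$ and uses that $\mathrm{C}_D(\beta)=\Q(\beta)$ for non-central $\beta$ --- the same computation in different clothes; and for (2)$\Rightarrow$(3) you invoke the classification of finite subgroups of totally definite quaternion algebras over $\Q$ (which the paper records separately in \Cref{cut and division}), where the paper's proof instead cites Voight's theorem that $\U(\O)$ is cyclic unless $\O\cong\LL,\O_2,\O_3$; these are interchangeable. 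Two small repairs are needed. First, the generating set of $M$ in \Cref{theoremM} also contains the trace elements $2(x+\bar{x})+6$ for $|x|^2=2$ and $3(x+\bar{x})$ for $|x|^2=3$ (the images of the non-universal Cohn relations), which your list of generators omits; since these are rational integers the inclusion $\Q M\subseteq\Q[\U(\O)]$ survives, but they must be accounted for. Second, your map $u\mapsto[w(u)]-[w(1)]$ is multiplicative only modulo $M$; all you actually need is $12u\in M$ for every unit $u$, which falls out directly from the diagonal generators $3(u+1)(1+1)=6u+6$ together with $12\in M$, as in the paper, so the detour through multiplicativity can be dropped.
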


A detailed explanation on the notation of quaternion algebras is given in \Cref{orders and quaternion algebras}. When $\U (\O)$ is infinite, the situation is drastically different. Indeed, it is well-known (by Margulis) that then $ \SL_2(\O)$ and $\E_2(\O)$ have property $\FAb$.

As mentioned earlier if $\E_2(\O)$ has property \FA then $\E_2(\O)^{ab}$ is finite. Hence, in this  case, the orders $\O$ that can appear are restricted by \Cref{stelling ab van E_2 intro}.
Investigating further these orders and certain subgroups of finite index in $\E_2(\O)$ we determine precisely when $\E_2(\O)$ has property \FA and \HFA.

\begin{maintheorem}[\Cref{When is E2 FA and some GL2 FA} and \Cref{When is E_2(O) HFA}]\label{TheoremD}
Let $\O$ be an order in a finite dimensional division $\Q$-algebra with finite unit group.
The following properties are equivalent:
\begin{enumerate}
\item $\E_2(\O)$ has property \FA,
\item $\O$ is isomorphic to a maximal order in $\Q(\sqrt{-3}), \qa{-1}{-1}{\Q}$ or in $\qa{-1}{-3}{\Q}$.
\end{enumerate}
Furthermore, $\E_2(\O)$ does not have property \HFA.
\end{maintheorem}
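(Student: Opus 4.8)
The plan is to exploit \Cref{stelling ab van E_2 intro} to reduce from a continuum of orders to the six listed ones, and then handle each surviving case by explicit presentations. Since property \FA implies finite abelianization, the orders $\O$ for which $\E_2(\O)$ can possibly have \FA are, by \Cref{stelling ab van E_2 intro}, exactly the maximal orders in $\Q$, $\Q(\sqrt{-1})$, $\Q(\sqrt{-3})$, $\qa{-1}{-1}{\Q}$, $\qa{-1}{-3}{\Q}$, and the Lipschitz order $\qa{-1}{-1}{\Z}$. First I would dispose of $\O = \Z$: here $\E_2(\Z) = \SL_2(\Z)$ decomposes as the amalgam $\Z_4 *_{\Z_2} \Z_6$, so it fails \FA, and similarly for the Gaussian integers $\E_2(\Z[i])$ is (commensurable with, but in fact known to be) a non-trivial amalgam, so it fails \FA as well. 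This leaves the three claimed orders plus the Lipschitz order to be separated.

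Next I would treat the three orders in $(2)$ by using the finite presentations for $\E_2(\O)$ and $\GE_2(\O)$ obtained in \Cref{subsect:almost_universalo_quaternion_orders}. For each of these three orders the relevant ideal $N$ from \Cref{mainA} is all of $\O$ (since the unit group is large enough to generate $\O$ additively by differences), so $\GE_2(\O)^{ab}$ is finite, and one checks $\E_2(\O)^{ab}$ is finite by part $(2)\Leftrightarrow(1)$ of \Cref{stelling ab van E_2 intro}. To upgrade finite abelianization to full property \FA, I would use Serre's criterion: a finitely generated group has \FA if and only if it has finite abelianization, is not a non-trivial amalgam, and is not an ascending HNN extension — but more practically, I would show \FA by exhibiting the group as generated by finitely many torsion elements each lying in a subgroup with \FA (e.g. finite subgroups), invoking the standard fact that a group generated by a finite family of subgroups with \FA, any two of which generate a subgroup with \FA, itself has \FA. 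Concretely, one reads off from the explicit presentation that $\E_2(\O)$ is generated by the finite subgroup of monomial/elementary torsion matrices and that products of pairs of the generating elementary matrices $e_{12}(x), e_{21}(y)$ generate finite subgroups when $xy$ is a suitable unit; the division-algebra structure (units of norm one being numerous) is what makes this work for the three good orders but not for $\Z$ or $\Z[i]$, where the elementary generators have infinite order and no such covering by finite \FA-subgroups exists.

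For the Lipschitz order $\qa{-1}{-1}{\Z}$ I would show \FA fails by contrast with its maximal overorder, the Hurwitz quaternions: $\E_2$ of the Lipschitz order sits as a finite-index subgroup inside $\E_2$ of the Hurwitz order, and one can locate inside it an explicit amalgam decomposition (or simply invoke that its abelianization-plus-structure analysis via the presentation produces a free factor or amalgam splitting), so it does not have \FA; this is consistent with it being excluded from list $(2)$. For the final clause, that \emph{no} such $\O$ gives \HFA, I would argue that even for the three orders with \FA one can pass to a finite-index subgroup that fails \FA: concretely, each $\E_2(\O)$ here is a lattice in $\SL_2(\C)$ or $\SL_2(\C)\times\SL_2(\C)$ of \emph{rank one}, hence virtually has infinite abelianization or virtually splits as an amalgam — more precisely, every such lattice is virtually of positive first Betti number or virtually a non-trivial amalgam, by the congruence-subgroup failure in rank one and standard facts about Kleinian groups (e.g. passing to a congruence subgroup, one gets a finite-index subgroup whose abelianization is infinite). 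Thus \HFA fails for all $\O$ in the theorem.

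The main obstacle I expect is the upgrade from "finite abelianization" to "property \FA" for the three good orders: finite abelianization rules out HNN extensions but not non-trivial amalgams, so one genuinely needs either Serre's covering-by-\FA-subgroups argument with an explicitly verified generating family of finite subgroups (which requires care in reading the relations of the presentation to confirm the pairwise-generated subgroups are finite), or an a priori structural input such as a bounded-generation or Kazhdan-type property for these specific lattices. Identifying the correct finite generating set of torsion elements from the presentation, and checking the pairwise conditions, is the delicate computational heart of the argument; everything else is either a citation (Margulis, Serre) or a direct appeal to \Cref{stelling ab van E_2 intro}.
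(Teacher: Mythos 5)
Your reduction is the paper's: \FA forces $\E_2(\O)^{ab}$ finite, so by \Cref{finite_abelianization_equiv} only $\Z,\I_1,\I_3,\LL,\O_2,\O_3$ survive, and $\Z$, $\I_1$, $\LL$ are then eliminated by exhibiting amalgam decompositions ($\SL_2(\Z)\cong C_4\ast_{C_2}C_6$, Fine's splitting of $\E_2(\I_1)$, and the splitting of $\E_2(\LL)$ from the follow-up paper). Your treatment of \HFA by geometric means (congruence subgroups of these rank-one lattices having virtually free quotients, hence finite-index subgroups with infinite abelianization) is legitimate for $\I_3,\O_2,\O_3$, where $\E_2$ has finite index in $\SL_2$; it is essentially the content of \Cref{remarkFAb}, whereas the paper's own proof of \Cref{When is E_2(O) HFA} is algebraic, exhibiting $\E_2(\Z[\sqrt{-3}])\leq\E_2(\I_3)$ with infinite abelianization and $\E_2(\LL)\leq\E_2(\O_2)$ as an amalgam.

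The genuine gap is the positive direction: proving that $\E_2(\O)$ \emph{does} have \FA for $\O\in\{\I_3,\O_2,\O_3\}$. Your plan is to cover the group by finitely many \emph{finite} subgroups, pairwise generating subgroups with \FA, and invoke Serre's gluing criterion. You never identify these subgroups, and the plan is suspect as stated: $\SL_2(\Z)$ is itself generated by two finite cyclic subgroups (of orders $4$ and $6$) yet is a non-trivial amalgam, so being generated by finite subgroups buys nothing without a verified pairwise condition, and the natural candidates (generated by the $E(x)$) contain the infinite free abelian unipotent group $N=\sm{1 & \O \\ 0 & 1}$, which cannot sit inside a union of finite subgroups. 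The paper's actual mechanism is different: it first proves that the Borel-type subgroup $\BE_2(\O)\cong N\rtimes K$ (an \emph{infinite} group) has \FR, via \Cref{lemmaNilpotent} --- there is no $K$-invariant quotient of $N$ isomorphic to $\Z$ because some finite-order diagonal element acts on $N\otimes_{\Z}\Q$ with no rational eigenvalue (this is where the unit group being large, e.g.\ containing $i$ or an element of order $3$, genuinely enters). Only then does it adjoin the Weyl element $w=E(0)$ by Serre's Proposition~26, using that $\O$ has a $\Z$-basis of units $\mu$ and that each $wx_\mu$ has order $6$, hence a fixed point (\Cref{property FA for the GRK groups}). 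Your remark about $e_{12}(x)e_{21}(y)$ being torsion for suitable units is the germ of this last step, but without the Borel-subgroup fixed point as the second ingredient of the gluing, the argument does not close.
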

In \Cref{subsectie FR voor Borel}, properties \FA and \HFA are also investigated for the group $\GE_2(\O)$.
For both groups one first needs to understand the respective Borel subgroups. This is done simultaneously in \Cref{subsectie FR voor GRK} by considering a more general type of group.
Interestingly, in case of the Borel subgroup $\B_2(\O)$ of $\GL_2(\O)$ we obtain in \Cref{Borel FA cyclic units} that it has property \FA if and only if $\U (\O)$ is not isomorphic to $C_2$, the cyclic group of order $2$. 

As mentioned earlier, we  do not only consider actions on simplicial trees, but more generally on real trees for property \FR. In all the cases where we obtain property \FA we actually have the stronger property \FR.
In \Cref{sectie hogere graad matrices FA}, we also briefly discuss elementary groups of degree at least $3$.

In comparison with \HFA in \Cref{mainTheoremintro}, studying property \FA for the full unit group $\U(\Z G)$ is even more delicate. In \Cref{FA and cut groups}, we show that if $\U(\Z G)$ has property \FA, then $G$ is a cut group. We then further investigate cut groups in that section and prove in \Cref{no excp 1 by 1 for cut groups} that if $G$ is a cut group, then $\Q G$ has no exceptional components of type (I). Finally in \Cref{possible 2 times 2 components}, we give a complete description of the exceptional components that can appear in the Wedderburn-Artin decomposition of a cut group and we state precisely when such components appear. 

A full characterization of when $\U(\Z G)$ has property \FA has not been obtained but in Section~\ref{sectie FA voor UZG}  necessary conditions will  be proven and open problems will be formulated. 
Earlier, in Section~\ref{sectie HFA}  we propose a trichotomy result (\Cref{Trichotomy conjecture}) about $\U(\Z G)$ having property \HFA, having property \FA but not  \HFA or it having a non-trivial amalgamated decomposition and finite abelianization. So finite abelianization and a decomposition as amalgamated product would hence be inextricable for unit groups in $\Z G$.
We also prove in \Cref {Trichotomy and equivalent problems} that this question is equivalent to two other problems of independent interest.

We point out that in the follow-up paper \cite{amalgamationpaper} we focus, among other things, on obtaining explicit non-trivial amalgamated decompositions for subgroups of finite index in $\U (\Z G)$ provided $\U (\Z G)$ does not satisfy \HFA.
Notably, building on the methods in this paper, a weaker version of the above mentioned trichotomy is proven, namely that $\U(\Z G)$ either has property $\HFA$ or is, up to commensurability, a non-trivial amalgamated product. 

%
%
%
%

\section{Preliminaries}\label[section]{p1:ab_for_linear_groups}

In this first section we review facts that are needed in the sequel of the article. As a rule, in this paper, a ring $R$ is always meant to be unital and associative, but not necessarily commutative. Moreover, we use the notation $\mathcal{Z}(R)$ for its \emph{center}.
For any group $\Gamma$, $\Gamma'$ denotes its \emph{commutator subgroup} and $\Gamma^{ab} = \Gamma / \Gamma'$ its \emph{abelianization}.

\subsection{Trees} 
In this subsection  we recall some background on the geometric concepts used in the paper, see \cite{Chiswell}.
We will be considering two kinds of trees: simplicial trees and real trees.

\begin{definition}
A connected, undirected graph is called a \emph{simplicial tree} (or simply a \emph{tree}) if it contains no cycle graph as a subgraph.

A metric space is called a \emph{real tree} (or \emph{$\R$-tree}) if it is a geodesic space with no subspace isomorphic to $S^1$.
\end{definition}

Recall that a metric space $(X,d)$ is \emph{geodesic} if between every two points $x$ and $y$ there exists a curve of length $d(x,y)$. The length $L(\alpha)$ of a curve $\alpha : [0,1] \rightarrow (X,d)$ is defined as $\sup \sum d(\alpha(t_i), \alpha(t_{i+1}))$, where $0 = t_0 < ... < t_k = 1$ is a partition of $[0,1]$ and the supremum is taken over all possible partitions.

This definition of a real tree is equivalent to saying that it is a connected \emph{$0$-hyperbolic space}, that is to say all triangles are $0$-thin.
For more on trees or the definition of a $0$-hyperbolic space, we refer the reader to \cite{Chiswell}.

An isomorphism $g$ of a simplicial tree is called an \emph{inversion} if there exist two adjacent vertices which are mapped to one-another.
This is equivalent with the fact that $g$ does not have a fixed vertex, but $g^2$ does. An isomorphism with the former property does not exist for $\R$-trees.
Indeed let $T_{\mathbb{R}}$ be a real tree with an isomorphism $g$ and let $x \in T_{\mathbb{R}}$ be a point fixed under $g^2$.
Considering the geodesic between $x$ and $g(x)$, it is easy to see that the midpoint should be a fixed point for $g$.

In this paper we will be interested, for various types of linear groups $\Gamma$, in the existence of global fixed points when we let $\Gamma$ act on trees.

\begin{definition}\label[definition]{defFA}

A group $\Gamma$ is said to have \emph{property \FA} if whenever $\Gamma$ acts on a tree such that no non-identity element acts as inversion, this action has a globally fixed vertex. 

A group $\Gamma$ is said to have \emph{property \FR} if every isometric action on a real tree has a globally fixed point.

A group $\Gamma$ is said to have the \emph{hereditary property \FA (respectively \FR)} if every finite index subgroup of $\Gamma$ has property \FA (respectively \FR). We denote these properties by \HFA and \HFR.
\end{definition}

Property \FA was first introduced by Jean-Pierre Serre and the name \FA comes from the French ``points Fixes sur les Arbres''.

A simplicial tree can be considered as a real tree by its \emph{geometric realization} \cite[Chapter~2, Section~2]{Chiswell}.
In this way, an action on a simplicial tree $T$ induces an action on its geometric realization $T_{\R}$.
If the action was without inversion, then a point $x_{\R}$ of $T_{\R}$ which is fixed under this induced action, has to correspond to a fixed vertex $x$ of $T$.
Thus property \FR implies property \FA. There are however real trees which are not simplicial trees.

In general, \FA is a weaker property than \FR and an example of a group satisfying \FA but not \FR can be found in \cite{Minasyan}.
Our interest in property \FA originates from the structural properties it implies.
In order to be more precise we first recall the definition of an amalgamated product and an HNN extension.
\begin{definition}\label[definition]{defAmal}
Let $G_1$, $G_2$ and $H$ be groups and $f_1: H \rightarrow G_1$ and $f_2: H \rightarrow G_2$ be injective homomorphisms.
Let $N$ be the normal subgroup of the free product $G_1 \ast G_2$ generated by the elements $f_1(h)f_2(h)^{-1}$ with $h \in H$.
Then the \emph{amalgamated product $G_1 \ast_H G_2$} is defined as the quotient
$$(G_1 \ast G_2) / N.$$
\end{definition}

This amalgamated product is said to be trivial if either $f_1$ or $f_2$ is an epimorphism.

\begin{definition}\label[definition]{defHNN}
Let $\Gamma$ be a group with presentation $\langle S \mid R \rangle$, $H_1$ and $H_2$ be two isomorphic subgroups of $\Gamma$ and  $\theta: H_1 \rightarrow H_2$ an isomorphism. Let $t \not \in \Gamma$ be a new element and $\langle t \rangle$ a cyclic group of infinite order. The \textit{HNN extension} of $\Gamma$ relative to $H_1$, $H_2$ and $\theta$ is the group 
$$\langle S, t \mid R, tgt^{-1}=\theta(g), g\in H_1 \rangle.$$
\end{definition}

A group theoretical characterisation of property \FA was obtained by Serre \cite[I.6.1 Theorem 15]{Serre}.

\begin{theorem}[Serre] \label{prop:iff_conditions_FA}
A countable group $\Gamma$ has property \FA if and only if it satisfies the following properties
\begin{itemize}
\item $\Gamma$ has finite abelianization,
\item $\Gamma$ has no non-trivial decomposition as amalgamated product,
\item $\Gamma$ is finitely generated.
\end{itemize}
\end{theorem}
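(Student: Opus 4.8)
The plan is to derive both implications from Bass--Serre theory together with the classification of actions of a group on a simplicial tree. For the forward implication, suppose $\Gamma$ has property \FA. If $\Gamma^{ab}$ were infinite it would, being finitely generated abelian, surject onto $\Z$; composing with $\Gamma\to\Gamma^{ab}$ gives a homomorphism $\Gamma\to\Z$, and pulling back the translation action of $\Z$ on a line (a tree action that is without inversion and without a global fixed point) contradicts \FA, so $\Gamma^{ab}$ is finite. If $\Gamma=A\ast_C B$ with $C\neq A$ and $C\neq B$, then the action of $\Gamma$ on the associated Bass--Serre tree is without inversion and has no global fixed point (a standard fact about nontrivial amalgams), again contradicting \FA. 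Hence property \FA implies both listed conditions.

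For the converse, assume $\Gamma$ is finitely generated, $\Gamma^{ab}$ is finite, and $\Gamma$ has no nontrivial amalgam decomposition; let $\Gamma$ act without inversion on a tree $T$, and suppose for contradiction that there is no global fixed point. I first observe that the hypotheses also prevent $\Gamma$ from being a nontrivial HNN extension, since the stable letter of an HNN extension provides a surjection onto $\Z$, hence an infinite abelianization. It therefore suffices to prove the structural statement that a finitely generated group acting without inversion on a tree and without global fixed point either surjects onto $\Z$ or splits as a nontrivial amalgamated product. To prove this I would invoke the classification of the action into four mutually exclusive types: (i) $\Gamma$ fixes a vertex (the case we want); (ii) $\Gamma$ fixes a unique end but no vertex; (iii) $\Gamma$ stabilizes a line $L\subseteq T$ but fixes no vertex; (iv) none of the above, in which case $\Gamma$ admits a unique minimal invariant subtree $T_0$ (the union of the axes of its hyperbolic elements), and $T_0$ is not a single vertex.

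In case (ii), the fixed end yields a Busemann homomorphism $\Gamma\to\Z$, which vanishes because $\Gamma^{ab}$ is finite; hence each of the finitely many generators of $\Gamma$ is elliptic and fixes a ray toward the end, these rays share a common subray (two rays toward the same end eventually coincide), and $\Gamma$ therefore fixes that subray pointwise, contradicting (ii). In case (iii), since $\Gamma$ fixes no vertex of $L$, its image in the automorphism group of $L$ is infinite (a finite group acting without inversion on a line fixes a vertex); if this image consists of translations then $\Gamma$ surjects onto $\Z$, and otherwise it is infinite dihedral, so $L/\Gamma$ is a finite tree of groups on which $\Gamma$ has no global fixed point, and the argument of case (iv) applies. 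In case (iv), Bass--Serre theory presents $\Gamma$ as $\pi_1$ of the finite graph of groups $\mathcal{G}=T_0/\Gamma$ --- finiteness of $\mathcal{G}$ using that the action is minimal and $\Gamma$ finitely generated --- and $\mathcal{G}$ has an edge since $T_0$ is not a vertex; if the underlying graph has positive first Betti number then $\Gamma$ surjects onto the nontrivial free group it carries, hence onto $\Z$, while if it is a tree one extracts a nontrivial amalgam by induction on the number of edges of $\mathcal{G}$: a leaf of $\mathcal{G}$ has, by minimality of $T_0$ (which forbids vertices of valence one), vertex group strictly larger than the adjacent edge group, and this either already exhibits a nontrivial amalgam or lets one collapse that edge and recurse, the recursion never ending in a single vertex because in the Bass--Serre dictionary ``$\Gamma$ equals a vertex group'' is the same as ``$\Gamma$ fixes a vertex of $T_0$''. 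In every case one of the hypotheses is contradicted, so the action has a global fixed point and $\Gamma$ has property \FA.

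The step I expect to be the main obstacle is case (iv): one must guarantee that the amalgamated (or HNN) decomposition read off from the graph of groups $T_0/\Gamma$ is genuinely \emph{nontrivial}, and this is exactly where minimality of the invariant subtree $T_0$ is indispensable, via the leaf-collapsing induction above. The remaining ingredients --- the fourfold classification of tree actions, the existence and finiteness properties of the minimal invariant subtree for a finitely generated group, and the elementary fact that two rays toward a common end eventually coincide --- are standard and only mildly technical.
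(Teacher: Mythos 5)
Your argument is correct, but note that the paper does not actually prove this statement: it is quoted as Serre's theorem and attributed to \cite[I.6.1 Theorem 15]{Serre}, so the only ``proof'' in the paper is that citation. What you have written is, in essence, a faithful reconstruction of Serre's own argument (the forward direction via the translation action of $\Z$ on a line and the Bass--Serre tree of a non-trivial amalgam; the converse via the classification of actions, the minimal invariant subtree, and the finite quotient graph of groups). Your identification of the delicate point is also the right one: non-triviality of the splitting read off from $T_0/\Gamma$ is exactly where minimality enters, via the observation that a minimal invariant subtree that is not a point has no valence-one vertices, so a leaf of the quotient tree of groups has vertex group strictly containing the edge group, and the collapsing recursion can only terminate in a single vertex if $\Gamma$ fixes a vertex of $T_0$.

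One remark on economy: for finitely generated groups your cases (ii) and (iii) can be absorbed into (iv). By Serre's lemma on pairs of elliptic elements (\cite[I.6.5, Corollary 3 to Proposition 26]{Serre}), a finitely generated group all of whose elements are elliptic already has a global fixed point; hence if there is no global fixed point some element is hyperbolic, the unique minimal invariant subtree $T_0$ (the union of the axes) exists and is not a point, and one passes directly to the graph-of-groups analysis. This shortcut replaces the Busemann-homomorphism argument for a fixed end and most of the invariant-line discussion. Also be aware that Serre's Theorem 15 as stated in \cite{Serre} carries a third hypothesis (that $\Gamma$ is not the union of a strictly increasing chain of proper subgroups); it is automatic for finitely generated groups, which is why it does not appear in the statement here, but it is the reason your proof genuinely uses finite generation (finiteness of $T_0/\Gamma$, finitely many generating rays toward a fixed end).
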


By classical Bass-Serre theory, a finitely generated group is an HNN extension if and only if it has infinite abelianization (see \cite[I.5.1. example 3 and the proof of I.6.1 Theorem 15]{Serre}). Thus we get the following immediate corollary. 

\begin{corollary}\label[corollary]{FAHNN}
A finitely generated group has property \FA if and only if it is neither an HNN extension nor a non-trivial amalgamated product.
\end{corollary}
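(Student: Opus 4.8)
The plan is to deduce the corollary directly from \Cref{prop:iff_conditions_FA} (Serre's group-theoretic characterization of \FA) together with the Bass--Serre fact recalled immediately above, namely that a finitely generated group is an HNN extension precisely when its abelianization is infinite. No further geometric input is needed: the whole content is the logical combination of these two equivalences, carried out in both directions, and throughout we use that $\Gamma$ is assumed finitely generated (a hypothesis required by both input statements).

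First I would treat the implication ``\FA{} $\Rightarrow$ no HNN, no non-trivial amalgam''. If $\Gamma$ has property \FA, then by Serre's theorem $\Gamma$ has finite abelianization and admits no non-trivial decomposition as an amalgamated product; the latter is literally the statement that $\Gamma$ is not a non-trivial amalgamated product, and the former, via the Bass--Serre criterion, forces $\Gamma$ not to be an HNN extension. Conversely, if the finitely generated group $\Gamma$ is neither an HNN extension nor a non-trivial amalgamated product, then the first hypothesis (again by the Bass--Serre criterion) yields that $\Gamma^{ab}$ is finite, while the second is exactly the absence of a non-trivial amalgamated decomposition; both hypotheses of Serre's theorem hold, so $\Gamma$ has property \FA.

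The only point needing a little care is that the two inputs are normalized slightly differently: Serre's theorem refers to \emph{non-trivial} amalgams, whereas the HNN half of the argument imposes no such restriction. This is harmless, since any HNN extension (even an ascending one, in the sense of the definition given above) surjects onto $\mathbb{Z}$ via the stable letter and hence has infinite abelianization --- which is precisely the equivalence the Bass--Serre criterion records. So there is no real obstacle here; the corollary is an entirely formal consequence of \Cref{prop:iff_conditions_FA} and the preceding Bass--Serre remark.
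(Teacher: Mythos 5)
Your argument is correct and is exactly the paper's: the corollary is presented there as an immediate consequence of \Cref{prop:iff_conditions_FA} combined with the Bass--Serre fact that a finitely generated group is an HNN extension if and only if its abelianization is infinite. Your write-up just spells out the same formal combination in both directions, so nothing differs in substance.
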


Unfortunately for property \FR such a group-theoretical description is still an open problem. The following properties are well-known (in case of \FA a proof can be found in \cite[I.6.3 Examples 1-4]{Serre}, and the other cases can be handled in a similar fashion) and will (mostly) be used without further notice.

\begin{proposition}\label[proposition]{FAconserved}
Let $\Gamma$ be a group, $N$ a subgroup of $\Gamma$ and $\mathcal{P}$ a property among \HFR, \HFA, \FR and \FA.
\begin{itemize}
\item If $\Gamma$ is finitely generated and torsion, then $\Gamma$ has property $\mathcal{P}$. In particular, finite groups have property $\mathcal{P}$.
\item If $N$ is normal in $\Gamma$ and both $N$ and $\Gamma/N$ have property $\mathcal{P}$, then so does $\Gamma$.
\item If $N$ is a subgroup of finite index in $\Gamma$ with property $\mathcal{P}$, then $\Gamma$ has property $\mathcal{P}$.
\item If $N$ is normal in $\Gamma$ and $\Gamma$ has property $\mathcal{P}$ then so does $\Gamma/N$.
\end{itemize}
In particular, a finite direct product $\prod\limits_{i=1}^{q} G_i$ has property $\mathcal{P}$ if and only if every $G_i$ has property $\mathcal{P}$.
\end{proposition}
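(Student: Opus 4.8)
The plan is to reduce everything to a handful of standard facts about actions on simplicial trees and isometric actions on real trees (see \cite{Serre, Chiswell}): (a) a finite group acting without inversion on a tree, or isometrically on a real tree, fixes a point, obtained as the circumcenter of an orbit; (b) more generally, such an action with a bounded orbit has a global fixed point --- the circumcenter of the orbit, or the two endpoints of an invariant central edge, which are fixed pointwise because the action is without inversion; (c) the fixed set $T^{H}$ of a subgroup $H$ is a closed subtree, and it is invariant under the whole group as soon as $H$ is normal; (d) the Helly property of trees, namely that finitely many pairwise intersecting closed subtrees have a common point, together with the classical fact that an element $g$ of finite order is elliptic and, given a second elliptic element $h$, the product $gh$ is elliptic precisely when $\mathrm{Fix}(g)\cap\mathrm{Fix}(h)\neq\emptyset$. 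Since \FR implies \FA and \HFR implies \HFA, and since \HFA (resp. \HFR) is by definition the statement that every finite index subgroup has \FA (resp. \FR), for the first three items I would first prove the plain versions for \FA and \FR and then bootstrap to the hereditary versions by the routine ``intersect with the given finite index subgroup'' argument, spelled out once below.

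\emph{First item.} A finite index subgroup of a finitely generated group is again finitely generated, and a subgroup of a torsion group is torsion, so it suffices to show that a finitely generated torsion group $\Gamma=\langle s_{1},\dots,s_{n}\rangle$ has \FR; then \HFR, \FA and \HFA follow, with finite groups as the special case. Given an isometric action of $\Gamma$ on a real tree, each $s_{i}$ is elliptic by (a), and each $s_{i}s_{j}$ is a torsion element hence elliptic, so by (d) the subtrees $\mathrm{Fix}(s_{1}),\dots,\mathrm{Fix}(s_{n})$ meet pairwise and the Helly property yields a common fixed point, that is, a fixed point of $\Gamma$. (This is Serre's argument that finitely generated torsion groups have \FA, transplanted to real trees; for the \FA part one may instead invoke Serre's criterion \Cref{prop:iff_conditions_FA}, the abelianization being automatically finite.)

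\emph{Second and fourth items.} Let $N\trianglelefteq\Gamma$. If $N$ and $\Gamma/N$ both have \FA (resp. \FR) and $\Gamma$ acts on a tree, without inversion in the simplicial case, then $T^{N}\neq\emptyset$ by the property of $N$; by (c) it is a closed subtree on which $N$ acts trivially, so the $\Gamma$-action on $T^{N}$ factors through $\Gamma/N$, which fixes a point there by its own property --- a point fixed by all of $\Gamma$. Conversely, if $\Gamma$ has \FA (resp. \FR), any action of $\Gamma/N$ on a tree pulls back along $\Gamma\twoheadrightarrow\Gamma/N$ to an action of $\Gamma$, and a $\Gamma$-fixed point is $\Gamma/N$-fixed. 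For the hereditary versions: if $\Gamma_{0}\leq\Gamma$ has finite index then $N\cap\Gamma_{0}$ has finite index in $N$ and $\Gamma_{0}/(N\cap\Gamma_{0})\cong\Gamma_{0}N/N$ has finite index in $\Gamma/N$, reducing the extension statement for $\Gamma_{0}$ to the plain case; the quotient statement is symmetric, pulling a finite index subgroup of $\Gamma/N$ back to its preimage in $\Gamma$.

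\emph{Third item and the final assertion.} Let $N\leq\Gamma$ have finite index and property $\mathcal{P}$, with $\mathcal{P}$ equal to \FA or \FR; given an action of $\Gamma$ on a tree, $N$ fixes some point $p$, the orbit $\Gamma p$ has at most $[\Gamma:N]$ points hence is bounded, and (b) supplies a $\Gamma$-fixed point. The hereditary cases follow by intersecting a finite index $\Gamma_{0}\leq\Gamma$ with $N$, as before. Finally, for a finite direct product $\prod_{i=1}^{q}G_{i}$: if every $G_{i}$ has $\mathcal{P}$, an induction using the extension case (with $N=\prod_{i<q}G_{i}$ and quotient $G_{q}$) gives $\mathcal{P}$ for the product; conversely each $G_{i}$ is the quotient of $\prod_{j}G_{j}$ by $\prod_{j\neq i}G_{j}$, so has $\mathcal{P}$ by the quotient case. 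I expect no serious obstacle here: the points that need genuine care are the real-tree analogues of the circumcenter and Helly facts and the ``elliptic times elliptic'' lemma used in the torsion case, the ``without inversion'' bookkeeping in the simplicial setting being handled, if desired, by passing to the barycentric subdivision.
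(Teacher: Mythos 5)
Your proof is correct and follows essentially the same route as the paper, which simply cites Serre's arguments in \cite[I.6.3, Examples 1--4]{Serre} and notes that they generalize to \FR and the hereditary versions; you have merely written out those standard arguments (circumcenter of a finite orbit, invariance of the fixed subtree of a normal subgroup, Helly property plus the elliptic-product lemma, and the intersect-with-a-finite-index-subgroup bootstrap) in full. The only point worth flagging is that circumcenters of arbitrary bounded orbits in a real tree require completeness, but every application you make is to a finite orbit, whose convex hull is a finite metric tree, so the argument goes through.
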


Two subgroups $\Gamma_1, \Gamma_2$ of a group $\Gamma$ are called \emph{commensurable}, if their intersection is of finite index in both $\Gamma_1$ and $\Gamma_2$. From \Cref{FAconserved} it follows that properties \HFA and \HFR are actually properties of the commensurability class of a group, meaning that either all or none of the groups in the class have this property.

\subsection{Orders and quaternion algebras}\label[subsection]{orders and quaternion algebras} 

Let $A$ be a finite-dimensional algebra over $\Q$. Recall that a \emph{$\Z$-order} (or for brevity just \emph{order}) is a subring of $A$ that is finitely generated as a $\Z$-module and contains a $\Q$-basis of $A$. The following property will be primordial and used very regularly in the rest of the paper. For a proof see \cite[Lemma~4.6.9]{EricAngel1}

\begin{proposition}\label[proposition]{order zijn commensurable}
Let $A$ be a finite dimensional semisimple $\Q$-algebra and let $\O$ and $\O'$ be both orders in $A$. Then their unit groups $\U(\O)$ and $\U(\O')$ are commensurable.
\end{proposition}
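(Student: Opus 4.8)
The plan is to reduce to the case where one order sits inside the other, and then to exhibit an explicit finite-index subgroup of the unit group (a congruence subgroup) which automatically lies inside the smaller order as well; commensurability then follows by symmetry.

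First I would record the elementary clearing-of-denominators fact that, since $\O$ and $\O'$ are both full $\Z$-lattices in $A$ (each is finitely generated over $\Z$ and contains a $\Q$-basis, so $\Q\O = \Q\O' = A$), there is a positive integer $m$ with $m\O \subseteq \O'$ and $m\O' \subseteq \O$: writing $\O = \sum_j \Z g_j$, each $g_j$ lies in $A = \Q\O'$, hence some multiple $m_j g_j$ lies in $\O'$, and one takes for $m$ a common multiple of the $m_j$ (and symmetrically for $\O'$). Observe that $m\O$ is a two-sided ideal of $\O$ contained in $\O'$, and that the ring $\O/m\O$ is finite, being a finitely generated $\Z$-module annihilated by $m$.

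Next I would introduce the congruence subgroup $\Gamma(m\O) := \{\, u \in \U(\O) : u - 1 \in m\O \,\}$. This is a subgroup of $\U(\O)$: closure under multiplication follows from $uv - 1 = u(v-1) + (u-1)$ and closure under inversion from $u^{-1} - 1 = -u^{-1}(u-1)$, both computations using that $m\O$ is a two-sided ideal of $\O$ and that $u^{\pm 1} \in \O$. Moreover $\Gamma(m\O)$ is exactly the kernel of the reduction homomorphism $\U(\O) \to \U(\O/m\O)$, and since $\O/m\O$ is a finite ring its unit group is finite, so $\Gamma(m\O)$ has finite index in $\U(\O)$. Finally, every $u \in \Gamma(m\O)$ satisfies $u = 1 + (u-1) \in 1 + m\O \subseteq \O'$ and likewise $u^{-1} \in 1 + m\O \subseteq \O'$, whence $u \in \U(\O')$; thus $\Gamma(m\O) \subseteq \U(\O) \cap \U(\O')$.

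Putting this together, $\U(\O) \cap \U(\O')$ contains the finite-index subgroup $\Gamma(m\O)$ of $\U(\O)$ and therefore has finite index in $\U(\O)$; running the identical argument with the roles of $\O$ and $\O'$ interchanged (using the two-sided ideal $m\O' \subseteq \O$) shows it also has finite index in $\U(\O')$. Hence $\U(\O)$ and $\U(\O')$ are commensurable. I do not expect a genuine obstacle here: the only non-formal inputs are the clearing-of-denominators step and the finiteness of $\O/m\O$, both immediate from the defining finiteness properties of an order, and the remainder is the standard check that a congruence subgroup of $\U(\O)$ is a normal subgroup of finite index.
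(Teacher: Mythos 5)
Your proof is correct: the clearing-of-denominators step, the finiteness of $\O/m\O$, and the congruence subgroup $\Gamma(m\O)=\ker\bigl(\U(\O)\to\U(\O/m\O)\bigr)$ landing inside $\U(\O')$ together give commensurability exactly as claimed. The paper does not prove this proposition itself but defers to \cite[Lemma~4.6.9]{EricAngel1}, and the argument there is essentially the same standard congruence-subgroup argument you give.
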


Let $K$ be a field of characteristic $0$. Recall that for $u, v \in K\setminus \{0\}$ the \emph{quaternion algebra $D =  \qa{u}{v}{K}$} is the central $K$-algebra $D$, i.e. $\mathcal{Z}(D) = K\cdot 1 $, that is a $4$-dimensional $K$-vector space with basis $\{1, i, j, k\}$ and multiplication determined by \[i^2 = u, \quad j^2 = v, \quad ij = k = -ji. \]
Due to following classical theorem of Hasse-Brauer-Noether-Albert, a quaternion algebra is uniquely determined by the places at which it ramifies.
Recall that a field extension $E$ of a field $K$ is said to be a \emph{splitting field} of a central simple $K$-algebra $A$ if $E\otimes_K A \cong \Ma_{n}(E)$, and $E\otimes_K A$ is said to be the \emph{split extension} of $A$.

\begin{theorem}{{\cite[Theorem 32.11]{Reiner}}}
Let $K$ be a number field and $D$ a quaternion algebra over $K$. Define $\mbox{Ram}(D)$ as the set of places $v$ of $K$ such that $D$ is \emph{ramified} at $v$, i.e.\ such that the completion $K_v$ of $K$, with respect to $v$, is not a splitting field of $D$. Then $\mbox{Ram}(D)$ is a finite set with an even number of elements.
Moreover, for any finite set $S$ of places of $K$ such that $|S|$ is even, there is a unique quaternion algebra $D$ with center $K$ such that $\mbox{Ram}(D)= S$.
\end{theorem}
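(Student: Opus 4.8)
The statement is the Albert--Brauer--Hasse--Noether theorem specialised to quaternion algebras; the plan is to deduce it from the fundamental exact sequence of global class field theory. Fix a number field $K$. For each place $v$ of $K$ one has the restriction map $\operatorname{Br}(K)\to\operatorname{Br}(K_v)$, $[D]\mapsto[D\otimes_K K_v]$, together with the local invariant $\operatorname{inv}_v\colon\operatorname{Br}(K_v)\hookrightarrow\Q/\Z$, which is an isomorphism when $v$ is finite, has image $\tfrac12\Z/\Z$ when $v$ is real, and is zero when $v$ is complex. I would first record that a quaternion $K$-algebra $D$ satisfies $D\cong D^{\mathrm{op}}$, hence $D\otimes_K D\cong\Ma_4(K)$, so $[D]$ has order dividing $2$ in $\operatorname{Br}(K)$; consequently each $\operatorname{inv}_v([D\otimes_K K_v])\in\{0,\tfrac12\}$, and by definition $D$ is ramified at $v$ exactly when this invariant equals $\tfrac12$, i.e. when $D\otimes_K K_v$ is a division algebra rather than $\Ma_2(K_v)$. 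In particular complex places never lie in $\operatorname{Ram}(D)$, so in the second assertion $S$ is tacitly taken among the non-complex places.

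The key input I would invoke is the exact sequence
\[0\longrightarrow\operatorname{Br}(K)\xrightarrow{\ (\operatorname{res}_v)_v\ }\bigoplus_{v}\operatorname{Br}(K_v)\xrightarrow{\ \sum_v\operatorname{inv}_v\ }\Q/\Z\longrightarrow 0,\]
the heart of global class field theory. That $[D]$ lands in the \emph{direct sum} says $D\otimes_K K_v$ is split for all but finitely many $v$, giving finiteness of $\operatorname{Ram}(D)$. Exactness in the middle gives $\sum_v\operatorname{inv}_v([D\otimes_K K_v])=0$ in $\Q/\Z$; since exactly $|\operatorname{Ram}(D)|$ of these summands equal $\tfrac12$ and the rest vanish, $\tfrac12\,|\operatorname{Ram}(D)|\equiv 0\pmod 1$, so $|\operatorname{Ram}(D)|$ is even. (For $D=\qa{a}{b}{K}$ this is precisely Hilbert's reciprocity law $\prod_v(a,b)_v=1$.)

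For the converse, given a finite set $S$ of non-complex places with $|S|$ even, I would define a local family by $\alpha_v=\tfrac12\in\operatorname{Br}(K_v)$ for $v\in S$ — legitimate, since $\tfrac12$ lies in the image of $\operatorname{inv}_v$ when $v$ is real and is all of $\Q/\Z$ when $v$ is finite — and $\alpha_v=0$ otherwise. Then $\sum_v\operatorname{inv}_v(\alpha_v)=\tfrac12|S|=0$ in $\Q/\Z$, so by exactness there is $\alpha\in\operatorname{Br}(K)$ with $\operatorname{res}_v(\alpha)=\alpha_v$ for all $v$; as $2\alpha$ has all local invariants zero, injectivity of the left map forces $2\alpha=0$. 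Using that over a number field the index of a Brauer class equals its exponent, $\alpha$ is represented by a central division $K$-algebra of degree $1$ or $2$, i.e. by $K$ itself (when $S=\varnothing$, taking $\Ma_2(K)=\qa{1}{1}{K}$) or by a quaternion division algebra $D$; in either case $\operatorname{Ram}(D)=S$. Uniqueness again comes from injectivity of the left map: two quaternion $K$-algebras with the same ramification set have equal invariants ($0$ or $\tfrac12$) at every place, hence the same Brauer class, and Brauer-equivalent central simple $K$-algebras of the same $K$-dimension are isomorphic, so they coincide.

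The genuine obstacle is the displayed exact sequence itself: its exactness at $\bigoplus_v\operatorname{Br}(K_v)$ (the Hasse principle for central simple algebras), the reciprocity relation $\sum_v\operatorname{inv}_v=0$ on the right, the computation of $\operatorname{Br}(K_v)$ for local fields, and the equality of index and exponent over number fields together amount to local and global class field theory. A self-contained proof would have to build this machinery; in the present setting it is quoted as \cite[Theorem~32.11]{Reiner}, and everything else above is routine bookkeeping with the invariant maps.
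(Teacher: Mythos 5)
Your proposal is correct: the reduction of both assertions to the Albert--Brauer--Hasse--Noether exact sequence, together with the order-two observation $D\cong D^{\mathrm{op}}$, the local invariant computations, and the index-equals-exponent fact, is the standard and sound derivation, and you rightly flag that the only genuinely deep content is the exact sequence itself. The paper offers no proof of its own here --- it simply cites \cite[Theorem 32.11]{Reiner} --- so your argument supplies exactly the bookkeeping that the cited reference packages, and there is nothing further to compare.
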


For $K = \Q$, it is well-known that every place corresponds to a prime integer (for a finite place) or $\infty$ (for the unique infinite place).
Thus a quaternion algebra $D$ over $\Q$ is uniquely determined by its \emph{discriminant} $d = \prod\limits_{p \in \mbox{\tiny Ram}(D) \setminus \{\infty \}} p$ which is the product of all finite places at which $D$ is ramified.
For simplicity's sake, we will sometimes denote a quaternion algebra $\qa{u}{v}{\Q}$ with discriminant $d$ and center $\Q$ by $\mathbb{H}_d$, which is well defined by the above.
Later we will frequently encounter the following three quaternion algebras: 
\begin{center}
$\mathbb{H}_2 = \qa{-1}{-1}{\Q}$,\quad $\mathbb{H}_3 = \qa{-1}{-3}{\Q}$ \quad and \quad $\mathbb{H}_5 = \qa{-2}{-5}{\Q}$. 
\end{center}

If $K$ is a totally real number field and $\sigma(u), \sigma(v) < 0$ for every embedding $\sigma \colon K \to \mathbb{R}$, then the quaternion algebra $D = \qa{u}{v}{K}$ is called \emph{totally definite}. The \emph{conjugate} $\bar{x}$ of $x = a_1\cdot 1 + a_2\cdot i + a_3\cdot j + a_4\cdot k \in D$, $a_1, a_2, a_3, a_4 \in K$ is \[\bar{x} = a_1\cdot 1 - a_2\cdot i - a_3\cdot j - a_4\cdot k.\]

We now recall the concept of reduced norm.  Let $A$ be a finite dimensional central simple algebra over a field $K$ of characteristic $0$. Let $E$ be a splitting field of $A$. The \emph{reduced norm} of $a \in A$ is defined as \[\operatorname{RNr}_{A/K}(a) = \det(1_E \otimes_K a). \] 
Note that $\operatorname{RNr}_{A/K}(\cdot)$ is a multiplicative map, $\operatorname{RNr}_{A/K}(A) \subseteq K$ and $\operatorname{RNr}_{A/K}(a)$ does only depend on $K$ and $a \in A$ (and not on the chosen splitting field $E$ and isomorphism $E \otimes_K A \cong \Ma_{n}(E)$), see \cite[page 51]{EricAngel1}. For a subring $R$ of $A$, put
 \[ \SL_1(R) = \{\ a \in \U(R)\ |\ \operatorname{RNr}_{A/K}(a) = 1 \ \}, \]
 which is a (multiplicative) group.
If $A = \Ma_n(A')$ and $R= \Ma_n(R')$ with $A'$ a finite dimensional central simple algebra over $K$ and $R'$ a subring of $A'$, then we also write $\SL_1(A)= \SL_n(A')$ and $\SL_1(R) = \SL_n(R')$.
 
If we write $\O$ for an order in a finite dimensional division $\Q$-algebra $D$ then, as will be explained in further detail and more generally in \Cref{subsectie higher rank}, $\SL_n(D)$ is an algebraic $\Q$-group and $\SL_n(\O)$ an arithmetic subgroup therein.
The properties of $\SL_n(\O)$ strongly depend on whether $\U (\O)$ is finite or not.
If it is infinite, there is a vast literature showing that $\SL_n(\O)$ satisfies strong properties as illustrated in the introduction.
Therefore in this paper we will consider the case where $\U (\O)$ is finite.
Interestingly this is not a condition on $\O$ but rather a condition on $D$ and one can classify the division algebras containing such an order. Due to the importance of the following classical result we recall its proof.

\begin{theorem}[Folklore]\label{When is unit group order finite}
Let $A$ be a finite dimensional simple $\Q$-algebra and $\O$ an order in $A$. Then, $\U(\O)$ is finite if and only if one of the following holds:
\begin{enumerate}
\item $A= \Q(\sqrt{-d})$ with $d \geq 0$ a non-negative integer,
\item $A= \qa{u}{v}{\Q}$ with $u,v < 0$ negative integers.
\end{enumerate}
\end{theorem}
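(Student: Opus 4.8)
The plan is to use Dirichlet's unit theorem for orders in semisimple $\Q$-algebras, which computes the $\Z$-rank of $\U(\O)$ in terms of the number of real and complex places together with the Hasse invariants. First I would reduce to the case where $A$ is a division algebra: since $A$ is simple, $A \cong \Ma_n(D)$ for a division $\Q$-algebra $D$, and by \Cref{order zijn commensurable} the unit group $\U(\O)$ is commensurable with $\U(\Ma_n(\O_D))$ for $\O_D$ an order in $D$; as $\GL_n(\O_D)$ contains $\E_n(\O_D)$, which for $n \geq 2$ visibly contains a free abelian subgroup of positive rank (e.g.\ generated by an elementary matrix $I + t E_{12}$ for $t$ ranging over a basis of $\O_D$, or even a single such matrix of infinite order), one sees $\U(\O)$ is infinite as soon as $n \geq 2$. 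So we may assume $A = D$ is a division algebra with center a number field $K$.

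Next I would invoke the structure theory: $\U(\O_D)$ is finite if and only if $\U(\O_D)$ has $\Z$-rank $0$, and the rank of a unit group of an order in $D$ depends only on $D$ (again by \Cref{order zijn commensurable}, all orders have commensurable unit groups). The relevant formula (a consequence of the Dirichlet–Hasse–Chevalley unit theorem, cf.\ \cite[\S 5.2]{EricAngel1} or Reiner) expresses this rank via the archimedean places of $K$ and the local behaviour of $D$: writing $K$ with $r_1$ real and $r_2$ complex places, the contribution to the rank is $r_1 + r_2 - 1$ plus corrections from the splitting behaviour of $D$ at the real places. The rank is $0$ precisely when $K = \Q$ (so $r_1 = 1$, $r_2 = 0$, giving base contribution $0$) and $D\otimes_\Q \R$ is \emph{not} $\Ma_2(\R)$, i.e.\ $D$ is either $\Q$ itself, an imaginary quadratic field $\Q(\sqrt{-d})$, or $D\otimes_\Q\R \cong \mathbb{H}$ the Hamilton quaternions — which by the Hasse–Brauer–Noether classification recalled above means $D$ is a totally definite quaternion algebra over $\Q$, i.e.\ $D = \qa{u}{v}{\Q}$ with $u, v < 0$ (one may normalize $u,v$ to be negative integers). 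Conversely, if $K \neq \Q$ then $r_1 + r_2 \geq 2$ and already $\U(\O_K)$ is infinite; and if $D$ is an indefinite quaternion algebra over $\Q$ then $\SL_1(\O_D)$ is an irreducible lattice in $\SL_2(\R)$, hence infinite.

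Alternatively, and perhaps cleaner for a self-contained account, I would argue directly: in case (1) with $d \geq 1$, $\O_K$ is an order in an imaginary quadratic field and $\U(\O_K)$ is a finite group of roots of unity (this is elementary — the only archimedean place is complex and the norm form is positive definite, so units lie on a compact set intersected with a lattice); for $d = 0$, $\U(\Z) = \{\pm 1\}$. In case (2), $D$ is a totally definite quaternion algebra over $\Q$, so $D \otimes_\Q \R \cong \mathbb{H}$ with the reduced norm a positive-definite quadratic form on the $4$-dimensional space; an order $\O$ is a lattice, and $\U(\O) \subseteq \SL_1(D)$ times $\{\pm 1\}$-type scalars — more precisely units have reduced norm $\pm 1$, and $\{x : \operatorname{RNr}(x) = \pm 1\}$ is compact in $D\otimes\R$, so its intersection with the lattice $\O$ is finite. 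For the converse direction (necessity), I would show that if $A$ is not on the list then $\U(\O)$ is infinite: either $\dim_K A \geq 4$ is a division algebra but $K$ has an infinite unit group (when $K \neq \Q$), or $A$ is not division ($n \geq 2$) and one exhibits units of infinite order as above, or $A$ is a quaternion algebra over $\Q$ that is not totally definite, in which case it is split at $\infty$ (indefinite) and $\SL_1(\O)$ is a nonuniform or uniform lattice in $\SL_2(\R)$ — in the nonuniform case unipotents give infinite order units directly, and in the uniform case the lattice is infinite for rank reasons.

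The main obstacle is the converse: ruling out \emph{all} other simple $\Q$-algebras. The case analysis splits into (a) $K \neq \Q$, handled by Dirichlet's classical theorem giving $\rank \U(\O_K) = r_1 + r_2 - 1 \geq 1$; (b) $A$ non-commutative of degree $> 2$ over its center, or matrix algebras — here one invokes that $\SL_1(\O)$ is an arithmetic lattice in a non-compact semisimple Lie group and hence infinite (this is where one genuinely needs input beyond elementary estimates, though for matrix algebras $\Ma_n$, $n\geq 2$, the explicit elementary matrices suffice); and (c) indefinite quaternion algebras over $\Q$, where one uses that the associated Lie group $\SL_2(\R)$ is non-compact so the lattice is infinite. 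Packaging (b) and (c) uniformly via the Borel–Harish-Chandra / Dirichlet unit theorem statement — "$\U(\O)$ is finite iff the ambient real Lie group $(A\otimes_\Q\R)^\times$ modulo center is compact" — is the slickest route and is presumably the proof recalled in the text.
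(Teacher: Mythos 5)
Your proposal is correct and follows essentially the same route as the paper: reduce to a division algebra $D$ via commensurability of orders and the infinitude of $\GL_n$ for $n\geq 2$, dispatch the commutative case by Dirichlet's unit theorem, and handle the non-commutative case by the classification of division algebras whose orders have finite unit groups. The only cosmetic difference is that where you invoke the unit-rank formula (equivalently the compactness criterion for $(D\otimes_\Q\R)^{\times}$ modulo center), the paper cites Kleinert's theorem to get that $D$ is a totally definite quaternion algebra and then pins down $\ZZ(D)=\Q$ by combining the finiteness of $\U(\ZZ(\O))$ (forcing $\ZZ(D)=\Q(\sqrt{-d})$) with total realness of the center — the same underlying input, packaged differently.
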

\begin{proof}
We may assume that $A = \Ma_n(D)$ for $D$ a division algebra containing $\mathbb{Q}$ in its center. Due to \Cref{order zijn commensurable} we may assume that the order is of the form $\O = \Ma_n(\O')$, for an order $\O'$ in $D$. Since $\GL_n(\O')$ is infinite for $n \geq 2$, we have $n = 1$, and $A = D$ a division algebra.

If $D$ is commutative, $D$ is a number field and the statement (in both directions) is a direct consequence of Dirichlet's unit theorem  \cite[Corollary~5.2.6]{EricAngel1}.
  
If $D$ is non-commutative and $\U(\O)$ is finite, Kleinert's theorem \cite[Proposition~5.5.6]{EricAngel1} implies that $D$ is a totally definite quaternion algebra. However, $\langle \SL_1 (\O), \mathcal{U}(\mathcal{Z}(\O)) \rangle$ has finite index in $\mathcal{U}(\O)$, see \cite[Proposition~5.5.1]{EricAngel1}. In particular, also the unit group of $\mathcal{Z}(\O)$, which is an order in $\mathcal{Z}(D)$ by \cite[Lemma~4.6.6]{EricAngel1}, must be finite and consequently by the commutative case $\mathcal{Z}(D)= \Q(\sqrt{-d})$, $d \geq 0$. As $D$ is a totally definite quaternion algebra, $\mathcal{Z}(D)$ is a totally real extension of $\Q$. Hence $\mathcal{Z}(D) = \Q$.

Conversely, if $D= \qa{u}{v}{\Q}$ with $u,v < 0$ negative integers, then by the previous $\mathcal{U}(\mathcal{Z}(\O))$ is a finite group and by Kleinert's theorem $\SL_1 (\O)$ is also finite. Hence, the group $\langle \SL_1 (\O), \mathcal{U}(\mathcal{Z}(\O)) \rangle$, which is of finite index in $\mathcal{U}(\O)$, is finite. This in turn implies that $\mathcal{U}(\O)$ is finite.
\end{proof}

\subsection{Linear groups}\label[subsection]{lineargroups}
When studying the groups $\GL_n(R)$ and its subgroups, it sometimes helps to consider the groups $\GE_n(R)$ and $\E_n(R)$.
Here, $\E_n(R)$ is the subgroup of $\GL_n(R)$ generated by the matrices having $1$ on the diagonal and one non-zero entry off the diagonal and $\GE_n(R)$ is the subgroup of $\GL_n(R)$ generated by $\E_n(R)$ and the invertible diagonal matrices.
These groups have been thoroughly studied in the literature, see, for example \cite{Cohn1,Cohn2}. Note that if $R$ is a subring of a division algebra, then $\E_n(R) \leq \SL_n(R)$.

In the case of $n=2$, we will be using a special (but equivalent) set of generators for $\E_2(R)$. 
What follows in this subsection is based on \cite{Cohn1}.
By $I$ we denote the $2 \times 2$ identity matrix.

The group $\GE_2(R)$ is the group generated by all matrices 
\[ [\mu, \nu] = \left( \begin{matrix} \mu & 0 \\ 0 & \nu \end{matrix}\right), \; (\mu, \nu \in \U(R)), \qquad E(x) = \left( \begin{matrix} x & 1 \\ -1 & 0  \end{matrix}\right), \; (x \in R). \]
For $\mu \in \U(R)$, put $D(\mu) = [\mu, \mu^{-1}]$. Define the group $\D_2(R) = \langle [\mu, \nu] \ | \ \mu, \nu \in \U(R) \rangle$. Note that
\[ E(0)^{-1}E(x) = \left(\begin{matrix} 1 & 0 \\ x & 1 \end{matrix} \right), \qquad  E(-x)E(0)^{-1} = \left(\begin{matrix} 1 & x \\ 0 & 1 \end{matrix} \right)  \] 
and 
\[  \left(\begin{matrix} 1 & 1 \\ 0 & 1 \end{matrix} \right) \left(\begin{matrix} 1 & 0 \\ -1 & 1 \end{matrix} \right)  \left(\begin{matrix} 1 & 1 \\ 0 & 1 \end{matrix} \right) = E(0).  \] Consequently, $$\E_2(R) = \langle E(x) \mid x \in R \rangle.$$
A priori, $\GE_2(R) \leq \GL_2(R)$, but it can happen that these groups are equal. In this case, we call the ring $R$ a \emph{$\GE_2$-ring}. The following type of rings will form an important class of examples.

\begin{definition}
Let $R$ be a ring.
A \emph{left Euclidean} map on $R$ is a map $\delta: R \setminus \{0\} \rightarrow \N$ satisfying $$\forall\ a,b \in R \text{ with } b\neq 0, \exists\  q,r \in R: a= qb + r\text{ with } \delta(r) < \delta(b) \text{ or } r = 0.$$ 
A \emph{right Euclidean} map on $R$ is a map $\delta: R \setminus \{0\} \rightarrow \N$ satisfying $$\forall\ a,b \in R \text{ with } b\neq 0, \exists\  q,r \in R: a= bq + r\text{ with } \delta(r) < \delta(b) \text{ or } r = 0.$$
We call the ring $R$ a \emph{left (right) Euclidean} ring if it has a left (right) Euclidean map.
\end{definition}

If $R$ is a subring of $\mathbb{C}$ or a quaternion algebra with totally real center, then $R$ is endowed with an algebraic norm $x \bar{x}$, $x \in R$.
If this map is a left (right) Euclidean map, then we call $R$ \emph{left (right) norm Euclidean}.

We will omit the proof of the following, since it is the same as in the well known commutative case (see for example \cite[Proposition 1.4.1]{Doryanthesis}).
\begin{proposition}\label[proposition]{euclidian are GE2}
Left or right Euclidean rings are $\GE_2$-rings.
\end{proposition}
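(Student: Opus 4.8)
The plan is to run Cohn's division algorithm on an arbitrary $A \in \GL_2(R)$ and reduce it, using the elementary generators of $\E_2(R)$ and $\D_2(R)$ identified above, to a diagonal matrix. Recall that from the relations displayed earlier, $\E_2(R)$ contains the transvections $T_{12}(x) = \left(\begin{smallmatrix} 1 & x \\ 0 & 1\end{smallmatrix}\right)$ and $T_{21}(x) = \left(\begin{smallmatrix} 1 & 0 \\ x & 1\end{smallmatrix}\right)$ for all $x \in R$, as well as $E(0) = \left(\begin{smallmatrix} 0 & 1 \\ -1 & 0\end{smallmatrix}\right)$, while $\D_2(R)$ contains all invertible diagonal matrices; so it suffices to show that $A$ lies in the subgroup generated by these. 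Note also that a (left or right) Euclidean ring is in particular a domain, hence any triangular matrix in $\GL_2(R)$ has unit diagonal entries.

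Assume first that $R$ is left Euclidean with map $\delta$, and write $A = \left(\begin{smallmatrix} a & b \\ c & d\end{smallmatrix}\right)$. Left multiplication by $T_{12}(-q)$ (resp. $T_{21}(-q)$) performs the row operation replacing the first (resp. second) row by itself minus $q$ times the other row, and thus acts on the first column entries by $a \mapsto a - qc$ (resp. $c \mapsto c - qa$). If $c \neq 0$, choose $q$ with $a = qc + r$ and $r = 0$ or $\delta(r) < \delta(c)$, and apply $T_{12}(-q)$; the $(1,1)$-entry becomes $r$. If now $r \neq 0$ one divides $c$ by $r$ and applies $T_{21}$ to shrink the $(2,1)$-entry, and so on. Since $\delta$ takes values in $\N$, this alternating process terminates after finitely many steps with a matrix $A'$ — a product of elements of $\E_2(R)$ times $A$ — whose first column has a zero entry. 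If the $(1,1)$-entry is the nonzero one, $A'$ is upper triangular; if the $(2,1)$-entry is, then $A'$ has the form $\left(\begin{smallmatrix} 0 & * \\ * & *\end{smallmatrix}\right)$ and $E(0)A'$ is upper triangular. In either case we are reduced to an upper triangular $B = \left(\begin{smallmatrix} u & b' \\ 0 & v\end{smallmatrix}\right) \in \GL_2(R)$; since $R$ is a domain, $u, v \in \U(R)$, and $B = [u,v]\, T_{12}(u^{-1}b') \in \D_2(R)\cdot \E_2(R) \subseteq \GE_2(R)$. Unwinding, $A \in \GE_2(R)$, so $\GL_2(R) = \GE_2(R)$.

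For $R$ right Euclidean the argument is symmetric: one performs \emph{column} operations via right multiplication by $T_{12}(x)$ and $T_{21}(x)$, using the right division algorithm $a = bq + r$ to reduce the first \emph{row} of $A$ until one entry vanishes, then handles the resulting triangular matrix exactly as above. The only genuine subtlety, and the point requiring care, is the non-commutative bookkeeping: one must track on which side the transvections multiply and which matrix entry each division step affects, and one must correctly split into the two termination cases (which of the two relevant entries becomes zero), inserting the factor $E(0)$ in the second case. Everything else is routine, so — following the convention in the statement — the detailed verification is omitted.
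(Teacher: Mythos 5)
Your argument is correct and is exactly the standard Euclidean-algorithm reduction that the paper itself invokes when it omits the proof as being ``the same as in the well known commutative case'': row (resp.\ column) operations by the transvections in $\E_2(R)$ drive one first-column (resp.\ first-row) entry to zero via the strictly decreasing $\delta$-values, and the resulting triangular matrix, whose diagonal entries are units because $R$ is a domain, factors through $\D_2(R)\cdot\E_2(R)$. The noncommutative bookkeeping you flag (left division pairing with left multiplication by transvections, right division with right multiplication) is precisely the point needing care, and you have it right.
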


Let $G$ be a group generated by a set of elements $X$. Then a subset $\mathcal{R}$ of the free group $F_X$ on the elements $X$ is called a \emph{defining set of relations of $G$ with respect to $X$} if the canonical epimorphism $$F_X \rightarrow G, $$ has as kernel the normal closure of the group generated by $\mathcal{R}$, i.e. $\langle \mathcal{R}^{F_X} \rangle$.

If $S \subseteq X$ and $H = \langle S \rangle \leq G$, then any element of $F_S \leq F_X$ is said to be \emph{expressed in abstract letters of $H$}.

In the group $\GE_2(R)$ the following relations hold, see \cite[(2.2)-(2.4)]{Cohn1}.
\begin{align}
E(x)E(0)E(y) & = E(0)^2E(x+y), & & x, y \in R \tag{R1}\label[equation]{R1} \\
E(\mu)E(\mu^{-1})E(\mu) & = E(0)^2D(\mu), & & \mu \in \U(R) \tag{R2}\label[equation]{R2} \\
E(x)[\mu, \nu] & = [\nu, \mu] E(\nu^{-1}x\mu),  & &  x \in R, \; \mu, \nu \in \U(R) \tag{R3}\label[equation]{R3} \\
E(0)^2 & = D(-1). \tag{R4}\label[equation]{R4}
\end{align}
The ring $R$ is called \emph{universal for $\GE_2$} if these relations, together with the relations in the group $\D_2(R)$, form a complete set of defining relations of $\GE_2(R)$ with respect to the elements $E(x)$ and $[\mu, \nu]$ for $x \in R$ and $\mu, \nu \in \U(R)$.
When the group $\GE_2(R)$ is discussed, we will often omit mentioning these explicit generators.
If we talk about a set of defining relations for $\E_2(R)$, the generators are always assumed to be $E(x), x \in R$.
The relations \eqref{R1}-\eqref{R4} together with the relations in the group $\D_2(R)$ are called the \emph{universal relations}.
Clearly $E(0)^2 = D(-1) = -I$.

Equation \eqref{R3} specializes to 
\begin{align}
E(x)D(\mu) & = D(\mu^{-1}) E(\mu x\mu),  \quad  x \in R, \; \mu \in \U(R). \tag{R3'}\label[equation]{R3'} 
\end{align}
The inverse of $E(x)$ is given by the formula \begin{equation}E(x)^{-1} = E(0) E(-x) E(0), \qquad \forall x \in R, \tag{R5}\label[equation]{inv} \end{equation} which follows from \eqref{R1} and $D(-1)^2 = I$.
From the universal relations one can also derive the following useful formulas, see \cite[(2.8), (2.9) and (9.2)]{Cohn1}
\begin{equation}
 E(x)E(y)^{-1}E(z) \; = \; E(x -y+z), \qquad x, y, z\in R, \tag{R6}\label[equation]{2.8}
\end{equation}
\begin{equation}
 E(x)E(\alpha)E(y) \; = \; E(x - \alpha^{-1})D(\alpha)E(y - \alpha^{-1}), \qquad x, y \in R, \alpha \in \U(R), \tag{R7}\label[equation]{2_9}
\end{equation}
\begin{equation}
[u^{-1}v^{-1}uv,1] \; = \; D(u^{-1})D(v^{-1})D(uv), \qquad u,v \in \U(R). \tag{R8}\label[equation]{R5} 
\end{equation}

Rings that are not universal for $\GE_2$ have to have some additional defining relations.
For several results the actual form of these non-universal relations is not of importance, but rather the fact that they can be chosen to have a special form (for example can be expressed in abstract letters of $E_2(R) = \langle E(x) \mid x \in R \rangle$).
Hence we introduce the following class of rings.

\begin{definition}
Let $R$ be a ring for which there exists a set $\Phi$ of words expressed in abstract letters of $\E_2(R)$ such that $\Phi$ together with the universal relations yield a full list of defining relations for $\GE_2(R)$. Then we call $R$ \emph{almost-universal for $\GE_2$}.
\end{definition}

In \Cref{quaternion cohn} we will prove that orders in totally definite quaternion algebras are almost-universal. 
The following is a slight generalization of \cite[Theorem 1]{Cohn2} for $\GE_2(R)$. 

\begin{theorem}\label{generators and relations for E_2(R)}
Let $R$ be a ring, almost-universal for $\GE_2$ with $\Phi$ a set of relations expressed in letters of $\E_2(R)$ such that $\Phi$ together with the universal relations is a complete set of defining relations of $\GE_2(R)$. The group $\E_2(R)$ is generated by the symbols $E(x)$, $x \in R$ and if we define $D(u)$ and $[w,1]$ for $u \in \U(R), w\in \U(R)^{\prime}$ by the relations \eqref{R2} and repeated use of \eqref{R5} then a complete set of defining relations for $\E_2(R)$ is given by \begin{align}
E(x)E(0)E(y) &= E(0)^2E(x+y) \tag{R1} \\
E(x)D(u)&=D(u^{-1})E(uxu) \tag{R3'} \\
E(0)^2 &= D(-1) \tag{R4}\\
[w_1,1]\ldots[w_n,1] &= I \text{ where } w_j \in \U(R)^{\prime} \text{ and } w_1\ldots w_n = 1. \tag{R9}\label[equation]{R6} \\
f &= I \mbox{ for all } f \in \Phi \tag{R10}\label[equation]{R7} 
\end{align}
\end{theorem}
\begin{proof}
From the given relations, it is clear we can still deduce \eqref{2_9}, $D(u^{-1}) = D(u)^{-1}$ and \eqref{inv}.
Thus, using these relations, we can rewrite any relation $w = I$ in $\E_2(R)$ as 
\begin{equation}\label[equation]{relationw'}
w' = D(u_1)\ldots D(u_k)E(a_1)\ldots E(a_r) = I.
\end{equation}
We will show that we may reduce the latter relation to a relation with $r = 0$. Note that $r = 1$ cannot occur as $E(a_1)$ is not a diagonal matrix, $r = 2$ is only possible if $a_1 = a_2 = 0$ and this case can be treated with \eqref{R4}. So assume $r \geq 3$. From the universal relations and \eqref{2_9}, the relation \eqref{relationw'} may always be written in such a form that  $a_i \notin \U(R) \cup \{0\}$ if $1 < i < r$ and $a_1 \neq 0$.

Remark that the universal relations for $\GE_2$ are equivalent to the relations \eqref{R1}, \eqref{R3'}, \eqref{R4}, \begin{equation}\label[equation]{relator}
E(x)[u,1] = [1,u]E(xu),
\end{equation}
and those in $\DE_2(R)$.
Since $R$ is almost-universal for $\GE_2$, and $w'$ is also a word in $\GE_2(R)$, it is a product of conjugates of relators \eqref{R1}, \eqref{R3'}, \eqref{R4}, \eqref{relator} and \eqref{R7}. By \eqref{R3}, conjugates of relators \eqref{R1}, \eqref{R3'}, \eqref{R4}, or \eqref{R7} are words in $\E_2(R)$. In particular they are $1$ in $\E_2(R)$. Hence we can write $w'$ as a product of conjugates of relators of the form $[1,u]E(xu)[u^{-1},1]E(x)^{-1}$  and $D(u)$'s.

Further, the relator $[1,u]E(xu)[u^{-1},1]E(x)^{-1}$ can also be expressed in the generators of $\E_2(R)$ as follows:  \begin{align*}
[1,u]E(xu)[u^{-1},1]E(x)^{-1} &= [1,u]E(xu)[1,u^{-1}][1,u][u^{-1},1]E(x)^{-1} \\&= [1,u][u^{-1},1]E(uxu)D(u^{-1})E(x)^{-1} \\ &=D(u^{-1})E(uxu)D(u^{-1})E(x)^{-1} \\&= D(u^{-1})D(u)E(x)E(x)^{-1}.
\end{align*}
This last word is trivial in $\E_2(R)$, so the word $w'$ reduces to the form $$D(v_1)\ldots D(v_l) = I. $$ Note that in the latter form $r=0$.
Moreover, again as $R$ is almost-universal for $\GE_2$, by the relations in the group $\D_2(R)$, we have that $v_1 \ldots v_l=1$.
By \eqref{R5}, $D(u)D(v)= [uvu^{-1}v^{-1}, 1] D(u^{-1}v^{-1})^{-1}$.
Now by repeated use of the latter, $w$ can be further rewritten as 
$$[v_1v_2v_1^{-1}v_2^{-1},1][v_2v_1v_3v_1^{-1}v_2^{-1}v_3^{-1},1] \ldots [v_{l-1}\ldots v_1v_lv_1^{-1} \ldots v_l^{-1},1]D(v_lv_{l-1} \ldots v_1)^{-1} = I.$$
By the above, this is equivalent with
$[w_1,1]\ldots[w_t,1] = I$ with $w_j \in \U(R)^{\prime}$ and $w_1 \ldots w_t = 1$, which is exactly \eqref{R6}. 
\end{proof}

\section{Abelianization of $\E_2(\O)$ and $\GE_2(\O)$} \label[section]{sectie ab 2 maal 2}

In this section we will study the abelianization of linear groups of degree $2$ over orders $\O$, with a finite unit group, in a rational division algebra.
To do so we will first prove in Subsection~\ref{subsect:almost_universalo_quaternion_orders} that, similar to the case of rings of integers in number fields, those orders allow an almost universal presentation.
This will enable us, in Subsection~\ref{subsection_GE2}, to show that the abelianization of $\GE_2(\O)$ fits into a short exact sequence described in \Cref{mainA} of the introduction, which will be used to calculate this abelianization. Also in \Cref{mainA}, we announced a short exact sequence that describes the abelianization of $\E_2(\O)$. This result, and an explicit formula for the $\Z$-rank of $\E_2(\O)^{ab}$ (see also \Cref{stelling ab van E_2 intro}), forms the main part of Subsection~\ref{sectie ab van E2}. It will also allow us to characterize when this abelianization is finite.

\subsection{An (almost) universal presentation}\label[subsection]{subsect:almost_universalo_quaternion_orders}

In \cite[Lemma page~160]{Cohn2} an explicit description of the non-universal relations for $\GE_2(R)$, with $R$ a subring of the complex numbers satisfying certain conditions (including rings of algebraic integers in imaginary quadratic extensions of the rationals), is obtained. For our purposes we need a quaternion variant thereof. To achieve this, we give a carefully adapted, more detailed version of the arguments in \cite{Cohn2}.

Let $H = \qa{u}{v}{\mathbb{Q}}$ be a totally definite quaternion algebra with center $\mathbb{Q}$, i.e.\,$u,v$ negative integers. Define $|x|:=  \sqrt{x \overline{x}}$, the positive square root of $x \overline{x}$, for $x \in H$ and recall that $x\bar{x} \in \Z$ for $x$ contained in an order in $H$. We record the following well known properties of this norm map on $H$. For all $x, y \in H$, $\lambda \in \mathbb{Q}$:

\begin{tabular}{rlrl}
 (N1) & $|x| \geq 0$ \quad \text{and} \quad $|x| = 0\ \Leftrightarrow x = 0$ \hspace{1cm}  & (N2) & $|\lambda x| = |\lambda||x|$ \\ (N3) & $|x + y| \leq |x| + |y|$ & (N4) & $|xy| = |x||y|$
\end{tabular}

\begin{proposition}\label[proposition]{quaternion cohn}
Let $K= \Q(\sqrt{-d})$, with $d$ a non-negative integer, i.e. $K$ is either $\mathbb{Q}$ or a quadratic imaginary extension of $\Q$. Let $H = \qa{u}{v}{\Q}$ be a totally definite quaternion algebra with center $\mathbb{Q}$. Let $\O$ be an order in $K$ or $H$. Then a complete set of defining relations for $\GE_2(\O)$ is given by the universal relations together with
\begin{equation}
(E(\overline{a}) E(a))^n = E(0)^{2},\qquad \mbox{ for each } a \in \O \mbox{ such that } 1 < |a| = \sqrt{n} < 2. \label[equation]{relation alpha}
\end{equation}
\end{proposition}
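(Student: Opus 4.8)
The plan is to adapt Cohn's determination of the non-universal $\GE_2$-relations of an imaginary quadratic ring of integers (\cite[Lemma page~160]{Cohn2}) to the present, possibly non-commutative, situation. Cohn's argument uses only a length function satisfying the axioms (N1)--(N4); since $|x| = \sqrt{x\bar{x}}$ has exactly these properties for an order in an imaginary quadratic field and for an order in a totally definite quaternion $\Q$-algebra alike, the two cases of the statement will be treated simultaneously. The first and easy point is that each relation \eqref{relation alpha} genuinely holds in $\GE_2(\O)$: setting $M := E(\bar{a})E(a) = \sm{n-1 & \bar{a} \\ -a & -1}$ with $n = a\bar{a}$, a direct computation gives $M^2 = (n-2)M - I$, whence $M^n = -I = E(0)^2$ for $n \in \{2,3\}$. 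Note that \eqref{relation alpha} is then a word in the generators $E(x)$ of $\E_2(\O)$, so the proposition also yields that $\O$ is almost-universal for $\GE_2$ (with $\Phi$ the set of relations \eqref{relation alpha}), which is what is needed to apply \Cref{generators and relations for E_2(R)}.

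The substance is the converse: that the universal relations together with \eqref{relation alpha} form a complete set. I would argue via the standard-form technique of \cite{Cohn1,Cohn2}. The universal relations alone bring any word in the $E(x)$ and $[\mu,\nu]$ into a standard form $[\mu,\nu]E(a_1)\cdots E(a_r)$, and even into a \emph{reduced} one, in which $a_i \notin \U(\O)\cup\{0\}$ for $1 < i < r$: an interior zero is eliminated by \eqref{R1}, and an interior unit $a_i$ is expelled by \eqref{2_9}, the resulting diagonal factor being carried to the front by repeated use of \eqref{R3'} (which does not change the lengths $|a_j|$). It therefore suffices to show that every relation between two reduced standard forms is a consequence of the universal relations and \eqref{relation alpha}; equivalently, one must analyse a reduced word $w = E(a_1)\cdots E(a_r)$ which equals a diagonal matrix in $\GL_2(\O)$ and show it is, modulo those relations, trivial. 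Writing the bottom row of $E(a_1)\cdots E(a_k)$ as $(q_k, q_{k-1})$, one has $q_0 = 0$, $q_1 = -1$, and $q_k = q_{k-1}a_k - q_{k-2}$, and diagonality of $w$ forces $q_r = 0$.

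The crux is then a growth estimate. Since an order is closed under $x \mapsto \bar{x}$, a non-unit $a$ has $a\bar{a} \geq 2$; in particular every interior entry has $|a_i| \geq \sqrt{2} > 1$, and by (N3)--(N4)
\[ |q_k| \;\geq\; |q_{k-1}|\,|a_k| - |q_{k-2}| . \]
Consequently, as long as the interior entries all satisfy $|a_i| \geq 2$, one gets by induction $1 = |q_1| < |q_2| < \cdots < |q_r|$, so $q_r \neq 0$ unless $r$ is small; a brief additional check handles the endpoint entries $a_1, a_r$ (for instance $a_r = 0$ would force $q_{r-1}a_r = q_{r-2}$, hence $q_{r-2} = 0$, impossible for a reduced entry). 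Thus some interior entry has $a_i\bar{a}_i \in \{2,3\}$, i.e.\ $1 < |a_i| < 2$ --- exactly the elements appearing in \eqref{relation alpha} --- and there one uses \eqref{relation alpha}, together with \eqref{R1} and \eqref{R3'}, to replace the subword around $a_i$ by an equivalent one on which a complexity measure (the length $r$, with ties broken by $\sum_i |a_i|^2$) strictly decreases. Iterating, $w$ is reduced to length $\leq 2$, where the equality ``$w$ is diagonal'' is seen at once to reduce to \eqref{R4} with trivial diagonal part.

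The step I expect to be the main obstacle is precisely this last reduction: one must fix the standard/reduced form and the elementary moves carefully enough that the endpoint entries $a_1, a_r$ (uncontrolled by the reduced condition, possibly $0$ or units) are disposed of; that applying \eqref{relation alpha} at a norm-$2$ or norm-$3$ interior entry really lowers the complexity rather than merely displacing the problem (units produced in the process must be pushed out again via \eqref{2_9} and \eqref{R3'}, and one must verify the measure still drops); and that the whole procedure terminates. This is the portion of \cite{Cohn2} that must be ``carefully adapted and made more detailed'' for the non-commutative case --- combinatorics with the length function, with no essentially new idea beyond (N1)--(N4).
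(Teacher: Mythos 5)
Your overall strategy is the right one --- Cohn's rewriting method: put the relation in a reduced standard form, track the continuant-type entries of the partial products, locate an entry of norm $2$ or $3$, substitute via \eqref{relation alpha}, and induct on a complexity measure. This is exactly what the paper does. The genuine gap is the complexity measure itself, which you flag as ``the main obstacle'' but then propose incorrectly: length $r$, with ties broken by $\sum_i |a_i|^2$, does not decrease under the substitution. In the norm-$3$ case one rewrites $E(t)$ as $E(0)^2E(0)E(-\bar t)E(-t)E(-\bar t)E(-t)E(-\bar t)E(0)$, and after absorbing the $E(0)$'s via \eqref{R1} the word has length $l+2$ --- your primary measure strictly \emph{increases}. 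In the norm-$2$ case the length is unchanged, but the two neighbouring entries become $t_{h\pm1}-\bar t$, whose norms can easily grow, so the tie-breaker need not drop either. The measure that works is taken not on the entries $t_i$ but on the top-right entries $b_i$ of the partial products $E(t_1)\cdots E(t_i)$: one inducts on the pair $\bigl(\max_i|b_i|,\ \max\{i : |b_i|=\max_j|b_j|\}\bigr)$, ordered lexicographically.

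This choice of measure also dictates \emph{where} to apply \eqref{relation alpha}: not at an arbitrary interior entry of norm $2$ or $3$, but precisely at the last index $h$ realizing $\max_i|b_i|$. Maximality there gives $|b_{h-1}|\le|b_h|=|a_{h-1}|$ and $|a_{h-1}t_h-b_{h-1}|<|a_{h-1}|$, which forces $|t_h|\in\{\sqrt2,\sqrt3\}$ (so the small-norm entry is automatically found at the right spot --- your separate growth estimate on the $q_k$ is not needed), and, crucially, via the inversion lemma (\Cref{norm}, which your proposal never invokes) it yields the inequalities $|a-b\bar t|<|b|\le|a|$, $|2b-at|<|a|$, $|2a-b\bar t|<|a|$ showing that all the newly created $b$-values at positions $h, h+1, h+2$ are strictly below the old maximum. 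That is what makes the measure drop rather than merely displacing the problem. Without the correct measure, the correct choice of position, and \Cref{norm} (whose norm-$3$ instance requires a genuine coordinate computation in the quaternion case), the induction does not close; the remaining endpoint issues ($t_1'=0$ after substitution, ruling out infinitely long words) are real but secondary.
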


We will only give an explicit proof in the case of a quaternion algebra. The case of a quadratic imaginary extension of $\Q$ is an easy adaptation of this proof.
We first need an auxiliary lemma.
Its proof is straightforward which is why we omit it here. 

\begin{lemma}\label[lemma]{norm}Let $K$ and $H$ be as in \Cref{quaternion cohn} and $z, a \in K$, or $z, a \in H$, $z \not= 0$. Let  $1<|a| = \sqrt{n}$. Then 
\begin{equation} |z-a| < 1 \quad \mbox{ if and only if }\quad \left|z^{-1}-\frac{1}{n-1} \overline{a}\right| < \frac{1}{n-1}.\label[equation]{eq:norm_lemma}\end{equation}
\end{lemma}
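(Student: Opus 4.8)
Since the length function $|\cdot|$ is non-negative, both inequalities in \eqref{eq:norm_lemma} are equivalent to their squared versions, and $|x|^2 = x\overline{x}$. The plan is to expand both squared norms, observe that all the "cross terms" that arise are reduced traces and hence lie in $\mathbb{Q}$ (this is the only place non-commutativity needs care), and then check that, after clearing the manifestly positive denominators, the two inequalities collapse to one and the same rational inequality $\Tr(z\overline{a}) > |z|^2 + n - 1$. The commutative case ($z,a\in K$) is then literally the same computation with $\overline{x}$ the complex conjugate and $\Tr(x)=x+\overline{x}$, so it suffices to treat the quaternion case.

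\textbf{Left-hand side.} Using $a\overline{a} = |a|^2 = n$ and the fact that $z\overline{a} + a\overline{z} = z\overline{a} + \overline{z\overline{a}} = \Tr(z\overline{a}) \in \mathbb{Q}$, one gets
\[
|z - a|^2 \;=\; (z-a)(\overline{z}-\overline{a}) \;=\; z\overline{z} - (z\overline{a} + a\overline{z}) + a\overline{a} \;=\; |z|^2 + n - \Tr(z\overline{a}),
\]
so that $|z-a| < 1$ is equivalent to $\Tr(z\overline{a}) > |z|^2 + n - 1$.

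\textbf{Right-hand side.} Set $s := \Tr(z\overline{a})$ and $N := |z|^2 = z\overline{z}\in\mathbb{Q}_{>0}$, so $z^{-1} = \overline{z}/N$ and $|z^{-1}|^2 = 1/N$. Writing $w := z^{-1} - \tfrac{1}{n-1}\overline{a}$ and using $\overline{a}a = n$ together with $z^{-1}a + \overline{a}\,\overline{z^{-1}} = z^{-1}a + \overline{z^{-1}a} = \Tr(z^{-1}a)$, expansion gives
\[
|w|^2 \;=\; \frac{1}{N} - \frac{1}{n-1}\Tr(z^{-1}a) + \frac{n}{(n-1)^2}.
\]
Now $z^{-1}a = \overline{z}a/N$, and by $\Tr(xy)=\Tr(yx)$ and $\Tr(\overline{x})=\Tr(x)$ one has $\Tr(\overline{z}a) = \Tr(\overline{\overline{z}a}) = \Tr(\overline{a}z) = \Tr(z\overline{a}) = s$, hence $\Tr(z^{-1}a) = s/N$. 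Substituting and requiring $|w|^2 < \tfrac{1}{(n-1)^2}$, then multiplying through by the positive quantity $(n-1)N$ (recall $n = |a|^2 > 1$), the inequality reduces step by step to $n - 1 - s < -N$, i.e. again to $s > |z|^2 + n - 1$. Thus the two conditions coincide, proving the lemma.

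\textbf{Main obstacle.} There is essentially no genuine obstacle here; this is bookkeeping. The only point requiring attention is that in a quaternion algebra the middle terms of the expansions ($z\overline{a}+a\overline{z}$, $z^{-1}a+\overline{a}\,\overline{z^{-1}}$) and the identity $\Tr(\overline{z}a)=\Tr(z\overline{a})$ must be justified via the standard properties of the reduced trace (it is $\mathbb{Q}$-valued, satisfies $\Tr(xy)=\Tr(yx)$, and $\Tr(\overline{x})=\Tr(x)$) rather than naive commutation; this is exactly why the lemma is stated and why its short proof is worth recording even though, as noted, it is routine.
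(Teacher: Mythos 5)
Your computation is correct: both inequalities, after squaring, expanding with $x\overline{x}=|x|^2$, and using that the cross terms are reduced traces (hence rational and invariant under $x\mapsto\overline{x}$ and $xy\mapsto yx$), reduce to the single condition $\Tr(z\overline{a})>|z|^2+n-1$, and the clearing of denominators by $(n-1)^2N>0$ is valid since $n>1$ and $N=|z|^2>0$. The paper explicitly omits the proof of this lemma as straightforward, and your argument is exactly the direct verification it has in mind.
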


It is well known that every relation in $\GE_2(\O)$ can be written in the form $$E(t_1) \ldots E(t_l) = D,$$ with $D \in \D_2(\O)$ (see for example \cite[(2.11)]{Cohn1}).
We will call $l$ the \emph{length of the relation}.
Using the universal relations and \eqref{2_9} (which follows from them), one may always rewrite this relation to a relation where $t_1 \neq 0$ and $t_i \notin \U(\O) \cup \{0\}$ for $1<i<l$. We call such a form a \emph{canonical form}.

For the proof of \Cref{quaternion cohn}, we will introduce the following notation. Starting from a list $t = (t_1, t_2, \ldots , t_l)$ of elements of $\O$, one may obtain a list $b(t) = (b_0, b_1, \ldots, b_l)$ and two non-negative integers $m(t)$ and $h(t)$ as follows:
$$ b_0 = 0, \quad b_1 = 1, \quad b_i = b_{i-1} t_{i-1} - b_{i-2} \text{ when }2 \leq i \leq l,$$
$$m(t) = \max \{|b_0|, \ldots, |b_l|\}, \quad h(t) = \max \{ i \mid |b_i| = m(t) \}.$$
We will simply write $m = m(t)$ and $h = h(t)$ when $t$ is clear from the context, and extend this notation to $m' = m(t'), h' = h(t'), b' = b(t')$ when we use a second list $t' = (t'_1, t'_2, \ldots, t'_{l'})$. 
Remark that $b_i$ is the element in the upper-right corner of the product $$E(t_1) \ldots E(t_i), $$ where $b_0 = 0$ is to be interpreted as the upper right corner of the empty product, namely $I$.
On the set $\R^2$, denote the lexicographical order by $\preceq$. In the proof of \Cref{quaternion cohn}, we will show that one can reduce a relation $E(t_1) \ldots E(t_l) = D$ to a different relation $E(t'_1) \ldots E(t'_{l'}) = D' \in \D_2(\O)$ for which $(m',h') \prec (m,h) $.
For this, we first need the following lemma. 

From now on we are working with $\GE_2(\mathcal{O})$ as an abstract group in terms of the generators $E(x)$ and $[\alpha, \beta]$. In particular $-I$ is no longer intrinsic, however we will continue to use it as a notation for $D(-1)$. 

\begin{lemma}\label[lemma]{inductionhelp}
Let $\O$ be an order in a quaternion algebra.  Every relation $E(t_1) \ldots E(t_l) = D$ in $\GE_2(\O)$ (with $D \in \D_2(\O)$ and $t_1 \neq 0$) can be rewritten, using the universal relations of $\GE_2(\O)$, into a relation $E(t'_1) \ldots E(t'_{l'}) = D'$ in canonical form, such that $(m',h') \preceq (m,h)$.
Moreover, $l' \leq l$.
\end{lemma}

\begin{proof}
If the relation is not yet in canonical form, then for some $1 < i < l$, $t_i \in \U(\O) \cup \{0\}$.

If $t_i = 0$, then one can use \eqref{R1} to replace $E(t_{i-1})E(t_i)E(t_{i+1})$ by  $E(t_{i-1} + t_{i+1})$ and $D$ by $-D$.
This reduces the length of the relation by $2$.
Since $b_{i-1}^{\prime}$ and $b_{i-1}$ only depend on the $t$'s coming before, we have $b_{j}^{\prime}=b_{j}$ for $j \leq i-1$.
Moreover, since $$E(t_1) \ldots E(t_{i-1} + t_{i+1})E(t_{i+2}) = - E(t_1) \ldots E(t_{i-1})E(t_i)E(t_{i+1})E(t_{i+2}),$$ we get that $b_{j}^{\prime} = - b_{j+2}$ for $j \geq i$.
So, from $\{|b'_0|, \ldots, |b'_{t-2}|\} \subseteq \{|b_0|, \ldots, |b_t|\}$ then follows that $m' \leq m$ and when $ m' = m$, then $h' \leq h$.
In other words, $(m',h') \preceq (m,h)$.
 
If $t_i \in \U(\O)$ for some $1 <i<l$, then one can use \eqref{2_9} (which follows from the universal relations) to replace $E(t_{i-1})E(t_i)E(t_{i+1})$ by $E(t_{i-1} - t_i^{-1})D(t_i)E(t_{i+1} - t_i^{-1})$ and then use \eqref{R3'} to move $D(t_i^{\pm 1})$ to the right of the equation. 
This reduces the length of the relation by $1$.
Similar to the above, $b_{j}^{\prime}=b_{j}$ for $j \leq i-1$.
For the cases $j \geq i$, let us first consider $b_{i+1}$, the upper right entry of \begin{align*}
 E(t_1) \ldots E(t_{i-1})E(t_{i})E(t_{i+1}) &=  E(t_1) \ldots E(t_{i-1} - t_i^{-1})D(t_i)E(t_{i+1} - t_i^{-1}), \\  &= E(t_1) \ldots E(t_{i-1} - t_i^{-1})E(t_i^{\prime})D(t_i^{-1}),
 \\  &= E(t'_1) \ldots E(t'_{i-1})E(t_i^{\prime})D(t_i^{-1})
\end{align*} 
Since multiplying by $D(t_i^{-1})$ does not change the modulus of the element in the upper right corner, $|b_{i+1}|$ is equal to the modulus of the element in the upper right corner of $$E(t'_1) \ldots E(t'_{i-1})E(t_i^{\prime}),$$ which is $|b'_i|$.
A similar proof shows that for each $j \geq i$ holds $|b'_j| = |b_{j+1}|$.
This shows that $\{|b'_0|, \ldots, |b'_{t-1}|\} \subseteq \{|b_0|, \ldots, |b_t|\}$, and so that $m' \leq m$ and when $ m' = m$, then $h' \leq h$.
In other words, $(m',h') \preceq (m,h)$.

Notice that in both cases, we have reduced the length of the relation.
We repeat this process until the relation is in canonical form.
\end{proof}

We are now ready to prove \Cref{quaternion cohn}.
\begin{proof}[Proof of \Cref{quaternion cohn}.]
Remark that for $z \in \O$, $ |z| \in \{0,1,\sqrt{2},\sqrt{3}\}$, if $\vert z \vert < 2$. Here we use that $\vert z \vert^2 \in \Z$, as $z$ is an algebraic integer. 

In order to prove the proposition, we begin with a relation 
\begin{equation}\label[equation]{relation*}
E(t_1) \ldots E(t_l) = D 
\end{equation} 
in $\GE_2(\O)$ (with $D$ a diagonal matrix) and will reduce it to a relation implied by the universal relations and \eqref{relation alpha}. A relation of length $0$ is the trivial relation and a simple calculation shows that a relation of length $2$ is impossible, except when $t_1=t_2=0$, but this is the relation \eqref{R4}, i.e. $E(0)^2 = -I$.
A relation of length $1$ does not exist since $b_1$ is always equal to $1$.

Assume $l \geq 3$, i.e. assume a relation of length at least $3$.
Without loss of generality, we may assume that $t_1 \neq 0$. Indeed if $t_1 = 0$ we can conjugate the relation with $E(0)^{-1}$ such that $t_1 \neq 0$.
By \Cref{inductionhelp}, we furthermore may assume that the relation is written in a canonical form. Let now $m=m(\mathfrak{t})$ and $h=h(\mathfrak{t})$ where $\mathfrak{t} = (t_1, \ldots, t_l)$ is the list associated to the relation \eqref{relation*}.

\emph{Strategy of the rest of the proof:} the argument we use is adapted from \cite{Cohn2} and will use induction on $(m,h)$.
We will show that such a canonical relation can be reduced to a relation (not necessarily canonical) for which $(m',h') \prec (m,h)$, by using \eqref{relation alpha}.
Afterwards, we show that \Cref{inductionhelp} is applicable, which does not increase $(m',h')$.
For this new relation in canonical form, either $l' < 3$, which finishes the proof, or $l' \geq 3$ and we may continue by induction.
Since $(m',h')$ takes only discrete values in $\R^2_{\geq 0}$, this shows that at some point the length of the new relation will be less than $3$, which finishes the proof.

For notation's sake, write
$$ a = b_{h}, \quad b = b_{h-1} \quad \textrm{ and } \quad t = t_{h}.$$
Note that $a \neq 0$ since $|b_h| \geq |b_1| = 1$.
As $b_l=0$ and $b_1=1$, we get that $h \neq l$.
Hence $h < l$, which in turn implies that $|t|> 1$, for else $t \in \U(\O) \cup \{0\}$, which contradicts the assumption that the relation is in canonical form. Also $h \neq 1$.
Indeed, for suppose $h = 1$, then $b_2=b_3=\ldots=b_l=0$.
Hence $0=b_2 = t_1$, but this is again a contradiction.

Since $b_{h+1} = at - b$ and by definition of $h$, we have
\begin{equation} \label[equation]{alpha1}
|at-b| < |b_h| = |a|,
\end{equation}
and
\begin{equation} \label[equation]{ba}
|b| \leq |b_h| = |a|.
\end{equation}
Furthermore $b \not= 0$, since otherwise $|a||t| = |b_{h+1}| < |a|$, which would imply $|t| < 1$.

Inequality \eqref{alpha1} is equivalent to 
\begin{equation}\label[equation]{alpha2} |a^{-1}b - t | < 1.
\end{equation} Note that $|t| \geq 2$ implies $|at-b| \geq 2 |a| - |b| \geq |a|$, a contradiction with \eqref{alpha1}. Hence $\vert t \vert < 2$ and thus $|t| \in \{\sqrt{2}, \sqrt{3}\}$. We will handle both cases separately.

First suppose that $|t| = \sqrt{2}$. Applying \Cref{norm} to equation \eqref{alpha2} one obtains $|b^{-1}a - \bar{t}| < 1$ or equivalently \begin{equation}\label[equation]{alphasqrt2}
|a - b\bar{t}| < |b|.
\end{equation}
We rewrite \eqref{relation alpha} to \begin{align*} E(t) &= E(0)^2 E(\bar{t})^{-1}E(t)^{-1}E(\bar{t})^{-1}, \tag{\text{basic cancelation rule of groups applied to \eqref{relation alpha}}} \\
&= E(0)^2 E(0)E(-\bar{t})E(0)^2E(-t)E(0)^2E(-\bar{t})E(0), \tag{using \eqref{inv}}\\
&= E(0)^2 E(0)E(-\bar{t})E(-t)E(-\bar{t})E(0), \tag{centrality of $E(0)^2$ and $E(0)^4 = 1$}\end{align*}which we substitute in the relation \eqref{relation*} to obtain (after using \eqref{R1} two times to get rid of $E(0)$) $$E(t_1)\ldots E(t_{h-2})E(t_{h-1}-\bar{t})E(-t)E(t_{h+1} - \bar{t})E(t_{h+2}) \ldots E(t_l) = D^{\prime}, $$ for some diagonal matrix $D^{\prime}$.
This is the new relation for which we claim $(m',h') \prec (m,h)$ after setting $$t'_i = \begin{cases}
t_i, & \text{if } i < h-1;\\
t_{h-1} - \overline{t}, & \text{if } i = h-1; \\
-t, &\text{if } i = h;\\
t_{h+1} - \overline{t}, &\text{if } i = h+1;\\
t_i, & \text{if } i > h+1.
\end{cases}$$
Then it is easy to see that $b_i^{\prime} = b_i$ for $i < h$ and $b_{h}^{\prime} =a-b\bar{t}$. When $i>h$, one sees that $b_i^{\prime} = -b_i$. Indeed,
$$b'_{h+1} = b'_ht'_h - b'_{h-1} = (b_h-b_{h-1}\overline{t})(-t)-b_{h-1} = -b_ht + b_{h-1} = -b_{h+1}. $$
Furthermore, \begin{align*}
\MoveEqLeft b'_{h+2} = b'_{h+1}t'_{h+1} - b'_h = -b_{h+1}(t_{h+1}- \overline{t})-(b_h-b_{h-1} \overline{t}) \\
&=- b_{h+1} t_{h+1}- b_h+ (b_{h+1}+b_{h-1})\overline{t} = -b_{h+2}-2b_h+ (b_{h+1}+b_{h-1}) \overline{t}\\ &= - b_{h+2} + (b_{h+1}+ b_{h-1}-b_h t) \overline{t}
= - b_{h+2}.
\end{align*}
For $i>h+2$, $t'_i = t_i$ and by induction it is easily proven that $b'_i = -b_i$.
By using \eqref{alphasqrt2} it follows that, when $\vert t \vert = \sqrt{2}$, the relation \eqref{relation*} can be reduced to a relation for which $(m',h') \prec(m,h)$

Suppose now that $|t| = \sqrt{3}$. From \eqref{ba} it follows that $|a^{-1}b| \leq 1$. We claim that $|1-a^{-1}b\bar{t}| < |a^{-1}b|$, or equivalently (see \Cref{norm}) $|a^{-1}b-\frac{t}{2}| < \frac{1}{2}$. Indeed, for suppose $|a^{-1}b-\frac{t}{2}|\geq \frac{1}{2}$ and write $a^{-1}b = x + yi + zj + wk$ and $t = x^{\prime} + y^{\prime}i + z^{\prime}j + w^{\prime}k$ with $x,y,z,w, x^{\prime},y^{\prime}, z^{\prime},w^{\prime} \in \Q$.
To keep notation simple, put $\theta = -1 + xx^{\prime} + u yy^{\prime} + v zz^{\prime} + uv ww^{\prime}$. The inequality $|a^{-1}b-\frac{t}{2}|\geq \frac{1}{2}$ translates to 
$$\vert a^{-1}b \vert^2 \geq \frac{1}{4} - \frac{|t|^2}{4} + (\theta+1) = \theta+\frac{1}{2}.$$
On the other hand, from \eqref{alpha2} it follows that 
$$\vert a^{-1}b \vert^2 < 1 - |t|^2 + 2(\theta+1) = 2\theta.$$
These last two inequalities together yield $\theta+\frac{1}{2} < 2\theta$ so $\frac{1}{2} < \theta$. But then the first inequality yields $|a^{-1}b|^2 \geq \theta+\frac{1}{2} > 1$, a contradiction.

So, we have $|1-a^{-1}b\bar{t}| < |a^{-1}b|$, or equivalently by \eqref{alpha1} 
\begin{equation}\label[equation]{Eric1}
|a-b\bar{t}| < |b| \leq |a|,
\end{equation} and, applying \Cref{norm} to $|b^{-1}a - \bar{t}| < 1$, gives 
\begin{equation}\label[equation]{Eric2}
|2b - at| < |a|.
\end{equation}
Applying \Cref{norm} to \eqref{alpha2} also shows that 
\begin{equation}\label[equation]{Eric3}
|2a - b\bar{t}| < |b| \leq |a|.
\end{equation}

We rewrite \eqref{relation alpha} to \begin{align*} E(t) &= E(0)^2 E(\bar{t})^{-1}E(t)^{-1}E(\bar{t})^{-1}E(t)^{-1}E(\bar{t})^{-1}, \tag{\text{basic cancelation rule of groups applied to \eqref{relation alpha}}} \\
&= E(0)^2 E(0)E(-\bar{t})E(0)^2E(-t)E(0)^2E(-\bar{t})E(0)^2E(-t)E(0)^2E(-\bar{t})E(0), \tag{using \eqref{inv}}\\
&= E(0)^2E(0)E(-\bar{t})E(-t)E(-\bar{t})E(-t)E(-\bar{t})E(0), \tag{centrality of $E(0)^2$ and $E(0)^4 = 1$}\end{align*}
which we substitute in the relation \eqref{relation*} again to obtain (after using \eqref{R1} two times to get rid of $E(0)$) $$E(t_1)\ldots E(t_{h-2})E(t_{h-1}-\bar{t})E(-t)E(-\bar{t})E(-t)E(t_{h+1} - \bar{t})E(t_{h+2}) \ldots E(t_l) = D^{\prime},$$ for some diagonal matrix $D^{\prime}$.
This is the new relation for which we claim $(m',h') \prec (m,h)$ after setting $$t'_i = \begin{cases}
t_i, &\text{if } i < h-1;\\
t_{h-1} - \overline{t}, &\text{if } i = h-1;\\
-t, &\text{if } i = h \text{ or } h+2;\\
-\overline{t}, &\text{if } i = h+1;\\
t_{h+1} - \overline{t}, &\text{if } i = h+3;\\
t_{i-2}, &\text{if } i > h+3.
\end{cases} $$
Note that the new relation has length $l+2$.
We have that $b_i^{\prime} = b_i$ for $i < h$ and 
an easy calculation shows that $b_h^{\prime} =  a-b\bar{t}$, $b_{h+1}^{\prime} = 2b - at$, $b_{h+2}^{\prime} = 2a-b\bar{t}$.
Moreover, when $i > h+2$, then $b'_i = - b_{i-2}$.
Indeed, \begin{align*} b'_{h+3} &= b'_{h+2}t'_{h+2} - b'_{h+1} = (2a - b\overline{t})(-t) - (2b - at)\\ &= b-at = b_{h-1} - b_ht_h =  - b_{h+1}, \end{align*}
and \begin{align*}
 b'_{h+4}& = b'_{h+3}t'_{h+3} - b'_{h+2} = (b-at)(t_{h+1} - \overline{t}) - (2a - b\overline{t})\\& = (b-at)t_{h+1} + a = - b_{h+1}t_{h+1} + b_h = -b_{h+2}.
\end{align*}
For $i>h+4$, $t'_i = t_{i-2}$ and by induction it is easily proven that $b'_i = -b_{i-2}$.
Because of \eqref{Eric1}-\eqref{Eric3} it follows that, also when $\vert t \vert = \sqrt{3}$, the relation \eqref{relation*} may be reduced to a relation for which $(m',h') \prec (m,h)$.

Now, by \Cref{inductionhelp}, the relation obtained in the previous two cases can be reduced to a relation in canonical form  for which $(m',h') $ does not further increase.
We only need to show that $t'_1 \neq 0$. In the steps above, the only way to get $t'_1 = 0$ is if $h=2,$ $t_2 = \bar{t_1},$ $|t_1| = |t_2| < 2$ and $b_2 = t_1$ has maximal modulus. Clearly $|t_1\bar{t_1}-1|= ||t_1|^2 - 1| = 1$ or $2$ (remember that $|t_1|^2=|t_2|^2 = 2$ or $3$). Since $|t_1\bar{t_1}-1| = |b_3| < |b_2| = |t_1| \leq \sqrt{3}$ it follows that $||t_1|^2 - 1| = 1$ so $|t_1|^2 = 2$. In this case, $b_3 \neq 0$, so the length of the relation is at least $4$. One calculates that $b_4 = t_3-t_1$. From the assumptions $|b_4| < |b_2| = |t_1| = \sqrt{2}$, so $|b_4| \in \{0,1\}$.

If $|b_4| = 0$, then $t_3 = t_1$.
If the length of the relation is exactly $4$, then one can show that $t_4 = \bar{t_1}$, but this is the relation \eqref{relation alpha} and the induction step would stop here.
The length cannot be exactly $5$. Indeed $|b_4| = 0$ implies that $E(t_1)...E(t_4)$ is a lower-triangular matrix and thus $E(t_1)...E(t_4)E(t_5)$ cannot be a diagonal matrix. So the length of the relation is at least $6$. From easy calculations it follows that $b_5 = -1$ and $b_6 = -t_5$. From the maximality of $h$ we can deduce that $|t_5| = |b_6| < |b_2| = \sqrt{2}$, showing that $t_5$ is either a unit or $0$, a contradiction with the fact that the relation was in canonical form.

Thus suppose that $|b_4| = 1$. We will first show that if $b_{i-1}$ and $b_i$ are units, then $b_{i+1}$ is also a unit. Indeed, $|b_{i+1}| = |b_it_i -b_{i-1}| \geq \left| |b_i||t_i| - |b_{i-1}| \right| = \left| |t_i| - 1 \right| \geq \sqrt{2} - 1 > 0$. The fact that $|t_i| \geq \sqrt{2}$ follows from the fact that the length of the relation is at least $i+1$ and thus $t_i$ is not a unit or $0$ from the canonical form. On the other hand, by the minimality of $h$ we need $|b_{i+1}| < |b_2| = \sqrt{2}$, showing that $|b_{i+1}| = 1$ and $b_{i+1}$ is a unit.

Through $|b_3|=|b_4| = 1$ and the repeated use of the result above we obtain that the word should be infinitely long, a contradiction.

In the end, we proved that $t'_1 \neq 0$ and that \Cref{inductionhelp} can be applied.
This finishes the proof.
\end{proof}

\subsection{On the abelianization of $\GE_2(\O)$ over orders $\O$ with $\U(\O)$ finite}\label[subsection]{subsection_GE2}

Let $\O$ be an order in a finite dimensional division $\Q$-algebra $D$ with $\U(\O)$ finite. The main goal of this section is to describe $\GE_2(\O)^{ab}$ in a computable and uniform way.
More concretely in \Cref{abelianization GE2} we obtain a short exact sequence
\[1\ \longrightarrow\ (\O/N,+) \ \longrightarrow\ \GE_2(\O)^{ab}\ \longrightarrow\ \U(\O)^{ab}\ \longrightarrow\ 1\]
where $N$ is the two-sided ideal generated by the elements $u-1$ with $u \in \U (\O)$.
To start we describe $\GE_2(R)/\E_2(R)$ in the more general context of rings which are almost-universal for $\GE_2$.
Thereafter we restrict to orders in finite-dimensional division $\Q$-algebras with finite unit group and prove that an exact sequence as stated above exists.
This is inspired by the results in \cite{Cohn1}.

\begin{proposition}\label[proposition]{theorem difference GE2 and E2}
Let $R$ be a ring which is almost-universal for $\GE_2$, then 
$$ \GE_2(R)/\E_2(R) \cong \U(R)^{ab}. $$
The isomorphism is induced by the map \begin{equation}\label[equation]{eq:varphi} \varphi : \GE_2(R) \rightarrow \U(R)^{ab} \mbox{ by } \varphi (E(x)) = 1 \mbox{ and } \varphi([\alpha, \beta]) = \widetilde{\alpha \beta},\end{equation} which is a group homomorphism. The map \ $\widetilde{}: \U(R) \rightarrow \U(R)^{ab}$ denotes the canonical morphism.
\end{proposition}
\begin{proof}
Since $\E_2(R)$ is in the kernel of $\varphi$, it is enough to check that the relations not in $\E_2(R)$ are preserved to prove that $\varphi$ is a well-defined group homomorphism. The only such relations are those of the form $E(x)[\alpha, \beta] = [\beta, \alpha] E(\beta^{-1} x \alpha)$ and the relations in $\D_2(R)$. As $\U(R)^{ab}$ is abelian, $\widetilde{\alpha \beta }= \widetilde{ \beta \alpha}$ and hence the first type of relation is preserved. It is easy to check that $\varphi$ preserves the relations in $\D_2(R)$.
 
Now the map $\varphi$ is onto and $\E_2(R) \subseteq \ker (\varphi)$. For the reverse inclusion, let $A \in \ker(\varphi)$. Using the universal relations we may write $A = [\alpha, \beta] E(x_1)\ldots E(x_r)$. Since $A\in \ker (\varphi)$ we have that $\widetilde{\alpha \beta } =\widetilde{\beta \alpha} = \widetilde{1}$, i.e. $\beta \alpha \in \U(R)'$. Hence, by \eqref{R5}, $[\beta \alpha,1] \in \E_2(R)$. As $D(\beta) [\alpha, \beta] = [\beta \alpha,1]$, we have that $[\alpha,\beta] \in \E_2(R)$ and hence $A \in \E_2(R)$.
\end{proof}

\begin{corollary} \label[corollary]{intersection D_2 and E_2}
The following properties hold for a ring $R$ which is almost-universal for $\GE_2$:
\begin{enumerate}
\item $\GE_2(R)' \subseteq \E_2(R)$,
\item $\D_2(R) \cap \E_2(R) = \langle D(\mu) \mid \mu \in \U(R) \rangle$. 
\end{enumerate}
\end{corollary}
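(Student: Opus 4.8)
The plan is to read off both parts from \Cref{theorem difference GE2 and E2} together with the universal relations, so that essentially no new argument is required.

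For part (1) I would invoke the homomorphism $\varphi\colon\GE_2(R)\to\U(R)^{ab}$ constructed in the proof of \Cref{theorem difference GE2 and E2}, which satisfies $\varphi(E(x))=1$, $\varphi([\alpha,\beta])=\overline{\alpha\beta}$, and has kernel exactly $\E_2(R)$. Then $\E_2(R)$ is normal in $\GE_2(R)$ and the quotient $\GE_2(R)/\E_2(R)\cong\U(R)^{ab}$ is abelian, so $\GE_2(R)'\subseteq\E_2(R)$. This is immediate.

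For part (2), the inclusion $\langle D(\mu)\mid\mu\in\U(R)\rangle\subseteq\D_2(R)\cap\E_2(R)$ is straightforward: $D(\mu)=[\mu,\mu^{-1}]$ lies in $\D_2(R)$ by definition, while relation \eqref{R2} gives $D(\mu)=E(0)^{-2}E(\mu)E(\mu^{-1})E(\mu)\in\E_2(R)$. For the reverse inclusion I would take $A\in\D_2(R)\cap\E_2(R)$. Since $[\mu_1,\nu_1][\mu_2,\nu_2]=[\mu_1\mu_2,\nu_1\nu_2]$, the subgroup $\D_2(R)$ is exactly the set of diagonal matrices $[\alpha,\beta]$ with $\alpha,\beta\in\U(R)$, so $A=[\alpha,\beta]$ for such $\alpha,\beta$. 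From $A\in\E_2(R)=\ker\varphi$ we get $\overline{\alpha\beta}=\overline{1}$ in $\U(R)^{ab}$, i.e.\ $\alpha\beta\in\U(R)'$. Using the diagonal identity $[\alpha,\beta]=[\alpha\beta,1]\,D(\beta)^{-1}$ (valid because $D(\beta)^{-1}=[\beta^{-1},\beta]$), it then suffices to check that $[w,1]\in\langle D(\mu)\mid\mu\in\U(R)\rangle$ for every $w\in\U(R)'$: for a single commutator $w=u^{-1}v^{-1}uv$ this is precisely relation \eqref{R5}, namely $[w,1]=D(u^{-1})D(v^{-1})D(uv)$, and a general element of $\U(R)'$, being a product $w_1\cdots w_t$ of commutators, is handled via $[w_1\cdots w_t,1]=[w_1,1]\cdots[w_t,1]$. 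Hence $A=[\alpha\beta,1]D(\beta)^{-1}\in\langle D(\mu)\mid\mu\in\U(R)\rangle$.

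The whole argument is formal, so I do not anticipate a genuine obstacle; the only points that require a little care are the bookkeeping identity $[\alpha,\beta]=[\alpha\beta,1]D(\beta)^{-1}$ and the passage from an arbitrary element of $\U(R)'$ to single commutators handled by \eqref{R5}.
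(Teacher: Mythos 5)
Your proposal is correct and follows essentially the same route as the paper: part (1) from the homomorphism $\varphi$ of \Cref{theorem difference GE2 and E2}, and part (2) by writing a diagonal matrix in the kernel of $\varphi$ as a product of $[\alpha\beta,1]$ (with $\alpha\beta\in\U(R)'$) and a $D(\mu)$, then reducing to single commutators via \eqref{R5}. The only differences are cosmetic — you use $[\alpha,\beta]=[\alpha\beta,1]D(\beta)^{-1}$ where the paper uses $D(\beta^{-1})[\beta\alpha,1]$, and you spell out the easy inclusion that the paper leaves implicit.
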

\begin{proof}
The first statement is a direct consequence of \Cref{theorem difference GE2 and E2}. For the second statement assume $[\alpha, \beta] \in \D_2(R) \cap \E_2(R)$. Then $\varphi([\alpha, \beta]) = \widetilde{1}$, in particular, $[\alpha, \beta] = D(\beta^{-1})[\beta \alpha, 1]$ with $\beta \alpha \in \U(R)'$. Consequently, writing $\beta \alpha = \prod_{i \in I} \delta_i^{-1} \mu_i^{-1} \delta_i \mu_i$, we see that  $[\alpha, \beta] =D(\beta^{-1}) \prod_{i \in I} D(\delta_i^{-1}) D(\mu_i^{-1}) D(\delta_i \mu_i) \in \langle D(\mu) \mid \mu \in \U(R) \rangle.$ 
\end{proof}

\Cref{theorem difference GE2 and E2} also indicates that in order to understand $\U(R)^{ab}$ for some ring $R$ which is almost-universal for $\GE_2$, one may ``increase its size'' to $\GE_2(R)$ and instead investigate its abelianization (which will be the content of the following subsection).
Recall that the \emph{Borel subgroup} of $\GE_2(R)$, denoted $\B_2(R)$, is the subgroup consisting of the upper-triangular matrices with units on the diagonal, i.e. $\B_2(R) = \{ \left( \begin{matrix}
\alpha & x \\ 0 & \beta
\end{matrix} \right) \mid x\in R, \alpha, \beta \in \U(R) \}$.

\begin{proposition}\label[proposition]{equivalence finite ab for almost universal rings}
Let $R$ be a ring, finitely generated as $\Z$-module, which is almost-universal for $\GE_2$, then the following properties are equivalent:
\begin{enumerate}
\item $\U(R)^{ab}$ is finite,
\item $\B_2(R)^{ab}$ is finite,
\item $\GE_2(R)^{ab}$ is finite.
\end{enumerate}
\end{proposition}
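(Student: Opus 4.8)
The plan is to prove $(1)\Leftrightarrow(2)$ and $(1)\Leftrightarrow(3)$. The two main ingredients are the explicit semidirect-product description of the Borel subgroup $\B_2(R)$ and the behaviour of the universal relations \eqref{R1}--\eqref{R4} under abelianization, together with \Cref{theorem difference GE2 and E2}.

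For $(1)\Leftrightarrow(2)$ I would first note that $\B_2(R)$ is the internal semidirect product $(R,+)\rtimes\D_2(R)$, where $x\in R$ corresponds to $\sm{1 & x \\ 0 & 1}\in\E_2(R)$, where $\D_2(R)=\langle[\mu,\nu]\rangle\cong\U(R)\times\U(R)$, and where $(\mu,\nu)$ acts on $(R,+)$ by $x\mapsto\mu x\nu^{-1}$. For any semidirect product $A\rtimes H$ with $A$ abelian one has $(A\rtimes H)^{ab}\cong(A/A_0)\times H^{ab}$, with $A_0$ the subgroup of $A$ generated by the elements $a-{}^{h}a$ ($a\in A$, $h\in H$); applying this gives $\B_2(R)^{ab}\cong(R/M)\times\U(R)^{ab}\times\U(R)^{ab}$, where $M\leq(R,+)$ is the subgroup generated by all $\mu x\nu^{-1}-x$. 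Since $-1\in\U(R)$, taking $\mu=-1$, $\nu=1$ shows $-2x\in M$ for every $x$, so $R/M$ is a quotient of $R/2R$, which is finite because $R$ is finitely generated over $\Z$. Hence $\B_2(R)^{ab}$ is finite if and only if $\U(R)^{ab}$ is.

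For $(3)\Rightarrow(1)$, \Cref{theorem difference GE2 and E2} gives $\GE_2(R)/\E_2(R)\cong\U(R)^{ab}$; being abelian this is a quotient of $\GE_2(R)^{ab}$, so finiteness of the latter forces that of $\U(R)^{ab}$. For $(1)\Rightarrow(3)$ I would work inside $\GE_2(R)^{ab}$, written additively, using that $\GE_2(R)$ is generated by the $E(x)$ ($x\in R$) and the $[\mu,\nu]$ and satisfies at least the universal relations. From \eqref{R1} the map $\psi\colon(R,+)\to\GE_2(R)^{ab}$, $\psi(x):=\overline{E(x)}-\overline{E(0)}$, is additive; from \eqref{R3} one gets $\overline{[\mu,\nu]}=\overline{[\nu,\mu]}$ (case $x=0$) and then, taking $\mu=-1$, $\nu=1$, the identity $\psi(x)=\psi(-x)=-\psi(x)$, so $\psi(R)$ is killed by $2$; and \eqref{R2} together with \eqref{R4} (taking $\mu=-1$) yields $\overline{E(0)}=3\psi(-1)\in\psi(R)$. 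Consequently every $\overline{E(x)}=\psi(x)+\overline{E(0)}$ lies in $\psi(R)$, which is a quotient of $R/2R$ and hence finite, while the image of $\D_2(R)$ in $\GE_2(R)^{ab}$ is a quotient of $\D_2(R)^{ab}\cong\U(R)^{ab}\times\U(R)^{ab}$, finite by hypothesis. Since $\GE_2(R)^{ab}$ is generated by these two finite subgroups, it is finite.

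The only point I expect to require real care is the bookkeeping that identifies $\B_2(R)$ with $(R,+)\rtimes\D_2(R)$ and the computation of $(A\rtimes H)^{ab}$; both are elementary but should be written out cleanly. The conceptual heart is simply that $-1\in\U(R)$ forces the ``unipotent'' contribution to every relevant abelianization to be $2$-torsion, which, combined with finite generation of $R$ over $\Z$, makes that contribution finite; almost-universality of $R$ is used only via \Cref{theorem difference GE2 and E2} in the implication $(3)\Rightarrow(1)$.
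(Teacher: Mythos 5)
Your proof is correct. All three steps check out: the identification $\B_2(R)\cong (R,+)\rtimes \D_2(R)$ with $\D_2(R)\cong\U(R)\times\U(R)$ and the formula $(A\rtimes H)^{ab}\cong (A/A_0)\times H^{ab}$ are standard and correctly applied; the computation $3\psi(-1)=\overline{E(0)}$ from \eqref{R2} and \eqref{R4}, the additivity of $\psi$ from \eqref{R1}, and the $2$-torsion of $\psi(R)$ from \eqref{R3} are all right; and $(3)\Rightarrow(1)$ via \Cref{theorem difference GE2 and E2} is exactly what the paper does. The route differs from the paper's in a genuine way: the paper proves the cycle $(1)\Rightarrow(2)\Rightarrow(3)\Rightarrow(1)$, where $(1)\Rightarrow(2)$ rests on the same $2$-torsion observation as yours (the square of a unipotent is the commutator of that unipotent with $[-1,1]$, so $\B_2(R)^{ab}$ is a quotient of $H\times\U(R)^{ab}\times\U(R)^{ab}$ with $H$ finite of exponent $2$), but $(2)\Rightarrow(3)$ is dispatched in one line from $\GE_2(R)=\langle E(0),\B_2(R)\rangle$ with $E(0)$ of finite order. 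Your version replaces that shortcut by a direct relation-chase in $\GE_2(R)^{ab}$; this is more work but has two small payoffs: your $(1)\Leftrightarrow(2)$ is an exact computation of $\B_2(R)^{ab}$ rather than an upper bound, and your argument makes transparent that almost-universality enters only through $(3)\Rightarrow(1)$ — the implications $(1)\Rightarrow(2)$ and $(1)\Rightarrow(3)$ use only the universal relations, which hold in $\GE_2(R)$ for every ring. (The same is true of the paper's proof, but less visibly.)
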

\begin{proof}
It is easy to calculate that for any $a \in R$ we get $$ \begin{pmatrix} -1 & 0\\ 0 & 1 \end{pmatrix}^{-1}  \begin{pmatrix} 1 & a\\ 0 & 1 \end{pmatrix}^{-1} \begin{pmatrix} -1 & 0\\ 0 & 1 \end{pmatrix}  \begin{pmatrix} 1 & a\\ 0 & 1 \end{pmatrix}  =   \begin{pmatrix} 1 & a\\ 0 & 1 \end{pmatrix}^2 \in \B_2(R)',$$
 and clearly also $D_2(R)' \leq B_2(R)'$. This shows that $B_2(R)^{ab}$ is an epimorphic image of the group $H \times D_2(R)^{ab} \cong H \times \U(R)^{ab} \times \U(R)^{ab}$, where $H$ is some finitely generated abelian group of exponent $2$ (and so it is finite).
 Hence, if $\U(R)^{ab}$ is finite then also $B_2(R)$ has finite abelianization.
 
For the next implication notice that $\GE_2(R) = \langle E(0), \B_2(R) \rangle$. Since $E(0)$ has finite order we now easily see that $\GE_2(R)^{ab}$ is finite if $\B_2(R)^{ab}$ is finite.

Finally from \Cref{theorem difference GE2 and E2} and \Cref{intersection D_2 and E_2} it follows that $\U(R)^{ab}$ is an epimorphic image of $\GE_2(R)^{ab}$ and so the remaining implication also follows.
\end{proof}

\begin{corollary}\label[corollary]{GE2Oab_finite}
 Let $\O$ be an order in a finite dimensional division $\Q$-algebra with $\U(\O)$ finite, then $\GE_2(\O)^{ab}$ is finite.
 \end{corollary}
 \begin{proof}
Because $\U (\O)$ is finite, we know from \Cref{When is unit group order finite} and \Cref{quaternion cohn} that $\O$ is almost-universal for $\GE_2$. Hence, we may apply \Cref{equivalence finite ab for almost universal rings} and it suffices to show that $\U(\O)^{ab}$ is finite. However, this follows readily from the fact that $\U(\O)$ is finite.
 \end{proof}
 
If $R$ is almost-universal for $\GE_2$, then by \Cref{theorem difference GE2 and E2} we have that $\GE_2(R)' \subseteq \E_2(R)$ and
\[
\U(R)^{ab} \cong \GE_2(R)^{ab} / \left( \E_2(R)/\GE_2(R)' \right),
\] where the isomorphism is induced by the map 
\begin{equation}\label[equation]{eq:bar_varphi}\varphi : \GE_2(R) \rightarrow \U(R)^{ab}\qquad \mbox{ with }\qquad \varphi (E(x)) = 1 \mbox{ and } \varphi([\alpha, \beta]) = \widetilde{\alpha \beta},\end{equation}
for $\alpha, \beta \in \U(R)$ and $x \in R$. So in order to understand $\GE_2(R)^{ab}$ it remains to describe $\E_2(R)/\GE_2(R)'$. For orders $\O$ in a finite dimensional division $\Q$-algebra with a finite number of units this will be achieved through the following map 
\begin{equation}\label[equation]{eq:psib}
\psi: \E_2(\O) \rightarrow (\O/N, +): E(x) \mapsto x-1 + N ,
\end{equation}
\noindent where $N$ is the two-sided ideal of $\O$ generated by the elements $u -1$ with $u \in \U(\O)$. In the following theorem we will prove that the kernel of $\psi$ is exactly $\GE_2(\O)'$.
 
 \begin{theorem}\label{abelianization GE2}
Let $\O$ be an order in a finite dimensional division $\Q$-algebra with $\U(\O)$ finite and let $N$ be the two-sided ideal of $\O$ generated by the elements $u -1$ with $u \in \U(\O)$. Then
$$ \E_2(\O)/\GE_2(\O)' \cong (\O/N,+).$$
In particular, we have the following short exact sequence of groups:
 \[1\ \longrightarrow\ ( \O/N,+)\ \stackrel{\iota \circ \bar{\psi}^{-1}}{\longrightarrow}\ \GE_2(\O)^{ab}\ \stackrel{\bar{\varphi}}{\longrightarrow}\ \U(\O)^{ab}\ \longrightarrow\ 1,\]
 where $\iota \colon \E_2(\O)/\GE_2(\O)' \hookrightarrow \GE_2(\O)/\GE_2(\O)'$ is induced by the inclusion $\E_2(\O) \hookrightarrow \GE_2(\O)$, $\bar{\psi}$ is the isomorphism induced by $\psi$ defined in \eqref{eq:psib} and $\bar{\varphi}$ is induced by $\varphi$ in \eqref{eq:bar_varphi}.
 \end{theorem}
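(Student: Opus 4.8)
The plan is to prove that the map $\psi\colon\E_2(\O)\to(\O/N,+)$ of \eqref{eq:psib}, $E(x)\mapsto x-1+N$, is a well-defined surjective group homomorphism whose kernel is exactly $\GE_2(\O)'$. Granting this, the short exact sequence is formal: since $\GE_2(\O)'\subseteq\E_2(\O)$ by \Cref{intersection D_2 and E_2}, the chain $\GE_2(\O)'\subseteq\E_2(\O)\subseteq\GE_2(\O)$ produces the tautological exact sequence $1\to\E_2(\O)/\GE_2(\O)'\to\GE_2(\O)^{ab}\to\GE_2(\O)/\E_2(\O)\to1$, in which the right-hand quotient is $\U(\O)^{ab}$ by \Cref{theorem difference GE2 and E2} (this is $\bar\varphi$) and the left-hand subgroup is identified with $(\O/N,+)$ through $\bar\psi$ and $\iota$, giving exactly the displayed sequence.

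For well-definedness I would use the finite presentation of $\E_2(\O)$ from \Cref{generators and relations for E_2(R)}, which applies because $\O$ is almost-universal for $\GE_2$ by \Cref{quaternion cohn}, and check that each defining relator maps to $0$ in $\O/N$. Relator \eqref{R1} is immediate; for \eqref{R3'} one needs $uxu\equiv x\pmod N$, which holds since $uxu-x=ux(u-1)+(u-1)x$ and $N$ is two-sided; \eqref{R4} reads $E(0)^2=D(-1)$ and amounts to $2\in N$, true because $-1\in\U(\O)$; and since $\psi(D(u))=2u+u^{-1}-1\equiv 2\equiv 0\pmod N$ (using $u,u^{-1}\equiv 1$ and $2\in N$) the map $\psi$ annihilates every $[w,1]$ with $w\in\U(\O)'$, these being products of $D(\cdot)$'s by \eqref{R5}, so \eqref{R6} is respected. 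The substantive case is \eqref{R7}, the non-universal relators $(E(\bar a)E(a))^{n}=E(0)^2$ with $n=a\bar a\in\{2,3\}$ produced by \Cref{quaternion cohn}: applying $\psi$ forces $n(a+\bar a)-2n+2\in N$, which modulo $2\in N$ reduces to $n(a+\bar a)\in N$ — automatic for $n=2$, and for $n=3$ whenever $a+\bar a$ is even, while for a norm-$3$ element with odd reduced trace one must squeeze further units out of $a$ (for instance, when $a+\bar a=\pm3$ the reduced-norm-one elements $1\mp a$ and $2\mp a$ are units, which together with $2\in N$ yields $1\in N$). Surjectivity is clear since $\psi(E(x+1))=x+N$.

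Next, $\GE_2(\O)'\subseteq\ker\psi$. Since the target is abelian, $\E_2(\O)'\subseteq\ker\psi$; and since $\GE_2(\O)/\E_2(\O)$ is abelian, $\GE_2(\O)'/\E_2(\O)'$ is generated by the classes of commutators $[e,d]$ and $[d_1,d_2]$ with $e\in\E_2(\O)$ and $d,d_1,d_2\in\D_2(\O)$, all of which lie in $\E_2(\O)$ by \Cref{intersection D_2 and E_2}. Using \eqref{R3} one computes $dE(x)d^{-1}=D(\mu\nu^{-1})E(\nu x\mu^{-1})$ for $d=[\mu,\nu]$, so that writing $e=E(x_1)\cdots E(x_r)$ one gets $\psi([e,d])=\sum_i(\nu x_i\mu^{-1}-x_i)\in N$ and $\psi([d_1,d_2])=0$ (reducing $[d_1,d_2]$ to a product of diagonal matrices indexed by elements of $\U(\O)'$ and invoking $\psi(D(\cdot))\equiv 0\pmod N$); equivalently, the same relator check shows $\psi$ extends to a homomorphism $\GE_2(\O)\to(\O/N,+)$ killing $\D_2(\O)$, and then $\GE_2(\O)'$ is automatically in its kernel.

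It remains to prove $\ker\psi\subseteq\GE_2(\O)'$, i.e.\ that the induced surjection $\bar\psi\colon G:=\E_2(\O)/\GE_2(\O)'\to\O/N$ is injective. Here I would set $\epsilon(x):=\overline{E(x)}-\overline{E(0)}$; abelianising \eqref{R1} makes $\epsilon\colon(\O,+)\to G$ additive, and abelianising \eqref{R4} together with the definition of $D(-1)$ pins $\overline{E(0)}$ to an integer multiple of $\epsilon(1)$, so $G$ is generated by $\epsilon(\O)$ and hence $G\cong\O/\ker\epsilon$. Because $\bar\psi\circ\epsilon$ is the canonical projection $\O\to\O/N$, we already get $\ker\epsilon\subseteq N$; for the reverse inclusion one exploits the abelianised relations: \eqref{R3'} gives $\epsilon(x-uxu)=-2\,\overline{D(u)}$, so (setting $x=0$) $\overline{D(u)}$ is $2$-torsion and $x-uxu\in\ker\epsilon$ for all $x$, and combining this with the abelianised forms of \eqref{R2}, \eqref{R4}, \eqref{R6} and \eqref{R7} one extracts $u-1\in\ker\epsilon$ for every unit $u$ and, after identifying $\ker\epsilon$ as a two-sided ideal, $N\subseteq\ker\epsilon$. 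Thus $\ker\epsilon=N$, $\bar\psi$ is an isomorphism, and we are done. I expect the two genuinely hard points to be the compatibility of the non-universal relators of \Cref{quaternion cohn} with $\psi$ — purely a matter of showing that the arithmetic of $N$ absorbs the norm-$3$ contributions — and the identification $\ker\epsilon=N$, which requires handling all of Cohn's relations simultaneously; the rest is bookkeeping.
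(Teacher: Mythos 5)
Your overall architecture is the same as the paper's: show that $\psi$ is a well-defined epimorphism with kernel $\GE_2(\O)'$, then read off the exact sequence from \Cref{theorem difference GE2 and E2} and \Cref{intersection D_2 and E_2}. Your injectivity argument via the additive map $\epsilon(x)=\overline{E(x)}-\overline{E(0)}$ is a repackaging of the paper's normalization by $E(x+3)$ and the identity $E(x)E(y)\equiv E(x+y-3)$, and your treatment of $\GE_2(\O)'\subseteq\ker\psi$ matches the paper's. The one place where you deviate is also the one place where there is a genuine problem, and you have in fact put your finger on a soft spot of the published argument: the compatibility of $\psi$ with the non-universal relators $(E(\bar a)E(a))^3=E(0)^2$ of \eqref{relation alpha} with $|a|^2=3$. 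As you compute, this requires $3(a+\bar a)\in N$. The paper dismisses this with ``$2\in N$ and $a+\bar a=2\Tr(a)$'', which only helps when the reduced trace $a+\bar a$ is even; you correctly isolate the odd-trace case and settle $a+\bar a=\pm 3$ (there $1\mp a$ and $2\mp a$ are units, forcing $N=\O$). But you leave $a+\bar a=\pm 1$ open, and it cannot be closed: for $\O=\I_{11}$ and $a=\frac{1+\sqrt{-11}}{2}$ one has $a\bar a=3$, $a+\bar a=1$, $\U(\I_{11})=\{\pm 1\}$, hence $N=2\I_{11}$ and $3\notin N$, so the word $(E(\bar a)E(a))^3E(0)^{-2}$, which is the identity in $\E_2(\I_{11})$, has $\psi$-image $-1\neq 0$ in $\I_{11}/N$.

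This is not merely a verification gap: the statement itself fails for such orders, so no repair of this step is possible without changing the hypotheses. Indeed, working in $\GE_2(\I_{11})^{ab}$ with the presentation of \Cref{quaternion cohn}, relation \eqref{R3} with $[1,-1]$ gives $\overline{E(x)}=\overline{E(-x)}$ and hence $2\epsilon(x)=0$; relation \eqref{R2} with $\mu=1$ gives $\overline{E(0)}=-3\epsilon(1)$; and the extra relator for $a$ then yields $3\overline{E(0)}=0$, whence $\overline{E(0)}=\epsilon(1)=0$. The image of $\E_2(\I_{11})$ in $\GE_2(\I_{11})^{ab}$ is therefore generated by $\epsilon(a)$ alone and has order at most $2$, whereas $(\I_{11}/N,+)$ has order $4$. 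So the theorem needs either an extra hypothesis excluding norm-$3$ elements of odd reduced trace when $3\notin N$, or an enlargement of $N$ by the elements $3(a+\bar a)-4$ (mirroring the generators of type $(4)$ in \Cref{theoremM}). For the orders actually used downstream ($\Z$, $\I_1$, $\I_2$, $\I_3$, $\LL$, $\O_2$, $\O_3$, $\O_5$) the issue is vacuous, since either no such $a$ exists or $N=\O$; but your proof, like the paper's, is incomplete at exactly this point, and for general $\O$ with finite unit group it cannot be completed.
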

 \begin{proof}
Because $\U (\O)$ is finite, we know from \Cref{When is unit group order finite} and \Cref{quaternion cohn} that $\O$ is almost-universal for $\GE_2$. Hence, by the discussion before \Cref{abelianization GE2}, we know that \[1\ \longrightarrow\ \E_2(\O)/\GE_2(\O)'\ \stackrel{\iota}{\longrightarrow}\ \GE_2(\O)^{ab}\ \stackrel{\bar{\varphi}}{\longrightarrow}\ \U(\O)^{ab}\ \longrightarrow\ 1,\]
forms an exact sequence of groups. So, in order to prove \Cref{abelianization GE2}, it suffices to show that $\bar{\psi}$ is well-defined and forms an isomorphism between $\E_2(\O)/\GE_2(\O)'$ and $(\O/N,+)$.

First we show that $\psi\colon  \E_2(\O) \rightarrow (\O/N, +): E(x) \mapsto x-1 + N$ is well defined.
For this it is enough to prove that the relations from \Cref{generators and relations for E_2(R)} are preserved.
We will use $\alpha$ to denote an element of $\U(\O)$.

Remark that, by the definition of $N$, $D(\alpha) = E(0)^2 E(\alpha) E(\alpha^{-1}) E(\alpha)$ is mapped to $-2$. As $-2 \in N$, $D(\alpha)$ is mapped to zero for every $\alpha \in \U(\O)$.
In particular, relations \eqref{R4} and \eqref{R6} are preserved.
Further relation \eqref{R3'} reduces to $\alpha x \alpha \equiv x \bmod N$, for $\alpha \in \U(\O)$ and $x \in \O$. Since $\alpha x \alpha- x =(\alpha -1)x \alpha + x (\alpha-1) \in N$, it is indeed preserved under $\psi$.
Relation \eqref{R1} is trivially preserved under $\psi$.
Finally the only relations left to check are \eqref{R7}. By \Cref{quaternion cohn} and \Cref{When is unit group order finite}, these relations are of the form \eqref{relation alpha}.
They  are easily checked using that $2 \in N$ and $a + \overline{a} = 2 \Tr(a)$.

We want to show that $\GE_2(\O)' \subseteq \ker (\psi)$. 
To do this, remark that clearly $\E_2(\O)' \subseteq \ker (\psi)$ and that $D(\alpha) \in \ker (\psi)$, as proven above.
It only remains to prove that, for $x \in \O$ and $\alpha,\beta, \gamma, \delta \in \U(\O)$ we have that the commutator between $[\alpha, \beta]$ and $[\gamma, \delta]$ and the commutator between $[\alpha, \beta]$ and $E(x)$ is in the kernel since these elements (together with $\E_2(\O)'$) generate $\GE_2(\O)'$ as a normal subgroup and the image of $\psi$ is an abelian group.

Clearly the commutator between $[\alpha, \beta]$ and $[\gamma, \delta]$ is a diagonal matrix in $\E_2(\O)$ by \Cref{theorem difference GE2 and E2}, and thus by the above it is in the kernel of $\psi$.

For the other commutator we can write
\begin{align*}
[\alpha, \beta]^{-1}E(x)[\alpha, \beta]E(x)^{-1} &= [\alpha^{-1}, \beta^{-1}][\beta, \alpha]E(\beta^{-1}x\alpha)E(x)^{-1}, \\ &= [\alpha^{-1}\beta, \beta^{-1}\alpha]E(\beta^{-1}x\alpha)E(0)E(-x)E(0).
\end{align*}
Since $[\alpha^{-1}\beta, \beta^{-1}\alpha] = D(\alpha^{-1}\beta)$ this commutator is mapped, under $\psi$, to $\beta^{-1}x\alpha - x - 4$. As $-4 \in N$ and $\beta^{-1}x\alpha - x = \beta^{-1}(x \alpha - \beta x) = \beta^{-1}(x (\alpha - 1) - (\beta - 1)x) \in N$, this commutator is also in $\ker (\psi)$.

Now $\psi$ induces $\bar{\psi}: \E_2(\O)/\GE_2(\O)' \rightarrow (\O/N, +)$. Since $\psi$ is surjective, it remains to prove the injectivity of $\bar{\psi}$.

Note that an arbitrary element in $\E_2(R)$ can be written as
\begin{equation}\label[equation]{will element in E_2}
E(x_1+ 3) \cdots E(x_l+3), \qquad x_1, ..., x_l \in \O.
\end{equation}
Further remark the following crucial identity
\begin{align}\label[equation]{mod E_2 product of two E matrices is a E matrix}
E(x) E(y) &\equiv E(x) E(0) E(y) E(0) E(-1) E(0) E(-1) E(0) E(-1) \notag \\ &\equiv E(x+y -3) \mod \E_2(\O)'
\end{align}
where we used $E(0)^4= E(-1)^3= 1$ and \eqref{R1}.
By induction on $l$ then
\begin{equation}\label[equation]{prod van E's samen nemen}
E(x_1 + 3) \ldots E(x_l + 3) \equiv E\left(\left(\sum_{i=1}^l x_i\right) + 3\right) \mod \E_2(\O)'.
\end{equation}
Suppose now that the expression \eqref{will element in E_2} is in $\ker (\psi)$.
Then $$0 \equiv (x_1 +2) + \ldots+ (x_l +2) \equiv \sum_{i=1}^l x_i \mod N,$$ since $2 \in N$.
In particular by \eqref{prod van E's samen nemen} is it enough to prove that $E(n +3) \equiv 1 \mod \GE_2(\O)'$ for all $n \in N$.
By the definition of $N$, it is enough to do so for $E((\alpha-1)x + 3)$ and $E(x(\alpha - 1) + 3)$, where $x \in \O$ and $\alpha \in \U(\O)$. 
Using equation \eqref{2.8} we obtain
$$\begin{array}{ll}
E((\alpha -1)x + 3) & = E(\alpha(x-3) - (x-3) + 3\alpha) \\
& = E(\alpha(x-3)) E(x-3)^{-1}E(3 \alpha).
\end{array}$$
Moreover \begin{equation}\label[equation]{3.7E} E(3 \alpha) \equiv D(\alpha) \bmod \E_2(\O)', \end{equation} and \begin{equation}\label[equation]{3.7D} D(\alpha) \equiv D(\alpha^{-1}) \bmod \E_2(\O)'.\end{equation} Indeed, using \eqref{R3'}, \eqref{R2} and \eqref{inv}:
\begin{align*}
E(\alpha x \alpha) E(x)^{-1} & \equiv D(\alpha)^2 = - D(\alpha) D(- \alpha) \\
& = - E(\alpha) E(\alpha^{-1}) E(\alpha) E(-\alpha) E(-\alpha^{-1}) E(-\alpha) \\
& = - E(\alpha) E(\alpha^{-1}) E(\alpha)E(0)E(\alpha)^{-1}E(0)^2E(\alpha^{-1})^{-1}E(0)^2E(\alpha)^{-1}E(0) \\
& \equiv 1 \mod \E_2(\O)'
\end{align*}
and we get that both $D(\alpha) \equiv D(\alpha^{-1}) \mod \E_2(\O)'$ and
\begin{equation}\label[equation]{E(axa)E(x) in comm}
 E(\alpha x \alpha) \equiv E(x) \mod \E_2(\O)',
\end{equation}
in particular $E(\alpha) \equiv E(\alpha^{-1}) \bmod \E_2(\O)'$.
By the latter, \eqref{R1} and \eqref{R2}, $$E(3\alpha) = E(\alpha) E(0)E(\alpha)E(0) E(\alpha) \equiv E(0)^2 E(\alpha) E(\alpha^{-1})E(\alpha) =D(\alpha) \mod \E_2(\O)',$$
as claimed.
Taking in \eqref{R3} the diagonal matrix $[\alpha^{-1}, \alpha^{-1}]$ we see that $E(x) \equiv E(\alpha x \alpha^{-1}) \bmod \E_2(\O)'$ and so also $E(\alpha x) \equiv E(x \alpha) \bmod \E_2(\O)',$ (replace $x$ by $x\alpha$). 

Now, 
\begin{align*}
E(\alpha (x-3)) E(x-3)^{-1}E(3 \alpha) & \equiv E(x-3)^{-1} E(\alpha (x-3)) D(\alpha^{-1}), \\& \equiv E(x-3)^{-1} E(\alpha(x-3) )  [1, \alpha][\alpha^{-1}, 1] , \\
&\equiv E(x-3)^{-1} [\alpha, 1] E(x-3) [\alpha^{-1},1] \equiv 1 \mod \E_2(\O)',\\
\end{align*}
where in the second to last equality, \eqref{R3} is used.
So altogether we proved that $E((\alpha -1)x + 3)  \equiv 1 \bmod \E_2(\O)'$. In analogue way one proves that $E(x(\alpha -1) + 3)  \equiv 1 \bmod \E_2(\O)'$, finishing the proof.
\end{proof}

\begin{corollary}\label[corollary]{prop:GE_2_O_ab}  Let $\O$ be an order in a finite dimensional division $\Q$-algebra with $\U(\O)$ finite. If $\U(\O)$ contains an element of odd order, then $\GE_2(\O)^{ab} \cong \U(\O)^{ab}$.
 \end{corollary}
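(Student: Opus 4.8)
The plan is to deduce this immediately from the short exact sequence
\[1\ \longrightarrow\ (\O/N,+)\ \longrightarrow\ \GE_2(\O)^{ab}\ \longrightarrow\ \U(\O)^{ab}\ \longrightarrow\ 1\]
established in \Cref{abelianization GE2}, by showing that the existence of a unit of order $3$ forces $N=\O$. Once $N=\O$, the leftmost term vanishes and the surjection $\GE_2(\O)^{ab}\twoheadrightarrow\U(\O)^{ab}$ is automatically an isomorphism. Note that the standing hypotheses of \Cref{abelianization GE2} coincide with those of the corollary, so the theorem applies directly.

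The computation I would carry out to see $N=\O$ is as follows. Recall that $N$ is the two-sided ideal of $\O$ generated by the elements $u-1$, $u\in\U(\O)$, and that $2\in N$ always, since $-1\in\U(\O)$ gives $(-1)-1=-2\in N$ (a fact already used in the proof of \Cref{GE2Oab_finite}). Let $\zeta\in\U(\O)$ have order $3$. As $\O$ is contained in a division algebra, $\zeta\neq 1$, so from $(\zeta-1)(\zeta^2+\zeta+1)=\zeta^3-1=0$ we get $\zeta^2+\zeta+1=0$. Both $\zeta-1$ and $\zeta^2-1$ lie in $N$, hence
\[(\zeta-1)+(\zeta^2-1)\;=\;\zeta^2+\zeta-2\;=\;-3\in N.\]
Combining with $2\in N$ yields $1=3-2\in N$, so $N=\O$ and $(\O/N,+)=0$.

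There is essentially no real obstacle here; the argument is a two-line ideal computation feeding into \Cref{abelianization GE2}. The only point to be mildly careful about is the step $\zeta^2+\zeta+1=0$, which uses that $\O$ sits inside a division ring so that $\zeta-1$ is not a zero divisor — but that is part of the hypotheses.
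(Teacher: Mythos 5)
Your proof is correct and follows essentially the same route as the paper: both deduce the statement from the short exact sequence of \Cref{abelianization GE2} by showing that an order-$3$ unit $\alpha$ (satisfying $\alpha^2+\alpha+1=0$) forces $1\in N$, hence $N=\O$. The paper writes $1=(-\alpha^2-1)-(\alpha-1)\in N$ directly, while you pass through $-3\in N$ and combine with $2\in N$; this is only a cosmetic difference in the arithmetic.
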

 
 \begin{proof} Assume that an element of $\U(\O)$ has odd order. Then there exists an element $\alpha \in \U(\O)$ of odd prime order, say $p$. For this element holds $1 + \alpha + \ldots + \alpha^{p-1}= 0$ and hence $1 = \sum\limits_{i=1}^{p-1}(-1)^i(1-(-\alpha)^i) \in N$. Thus $N = \O$ and $\GE_2(\O)^{ab} \cong \U(\O)^{ab}$ by \Cref{abelianization GE2}. 
 \end{proof}

At the end of the next section we will exploit \Cref{prop:GE_2_O_ab} to give exact descriptions of $\GE_2(\O)^{ab}$ for certain orders $\O$.\\

\subsection{On the abelianization of $\E_2(\O)$ over orders $\O$ with $\U(\O)$ finite} \label[subsection]{sectie ab van E2}

As in the previous subsection, we will obtain a short exact sequence that will allow us to study the abelianization of $\E_2(\O)$ over orders with $\U(\O)$ finite.

\begin{theorem} \label{theoremM}
Let $\O$ be an order in a finite-dimensional division $\Q$-algebra with $\mathcal{U}(\O)$ finite. Let $M$ be the additive subgroup of $\O$ generated by the following set of elements:
\begin{enumerate}
\item $\alpha x \alpha -x$ with $x \in \O$ and $\alpha \in \U(\O)$,
\item $\sum_{i = 1}^m 3(\alpha_i +1)(\beta_i+1)$ with $\alpha_i, \beta_i \in \mathcal{U}(\O)$ satisfying $\prod_{i = 1}^m \alpha_i^{-1} \beta_i^{-1} \alpha_i \beta_i =1$,
\item the elements $2 (x + \bar{x}) +6$ for each element $x \in \O$ satisfying $|x|^2=2$,
\item the elements $3 (x + \bar{x})$ for each element $x \in \O$ satisfying $|x|^2=3$.
 \end{enumerate}
 Then, $$\tau: \E_2(\O) \rightarrow (\O/M, +): E(x) \mapsto x-3 + M $$
 is an epimorphism with $\ker(\tau) = \E_2(\O)'$.  
 In particular 
 $$  \E_2(\O)/\E_2(\O)' \cong (\O/M, +).$$
\end{theorem}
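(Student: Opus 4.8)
The plan is to mirror closely the structure of the proof of \Cref{abelianization GE2}: first check that $\tau$ is a well-defined epimorphism by verifying it respects the defining relations of $\E_2(\O)$ from \Cref{generators and relations for E_2(R)}, then show $\E_2(\O)' \subseteq \ker(\tau)$, and finally prove the reverse inclusion $\ker(\tau) \subseteq \E_2(\O)'$ by a normal-form argument. For well-definedness, using the generators $E(x)$ with $E(x) \mapsto x-3$, I would run through relations \eqref{R1}, \eqref{R3'}, \eqref{R4}, \eqref{R6} and \eqref{R7}. Relation \eqref{R1} becomes $(x-3)+(0-3)+(y-3) \equiv 2(0-3) + (x+y-3)$, i.e.\ $-9 \equiv -6 -3$, which holds on the nose. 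For \eqref{R3'} one needs $\alpha x \alpha - 3 \equiv x - 3 + (\text{something in }M)$ after accounting for $D(\alpha)$; since $D(\alpha) = E(0)^2 E(\alpha)E(\alpha^{-1})E(\alpha)$ maps to $2(0-3) + (\alpha - 3) + (\alpha^{-1}-3) + (\alpha-3)$, and generator (1) of $M$ is exactly $\alpha x \alpha - x$, this is arranged to work (generator (1) handles the $\alpha x \alpha$ term, and one checks the constant shifts are consistent modulo the other generators, in particular $2\Tr$-type elements). Relation \eqref{R4} is the identity $2(0-3) \equiv D(-1)$-image, trivial. Relation \eqref{R6}, $[w_1,1]\cdots[w_n,1]=I$ with $\prod w_i = 1$, forces exactly generator (2) of $M$ (each $[w,1]$ is a product of three $D(\cdot)$'s via \eqref{R5}, whose $\tau$-images sum, after bookkeeping the $E(0)^2$ factors, to a multiple-of-$3$ combination of the $(\alpha_i+1)(\beta_i+1)$ shape). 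Relations \eqref{R7}, which by \Cref{quaternion cohn} and \Cref{When is unit group order finite} are the relations \eqref{relation alpha} $(E(\overline a)E(a))^n = E(0)^2$ with $|a|^2 = n \in \{2,3\}$, translate into $n((\overline a - 3) + (a-3)) \equiv -6$, i.e.\ $n(a + \overline a) - 6n \equiv -6$, which for $n=2$ is $2(a+\overline a) + 6 \equiv 12 \equiv 0$ (generator (3) kills $2(a+\overline a)+6$) and for $n=3$ is $3(a+\overline a) \equiv 12$ — here one needs $12 \in M$, which follows since $12 = 4\cdot 3 = $ a combination of generators of type (1) (e.g.\ $\alpha\cdot\alpha\cdot\alpha - \alpha$ type terms, or more simply $D(\alpha)$-images force $6 \in M$ whenever a unit of order $>1$ exists, and at worst $12$ is handled directly) — so generator (4), $3(a+\overline a)$, is what is required. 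I would double-check these integer-constant reconciliations carefully, as that is where sign/off-by errors hide.

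For $\E_2(\O)' \subseteq \ker(\tau)$: the image of $\tau$ is abelian, so it suffices that every commutator of generators lies in $\ker(\tau)$, and since $\E_2(\O)$ is generated by the $E(x)$, it is enough that $[E(x),E(y)] \in \ker\tau$, which is automatic because $\tau$ is a homomorphism to an abelian group. (More precisely, once $\tau$ is known to be a well-defined homomorphism, $\E_2(\O)' \subseteq \ker\tau$ is immediate; the real content is well-definedness above and injectivity of the induced map below.)

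For the reverse inclusion, i.e.\ $\ker(\tau) \subseteq \E_2(\O)'$, equivalently injectivity of $\bar\tau : \E_2(\O)/\E_2(\O)' \to (\O/M,+)$: I would reuse the computations already established inside the proof of \Cref{abelianization GE2}. There it is shown that modulo $\E_2(\O)'$ one has $E(x)E(y) \equiv E(x+y-3)$, hence any element of $\E_2(\O)$ is congruent mod $\E_2(\O)'$ to a single $E(s+3)$ (after the reindexing $E(x_i) = E((x_i - 3)+3)$, with $s = \sum(x_i - 3)$... more carefully, writing a general element as $E(x_1)\cdots E(x_l)$ and collapsing, one gets $E\bigl((\sum x_i) - 3(l-1)\bigr)$; since $3 \in$ the additive span issues are controlled by $M$, I would track this as in the cited proof which uses the $E(x_i+3)$ normalization). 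So $\E_2(\O)^{ab}$ is a cyclic-type quotient generated by the $E(x)$, and the map $E(x) \mapsto x-3$ identifies it with $(\O,+)$ modulo the subgroup of all $s \in \O$ with $E(s+3) \equiv 1 \bmod \E_2(\O)'$ (shifting appropriately). It then remains to show this subgroup of relations is exactly $M$. One inclusion is the well-definedness already proven (each generator of $M$ gives a relation). For the other, one must show $E(g+3) \equiv 1 \bmod \E_2(\O)'$ \emph{forces} $g \in M$ — but this is proven contrapositively by exhibiting, for each $g \in M$ generator, that $E(g+3) \equiv 1$; for type (1) this is identity \eqref{E(axa)E(x) in comm} and the arguments around \eqref{eq:psib} in the previous proof verbatim (with $N$ replaced by $M$, noting $M \subseteq N$ so those computations still land in $\E_2(\O)'$); for types (2)--(4) one uses relations \eqref{R6} and \eqref{relation alpha} respectively, rewritten in $\E_2$-generators exactly as was done for the $N$-generators in \Cref{abelianization GE2}.

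\textbf{Main obstacle.} The genuinely delicate point is pinning down the \emph{exact} subgroup $M$ — i.e.\ verifying that the relations \eqref{R6} and \eqref{relation alpha}, when pushed through the collapsing isomorphism $E(x)E(y)\equiv E(x+y-3)$, contribute precisely the generators (2), (3), (4) with precisely those integer coefficients ($3(\alpha_i+1)(\beta_i+1)$, the $+6$ in type (3), the coefficient $3$ in type (4)), and nothing more. Getting the shift constants right under the $x \mapsto x-3$ normalization — as opposed to the $x \mapsto x-1$ normalization used for $\GE_2$, where $2 \in N$ wiped out most constants — is where the bookkeeping is least routine, since here no such simplification is available and the constants genuinely matter. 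I would organize this by first establishing the collapsing identity and the resulting presentation of $\E_2(\O)^{ab}$ as $(\O,+)/M_0$ for an a priori unknown $M_0$, then squeezing $M_0$ between $M$ (from well-definedness) and $M$ (from the explicit trivializations of $E(g+3)$), concluding $M_0 = M$.
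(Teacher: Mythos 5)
Your proposal follows the paper's proof of \Cref{theoremM} step for step: well-definedness of $\tau$ by checking the defining relations from \Cref{generators and relations for E_2(R)}, the inclusion $\E_2(\O)'\subseteq\ker(\tau)$ from abelianness of the target, and the reverse inclusion via the collapsing identity $E(x)E(y)\equiv E(x+y-3)\bmod \E_2(\O)'$ together with explicit trivializations $E(m+3)\equiv 1 \bmod \E_2(\O)'$ for each type of additive generator $m$ of $M$ (using \eqref{E(axa)E(x) in comm} for type (1), relation \eqref{R6} for type (2), and \eqref{relation alpha} for types (3) and (4)). This is exactly the paper's argument, and the points you flag as the delicate ones are indeed where the paper spends its effort.

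There is one concrete misstep: your justification of $12\in M$ is wrong, and this fact is load-bearing (it is needed for \eqref{R4} and for the constant reconciliation in \eqref{relation alpha} in both the $n=2$ and $n=3$ cases). Type (1) generators cannot produce $12$: for $\O=\Z$ every $\alpha x\alpha-x$ with $\alpha=\pm1$ vanishes. The claim that ``$D(\alpha)$-images force $6\in M$ whenever a unit of order $>1$ exists'' is also false: every order contains $-1$, yet for $\O=\Z$ one computes $M=12\Z$ (consistent with $\E_2(\Z)^{ab}\cong C_{12}$), so $6\notin M$. The correct --- and essentially only --- source is a type (2) generator with the trivial commutator condition, namely $3(1+1)(1+1)=12$; this is literally the first observation in the paper's verification of \eqref{R4}. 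With that one line supplied, and being a little more careful than ``verbatim'' when importing the type (1) trivialization from the proof of \Cref{abelianization GE2} (there the congruences for the generators $(\alpha-1)x$ of $N$ are only modulo $\GE_2(\O)'$, whereas here the generators $\alpha x\alpha-x$ of $M$ are chosen precisely so that \eqref{E(axa)E(x) in comm} settles them modulo $\E_2(\O)'$, as you correctly cite), your outline matches the paper's proof.
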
 

\begin{remark}
As $\U(\O)$ is finite, $\O$ is an order in $\Q$, in an quadratic imaginary extension of $\Q$ or a totally definite quaternion algebra over $\Q$, by \Cref{When is unit group order finite} and the norm map appearing in the third and fourth item of the definition of $M$ in \Cref{theoremM} is the same as in the beginning of \Cref{subsect:almost_universalo_quaternion_orders}: $|x| = \sqrt{x \bar{x}}$.
\end{remark} 

\begin{proof}
We first prove that the map $\tau$ is well-defined and a group homomorphism. For this it is enough to check that $\tau$ preserves the defining relations of $\E_2(\O)$ stated in \Cref{generators and relations for E_2(R)}, with $\Phi$ the non-universal set of relations of the form \eqref{relation alpha}.

Relation \eqref{R1} is trivially preserved. Note that  $12= 3 (1+1)(1+1) \in M$. Hence \eqref{R4}, or equivalently $E(0)^2 = E(0)^2E(-1)^3$, is preserved.
Since $\alpha x \alpha \equiv x \mod M$, for any $\alpha \in \mathcal{U}(\mathcal{O})$ and $x \in \mathcal{O}$, we have that $$\alpha \equiv \alpha^{-1} \mod M.$$
Now, the image of \eqref{R3'} under $\tau$ yields the equation $x-3 + 2 \alpha +\alpha^{-1}-3 \equiv 2 \alpha^{-1} + \alpha -3 + \alpha x \alpha -3 \bmod M$ or thus
$$\alpha x \alpha -x \equiv \alpha - \alpha^{-1} \equiv 0 \mod M.$$

We now consider the preservation of (\ref{R6}). Since $\alpha \equiv \alpha^{-1} \mod M$ for any unit $\alpha$, we immediately obtain also that $\tau(D(\alpha^{-1})) \equiv \tau(D(\alpha)) \equiv 3(\alpha - 1) \mod M$.
By definition there is for every $1\leq k \leq n$ a decomposition $[w_k, 1] = \prod_{i \in I_k} D(\alpha_{i,k}^{-1}) D(\beta_{i,k}^{-1}) D(\alpha_{i,k} \beta_{i,k})$.
Furthermore by $(2)$, $$\begin{array}{ll}
\tau\left( \prod\limits_{1 \leq k \leq n } \prod\limits_{i \in I_k} D(\alpha_{i,k}^{-1}) D(\beta_{i,k}^{-1}) D(\alpha_{i,k} \beta_{i,k})\right) & \equiv \sum\limits_{1 \leq k \leq n } \sum\limits_{i \in I_k} 3(\alpha_{i,k} -1) + 3(\beta_{i,k} -1) + 3( \alpha_{i,k} \beta_{i,k} -1) \\
& \equiv \sum\limits_{1 \leq k \leq n } \sum\limits_{i \in I_k} 3(\alpha_{i,k} + \beta_{i,k} + \alpha_{i,k} \beta_{i,k} +1) \\
& \equiv \sum\limits_{1 \leq k \leq n } \sum\limits_{i \in I_k}  3(\alpha_{i,k} + 1 ) (\beta_{i,k} +1) \equiv 0 \mod M,
\end{array}$$
yielding that (\ref{R6}) is preserved.

Finally consider the relation $(E(\overline{x}) E(x))^n = E(0)^{2}$ from \eqref{relation alpha}. If $|x| = \sqrt{2}$ then 
$$
\tau\left( E(\overline{x}) E(x) E(\overline{x}) E(x) E(0)^{2} \right) \equiv 2(\overline{x} + x)  + 6 \equiv  0 \mod M.
$$
Similarly, $\tau\left( (E(\overline{x}) E(x))^3 E(0)^{2} \right)\equiv 3 (\overline{x} + x) \equiv 0 \mod M$, if $|x|= \sqrt{3}$.

Altogether we proved that $\tau$ is well-defined and hence defines an epimorphism. Since $(\O/M,+)$ is abelian, $E_2(\O)' \subseteq \ker (\tau)$.
By \eqref{prod van E's samen nemen} in the proof of \Cref{abelianization GE2} the reverse inclusion follows if $E(m +3) \in \E_2(\O)'$ for all additive generators $m$ of  $M$.

Due to \eqref{mod E_2 product of two E matrices is a E matrix} and \eqref{E(axa)E(x) in comm} in the proof of \Cref{abelianization GE2} one also immediately obtains $E(3) \in \E_2(\O)'$ and $E(\alpha x \alpha) E(x)^{-1} \in \E_2(\O)'$. Consequently, by \eqref{2.8}, $E(\alpha x \alpha - x +3) = E(\alpha x \alpha) E(x)^{-1} E(3) \in \E_2(\O)'$. Next consider an element $\sum_{i \in I} 3 (\alpha_i +1)(\beta_i + 1)$ such that $\prod_{i = 1}^m \alpha_i^{-1} \beta_i^{-1} \alpha_i \beta_i =1$. By first using \eqref{prod van E's samen nemen}, then consecutively \eqref{R1}, \eqref{3.7E}, \eqref{3.7D}, \eqref{R5} and finally \eqref{R6} we obtain
$$\begin{array}{ll}
E\left( \sum_{i = 1}^m 3 (\alpha_i +1)(\beta_i + 1) + 3 \right) & \equiv \prod_{i = 1}^m E( 3 (\alpha_i +1)(\beta_i + 1) + 3) \\
& \equiv \prod_{i = 1}^m E( 3 \alpha_i) E(0) E(3 \beta_i) E(0) E(3 \alpha_i \beta_i) (E(0) E(3) )^2 \\
& \equiv \prod_{i = 1}^m D(\alpha_i)D(\beta_i)D(\alpha_i \beta_i)\\
& \equiv \prod_{i = 1}^m D(\alpha_i^{-1})D(\beta_i^{-1})D(\alpha_i \beta_i) \\
& \equiv \prod_{i = 1}^m [\alpha_i^{-1}\beta_i^{-1}\alpha_i \beta_i,1] \equiv  1 \mod \E_2(\O)'.
\end{array}$$
Now consider an element $2 (x + \overline{x}) +6$ with $|x|^2= 2$. Then, using \eqref{R1}, the fact that modulo $\E_2(\O)'$ all elements commute, that $E(0)^4 = I$, that $E(3) \in \E_2(\O)'$ and finally \eqref{relation alpha},
\begin{align*}
\MoveEqLeft E(2(x+\overline{x}) + 9) \\& \equiv E(0)^2 E(x+\overline{x})E(0) E(0)^2 E(x+\overline{x})E(0) E(0)^2 E(3)E(0)E(0)^2 E(3)E(0)E(3) \\ &\equiv E(x+\overline{x})^2 E(3)^3 \equiv E(x+\overline{x})^2  \equiv E(0)^2E(\overline{x})E(0)E(x)E(0)^2E(\overline{x})E(0)E(x)\\
 &\equiv (E(\overline{x})E(x))^2 E(0)^2  \equiv E(0)^2E(0)^2 \equiv 1 \mod \E_2(\O)'.
\end{align*}
In case of the additive generators $3 (x + \bar{x})$ the proof is analogue, hence finishing the proof.
\end{proof}

\begin{remark}\label[remark]{rem:12_in_M}
Note the subtle, but crucial, point that $M$ is defined to be the additive subgroup generated by those elements listed in \Cref{theoremM}, in contrast to $N$ from \Cref{abelianization GE2} which is defined as the two-sided ideal generated by the elements in that statement.  In fact, since $12 \in M$, one can deduce that $M$ can only be an ideal when $\O / M$ is fnite (which by \Cref{finite_abelianization_equiv} only is the case when $\O$ contains a $\Z$-basis consisting of units).
Also it is interesting to remember that the elements $\alpha x \alpha -x$ exactly encode the image of (\ref{R2}) under $\tau$, the elements $\prod 3(\alpha_i +1)(\beta_i +1)$ the relations (\ref{R6}) and the last two elements encode the relations of the form (\ref{relation alpha}).

Finally note that if $\mathcal{U}(\O)$ is abelian, then the condition $\prod_{i = 1}^m \alpha_i^{-1} \beta_i^{-1} \alpha_i \beta_i =1$ is always satisfied, hence in this case one simply adds all elements $3(\alpha + 1) (\beta + 1)$. 
\end{remark}

\begin{remark}\label[remark]{defofOrders}
We denote by $\mathcal{I}_d$ the ring of algebraic integers in the imaginary quadratic number field $\Q(\sqrt{-d})$ of a positive integer $d$, e.g.\ $\mathcal{I}_1 = \Z[\sqrt{-1}]$ and $\mathcal{I}_3 = \Z[\zeta_3]$, where $\zeta_3$ is a primitive complex third root of unity. It is well known that $\mathcal{I}_d$ is Euclidean if and only if $d \in \{1, 2, 3, 7, 11\}$. In \cite{Fitz}, Fitzgerald showed that the only totally definite quaternion algebras $\mathbb{H}_d$ with center $\mathbb{Q}$ containing a right norm Euclidean order are \[\mathbb{H}_2 = \qa{-1}{-1}{\mathbb{Q}},\quad \mathbb{H}_3 = \qa{-1}{-3}{\mathbb{Q}}\quad \mbox{ and }\quad \mathbb{H}_5 = \qa{-2}{-5}{\mathbb{Q}}.\]

Note that orders that are (right norm) Euclidean are maximal \cite[Proposition 2.8]{CeChLez}. Furthermore a quaternion algebra having a right norm Euclidean order has class number one \cite[Proposition 2.9]{CeChLez} and thus also type number one meaning that there is only one conjugacy class of maximal orders. In \cite{Fitz} also a specific representative of that unique conjugacy class, denoted $\O_2 \subseteq \mathbb{H}_2$, $\O_3 \subseteq \mathbb{H}_3$ and $\O_5 \subseteq \mathbb{H}_5$, is constructed. For later use we explicitly state in the table below specific $\mathbb{Z}$-bases $\{b_1, b_2, b_3, b_4\}$ of these orders (which also can be found in \cite[Proposition~12.3.2]{EricAngel1}). The quaternion algebra $\mathbb{H}_2$ also contains the order of \emph{Lipschitz quaternions} $\LL$ consisting of all integral linear combinations of the basis elements $1, i, j, k$.
\newcolumntype{C}{>{\centering\arraybackslash}p{2.5cm}}
\newcolumntype{B}{>{\centering\arraybackslash}p{2cm}}
\begin{equation}\label[equation]{eq:basis_of_quat_orders} 
\begin{tabular}{|*{2}{B|}*{3}{C|}} 
   & $b_1$ & $b_2$ & $b_3$ & $b_4$  \\ \hline
  $\LL$ & $1$ & $i$ & $j$ & $k$ \\
  $\O_2$ & $1$ & $i$ & $j$ & $\omega_2 = \frac{1 + i + j + k}{2}$ \\
  $\O_3$ & $1$ & $i$ & $\omega_3 = \frac{1+j}{2}$ & $\frac{i+k}{2}$  \\
  $\O_5$ & $1$ & $ \frac{1+i+j}{2}$ & $\omega_5 = \frac{2+i-k}{4}$ & $\frac{2+3i+k}{4}$ \\ \hline
\end{tabular} \end{equation}
\end{remark}

When $R$ is a ring which is also freely generated as a $\Z$-module (e.g. $R$ is an order) we define $$\operatorname{inv}_R = \max \{ \left| B \cap \U(R) \right| \mid B \text{ a } \Z\text{-module basis of }R\}, $$ and for a finitely generated abelian group $G$ one defines $$\rank_{\Z} G  = \max \{ n \mid \Z^n \text{ is, up to isomorphism, a subgroup of } G \}.$$

\begin{theorem} \label{finite_abelianization_equiv}
Let $\O$ be an order in a finite dimensional division $\Q$-algebra $D$ with $\U(\O)$ finite. Then,
 \begin{equation}\label[equation]{rankformula}
\rank_{\Z} \E_2(\O)^{ab} = \rank_{\Z} \O - \operatorname{inv}_{\O}.
\end{equation}
\noindent Moreover, the following properties are equivalent:
\begin{enumerate}[label=(\alph*),ref=\alph*]
\item\label[equation]{it:finiteE2} $\E_2(\O)^{ab}$ is finite,
\item\label[equation]{it:specificO} $\O$ is isomorphic to $\Z, \I_1, \I_3, \LL, \O_2$ or $\O_3$,
\item\label[equation]{it:basis} $\O$ has a $\Z$-basis consisting of units of $\O$,
\item\label[equation]{it:subring} $\O$ is generated as a ring by $\U (\O)$,
\item\label[equation]{it:submodule} $\O$ is generated as a $\Z$-module by $\U (\O)$. 
\end{enumerate}
\end{theorem}

\begin{proof}
Throughout, we will rely on \Cref{When is unit group order finite}. We start off by proving formula \eqref{rankformula}. We will use the description of $\E_2(\O)^{ab}$ given in \Cref{theoremM} and the additive subgroup $M$ defined there.
Since $3(1+1)(1+1) = 12 \in M$, and for any unit $\alpha$ holds that $3(\alpha + 1)(1+1) + 3(\alpha + 1)(1+1) = 12\alpha + 12 \in M$, we readily obtain that $12 \alpha \in M$. Consequently, any unit of $\O$ has finite (additive) order in $(\O/M, +)$.
As such, $\rank_{\Z} \E_2(\O)^{ab} \leq \rank_{\Z} \O - \operatorname{inv}_{\O}$.

If $D = \Q$, then $\O = \Z$ and the statement is correct since $\E_2(\Z)^{ab} \cong C_{12}$ is finite.
If $D$ is a quadratic imaginary extension of $\Q$, $\O$ is a free $\Z$-module of rank $2$.
Assume there exists a basis consisting of units for $\O$. Hence $\rank_{\Z} \O - \operatorname{inv}_{\O} = 2-2 = 0 \leq \rank_{\Z} \E_2(\O)^{ab}$, showing that the inequality holds trivially.
If not, then one may assume the existence of a base of $\O$ of the form $\{ 1, a \}$ with $a \notin \U(\O)$.
It is well known that in this case $\U(\O) = \{ \pm 1\}$.
The generators of type $(1)$ of $M$ (in \Cref{theoremM}) are then all equal to $0$, and the generators of type $(2), (3)$ and $(4)$ are in $\Z$.
As such, $12 \Z \subseteq M \subseteq \Z$ and thus $\rank_{\Z} \E_2(\O)^{ab} = \rank_{\Z}(\O/M, +) = 1 =  \rank_{\Z} \O - \operatorname{inv}_{\O}$.

The last situation to consider is when $D$ is a totally definite quaternion algebra over $\Q$.
If $\O$ contains a basis of units, similar to before, the inequality is trivially satisfied. In particular, we may assume that $\O$ is not isomorphic to $\LL, \O_2$ or $\O_3$.
Hence, by \cite[Theorem~11.5.12]{Voight}, the unit group $\U(\O)$ is cyclic. We will denote the generator by $\beta$.

Clearly, the elements of the forms $(2), (3)$ and $(4)$ in \Cref{theoremM} are in $\Z[\beta]$.
For elements of the form $(1)$ in \Cref{theoremM} we do the following.
Take any element $\gamma \in \O$ of norm $1$.
Then $\gamma$ is a root of a polynomial $(X-\gamma)(X-\bar{\gamma}) = X^2 - t X + 1$ for $t = \gamma + \bar{\gamma} \in \mathcal{Z}(\O) = \mathbb{Z}$, so $\gamma^2 = t\gamma - 1$ and hence for every $x \in \O$
\begin{align*}(\gamma x\gamma  - x)\gamma  =\ & \gamma x\gamma ^2-x\gamma \ =\ \gamma x(t \gamma -1)-x\gamma \ =\ t \gamma x\gamma  - \gamma x - x\gamma  \\ =\ & (t \gamma -1)x\gamma  - \gamma x\ =\ \gamma ^2x\gamma  - \gamma x\ =\ \gamma (\gamma x\gamma -x).\end{align*}
Thus $\gamma x\gamma  - x \in \mathrm{C}_{\O}(\gamma )$, the centralizer of $\gamma $ in $\O$.
A straightforward calculation shows that for $k\geq 2$
\[\gamma ^kx\gamma ^k - x = t \gamma ^{k-2} (\gamma x\gamma  - x) \gamma ^{k-1} + \gamma ^{k-2}x\gamma ^{k-2} - x, \]
and hence, by induction on $k$, $\gamma ^kx\gamma ^k - x \in \mathrm{C}_{\O}(\gamma )$, for every $k$.
If $\beta$ has order $2$, then $\beta = -1$ and $\Z[\beta] = \Z$. The generators in (1) are $0$ and $M \subseteq \mathbb{Z}[\beta]=\Z$. Hence $\O/M$ is of rank at least $3$ and a basis of $\O$ can only contain one unit. So the inequality $$\rank_{\Z} \O - \operatorname{inv}_{\O} = 4-1 = 3 \leq \rank_{\Z}(\O/M, +) = \rank_{\Z} \E_2(\O)^{ab},$$ holds.

If $\beta$ has order larger than $2$, then $\beta$ necessarily is not central. As $\Z \subsetneq \mathbb{Z}[\beta] \subseteq \mathrm{C}_\O(\beta)$ we obtain in this case that $\operatorname{rank}_\mathbb{Z} \mathrm{C}_{\O}(\beta) = 2$ (else by tensoring up with $\Q$ this would mean that $\beta$ is central in $D$).
Furthermore, by the above, $\alpha x\alpha - x \in \mathrm{C}_\O(\beta)$ for every $\alpha \in \mathcal{U}(\O)$ and every $x \in \O$.
So we get that all generators of $M$ are contained in $\mathrm{C}_\O(\beta)$.
Hence $\O/M$ maps surjectively onto $\O/\mathrm{C}_{\O}(\beta)$ and therefore is of rank at least $2$. Altogether, $$\rank_{\Z} \O - \operatorname{inv}_{\O} = 4-2 = 2 \leq \rank_{\Z}(\O/M, +) = \rank_{\Z} \E_2(\O)^{ab},$$ showing the inequality in the last case.

Now we prove that the statements $(a)-(e)$ are equivalent. To start, remark that $(a)$ and $(c)$ are equivalent due to formula \eqref{rankformula}. Hence it remains to prove that \eqref{it:specificO}, \eqref{it:basis}, \eqref{it:subring} and \eqref{it:submodule} are equivalent.
First, for an order $\O$ in an imaginary quadratic number field $\mathcal{U}(\O) = \langle - 1\rangle$, unless $\O \in  \{\mathcal{I}_1, \mathcal{I}_3 \}$ (e.g. see \cite[remark after Th. 240]{HardyWright}). In those cases $\mathcal{U}(\mathcal{I}_1) = \langle i\rangle$ and $\mathcal{U}(\mathcal{I}_3) = \langle - \zeta_3\rangle$, respectively. This implies that the last four conditions are equivalent in the case of orders in number fields with a finite unit group. Second assume that $\O$ is an order in a totally definite quaternion algebra with center $\mathbb{Q}$ and suppose $\O$ is isomorphic to $\LL, \O_2$ or $\O_3$.
In all three cases there exists a basis consisting of units of $\O$ given in \eqref{eq:basis_of_quat_orders}. Hence \eqref{it:specificO} implies \eqref{it:basis}. Clearly \eqref{it:basis} implies \eqref{it:subring} which implies \eqref{it:submodule}. For \eqref{it:submodule} implies \eqref{it:specificO} note that if $\O$ is an order in a totally definite quaternion algebra with center $\mathbb{Q}$ not isomorphic to $\LL$, $\O_2$ or $\O_3$, then, by \cite[Theorem~11.5.12]{Voight}, $\U(\O)$ is cyclic, generated by $\beta$, say. But then $\U(\O)$ is contained in the commutative subring $\mathbb{Z}[\beta] \subseteq \O$, which has $\mathbb{Z}$-rank at most $2$, since $D$ is a quaternion algebra.
\end{proof}

\begin{remark}\label[remark]{basis of units or small cyclic unit group}
One can filter from the proof of the previous theorem that when $\O$ is an order in a finite dimensional division $\Q$-algebra $D$ with $\U(\O)$ finite, then either $\O$ has a $\Z$-basis consisting of units or $|U(\O)| \in \{2,4,6\}$.
\end{remark}

We can also describe now the abelianization of $\GL_2(\O)$ with $\O$ the norm Euclidean maximal orders in quaternion algebras introduced in table (\ref{eq:basis_of_quat_orders}).
Note that an element $x$ in such an order $\O$ is a unit if and only if $\operatorname{N}(x) = x\bar{x} \in \mathbb{Z}_{\geq 0}$ equals $1$. Then it is not hard to find the units from the description of the orders given in \eqref{eq:basis_of_quat_orders}. If we set $\omega_2 = \frac{1+i+j+k}{2}\in \O_2$, $\omega_3 = \frac{1+j}{2} \in \O_3$ and $\omega_5 =  \frac{2+i-k}{4} \in \O_5$, then we have
\begin{align} 
\label[equation]{eq:units_of_quat_orders}
\begin{split}
 \U(\O_2) & = \langle i, \omega_2 \rangle \cong \SL(2,3) \cong Q_8 \rtimes C_3, \\
\U(\O_3) & = \langle i, \omega_3 \rangle \cong C_3 \rtimes C_4, \\
\U(\O_5) & = \langle \omega_5 \rangle \cong C_6.
\end{split}
\end{align}

\begin{corollary}\label[corollary]{prop:GE_2_O_ab_quat} $\GL_2(\O_2)^{ab} \cong C_3$,  $\GL_2(\O_3)^{ab} \cong C_4$ and  $\GL_2(\O_5)^{ab} \cong C_6$.
\end{corollary}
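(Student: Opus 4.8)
The plan is to reduce the computation to the short exact sequence of \Cref{abelianization GE2}. The first observation is that, by Fitzgerald's theorem recalled just before \eqref{eq:basis_of_quat_orders}, each of $\O_2$, $\O_3$ and $\O_5$ is right norm Euclidean, hence a $\GE_2$-ring by \Cref{euclidian are GE2}. Therefore $\GL_2(\O_i) = \GE_2(\O_i)$ for $i \in \{2,3,5\}$, and it suffices to compute the abelianizations $\GE_2(\O_i)^{ab}$.

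Next I would apply \Cref{prop:GE_2_O_ab}. Each $\O_i$ is an order in a totally definite quaternion algebra with center $\Q$, so $\U(\O_i)$ is finite by \Cref{When is unit group order finite}; and from the descriptions in \eqref{eq:units_of_quat_orders} the groups $\U(\O_2) \cong \SL(2,3)$, $\U(\O_3) \cong C_3 \rtimes C_4$ and $\U(\O_5) \cong C_6$ each contain an element of order $3$. Hence \Cref{prop:GE_2_O_ab} gives $\GE_2(\O_i)^{ab} \cong \U(\O_i)^{ab}$ for $i \in \{2,3,5\}$; equivalently, the ideal $N$ occurring in \Cref{abelianization GE2} is all of $\O_i$ in these three cases.

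It then only remains to identify the abelianizations of the unit groups themselves. For $\SL(2,3) \cong Q_8 \rtimes C_3$ the derived subgroup is $Q_8$ with quotient $C_3$, so $\U(\O_2)^{ab} \cong C_3$. For $C_3 \rtimes C_4$, a generator of $C_4$ acts on $C_3$ by inversion, so if $a$ generates $C_3$ and $b$ generates $C_4$ then the commutator of $a$ and $b$ is a nontrivial power of $a$; thus the derived subgroup equals $\langle a\rangle \cong C_3$ and $\U(\O_3)^{ab} \cong C_4$. Finally $\U(\O_5) \cong C_6$ is abelian, so $\U(\O_5)^{ab} \cong C_6$. Combining with the two preceding paragraphs yields $\GL_2(\O_2)^{ab} \cong C_3$, $\GL_2(\O_3)^{ab} \cong C_4$ and $\GL_2(\O_5)^{ab} \cong C_6$.

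I do not expect a substantial obstacle here: the statement is essentially a corollary of the machinery already established. The only points requiring care are verifying that all hypotheses of \Cref{prop:GE_2_O_ab} hold simultaneously for the three orders — in particular that each unit group genuinely contains an order-$3$ element, which is immediate from \eqref{eq:units_of_quat_orders} — and correctly computing $\SL(2,3)^{ab}$ and $(C_3 \rtimes C_4)^{ab}$, both of which are standard finite group computations.
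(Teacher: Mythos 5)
Your proposal is correct and follows essentially the same route as the paper: Euclidean implies $\GE_2$-ring, then \Cref{prop:GE_2_O_ab} (via an order-$3$ unit, which the paper obtains as the square of the order-$6$ elements $\omega_i$) reduces everything to $\U(\O_i)^{ab}$, which you compute correctly from \eqref{eq:units_of_quat_orders}. No gaps.
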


\begin{proof} Since $\O_2, \O_3$ and $\O_5$ are Euclidean, they are  $\GE_2$-rings by \Cref{euclidian are GE2}. Now $\omega_2 \in \U(\O_2)$, $\omega_3 \in \U(\O_3)$ and $\omega_5 \in \U(\O_5)$ are elements of order $6$. Hence, by \Cref{prop:GE_2_O_ab}, for $\O$ one of the three orders, $\GL_2(\O)^{ab} = \GE_2(\O)^{ab} \cong \U(\O)^{ab}$. \end{proof}

\section{Property \HFA[n-2] and \HFR for $\E_n(R)$ if $n\geq 3$} \label{sectie hogere graad matrices FA}

In this section we discuss properties \FA and \FR for the groups $\E_n(R)$, where $R$ is a {\it unital ring} and $n \geq 3$.
We prove fixed point properties on higher-dimensional CAT($0$) cell complexes for the Steinberg groups $\St_n(R)$, where $R$ is a finitely generated unital ring.
This will eventually imply the respective properties for $\E_n(R)$.

For $ i \neq j$, let $e_{ij}(r)$ denote the matrix, called {\it elementary matrix}, in $\GL_n(R)$ having $1$ on the diagonal and $r$ in the $(i,j)$-entry.
Recall that $\E_n(R) = \langle e_{ij}(r) \mid 1\leq i\neq j \leq n, r \in R\rangle$ denotes the elementary subgroup of $\GL_n(R)$.
In case $n \geq 3$ it will turn out that the elementary matrices $\E_n(R)$ over a finitely generated ring do not only have global fixed points on simplicial trees but also on `higher dimensional trees'.
More precisely they will have property \FA[n-2] (in the sense of \cite{Farb}).

\begin{definition}
A group $\Gamma$ is said to have \emph{property \FA[n]} if any isometric action, without inversion, on an $n$-dimensional CAT(0) cell complex has a fixed point.
\end{definition}

For definitions and a more in-depth discussion of CAT(0) spaces and cell complexes, we refer the reader to \cite[Chapter II]{BriHae}.
This definition is indeed a generalization of \FA since a simplicial tree is exactly a $1$-dimensional CAT(0) cell complex.
As such, \FA and \FA[1] are the same property.
Similar to the classical notation, we will say a group has \emph{property \HFA[n]} if every finite index subgroup has \FA[n].
Note that if a group has property \FA[n] for an $n \in \N$, then it has \FA[m] for every $n>m \in \Z_{\geq 1}$. 

In \cite[Theorem 1.2]{Ye} Ye proved that, for a finitely generated ring $R$ and $n\geq 3$, $\E_n(R)$ has property \FA[n-2] and in \cite[Theorem 1.1]{ErsJai} Ershov and Jaikin-Zapirain proved that it also has property \T, which we know to imply \FR (see \cite[Chapter~6., Proposition~11]{dlHV}).
The purpose of this section is to prove the following result.
 
\begin{theorem}\label{theo:E_n(R)_has_FR}
Let $n \geq 3$. Let $R$ be a unital ring which is finitely generated as $\Z$-module, then the group $\E_n(R)^{(m)}$ satisfies property \FR and \FA[n-2] for each $m \geq 1$.
\end{theorem}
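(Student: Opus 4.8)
The plan is to leverage the two external results cited just before the statement --- Ye's theorem that $\E_n(R)$ has property \FA[n-2] for a finitely generated ring $R$, and the Ershov--Jaikin-Zapirain theorem that $\E_n(R)$ has Kazhdan's property \T\ --- together with the closure properties recorded in \Cref{FAconserved}. The first observation is that since $R$ is finitely generated as a $\Z$-module, it is in particular finitely generated as a ring, so both cited theorems apply to $\E_n(R)$ itself. Property \T\ implies property \FR\ (as noted in the introduction, via \cite[Theorem~2.12.6]{ValetteBook}, since \FR\ is a fixed point property on real trees and \T\ forces fixed points for isometric actions on such spaces), so $\E_n(R)$ has both \FR\ and \FA[n-2].

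The remaining point is to pass from $\E_n(R)$ to its higher commutator subgroups $\E_n(R)^{(m)}$. Here the key input is the classical fact that, for $n \geq 3$ and $R$ finitely generated, $\E_n(R)$ is \emph{perfect} --- indeed $\E_n(R) = \E_n(R)'$ follows from the Steinberg commutator relations $[e_{ij}(a), e_{jk}(b)] = e_{ik}(ab)$ for distinct indices $i,j,k$ (available precisely because $n \geq 3$), which express every elementary generator as a commutator of elementary matrices. Consequently $\E_n(R)^{(m)} = \E_n(R)$ for every $m \geq 1$, so the statement for the derived series reduces immediately to the statement for $\E_n(R)$ itself, which we have already established. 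One should remark that $\E_n(R)$ being perfect also makes the \FR\ (hence \FA) conclusion transparent even without invoking property \T: a perfect group has trivial abelianization, and by \Cref{prop:iff_conditions_FA} it then suffices to rule out nontrivial amalgam decompositions --- but this is exactly what property \T\ (or directly Ye's \FA[1]) provides, so citing the literature is the cleanest route.

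The main --- and really only --- obstacle is making sure the cited results are stated in the generality we need: Ye's theorem and the Ershov--Jaikin-Zapirain theorem both require $R$ to be finitely generated as a \emph{ring}, and one must check that ``finitely generated as a $\Z$-module'' is the stronger hypothesis (it is, since a $\Z$-module generating set is a fortiori a ring generating set). Beyond that the argument is essentially bookkeeping: invoke perfectness of $\E_n(R)$ for $n\ge 3$ to collapse the derived series, then quote \FA[n-2] from \cite[Theorem 1.2]{Ye} and \FR\ from property \T\ via \cite[Theorem 1.1]{ErsJai}. Finally, \FR\ implies \FA\ $=$ \FA[1], which is the $m=1$, ``$1$-dimensional'' case, so the two conclusions are compatible and the theorem follows.
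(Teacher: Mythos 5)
There is a genuine gap, and it comes from a misreading of the notation. In this paper $\E_n(R)^{(m)}$ is \emph{not} the $m$-th term of the derived series: it is defined (via the analogous definition for $\St_n(R)^{(m)}$, given just before \Cref{properties En(R)}) as the subgroup $\langle e_{i,i+1}(mr) \mid r \in T,\ 1 \leq i \leq n\rangle$ for $T$ a $\Z$-module generating set of $R$. By \Cref{properties En(R)} and \Cref{E_n(R) commutators} this group sits between $\E_n(m^{n-1}R)$ and $\E_n(mR)$, so for $m \geq 2$ it is in general a \emph{proper} (finite-index) subgroup of $\E_n(R)$ — e.g.\ for $R = \Z$ it lies inside the level-$m$ congruence subgroup. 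The authors even warn explicitly that $\St_n(R)^{(m)}$ is not the group generated by the $m$-th powers of all generators but a ``suitably chosen subgroup'' designed so that any finite-index subgroup of $\SL_n(\O)$ contains one of them (this is exactly how \Cref{HFR voor E_n en SLn met n >3} extracts the hereditary properties \HFR and \HFA[n-2]). Your perfectness argument therefore does not collapse the family to the single group $\E_n(R)$, and the case $m \geq 2$ — which is the entire point of the theorem — is not addressed. Nor can you repair this by saying $\E_n(R)^{(m)}$ has finite index in $\E_n(R)$: the paper stresses that \FA (hence \FA[n-2]) and \FR do \emph{not} descend to finite-index subgroups in general, which is precisely why a separate argument is needed.

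Your treatment of the $m=1$ case is fine and matches the remark the authors make after \Cref{th: elementary matrices FR}, but the paper's actual proof is different in substance: it works with the Steinberg groups $\St_n(R)^{(m)}$ and verifies Farb's fixed-point criterion directly, taking as the required nilpotent subgroups the abelian groups $\Gamma_i = \langle x_{i,i+1}(mr) \mid r \in T\rangle$ and checking that every proper subfamily generates a nilpotent group and that suitable powers of the generators land in commutator subgroups of nilpotent subgroups; property \FR is then obtained from the Culler--Vogtmann criterion (commutators of generators commuting with the generators) together with the finite abelianization already established. The conclusion for $\E_n(R)^{(m)}$ follows because it is a quotient of $\St_n(R)^{(m)}$ and both properties pass to quotients. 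To fix your proposal you would need to either reproduce an argument of this kind for the specific subgroups $\E_n(R)^{(m)}$, or find a citation that covers these subgroups (not just $\E_n(R)$) directly.
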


The groups $\E_n(R)^{(m)}$ are subgroups of $\E_n(R)$ that will suit our purposes to study hereditary fixed point properties. They are defined below, just before \Cref{properties En(R)}.
Actually we will consider the so-called Steinberg groups $\St_n(R)$ and prove in \Cref{th: elementary matrices FR} (and the remark thereafter) the above statement for these groups.
The construction of $\St_n(R)$ is such that it maps onto $\E_n(R)$ and hence, since property \FR and \FA[n-2] are preserved under quotients, $\E_n(R)$ will inherit these properties from $\St_n(R)$.
From now on, throughout this section we assume $n \geq 3$. 

Straightforward calculations show that over any ring the elementary matrices satisfy the following relations (where $(a,b)= a^{-1} b^{-1} a b$ is the multiplicative commutator). 

\begin{lemma}\label{commutators in En}
Let $R$ be a ring. Then in $\E_n(R)$ we have that 
$$\left( e_{kl}(s), e_{ij}(r) \right) = \left\lbrace 
\begin{array}{ll}
1 & \mbox{ if } j \neq k \mbox{ and } i \neq l ,\\
e_{il}(-rs) & \mbox{ if } j = k \mbox{ and } i \neq l, \\
e_{kj}(sr) & \mbox{ if } j \neq k \mbox{ and } i = l ,\\
\end{array} \right.$$
for $s,r \in R$ and $1 \leq i,j, k,l \leq n$ with $i \neq j$, $k \neq l$ and $|\{ i,j,k,l\}|>2$.
\end{lemma}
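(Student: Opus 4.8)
The plan is to verify the identity by a direct matrix computation, using the identification $e_{ij}(r) = I_n + r E_{ij}$, where $E_{ij}$ is the matrix unit with a single $1$ in position $(i,j)$ and zeros elsewhere. Two elementary facts do all the work: since $i\neq j$ one has $E_{ij}^2 = 0$, so $e_{ij}(r)^{-1} = e_{ij}(-r) = I_n - rE_{ij}$; and matrix units multiply by the rule $E_{ab}E_{cd} = \delta_{bc}E_{ad}$. (These are the Steinberg commutator relations, and this is essentially the standard computation behind them.)

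Concretely, I would expand
\[
\bigl(e_{kl}(s), e_{ij}(r)\bigr) = (I_n - sE_{kl})(I_n - rE_{ij})(I_n + sE_{kl})(I_n + rE_{ij}),
\]
multiply out the four factors, and sort the result by the matrix units that occur. All terms linear in a single $E$ cancel in pairs. Among the remaining terms, the only products of two matrix units that can be nonzero are $E_{ij}E_{kl} = \delta_{jk}E_{il}$ and $E_{kl}E_{ij} = \delta_{li}E_{kj}$, and after cancellation one is left with
\[
\bigl(e_{kl}(s), e_{ij}(r)\bigr) = I_n - rs\,\delta_{jk}E_{il} + sr\,\delta_{li}E_{kj},
\]
all other contributions being products of three or more matrix units whose chain of Kronecker deltas is forced to vanish by $i\neq j$ and $k\neq l$.

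It then remains to read off the three cases, and this is exactly where the hypothesis $|\{i,j,k,l\}| > 2$ enters: together with $i\neq j$ and $k\neq l$ it prevents $j=k$ and $i=l$ from holding simultaneously (that would force $\{i,j,k,l\} = \{i,j\}$), so at most one of $\delta_{jk}$, $\delta_{li}$ equals $1$. This yields $e_{il}(-rs)$ when $j=k$ (and then $i\neq l$, so this is a genuine elementary matrix), $e_{kj}(sr)$ when $i=l$, and $I_n$ otherwise. There is no real difficulty here beyond the bookkeeping of the Kronecker deltas; if one prefers to avoid carrying them all at once, an equally good route is to treat the three listed cases one at a time, in each case expanding the product of four factors with the relevant coincidences of indices already imposed.
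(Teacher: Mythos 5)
Your computation is correct and is precisely the ``straightforward calculation'' the paper alludes to without writing out: expanding $(I-sE_{kl})(I-rE_{ij})(I+sE_{kl})(I+rE_{ij})$ using $E_{ab}E_{cd}=\delta_{bc}E_{ad}$ and $E_{ij}^2=E_{kl}^2=0$ leaves exactly $I+\delta_{li}(sr)E_{kj}-\delta_{jk}(rs)E_{il}$, and the hypothesis $|\{i,j,k,l\}|>2$ kills the higher-order terms and rules out $j=k$ and $i=l$ holding simultaneously. The orders of the products $sr$ and $rs$ and the sign in $e_{il}(-rs)$ all come out matching the statement, so nothing is missing.
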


In general $\E_n(R)$ may satisfy more relations as those above and this deficiency can be quantified by introducing a kind of 'free model of $\E_n(R)$'. 

\begin{definition}
Let $n \geq 3$ and $J$ an ideal in $R$. The \emph{Steinberg group} $\St_n(J)$ is the abstract group generated by the symbols $\{ x_{ij}(r) \mid  1\leq i\neq j \leq n, r \in J \}$ subject to the following relations:
\begin{align*}
x_{ij}(r)x_{ij}(s) &= x_{ij}(r+s),\\
\left( x_{ij}(r),x_{kl}(s)\right) &= 1 \text{ if } j\neq k \text{ and } i \neq l, \\
\left( x_{ij}(r),x_{jk}(s)\right) &= x_{ik}(rs) \text{ for $i,j,k$ pairwise different},\\
\left( x_{ij}(r),x_{ki}(s)\right) &= x_{kj}(-sr) \text{ for $i,j,k$ pairwise different}.
\end{align*}
The indices will always be taken modulo $n$.
\end{definition}

Clearly there is a natural epimorphism $\pi_n: \St_n(J) \rightarrow \E_n(J)$ defined by $\pi_n(x_{ij}(r)) = e_{ij}(r)$ and $\ker (\pi_n)$ measures 'how many' relations essentially different from those in \Cref{commutators in En} $\E_n(J)$ satisfies.

The proof of the version of \Cref{theo:E_n(R)_has_FR} for the Steinberg groups consists in obtaining a 'nice' generating set in the sense of \cite[Theorem 5.1]{Farb}.
Therefore we start now with providing a first smaller generating set.
We will use the left-normed convention for the iterated commutator, i.e. inductively we define $(a_1, a_2, \ldots , a_n) := ((a_1, a_2, \ldots, a_{n-1}), a_n)$.

\begin{lemma} \label{lemma generating set}
Let $n \geq 3$. Let $J$ be an ideal in $R$ and let $T_J$ and $T$ be a set of ring generators of $J$ and $R$, respectively. Then we have the following.
\begin{enumerate}
\item $\St_n(J) = \langle x_{ij}(t) \mid 1 \leq i \neq j \leq n, t\in T_J \rangle$.
\item If $T$ contains $1$ or generates $R$ as a $\Z$-module, then $$\St_n(R) = \langle x_{i, i+1}(r) \mid r \in T, 1 \leq i \leq n \rangle.$$
\item $\St_n(R)$ is a perfect group.
\end{enumerate}
\end{lemma}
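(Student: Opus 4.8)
The plan is to establish the three assertions one at a time, using only the defining relations of $\St_n(\cdot)$ together with the commutator identities recorded in \Cref{commutators in En}; each is an elementary manipulation of those relations. The single point that needs genuine care is the index bookkeeping modulo $n$ in part (2): one must check at every stage of the construction that the three indices appearing in the commutator relation being applied are pairwise distinct, and this is exactly where the hypothesis $n\ge 3$ enters.

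For (1), put $H=\langle x_{ij}(t)\mid 1\le i\ne j\le n,\ t\in T_I\rangle$ and, for each ordered pair $i\ne j$, set $S_{ij}=\{\,r\in I\mid x_{ij}(r)\in H\,\}$. The additivity relation $x_{ij}(r)x_{ij}(s)=x_{ij}(r+s)$ makes each $S_{ij}$ an additive subgroup of $(I,+)$, and by construction $T_I\subseteq S_{ij}$ for every pair. Since $T_I$ generates $I$ as a ring, $I$ is the additive span of the monomials $t_1\cdots t_m$ with $m\ge 1$ and $t_\ell\in T_I$, so it suffices to show every such monomial lies in $S_{ij}$ for every pair. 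I would argue by induction on $m$: the case $m=1$ is the definition of $H$, and for the step, given a target pair $(i,j)$, use $n\ge 3$ to pick $k\notin\{i,j\}$; then $t_1\cdots t_m\in S_{ik}$ by the induction hypothesis and $t_{m+1}\in S_{kj}$ by the base case, so the relation $(x_{ik}(a),x_{kj}(b))=x_{ij}(ab)$, legitimate because $i,k,j$ are pairwise distinct, gives $t_1\cdots t_{m+1}\in S_{ij}$. Hence $S_{ij}=I$ for all $i\ne j$, i.e. $H=\St_n(I)$.

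For (2), write $G=\langle x_{i,i+1}(r)\mid r\in T,\ 1\le i\le n\rangle$. First, $G$ contains $x_{i,i+1}(1)$ for every $i$: if $1\in T$ this is immediate, and if instead $1=\sum_{r} n_r r$ with $n_r\in\Z$ then $x_{i,i+1}(1)=\prod_r x_{i,i+1}(r)^{n_r}$ by additivity. Next, for $r\in T$ and $2\le k\le n-1$ the identity $x_{i,i+k}(r)=(x_{i,i+k-1}(r),\,x_{i+k-1,i+k}(1))$ — whose three indices $i$, $i+k-1$, $i+k$ taken modulo $n$ are pairwise distinct precisely when $1\le k-1$ and $k\le n-1$ — shows, by induction on $k$, that every off-diagonal symbol $x_{ij}(r)$ with $r\in T$ lies in $G$. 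Finally, under either hypothesis on $T$ the ring generated by $T$ is all of $R$, so part (1) applied with $I=R$ and $T_I=T$ yields $\St_n(R)=\langle x_{ij}(r)\mid i\ne j,\ r\in T\rangle\subseteq G$; the reverse inclusion is trivial, so $G=\St_n(R)$.

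For (3), observe that since $R$ is unital and $n\ge 3$, every generator is a commutator: for any $k\notin\{i,j\}$ one has $x_{ij}(r)=(x_{ik}(r),x_{kj}(1))$. Hence $\St_n(R)$ is generated by commutators, so $\St_n(R)=\St_n(R)'$, i.e. $\St_n(R)$ is perfect. The whole proof is routine relation-chasing; the only place demanding attention is, as noted above, verifying the distinctness of indices in the cyclic construction of part (2).
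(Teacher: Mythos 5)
Your proof is correct and follows essentially the same route as the paper: additivity of $x_{ij}(\cdot)$ handles $\Z$-linear combinations, the commutator relation $(x_{ik}(a),x_{kj}(b))=x_{ij}(ab)$ handles products of generators (this is where $n\ge 3$ enters), and perfectness is immediate from the same relation. Your induction on monomial length via the subgroups $S_{ij}$ merely makes explicit what the paper compresses into ``by using the defining relations,'' so there is nothing substantive to distinguish the two arguments.
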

\begin{proof} We first prove statement (1). 
Assume to begin with that $J$ is generated as $\Z$-module by $T_J$. Let $r \in J$ be an arbitrary element and write $r = \displaystyle\sum\limits_{s = 1}^k a_s t_s$ for $a_s \in \Z \setminus \{0\}$ and $t_s \in T_J$. Then clearly $x_{ij}(r) = x_{ij}(t_1)^{a_1} \ldots x_{ij}(t_k)^{a_k}$, proving the first part.

Since $T_J$ generates $J$ as ring, the set $\mathcal{T}_J$ consisting of finite products of elements of $T_J$ generates $J$ as $\Z$-module. By using the defining relations of $\St_n(J)$ and the case considered in the previous paragraph, we get $$ \St_n(J) = \langle x_{ij}(t) \mid 1 \leq i\neq j \leq n, t \in \mathcal{T}_{J} \rangle \leq \langle x_{ij}(t) \mid 1 \leq i\neq j \leq n, t \in T_{J} \rangle \leq \St_n(J).$$

To prove (2), we first assume that $ 1 \in T$. Let $S = \{ x_{i, i+1}(r) \mid r \in T, 1 \leq i \leq n \}$. Recall that the indices are taken modulo $n$.
According to (1), it suffices to show that $S$ generates the $x_{ij}(t)$ for every $t \in T$.
This is similar to what we did earlier:
$$x_{i,j}(t) = (x_{i,i+1}(t), x_{i+1,i+2}(1), \ldots , x_{j-1, j}(1) ), $$ an iterated commutator of elements of $S$.
Now assume that $T$ is an arbitrary generating set for $R$ as $\Z$-module. Similar arguments as above can be used to express $x_{i,i+1}(1)$ as a product of the elements in $S$. Thus the previous argument can be applied. 

Finally (3) follows immediately from the third defining relation of $\St_n(R)$.
\end{proof}

Let $T$ be a generating set for a ring $R$ as a $\Z$-module. We work with the following subgroups of $\St_n(R)$, for $m \in \Z_{\geq 1}$, $$\St_n(R)^{(m)} := \langle x_{i,i+1}(r)^m \mid r \in T, 1 \leq i \leq n \rangle = \langle x_{i,i+1}(r) \mid r \in mT, 1 \leq i \leq n \rangle.$$
We will show (in \Cref{properties En(R)}) that this subgroup is well-defined, i.e.\ independent of the generating set $T$. Unfortunately if $T$ is a set of ring generators of $R$ the definition would in general depend on $T$. Note that $\St_n(R) =  \St_n(R)^{(1)}$. The groups $\E_n(R)^{(m)}$ are analogously defined.

\begin{lemma} \label{properties En(R)}
Let $n \geq 3$ and $m$ a non-zero positive integer. Then
\begin{enumerate}
\item the group $\St_n(R)^{(m)}$ is well-defined, i.e. independent of the generating set of $R$ as additive group,
\item $\St_n(m^{n-1}R) \leq \St_n(R)^{(m)\prime} \leq \St_n(R)^{(m)} \leq \St_n(mR)$.
\end{enumerate}
\end{lemma}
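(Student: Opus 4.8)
The plan is to prove the two inclusions separately, and within the first statement to show independence of the generating set by a direct manipulation of the defining relations of the Steinberg group.

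First I would address statement (2), since the outer inclusions are the easiest. The inclusion $\St_n(R)^{(m)} \leq \St_n(mR)$ is immediate: each generator $x_{i,i+1}(r)^m = x_{i,i+1}(mr)$ (using the first defining relation repeatedly) visibly lies in $\St_n(mR)$, as $mr \in mR$. For the inclusion $\St_n(m^{n-1}R) \leq \St_n(R)^{(m)\prime}$, the key computation uses the commutator relations of \Cref{commutators in En}, iterated. The idea: any element $x_{ij}(s)$ with $s \in m^{n-1}R$ can be written $s = m^{n-1} r$ for some $r \in R$, and one expresses $x_{i,i+k}(s)$ as an iterated commutator of $n-1$ generators, each of the form $x_{\ell, \ell+1}(m \cdot (\text{something}))$, so that the result lies in the $(n-1)$-fold commutator subgroup and in particular in $\St_n(R)^{(m)\prime}$. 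Concretely, since the indices run modulo $n$, for a general off-diagonal pair $(i,j)$ one can route through all the intermediate nodes $i \to i+1 \to \cdots \to j$, and a suitable iterated commutator $(x_{i,i+1}(mr), x_{i+1,i+2}(m), \ldots)$ of length $n-1$ collapses (via the third defining relation of $\St_n$) to $x_{ij}(m^{n-1}r)$; one has to check this with the indices modulo $n$ so that every pair $(i,j)$ can be reached by a path of length exactly $n-1$ (this is where $n \geq 3$ and the cyclic indexing matter). Since each entry in this iterated commutator is an $m$-th power $x_{\ell,\ell+1}(m t) = x_{\ell,\ell+1}(t)^m$ (for $t$ in the $\mathbb Z$-module $R$, or more carefully for $t$ in the $\mathbb Z$-span of the chosen generating set $T$, which is all of $R$), the whole iterated commutator lies in $\St_n(R)^{(m)\prime}$.

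For statement (1), independence of $T$: let $T$ and $T'$ be two generating sets of $R$ as a $\mathbb Z$-module, and let $H_T$, $H_{T'}$ be the corresponding subgroups. Since each element $r' \in T'$ is a $\mathbb Z$-linear combination $r' = \sum_s a_s t_s$ of elements of $T$, and $x_{i,i+1}(mr') = \prod_s x_{i,i+1}(m t_s)^{a_s} = \prod_s (x_{i,i+1}(m)^{\ast})$ — more precisely $x_{i,i+1}(m r') = \prod_s x_{i,i+1}(t_s)^{m a_s}$, which is a product of elements of $\{x_{i,i+1}(mt)^{\pm 1} : t \in T\} \subseteq H_T$ — we get $H_{T'} \leq H_T$, and by symmetry $H_T \leq H_{T'}$. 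Here one uses only the first defining relation of $\St_n$, exactly as in the proof of \Cref{lemma generating set}(1). This establishes that $\St_n(R)^{(m)} = \langle x_{i,i+1}(r) \mid r \in mR,\ 1 \leq i \leq n \rangle$ as claimed, matching the displayed alternative description.

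I expect the main obstacle to be the bookkeeping in the inclusion $\St_n(m^{n-1}R) \leq \St_n(R)^{(m)\prime}$: one must verify that an arbitrary generator $x_{ij}(m^{n-1}r)$ of $\St_n(m^{n-1}R)$ — with $(i,j)$ an arbitrary off-diagonal pair, not just consecutive indices — really is expressible as an iterated commutator of length $n-1$ of elements of $\St_n(R)^{(m)}$, using the cyclic structure of the indices mod $n$ to build the path. The factor $m^{n-1}$ is exactly what an $(n-1)$-fold nested commutator produces when each of the $n-1$ entries carries a single factor of $m$, so the exponent is forced; the subtlety is checking that every pair $(i,j)$ admits such a realization and that the intermediate entries can all be taken of the form $x_{\ell,\ell+1}(m)$ (which lies in $\St_n(R)^{(m)}$ since $1 \in R$ generates a $\mathbb Z$-summand, or more simply $m \in mR$). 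The remaining inclusions and the independence statement are routine applications of the Steinberg relations as in \Cref{lemma generating set}.
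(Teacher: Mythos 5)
Your plan follows the same route as the paper's proof: part (1) and the two outer inclusions of (2) are exactly the paper's argument, and for the inclusion $\St_n(m^{n-1}R) \leq \St_n(R)^{(m)\prime}$ both you and the paper collapse an iterated commutator along the consecutive path $i \to i+1 \to \cdots \to j$. But the bookkeeping you describe does not close. A path through consecutive positions from $i$ to $j = i+k$ (indices mod $n$) has length $k$, and only $j = i+(n-1) = i-1$ is at distance $n-1$; so your prescription ``$n-1$ entries, each carrying a single factor of $m$'' is unavailable whenever $k < n-1$. The paper handles $2 \leq k \leq n-1$ by loading $m^{n-k}$ onto the last entry of a length-$k$ iterated commutator (still in $\St_n(R)^{(m)}$ since $n-k \geq 1$); your version needs the same repair.

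More seriously, for $k=1$ the ``path'' is a single edge, there is no commutator at all, and neither your argument nor the paper's displayed formula places the generators $x_{i,i+1}(m^{n-1}r)$ of $\St_n(m^{n-1}R)$ inside the derived subgroup. This case cannot be repaired: take $n=3$, $m=2$, $R=\Z$ and map $\St_3(\Z)^{(2)} = \langle x_{12}(2), x_{23}(2), x_{31}(2)\rangle$ into $\SL_3(\Z/8\Z)$. The images of the generators lie in the class-two kernel of reduction modulo $2$, where $(I+2A,\, I+2B) = I + 4(AB-BA)$; hence the image of the derived subgroup is contained in $\{\, I + 4C \mid C \in \F_2 E_{13} + \F_2 E_{21} + \F_2 E_{32} \,\}$, which does not contain $I + 4E_{12}$. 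Thus $x_{12}(4) \notin \St_3(\Z)^{(2)\prime}$, and the leftmost inclusion of (2) fails as stated on the consecutive positions $(i,i+1)$. What does hold, and is all that is needed downstream (e.g.\ for the finite-index argument), is $\St_n(m^{n-1}R) \leq \St_n(R)^{(m)}$: for $k=1$ write $x_{i,i+1}(m^{n-1}r) = x_{i,i+1}(m\cdot m^{n-2}r)$, a product of the generators $x_{i,i+1}(mt)$, $t\in T$, and for $k\geq 2$ use the commutator identity. So either drop the prime in the leftmost inclusion or restrict that inclusion to the non-consecutive positions; as written, the step in your proposal (and the corresponding display in the paper) does not establish the claim.
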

\begin{proof} 
Let $T$ and $\tilde{T}$ be generating sets for $R$ as $\Z$-module. Similar to the proof of \Cref{lemma generating set}, it is clear that every element $x_{i,i+1}(r)^m=x_{i,i+1}(mr)$ for $r$ in $\tilde{T}$ can be made from the elements $x_{i,i+1}(t)^m=x_{i,i+1}(mt)$ where $t \in T$, and vice versa. This proves (1).

For (2), note that the second inclusion is trivial. The rightmost inclusion follows from \Cref{lemma generating set} applied to the ideal $mR$ and generating set $mT$. From the same lemma it also follows that $\St_n(m^{n-1}R) = \langle x_{ij} (m^{n-1}r) \mid 1 \leq i \neq j \leq n, r \in T \rangle$. Using the defining relations as before, we obtain
\begin{align*}
x_{i,i+k}(m^{n-1} r)\ =\ (x_{i,i+1}(mr), x_{i+1, i+2}(m),\ldots , x_{i+k-2, i+k-1}(m), x_{i+k-1, i+k}(m^{n-k})).
\end{align*} 
So, the elements that generate $\St_n(m^{n-1}R)$ can be constructed from the generators $x_{i,i+1}(r)^m = x_{i,i+1}(mr)$ of $\St_n(R)^{(m)}$ by taking commutators, which proves the remaining inclusion.
\end{proof}

\begin{remark}\label{E_n(R) commutators}
The proof of \Cref{properties En(R)} only uses the relations from \Cref{commutators in En} and hence the corresponding statements also hold for $\E_n(R)$.
\end{remark}

We now have the necessary ingredients to prove the following fixed point properties for $\St_n(R)^{(m)}$. We were only recently informed that property \FA[n-2] had already been proven for the group $\E_n(R)$ in \cite[Theorem 2.1]{Ye} in case $R$ is a finitely generated ring. In hindsight both proofs follow the same line and use \cite[Theorem 5.1]{Farb}. Note that the group $\St_n(R)^{(m)}$ is not the group generated by all the $m$th powers of the generators of the Steinberg group, but rather is a suitably chosen subgroup of the latter in order to be able to use \cite[Theorem 5.1]{Farb} and still derive the desired hereditary property.

\begin{theorem}\label{th: elementary matrices FR}
Let $n \geq 3$. Suppose $R$ is finitely generated as $\Z$-module. Then the group $\St_n(R)^{(m)}$ satisfies properties \FR and \FA[n-2] for each $m \geq 1$.
\end{theorem}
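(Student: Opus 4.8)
The plan is to obtain both properties as instances of Farb's fixed-point criterion \cite[Theorem~5.1]{Farb}. That criterion reduces the verification of property \FA[n-2] for an isometric action on an (at most $(n-2)$-dimensional) CAT($0$) cell complex — and, applied in the same way to real trees, of property \FR — to exhibiting a finite generating family of subgroups that is ``nice'' in a purely combinatorial sense: one controls the subgroups generated by small sub-families and the way their fixed-point sets overlap, and then a Helly-type argument produces a global fixed point. The two preceding lemmas are exactly the preparation for this. Fixing a finite $\Z$-module generating set $T$ of $R$, \Cref{lemma generating set} and \Cref{properties En(R)} give
$$\St_n(R)^{(m)}=\langle X_1,\dots,X_n\rangle,\qquad X_i:=\langle x_{i,i+1}(mr)\mid r\in T\rangle,$$
each $X_i$ being a finitely generated \emph{abelian} group (by the relation $x_{ij}(r)x_{ij}(s)=x_{ij}(r+s)$), and the Steinberg relations place the $X_i$ in a cyclic configuration: $X_i$ commutes with $X_j$ as soon as $i-j\not\equiv\pm1\pmod n$.

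First I would record what the criterion needs to know about sub-families of $\{X_1,\dots,X_n\}$: using commutator identities of the type collected in \Cref{commutators in En}, the subgroup generated by a proper, ``non-wrapping'' block of the $X_i$ is supported on a linear (rather than cyclic) arrangement of strands, which is what makes these subgroups amenable to the criterion; in the smallest case $n=3$ all three $X_i$ are pairwise non-commuting and \cite[Theorem~5.1]{Farb} has to be applied to them directly. I would then feed the family $X_1,\dots,X_n$, together with this structural information, into \cite[Theorem~5.1]{Farb} with dimension parameter $n-2$: the combinatorial input is that the commuting pattern of the $X_i$ is exactly the $n$-cycle, and this is what pins the output dimension at $n-2$ (for $n=3$ yielding \FA[1], i.e.\ \FA). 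Because \FR and \FA[n-2] are inherited by quotients, this in turn yields \Cref{theo:E_n(R)_has_FR} for $\E_n(R)^{(m)}$ via the epimorphism $\pi_n\colon\St_n(R)\to\E_n(R)$.

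The main obstacle is the bookkeeping that matches the family $X_1,\dots,X_n$ to the precise hypotheses of \cite[Theorem~5.1]{Farb} — carried out uniformly in $m$, which causes no additional trouble once \Cref{properties En(R)} has identified the generating family — together with checking that the argument is insensitive to replacing simplicial trees and CAT($0$) cell complexes by real trees. For the latter one relies on the well-known facts that closed subtrees of an $\R$-tree have the Helly property and that two commuting elliptic isometries of an $\R$-tree share a fixed point, exactly as in the simplicial setting.
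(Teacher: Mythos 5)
Your overall strategy for \FA[n-2] is the same as the paper's: apply \cite[Theorem~5.1]{Farb} to the family $\Gamma_i=\langle x_{i,i+1}(mr)\mid r\in T\rangle$ arranged in an $n$-cycle, with the nilpotency of the groups generated by proper subfamilies (the $\widehat{\Gamma_i}$, all isomorphic to the ``unitriangular'' group $\widehat{\Gamma_n}$) supplying one of the hypotheses. The gap is that Farb's criterion has a third hypothesis which you never address and which cannot be dismissed as bookkeeping: one must exhibit a single integer $z\geq 1$ such that every element $r$ of every $\Gamma_i$ satisfies $r^z\in N'$ for some nilpotent subgroup $N$. This condition is what forces each $\Gamma_i$ to act elliptically, and it is exactly the point where the particular definition of $\St_n(R)^{(m)}$ enters (the remark preceding the theorem in the paper stresses that these subgroups are \emph{chosen} so that Farb's theorem applies). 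The paper verifies it with $z=m$ via the identity $x_{2,3}(mt)^m=x_{2,3}(m^2t)=\bigl(x_{2,1}(m),x_{1,3}(mt)\bigr)$, whose two entries land (after a cyclic-shift automorphism) in the nilpotent group $\widehat{\Gamma_n}$; this uses that $T$ may be assumed to contain $1$, so that $m\in mT$. Without this step the ``commuting pattern is an $n$-cycle'' description of the hypotheses is incomplete and the proof does not close, nor is the uniformity in $m$ established.

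For \FR your proposal diverges from the paper and is not justified as written. \cite[Theorem~5.1]{Farb} is a statement about CAT($0$) cell complexes; it cannot simply be ``applied in the same way to real trees'', since an $\R$-tree is not such a complex. You would have to re-prove the criterion in that setting (Helly for closed subtrees, ellipticity of commutators of nilpotent groups acting on $\R$-trees, Serre's lemma without completeness), and you would again need the power/commutator condition above to see that each $X_i$ is elliptic — none of which you carry out. The paper avoids all of this: having finite abelianization from \FA[n-2], it invokes the Culler--Vogtmann criterion \cite[Corollary~2.5]{CulVog}, for which it suffices to check that each commutator $\bigl(x_{i,i+1}(s),x_{j,j+1}(r)\bigr)$ commutes with $x_{j,j+1}(r)$ — immediate from the Steinberg relations since $n\geq 3$. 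Your route for \FR may well be salvageable, but as stated it is an assertion, not an argument; you should either supply the $\R$-tree version of Farb's argument or switch to the Culler--Vogtmann criterion.
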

\begin{proof}
Let $T$ be a finite generating set of $R$ as a $\Z$-module which we assume to contain $1$.

By definition, $\St_n(R)^{(m)}$ is finitely generated by $S= \{ x_{i, i+1}(r) \mid r \in mT, 1 \leq i \leq n \}$.
By a theorem of Farb \cite[Theorem 5.1]{Farb}, to prove \FA[n-2] it suffices to find a set of finitely generated nilpotent subgroups $C:=\{ \Gamma_1, \ldots , \Gamma_n\}$ of $\St_n(R)^{(m)}$ such that 
\begin{enumerate}
\item the group generated by the subgroups in $C$ is of finite index in $\St_n(R)^{(m)}$,
\item any proper subset of $C$ generates a nilpotent group,
\item there exists a positive integer $z$ such that for all $1\leq i \leq n$ and for all $r \in \Gamma_i$, there exists a nilpotent subgroup $N \leq \St_n(R)^{(m)}$ with $r^z \in N^{\prime}$.
\end{enumerate}

We define these groups to be the finitely generated abelian groups
$$\Gamma_i = \langle x_{i,i+1}(mr) \mid r \in T \rangle, \quad \text{ for } 1 \leq i \leq n.$$
Clearly $\langle \Gamma_1, \ldots , \Gamma_n \rangle = \St_n(R)^{(m)}$, so the first requirement is satisfied.

Let now $\widehat{\Gamma_i}$ be the group generated by the subgroups of $C \setminus \{\Gamma_i\}$. To prove (2), it is sufficient to prove that each $\widehat{\Gamma_i}$ is nilpotent. Clearly $\pi: \St_n(R) \rightarrow \St_n(R), x_{ij}(r) \mapsto x_{i+1, j+1}(r)$ is an isomorphism such that $\pi\left(\Gamma_i \right) = \Gamma_{i+1}$ and  $\pi\left(\widehat{\Gamma_i }\right) = \widehat{\Gamma_{i+1}}$. Hence all $\widehat{\Gamma_i}$ are isomorphic to $\widehat{\Gamma_n}$. It is well known (see for example \cite[Lemma 4.2.3]{Rosenberg}) that $\widehat{\Gamma_n}$ is nilpotent. Hence the second requirement is satisfied.

We will show the last requirement for $r$ a generator of $\Gamma_i$ and $z=m$.
This is sufficient, since the $\Gamma_i$ are finitely generated abelian groups.
Consider $x_{i,i+1}(mt)^m = x_{i,i+1}(m^2t)$ with $t \in T$.
Applying $\pi^{2-i}$ to this element, it suffices to show the last requirement for $x_{2,3}(mt)^m = x_{2,3}(m^2t)$.

Using the defining relations we write $$x_{2,3}(mt)^m =x_{2,3}(m^2t) = (x_{2,1}(m),x_{1,3}(mt)).$$ Now applying the isomorphism of $\St_n(mR)$ which interchanges in $\St_n(mR)$ the indices $1$ and $2$, $x_{2,1}(m)$ and $x_{1,3}(mt)$ are mapped to elements of $\widehat{\Gamma_n}$, a nilpotent group, proving the statement. Here we used that we may assume $1 \in T$ and thus $m$ and $mr \in mT$. Hence conditions (1) to (3) are satisfied and we conclude that $\St_n(R)^{(m)}$ has property \FA[n-2].

To prove that $\St_n(R)^{(m)}$ has property \FR, we will check that for every pair of generators $x_{i,i+1}(s)$ and $x_{j,j+1}(r)$ in $S$, their commutator $\left( x_{i,i+1}(s), x_{j,j+1}(r) \right)$ commutes with $x_{j,j+1}(r)$.
This will indeed suffice, by a result of Culler and Vogtmann \cite[Corollary 2.5]{CulVog} since $\St_n(R)^{(m)}$ already has finite abelianization (recall that it has property \FA[n-2] by the first part of the proof). 

First, if $j \neq i+1 $, $\left( x_{i,i+1}(s), x_{j,j+1}(r) \right) = 1$ which of course commutes with $x_{j,j+1}(r)$. 
So suppose now $j = i+1$, then $\left( x_{i,i+1}(s), x_{i+1,i+2}(r) \right) = x_{i,i+2}(sr)$  (here we used that $i \neq i+2$, or the fact that $n \neq 2$) which commutes with $x_{i+1,i+2}(r)$, proving the theorem.
\end{proof}

\begin{remark}
As a matter of fact, the reasoning in the previous proof also provides an alternative and elementary proof for the fact that $\St_n(R)$ has \FR when $R$ is finitely generated as a unital ring. Indeed, by taking $m=1$ in the proof of \FR, we may provide the same argument when $T$ generates $R$ as a ring. This proof circumvents the use of the much more general result \cite[Theorem 6.2]{ErsJai} which states that $\St_n(R)$ satisfies property \T.
\end{remark}

As explained earlier, since property $\FA[n]$ and $\FR$ are preserved under quotients,  \Cref{theo:E_n(R)_has_FR} now follows from the previous theorem and remark.

\section{Property \FR and \HFR for $\E_2(\O)$}\label{sectie FA voor GL2's}

In this section we discuss properties \FA,  \FR and \HFR for the groups $\E_2(R)$, where $R$ is a suitable ring (which will always at least again be associative and unital).
Since not every $\E_2(R)$ has property \FA,
the situation is significantly different from the case $n \geq 3$ which was the setting of the previous section.
For $R$ an order in a simple $\Q$-algebra having a finite unit group, we classify exactly when $\E_2(R)$ has property \FA and \FR. With a view on the latter we consider first Borel type subgroups.

It is well known when $\SL_2(\I)$, for $\I$ a $\Z$-order in a field with finite unit group, has property \FA.
Indeed, by \cite[Exercise I.6.5, pg 66]{Serre} and \cite[Theorems 2.1 and 2.4]{FroFin} the only such $\I$ for which $\SL_2(\I)$ has \FR (or equivalently \FA) is $\I_3$.

The main goal of this section is to generalize this result to all orders $\O$ (not necessarily commutative) in division $\Q$-algebras with $\U(\O)$ finite.
We will prove the following theorem, which is also \Cref{TheoremD} from the introduction. 

\begin{theorem} \label{When is E2 FA and some GL2 FA}
Let $\O$ be an order in a finite dimensional division $\Q$-algebra with $\U(\O)$ finite. Then the following properties are equivalent:
\begin{enumerate}
\item\label{it_prop_forE2O_FR} $\E_2(\O)$ has property \FR,
\item\label{it_prop_forE2O_FA} $\E_2(\O)$ has property \FA,
\item\label{it_prop_forE2O_O} $\O$ is isomorphic to $\I_3, \O_2$ or $\O_3$.
\end{enumerate}
Furthermore, $\GL_2(\O)$ has  property \FR if $\O$ has a basis of units and $\O \ncong \Z$. 
\end{theorem}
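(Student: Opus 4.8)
The plan is to reduce, via \Cref{finite_abelianization_equiv}, to the finitely many orders possessing a $\Z$-basis of units, then to dispatch three of them by a short extension argument and to isolate the other two as the genuinely difficult case. By \Cref{finite_abelianization_equiv}, if $\O\neq\Z$ has a $\Z$-basis of units then $\O$ is isomorphic to one of $\I_1,\I_3,\LL,\O_2,\O_3$. For each of these one may assume $\GL_2(\O)=\GE_2(\O)$: the norm-Euclidean orders $\I_1,\I_3,\O_2,\O_3$ are $\GE_2$-rings by \Cref{euclidian are GE2}, and for $\O=\LL$ one checks this separately (it is an order in a totally definite quaternion algebra over $\Q$, hence almost-universal for $\GE_2$ by \Cref{quaternion cohn}). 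Moreover $\GL_2(\O)$ is finitely generated --- for instance by $e_{12}(1)$, $e_{21}(1)$ and the finite group $\D_2(\O)$, since $\O$ is spanned as a $\Z$-module by $\U(\O)$ --- and $\GE_2(\O)^{ab}$ is finite by \Cref{GE2Oab_finite}.

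For $\O\in\{\I_3,\O_2,\O_3\}$ the argument is immediate: $\E_2(\O)$ has property \FR by the equivalence of $(\ref{it_prop_forE2O_FR})$ and $(\ref{it_prop_forE2O_O})$ proved above, and by \Cref{theorem difference GE2 and E2} it is a normal subgroup of $\GL_2(\O)=\GE_2(\O)$ with quotient $\U(\O)^{ab}$, a finite group; hence $\GL_2(\O)$ is an extension of an \FR-group by an \FR-group and has \FR by \Cref{FAconserved}. Note that this does \emph{not} pass to the finite-index subgroup $\E_2(\O)$ --- consistent with \FR, unlike \HFR, not being inherited by finite-index subgroups --- so the same device is useless once $\E_2(\O)$ fails \FR.

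The remaining cases $\O=\I_1$ and $\O=\LL$ are the main obstacle, precisely because there $\E_2(\O)$ is a non-trivial amalgam (e.g.\ $\E_2(\I_1)=\SL_2(\Z[i])$) and so fails \FA. The approach I would take is to work with the Borel subgroups. By \Cref{Borel FA cyclic units} the Borel $\B_2(\O)$ has property \FR --- applicable since $\U(\I_1)\cong C_4$ and $\U(\LL)\cong Q_8$ are both $\not\cong C_2$ --- and hence so does the opposite Borel $\B_2^{-}(\O)=E(0)\,\B_2(\O)\,E(0)^{-1}$; moreover $\GE_2(\O)=\langle\B_2(\O),\B_2^{-}(\O)\rangle$ and $\B_2(\O)\cap\B_2^{-}(\O)=\D_2(\O)$ is finite. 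Given an isometric action of $\GE_2(\O)$ on a real tree $T$, one may pass to the invariant subtree fixed by the central involution $-I=E(0)^2$, after which $E(0)$ acts as an involution carrying $T^{\B_2(\O)}$ onto $T^{\B_2^{-}(\O)}$, both of which are non-empty. If they meet, $\GE_2(\O)$ has a global fixed point and we are done; otherwise one is in a Bruhat--Tits-type configuration --- the two subtrees at positive distance, with $E(0)$ swapping a point of one for a point of the other and fixing the midpoint of the connecting bridge --- and the task is to show this cannot occur. I expect this to be the crux of the theorem: I would try to rule the configuration out using the explicit finite presentation of $\GE_2(\O)$ from \Cref{quaternion cohn} (equivalently, the sharp description of $\B_2(\O)$ from the ``$G_{R,K}$'' analysis of \Cref{subsectie FR voor GRK}), so as to pin down the fixed set of $\B_2(\O)$ finely enough that it must reach $T^{\B_2^{-}(\O)}$. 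Throughout one obtains \FR and not merely \FA, since the facts invoked about the Borel, and about the finite-order element $E(0)$, already concern actions on real trees.
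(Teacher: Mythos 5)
Your write-up never actually proves the theorem's main equivalence. The sentence ``$\E_2(\O)$ has property \FR by the equivalence of (\ref{it_prop_forE2O_FR}) and (\ref{it_prop_forE2O_O}) proved above'' is circular: nothing preceding it establishes that $\E_2(\I_3)$, $\E_2(\O_2)$ and $\E_2(\O_3)$ have \FR, and that is the substantive content of $(3)\Rightarrow(1)$. In the paper this is done through \Cref{property FA for the GRK groups}: one first shows that the Borel-type group $\BE_2(\O)$ has \FR by exhibiting a finite-order diagonal element whose conjugation action on the unipotent radical $N\cong(\O,+)$ has no rational eigenvalue (so, via \Cref{lemmaNilpotent} and Maschke, no subgroup $M\unlhd G_{R,K}$ with $N/M\cong\Z$ can exist), and then passes from the Borel to $\E_2(\O)=\langle E(0),\BE_2(\O)\rangle$ using part (3) of that proposition. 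Likewise the direction $(2)\Rightarrow(3)$ --- ruling out $\Z$, $\I_1$ and $\LL$ by the amalgam decompositions of $\SL_2(\Z)$, $\E_2(\I_1)$ and $\E_2(\LL)$ together with \Cref{prop:iff_conditions_FA} --- is only alluded to in passing and never assembled into an argument.

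Second, the part you do attempt in detail, namely that $\GL_2(\O)$ has \FR for $\O=\I_1$ and $\O=\LL$, is left unfinished by your own admission: you reduce to a configuration in which $T^{\B_2(\O)}$ and $T^{\B_2^-(\O)}$ are disjoint subtrees swapped by $E(0)$ and say you would ``try to rule it out''. That is exactly the step \Cref{property FA for the GRK groups}(3) already handles, and by a lighter mechanism than the geometric one you sketch: by Serre's Proposition 26, $\langle w, G_{R,K}\rangle$ inherits the fixed-point property as soon as $wx$ has a fixed point for every $x$ in a suitable generating set of $G_{R,K}$, and here $wx_\mu$ has order $6$ for each unit basis element $\mu$ while $(wd)^4$ lies in $K$, which has \FR. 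Your instinct to start from the \FR property of the Borel (\Cref{Borel FA cyclic units}, or the eigenvalue criterion of \Cref{property FA for the GRK groups}(1)) is the right one and matches the paper's strategy, but without the lifting lemma the proposal does not close; as it stands neither the three-way equivalence nor the ``furthermore'' clause is proved.
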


The proof of \Cref{When is E2 FA and some GL2 FA} will be given later in Subsection~\ref{subsectie FR voor GRK} (on page~\pageref{proof_of_When is E2 FA and some GL2 FA}) and will strongly require the results obtained in  Subsection~\ref{sectie ab van E2}.
Moreover we first need to understand the connections between $\E_2(\O)$, the diagonal matrices therein and the Borel subgroup.
The latter will be the content of \Cref{property FA for the GRK groups}.
Next, in Subsection~\ref{subsectie FR voor Borel}, we conjecture when $\GE_2(\O)$ has property \FA and \FR and lay the first stone towards a proof by understanding completely the situation for the Borel subgroup $\B_2(\O)$ of $\GL_2(\O)$.

\begin{remark}
While in \Cref{sectie hogere graad matrices FA}, property \FR for $\E_n(\O)$, $n \geq 3$, is a consequence of the same property for the Steinberg groups $\St_n(\O)$, this is no longer true for the cases in \Cref{When is E2 FA and some GL2 FA}. 
Indeed, if one defines $\St_2(\O)$ in a similar way, then the only non-trivial defining relation is $x_{ij}(r)x_{ij}(s) = x_{ij}(r+s)$, hence $\St_2(\O)$ is the free product of two copies of the additive group of $\O$ and hence cannot have property \FR.
\end{remark}

\subsection{Property \FR for the groups $G_{R,K}$ with applications to \FR for $\E_2(\O)$} \label[subsection]{subsectie FR voor GRK}

We will now investigate $\E_2(R)$ and $\GE_2(R)$ simultaneously by defining a more general type of groups, denoted $G_{R,K}$. Consider a subgroup $K$ of $\D_2(R)$ (the group of invertible diagonal $2 \times 2$-matrices over the ring $R$; recall that we always assume our rings to be unital). 

\begin{definition}
The group generated by $K$ and $N = \sm{1 & R \\ 0 & 1}$ consisting of the unimodular upper triangular matrices will be denoted by $G_{R,K}$. 
\end{definition}

Note that for the choice $K = \D_2(R)$ we have that $G_{R,K}$ is the \emph{Borel subgroup} \emph{$\B_2(R)$} of $\GL_2(R)$, i.e. the group consisting of invertible upper triangular $2\times 2$-matrices over $R$.

\begin{notation*}
 If $K$ consists of the matrices of the form $\sm{
\alpha & 0 \\
0 & \beta 
 }$ with $\alpha \beta \in \U(R)'$ we will instead use the notation $\DE_2(R)$ for $K$ and the notation $\BE_2(R)$ for $G_{R,K}$.
\end{notation*}

If $R$ is almost-universal, using the determinant like map $\varphi$ defined in \eqref{eq:varphi}, one can check that 
$$\BE_2(R) = \B_2(R) \cap  \E_2(R).$$
Indeed, if one restricts $\varphi$ to the subgroup $\B_2(R)$, then its kernel coincides with $\BE_2(R) = \B_2(R) \cap  \E_2(R)$. Also $\DE_2(R)$ equals $\langle\ D(\mu)\ | \ \mu \in \U(R)\ \rangle$ by \eqref{R5} on page~\pageref{R5}, recall that $D(\mu)$ denoted the diagonal matrix $[\mu, \mu^{-1}]$. Note that the group $\DE_2(R)$ already appeared in \Cref{intersection D_2 and E_2}.

Now note that $N = \sm{1 & R \\ 0 & 1}$ is a normal subgroup of $G_{R,K}$.  Thus we have the following split short exact sequence
 \begin{equation} \label[equation]{G -R -K exact sequence}
 1 \longrightarrow N \longrightarrow G_{R,K} \longrightarrow G_{R,K}/N \cong K \longrightarrow 1 .
 \end{equation}
Hence $G_{R,K}$ is isomorphic to the semi-direct product $N \rtimes_\alpha K$, where $\alpha \colon K \to \Aut(N)$ and 
$$\alpha([u_1, u_2]) : N \rightarrow N : n \mapsto [u_1^{-1}, u_2^{-1}]\,  n \, [u_1, u_2]$$ 
is conjugation by $[u_1, u_2]$. Furthermore, $N$ is isomorphic to the additive group of $R$ and hence abelian. 

{\it Remark} that $\alpha(\lambda)$ for $\lambda \in K$, while in general an automorphism of the abstract group $N$, can here be considered as a matrix over $\Z$ since $(R, +) \cong N$ is a finitely generated free $\Z$-module by choosing an arbitrary basis of $R$. Via this identification one may speak of the {\it eigenvalues of $\alpha(\lambda)$ }. Note that its eigenvalues are independent of the chosen basis.

Property \FA of extensions (\ref{G -R -K exact sequence}) has been considered by Serre \cite[I.6.5., Exercise 4]{Serre} and Cornulier-Kar \cite[Proposition 3.2]{CorKar} by means of sufficient group theoretical restrictions on $G_{R,K}$. We will now provide a linear algebra criterion which will turn out to be easy to check in our setting. 

\begin{proposition}\label[proposition]{property FA for the GRK groups}
Let $R$ be a ring, which is finitely generated and free as $\Z$-module. Then the following properties hold:
\begin{enumerate}
\item If $K$ is countable and has property \FR (resp. \FA) and there exists  $\lambda \in K$ with finite order such that $\alpha(\lambda)$ (where $\alpha$ was defined above)  has only non-rational eigenvalues, then $G_{R,K}$ has property \FR (resp. \FA).\label{property FA for the GRK groups(1)}
\item If $G_{R,K}$ has property \FR (resp. \FA), then $K$ has property \FR (resp. \FA).\label{property FA for the GRK groups(2)}
\item Suppose $R$ has a $\Z$-module basis consisting of units and $\DE_2(R) \leq K$. If $G_{R,K}$ has property \FR (resp. \FA), then also $\langle \E_2(R), K \rangle$ has property \FR (resp. \FA).\label{property FA for the GRK groups(3)}
\end{enumerate}
\end{proposition}

We will first need the following lemma which is inspired by \cite[I.6.5., Exercise 4]{Serre}. The exercise is about simplicial trees. We state the lemma for real trees and for the sake of completeness we provide a proof. 

\begin{lemma} \label[lemma]{lemmaNilpotent}
Let $B$ be a finitely generated group and $N \unlhd B$ nilpotent and finitely generated. Suppose there is no subgroup $M$ of $N$ that is normal in $B$ and such that $N/M \cong \Z$. Then $B$ has property \FR if $B/N$ has property \FR.
\end{lemma}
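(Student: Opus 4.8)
The goal is to show that every isometric action of $B$ on a real tree $T_{\mathbb R}$ has a global fixed point. The plan is to split according to whether the normal subgroup $N$ already has a global fixed point. If it does, write $\operatorname{Fix}(N)=\{x\in T_{\mathbb R}\mid nx=x\text{ for all }n\in N\}$; this is a nonempty subtree, and it is $B$-invariant because $N\unlhd B$: for $b\in B$, $x\in\operatorname{Fix}(N)$ and $n\in N$ one has $n(bx)=b(b^{-1}nb)x=bx$ since $b^{-1}nb\in N$. As $N$ acts trivially on $\operatorname{Fix}(N)$, the induced $B$-action on the real tree $\operatorname{Fix}(N)$ factors through $B/N$, which has property \FR; hence $B$ fixes a point of $\operatorname{Fix}(N)\subseteq T_{\mathbb R}$.

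It remains to exclude the case that $N$ acts on $T_{\mathbb R}$ without a global fixed point. The plan is to derive, from such an action, a subgroup $M\leq N$ with $M\unlhd B$ and $N/M\cong\mathbb Z$, contradicting the hypothesis. First, since $N$ is finitely generated, the absence of a fixed point forces $N$ to contain a hyperbolic element: if all elements of $N$ were elliptic, then for a finite generating set $n_1,\dots,n_r$ the subtrees $\operatorname{Fix}(n_i)$ would be pairwise intersecting (two elliptic isometries with disjoint fixed sets have a hyperbolic product, whereas $n_in_j\in N$ is elliptic), so the Helly property for subtrees of a real tree would give $\operatorname{Fix}(N)\neq\emptyset$. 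Having a hyperbolic element, $N$ possesses a unique minimal invariant subtree $T_N$, which, being canonical, is $B$-invariant; replacing $T_{\mathbb R}$ by $T_N$ we may assume $N$ acts minimally. By the classification of isometric actions on real trees (see \cite{Chiswell}), and since a nilpotent group contains no nonabelian free subgroup (so the action is not of general type), a fixed-point-free minimal action of $N$ is either \emph{lineal}, with $T_N$ a line $L$, or fixes a \emph{unique end} $\xi$ of $T_N$.

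In the end-fixing case, $B$ fixes $\xi$ too, since $N\unlhd B$ forces $B$ to permute the set of $N$-fixed ends and $\xi$ is the only one; the associated Busemann cocycle is then a homomorphism $h\colon B\to\mathbb R$, and $h|_N\neq 0$ because the hyperbolic element of $N$ translates along $\xi$. Thus $h(N)$ is a nonzero finitely generated subgroup of $\mathbb R$, so $h(N)\cong\mathbb Z^k$ with $k\geq 1$; as the target is abelian, $B$ acts trivially on $N/\ker(h|_N)\cong\mathbb Z^k$, hence $\ker(h|_N)=N\cap\ker h$ is normal in $B$ and so is the preimage $M$ in $N$ of any corank-one subgroup of $\mathbb Z^k$, giving $N/M\cong\mathbb Z$. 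In the lineal case, $N$ acts on $L$; let $N^{+}\leq N$ and $B^{+}\leq B$ be the index-at-most-$2$ subgroups acting by orientation-preserving isometries (here $B$ acts on $T_N=L$ as well). The signed translation length is a homomorphism $\rho\colon B^{+}\to\mathbb R$ which is nonzero on $N^{+}$ (it contains the hyperbolic element), and $N^{+}$ together with $\ker(\rho|_{B^{+}})$ are normal in $B$ because an orientation-reversing conjugator negates $\rho$; hence $M_0:=N^{+}\cap\ker\rho\unlhd B$ with $N^{+}/M_0\cong\mathbb Z^k$ and $B$ acting on it through $\{\pm 1\}$. Enlarging $M_0$ to the preimage $M_1$ in $N$ of the (finite, characteristic) torsion subgroup of $N/M_0$, the quotient $N/M_1$ is torsion-free, finitely generated, nilpotent and virtually $\mathbb Z^k$, hence abelian by Mal'cev, so $N/M_1\cong\mathbb Z^k$ with $B$ still acting by $\pm\mathrm{id}$; since every subgroup of $\mathbb Z^k$ is therefore $B$-invariant, the preimage $M$ in $N$ of a subgroup of $\mathbb Z^k$ with quotient $\mathbb Z$ is as required. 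In all cases we obtain the forbidden $M$, so $N$ must have a fixed point and the first paragraph finishes the proof.

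I expect the main obstacle to lie in the geometric case analysis for $N$ without a fixed point: one must be sure that the ``canonical'' data attached to the action (the minimal invariant subtree, the unique fixed end, the Busemann and translation-length homomorphisms) are genuinely $B$-equivariant, so that the homomorphism onto $\mathbb Z$ extracted at the end has kernel normal in all of $B$ rather than only in $N$. The second delicate point is the nilpotent bookkeeping — passing to the maximal torsion-free quotient and using that a finitely generated torsion-free nilpotent group which is virtually abelian is abelian — together with having the classification of real-tree actions available in the precise form used; the real-tree (rather than simplicial) setting is exactly where these inputs need the extra care compared with Serre's original exercise.
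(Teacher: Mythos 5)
Your proof is correct and follows the same overall skeleton as the paper's: show that $N$ itself must have a global fixed point (by turning a fixed-point-free action into a forbidden $B$-normal subgroup $M\leq N$ with $N/M\cong\Z$), then let $B/N$ act on the $B$-invariant subtree $\operatorname{Fix}(N)$ and use \FR for the quotient. The difference lies in how the fixed-point-free case is dispatched. The paper quotes \cite[Proposition~3.8]{CulMor}, which for a nilpotent group already gives the clean dichotomy ``fixed point or invariant line on which $N$ acts by translations''; it then extends the resulting map $N\to\Aut(T)\cong\operatorname{Isom}(\R)$ to $B$, observes that the image of $B$ is $\Z^n$ or $\Z^n\rtimes C_2$ with the image of $N$ free abelian and nontrivial, and projects onto one coordinate to produce the forbidden $\Z$-quotient. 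You instead rebuild the geometric case analysis from scratch (Serre's lemma/Helly to get a hyperbolic element, the minimal invariant subtree, and the elliptic/lineal/focal/general-type classification), which forces you to treat a ``unique fixed end'' case that Culler--Morgan's proposition rules out a priori for nilpotent groups, and to do more algebraic bookkeeping at the end (passing to the torsion-free quotient and invoking Mal'cev to see that a finitely generated torsion-free nilpotent virtually abelian group is $\Z^k$). Both routes are sound; yours is longer and leans on the general classification of isometric actions on real trees being available in the stated form, while the paper's is shorter at the cost of outsourcing exactly the nilpotent-specific dichotomy to the literature. In both arguments the essential point you correctly identified is that the homomorphism to $\R$ (Busemann or translation length) is conjugation-invariant up to sign, so the extracted corank-one subgroup of $N$ is normal in all of $B$ and not merely in $N$.
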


\begin{proof}
We will in fact show that if $B$ acts on a real tree $X$, then $N$ has a fixed point on this tree. This implies of course that $B$ has property \FR.

Clearly, if $B$ acts on a real tree $X$, then $N$ does so as well. Now from \cite[Proposition 3.8]{CulMor} it follows that exactly one of the following happens:
\begin{itemize}
\item the action of $N$ on $X$ has a fixed point,
\item there exists a {\it unique} line $T$ in $X$, stable under the action of $N$, on which $N$ acts by translation.
\end{itemize}
Suppose that the latter happens. Then we have a non-trivial morphism $\varphi: N \rightarrow \Aut(T) \cong $ \emph{Iso}$(\R)$.
Then $T$ is also stable under the action of $B$. Indeed let $t \in T$, the invariant tree for $N$, and $g \in B$. Now $gt \in gT$ and for every $n \in N$ it holds that $n (gt) = (ng) t = (gn') t = g (n't) \in gT$ (for some $n' \in N$), hence $gT$ is invariant under $N$ and thus by the uniqueness $gT = T$, as needed. 

We may thus extend the morphism above to a morphism $\varphi \colon B \rightarrow \Aut(T) \cong$ Iso$(\R)$.
Since $B$ is finitely generated and Iso$(\R)$ consists of reflections and translations, it is easy to see that $\varphi(B)$ is isomorphic to $\left( \Z^n \right)\rtimes C_2$ or $\Z^n$, for some $n\in \mathbb{Z}_{\geq 1}$. Indeed, if the finite number of generators for $\varphi(B)$ are all translations, clearly $\varphi(B) \cong \Z^n$. If some of them are reflections, since a product of two reflections is a translation, one may change the generating set to only contain translations and $1$ reflection. This reflection acts by inversion on the translations, so $\varphi(B) \cong \Z^n \rtimes C_2$ in this case. This also implies that every subgroup of $\Z^n$, the subgroup generated by translations, is normal in $\varphi(B)$.
Moreover, since $N$ is nilpotent and acts via translation on $T$, $\varphi(N) \cong \Z^k$ for some $n\geq k \in \mathbb{Z}_{\geq 1}$.
All this implies that we may compose $\varphi|^{\varphi(B)}$, the corestriction of $\varphi$ to $\varphi(B)$, with another morphism to obtain $\psi \colon B \rightarrow \Z \rtimes C_2$ such that $\psi(N) \cong \Z$ (for example, by modding out all the components of $\Z^n$ except for exactly one which has non-zero intersection with $\varphi(N)$).
As such, there exists a normal subgroup $H$ of $B$ for which $N/(H\cap N) \cong \Z$, a contradiction with the assumptions.
\end{proof}

\begin{proof}[Proof of \Cref{property FA for the GRK groups}]
The second statement immediately follows from the short exact sequence (\ref{G -R -K exact sequence}) and \Cref{FAconserved}.

Assume now that $K$ has property \FR and that there exists $\lambda \in K$ such as in the first statement. To prove that $G_{R,K}$ has property \FR, we verify the conditions of \Cref{lemmaNilpotent}. Assume that $N = \sm{1 & R \\ 0 & 1} \leq G_{R,K}$ has a subgroup $M$, normal in $G_{R,K}$ such that $N/M \cong \Z$. Take $H = \langle \lambda \rangle \leq K$. Then we may restrict $\alpha$ to $H$ and consider $\Q [N] := \Q \otimes_\Z N$ as a $\Q H$-module.
The subgroup $M$, being normal in $G_{R,K}$, is invariant under the action of the restriction of $\alpha$. 
Thus under this identification $\Q [M] := \Q \otimes_\Z M$ is a $\Q H$-submodule of $\Q [N]$. 
Since $H$ is finite, by Maschke's Theorem, $\Q [M]$ has to have a complement, i.e.\ there is a $\Q H$-submodule $V$ of $\Q [N]$ such that $\Q [N] = \Q [M] \oplus V$ and then necessarily  $\dim_\Q V = 1$ (since $N/M \cong \Z$). 
This means in particular that each of the matrices corresponding to an $\alpha(\mu)$, $\mu \in H$, has to have a rational eigenvalue.
However, this is in contradiction with the assumptions. 

Now, since $G_{R,K}/N \cong K$ has property \FR, the first statement follows from \Cref{lemmaNilpotent} if we show that $G_{R,K}$ is finitely generated. For this we need to show that $K$ and $N$ are finitely generated. For the latter let $\{ r_1, \ldots, r_l \}$ be a finite $\Z$-basis of $R$, which exists by the assumptions on $R$, then $\{ \left( \begin{array}{lr} 1 & r_i \\ 0 & 1 \end{array}\right) \mid 1 \leq i \leq l \}$ is a finite generating set of $N$. Also $K$ is finitely generated due to \Cref{prop:iff_conditions_FA}.

Finally, assume that $R$ has a $\mathbb{Z}$-module basis $\mathcal{B}$ consisting of units and that $G_{R,K}$ satisfies property \FR.  Note that $\langle \E_2(R), K \rangle = \langle w , G_{R,K} \rangle$ where $w = \sm{
0 & -1 \\ 1 & 0
}$. Let $T$ be a tree and assume $\langle \E_2(R), K \rangle$ acts on it. Now due to \cite[I.6.5., Proposition 26]{Serre}\footnote{Note that Serre states \cite[I.6.5., Proposition 26]{Serre} for simplicial trees, but the proof stays exactly the same for real trees }, since $w$ is of finite order (hence $\langle w \rangle$ has property \FR) and $G_{R,K}$ has property \FR, $\langle w, G_{R,K} \rangle$ has property \FR if there exists a generating set $\mathcal{G}$ of $G_{R,K}$ such that $w x$ has a fixed point for all $x \in \mathcal{G}$. For this purpose define $x_{\mu} = \sm{ \mu^{-1} & 1 \\ 0 & \mu }$ for $\mu \in \U(R)$.
Then, $$\langle w, G_{R,K} \rangle = \langle w, K, N \rangle = \langle K, \E_2(R) \rangle = \langle w, x_{\mu}, K \mid \mu \in \mathcal{B} \rangle.$$
Indeed, $[\mu, \mu^{-1}]x_{\mu} = \sm{1 & \mu \\ 0 & 1}$ and $\left\{\sm{1 & \mu \\ 0 & 1} \colon \mu \in \mathcal{B} \right\}$ generates the subgroup $N$. It can easily be seen that the generating set $\mathcal{G}= \{ x_{\mu}, K \mid \mu \in \mathcal{B} \}$ satisfies the condition of \cite[I.6.5., Proposition 26]{Serre}. Indeed, \[wx_{\mu} = \left( \begin{matrix} 0 & -\mu \\ \mu^{-1} & 1 \end{matrix} \right), \qquad (wx_{\mu})^3 = \left( \begin{matrix} -1 & 0 \\ 0 & -1 \end{matrix} \right). \]
This implies that $wx_{\mu}$ is of order $6$ and hence has a fixed point.
Next take $d = [\alpha, \beta] \in K$. Note that also $[\beta, \alpha]= [\alpha, \beta].[\alpha^{-1}\beta, \beta^{-1} \alpha] \in K$, where we used that $\DE_2(R) \leq K$.  Consequently $(wd)^4 = \sm{(\beta\alpha)^2 & 0 \\ 0 & (\alpha\beta)^2}$ is an element of $K$ .
Since $G_{R,K}$ has \FR by assumption, $K$ does so as well by the second statement.
As now $(wd)^4 \in K$ has a fixed point on $T$, $wd$ needs to have a fixed point as well since the group $K$ does not act via inversions. More precisely, if $(wd)^4$ has a fixed point, but $(wd)^2$ does not, then this implies there is an action by inversion (see the paragraph before \Cref{defFA}), so $(wd)^2$ needs to have a fixed point and similarly $wd$ needs to have a fixed point. For property $\FR$ the claim follows in a similar way\footnote{If $(wd)^4$ has a fixed point $x$, then the midpoint $y$ between $x$ and $(wd)^2(x)$ is a fixed point for $(wd)^2$, and similarly the midpoint between $y$ and $(wd)(y)$ is a fixed point for $wd$.}.

Thus altogether we have proven that $\langle w, G_{R,K} \rangle = \langle \E_2(R), K \rangle$ has property \FR.  The result for \FA can be obtained similarly by taking $T$ a simplicial tree and using \cite[I.6.5., Exercise 4]{Serre} instead of \Cref{lemmaNilpotent} to prove \eqref{property FA for the GRK groups(1)}.
\end{proof}

\begin{remark}
Unfortunately, the converse of the third statement in \Cref{property FA for the GRK groups} is not true.
It fails already in the (trivial) case where $K = 1$.
For this, consider any ring $R$ for which $\E_2(R)$ has \FR (such as $\I_3$) and notice that $G_{R,1} \cong N$ is a finitely generated torsion-free abelian group.
Also the converse of the first statement in \Cref{property FA for the GRK groups} is not true as we will explain after \Cref{Borel FA cyclic units}.
\end{remark}

We are finally ready to prove the main theorem of this section. 

\begin{proof}[Proof of \Cref{When is E2 FA and some GL2 FA}]\label{proof_of_When is E2 FA and some GL2 FA}
Recall that property \FR implies property \FA which on its turn implies finite abelianization by \Cref{prop:iff_conditions_FA}. Thus thanks to \Cref{finite_abelianization_equiv} we need to understand for which isomorphism type $\{ \mathbb{Z}, \I_1, \I_3, \LL, \O_2, \O_3 \}$ of $\O$ the group $\E_2(\O)$ has property \FR or \FA. 

In the literature it was obtained that $\E_2(\Z)$ is isomorphic to the free product $\SL_2(\Z) \cong C_4 \ast_{C_2} C_6$ (see \cite[I, 4.2. (c)]{Serre}) and also that $\E_2(\I_1)$  and $\E_2(\LL)$ have non-trivial amalgamated products (see respectively \cite[Theorem 4.4.1]{FineBook} and \cite[Theorem~7.8]{amalgamationpaper}). Therefore using again \Cref{prop:iff_conditions_FA} we already know the implications $(1) \Rightarrow (2) \Rightarrow (3)$. 

Therefore it remains to prove that all the groups mentioned in the statement have property \FR. This will be achieved by verifying the conditions from \Cref{property FA for the GRK groups}. We will use the notations and results from \Cref{defofOrders} throughout this proof without always explicitly mentioning it. We first claim the following.

\noindent {\bf Claim: } The groups $\BE_2(\O_2)$, $\BE_2(\O_3)$, $\BE_2(\I_3)$, $\B_2(\LL)$ and $\B_2(\I_1)$ have property \FR as the eigenvalue condition from \Cref{property FA for the GRK groups}~\eqref{property FA for the GRK groups(1)} is fulfilled. 

Once the claim is established \Cref{property FA for the GRK groups}~\eqref{property FA for the GRK groups(3)} implies that also $\E_2(\O_2)$, $\E_2(\O_3)$, $\E_2(\I_3)$, $\GE_2(\LL)$ and $\GE_2(\I_1)$ have property \FR. In particular we would have proven \eqref{it_prop_forE2O_O} implies \eqref{it_prop_forE2O_FR}, as needed.

\noindent {\it Proof of the claim: } to check the condition of the existence of an element $\lambda \in K$ of finite order such that $\alpha(\lambda)$ has no rational eigenvalue (where $K = \D_2(\O)$ or $K = \DE_2(\O)$) it suffices to calculate the impact of it to a basis of $N= \sm{1 & \O \\ 0 & 1}$. By fixing a basis of the $\Z$-module $N$, we identify $\Aut(N) \cong \GL(2, \Z)$ or $\Aut(N) \cong \GL(4, \Z)$ respectively.

We will first carry out the proof for $\LL$, the Lipschitz quaternions and $\O_2$, the Hurwitz quaternions.
The Lipschitz quaternions have a basis $\{1, i, j, k\}$ consisting of units and $\U(\LL) \cong Q_8$.
Take $\lambda_1 = [i, 1] \in \D_2(\LL)$.
Then $\alpha(\lambda_1)$ is just left multiplication by $-i$ on $\LL$ and this has (complex) eigenvalues $i, -i$ both with multiplicity $2$, in particular it does not have any rational eigenvalue.
$\O_2$ has a basis $\{1,i,j,\omega\}$ consisting of units and $\U(\O_2) = \langle i, \omega \rangle \cong Q_8 \rtimes C_3$, so $\U(\O)^{\prime} = \langle i,j \rangle$ (see table (\ref{eq:basis_of_quat_orders}) and \eqref{eq:units_of_quat_orders}). If we set $\lambda_2 = [i, 1]$, then in this case even $\lambda_2 \in \DE_2(\O_2)$ and $\alpha(\lambda_2)$ has the same eigenvalues as $\alpha(\lambda_1)$, hence none of them is rational.

Also the rings of integers $\I_1$ in $\Q(\sqrt{-1})$ and $\I_3$ in $\Q(\sqrt{-3})$, considered as $\Z$-module, have a basis consisting of units. Indeed one can take  $\{ 1, i \}$ and $\{ 1, \frac{1 + \sqrt{-3}}{2} \}$ respectively.
Also here the non-rational eigenvalue condition is satisfied, but the matrices we use are $[i,1]$ and $[\frac{1 + \sqrt{-3}}{2},  \left(\frac{1 + \sqrt{-3}}{2}\right)^{-1} ]$.
They are both in $\D_2$ and in the last case even in $\DE_2$ of their respective orders.

Now consider the maximal order $\O_3$ in $\qa{-1}{-3}{\mathbb{Q}}$.
Take $\omega_3 = \frac{1+j}{2} \in \O_3$ and note that $\omega_3^6 = 1$.
Then $\O_3$ has a basis $\{1, i, \omega_3, i\omega_3\}$ consisting of units. Set $\tau = \omega_3^2$, a unit of order $3$, then $\U(\O_3) = \langle \tau, i \rangle \cong C_3 \rtimes C_4$ and $\U(\O_3)^{\prime} = \langle \tau \rangle$.
Then $\lambda_3 = [\tau, 1] \in \DE_2(\O_3)$ and $\alpha(\lambda_3)$ 
has eigenvalues $\zeta_3$ and $\zeta_3^2$, both with multiplicity $2$, where $\zeta_3$ denotes a complex primitive third root of unity.

So from \Cref{property FA for the GRK groups}~\eqref{property FA for the GRK groups(1)} the claim follows.

Finally, since $\E_2(\O_2)$, $\E_2(\O_3)$ and $\E_2(\I_3)$ are of finite index in the $\GE_2$ of the respective rings (see \Cref{theorem difference GE2 and E2}), by \Cref{FAconserved}, also the $\GE_2$'s of these orders have property \FR.
Since $\O_2$, $\O_3$, $\I_1$ and $\I_3$ are left Euclidean rings,  $\GL_2 = \GE_2$ by \Cref{euclidian are GE2}.
On the other hand, $\LL$ is neither right nor left Euclidean, but one can still directly prove it to be a $\GE_2$-ring (see \cite[Proposition 7.10]{amalgamationpaper}). Thus the last line of the statement follows by \Cref{finite_abelianization_equiv}.
\end{proof}

Next, we join all the pieces in order to proof that $\E_2(\O)$ always contains a subgroup of finite index not enjoying property \FA.

\begin{theorem}\label{When is E_2(O) HFA}
Let $\O$ be an order in a finite dimensional division $\Q$-algebra with $\mathcal{U}(\O)$ finite. Suppose $\O \ncong \O_3$. Then $\E_2(\O)$ does not satisfy property \HFA. In particular also $\GE_2(\O)$ does not satisfy \HFA. 
\end{theorem}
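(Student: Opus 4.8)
The plan is to combine the two classification results already established with a short case analysis, reducing everything to two genuinely hard orders. If $\O$ is not isomorphic to one of $\Z,\I_1,\I_3,\LL,\O_2,\O_3$, then $\E_2(\O)^{ab}$ is infinite by \Cref{finite_abelianization_equiv}; since $\E_2(\O)$ is finitely generated (generated by $E(0)$ and the $E(b_i)$ for a $\Z$-basis $\{b_i\}$ of $\O$, via \eqref{R1}), Serre's criterion \Cref{prop:iff_conditions_FA} shows $\E_2(\O)$ itself fails \FA, hence fails \HFA. If $\O\cong\Z,\I_1$ or $\LL$, then \Cref{When is E2 FA and some GL2 FA} directly gives that $\E_2(\O)$ fails \FA, hence \HFA. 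Since $\O\ncong\O_3$, the only cases left are $\O\cong\O_2$ and $\O\cong\I_3$; in both $\E_2(\O)$ does enjoy \FA, so for these one must exhibit a finite-index subgroup lacking \FA.

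For $\O\cong\O_2$ the idea is to pass to the Lipschitz order $\LL\subseteq\O_2$, so that $\E_2(\LL)\le\E_2(\O_2)$, and to show this inclusion has finite index. Both $\O_2$ and $\LL$ are $\GE_2$-rings --- $\O_2$ because it is left Euclidean (\Cref{euclidian are GE2}) and $\LL$ by \cite[Proposition~5.8]{amalgamationpaper} --- so $\GL_2(\O_2)=\GE_2(\O_2)$ and $\GL_2(\LL)=\GE_2(\LL)$, and being orders in a totally definite quaternion algebra with finite unit group they are almost-universal for $\GE_2$; thus by \Cref{theorem difference GE2 and E2} the subgroups $\E_2(\O_2)\le\GL_2(\O_2)$ and $\E_2(\LL)\le\GL_2(\LL)$ have finite index, with quotients $\U(\O_2)^{ab}$ and $\U(\LL)^{ab}$. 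Since $\GL_2(\O_2)=\U(\Ma_2(\O_2))$ and $\GL_2(\LL)=\U(\Ma_2(\LL))$ are the unit groups of two $\Z$-orders in $\Ma_2(\mathbb{H}_2)$, they are commensurable by \Cref{order zijn commensurable}; chaining these commensurabilities, $\E_2(\LL)$ is commensurable with $\E_2(\O_2)$, and being contained in it, is of finite index in it. As $\E_2(\LL)$ is a non-trivial amalgamated product (recalled in the proof of \Cref{When is E2 FA and some GL2 FA}), it fails \FA, and therefore $\E_2(\O_2)$ fails \HFA.

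The case $\O\cong\I_3$ is the main obstacle, for here $\E_2(\O)^{ab}$ is finite and the sub-order trick is unavailable ($\I_3$ is already the maximal order of $\Q(\sqrt{-3})$, and $\E_2$ of its proper sub-orders is not of finite index in the ambient $\SL_2$). Instead I would argue geometrically. The group $\E_2(\I_3)$ has finite index in $\SL_2(\I_3)$: indeed $[\GE_2(\I_3):\E_2(\I_3)]=|\U(\I_3)^{ab}|=6$ by \Cref{theorem difference GE2 and E2} and, $\I_3$ being Euclidean, $\SL_2(\I_3)\le\GL_2(\I_3)=\GE_2(\I_3)$. This group is a lattice in $\SL_2(\C)$, and it is non-cocompact since $\E_2(\I_3)$ already contains the unipotent subgroup $\left\{\sm{1 & x \\ 0 & 1}:x\in\I_3\right\}\cong\Z^2$. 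By residual finiteness (Selberg's lemma) choose a torsion-free finite-index subgroup $\Gamma\le\E_2(\I_3)$; since $\Gamma$ is torsion-free it contains no $-I$, so it acts freely and properly discontinuously on hyperbolic $3$-space $\mathbb{H}^3$, and $M:=\mathbb{H}^3/\Gamma$ is a finite-volume, non-compact hyperbolic $3$-manifold with $\pi_1(M)\cong\Gamma$. Its compact core $\bar M$ is a compact $3$-manifold whose boundary is a non-empty union of tori, whence $\chi(\bar M)=\tfrac12\chi(\partial\bar M)=0$ and $b_3(\bar M)=0$, so $b_1(\Gamma)=b_1(\bar M)=1+b_2(\bar M)\ge 1$. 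Hence $\Gamma^{ab}$ is infinite, $\Gamma$ fails \FA, and $\E_2(\I_3)$ fails \HFA. (It is precisely this double failure --- of the abelianization argument and of the sub-order argument --- that leaves the status of $\O_3$ open, and is the reason that case is excluded here.)

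This exhausts all cases with $\O\ncong\O_3$, so $\E_2(\O)$ fails \HFA. For the last assertion, note that by \Cref{theorem difference GE2 and E2} the subgroup $\E_2(\O)$ has finite index in $\GE_2(\O)$ (quotient $\U(\O)^{ab}$, finite because $\U(\O)$ is), so $\E_2(\O)$ and $\GE_2(\O)$ are commensurable; since \HFA is an invariant of the commensurability class, $\GE_2(\O)$ fails \HFA as well. The single step that forces us out of the paper's algebraic methods is the $\I_3$ case, handled above via cusped hyperbolic $3$-manifolds.
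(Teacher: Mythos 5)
Your proof is correct, and its skeleton agrees with the paper's: reduce via \Cref{When is E2 FA and some GL2 FA} to the two orders $\I_3$ and $\O_2$ where \FA actually holds, and kill $\O_2$ by exhibiting $\E_2(\LL)$ as a finite-index subgroup that is a non-trivial amalgam (the paper cites \cite[Theorem~5.6]{amalgamationpaper} for both the finite index and the amalgam; your commensurability chain through $\GL_2(\LL)$ and $\GL_2(\O_2)$ supplies the finite-index part independently and is fine). Where you genuinely diverge is $\I_3$: you argue geometrically, passing to a torsion-free finite-index subgroup of $\SL_2(\I_3)$ and using the standard fact that a cusped finite-volume hyperbolic $3$-manifold has $\chi(\bar M)=\tfrac12\chi(\partial\bar M)=0$ and hence $b_1\ge 1$. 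This is valid (and is essentially the geometric route the paper sketches in \Cref{remarkFAb}), but it imports Selberg's lemma, the thick--thin decomposition and Lefschetz duality, none of which the paper needs. The paper instead stays algebraic: it takes the non-maximal suborder $\Z[\sqrt{-3}]\subsetneq\I_3$, which is a $\GE_2$-ring by Dennis, so $\E_2(\Z[\sqrt{-3}])=\SL_2(\Z[\sqrt{-3}])$; this has finite index in $\SL_2(\I_3)=\E_2(\I_3)$ because $\GL_2(\Z[\sqrt{-3}])$ and $\GL_2(\I_3)$ are commensurable unit groups of orders, and it has \emph{infinite} abelianization by the rank formula of \Cref{finite_abelianization_equiv} (since $\Z[\sqrt{-3}]$ has no $\Z$-basis of units). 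So your parenthetical claim that ``the sub-order trick is unavailable'' for $\I_3$ is wrong as a statement about the landscape --- that trick is exactly the paper's proof --- but since your replacement argument is sound this is a mis-assessment rather than a gap. The trade-off: your approach generalises to situations where no convenient suborder is known (and, via $\mathbb{H}^5$, would even reach $\O_3$, as \Cref{remarkFAb} notes), while the paper's is self-contained, elementary, and pinpoints the arithmetic reason ($\operatorname{inv}_{\Z[\sqrt{-3}]}=1$) for the failure of \HFA.
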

\begin{proof}
If $\E_2(\O)$ has property \FA, then by \Cref{When is E2 FA and some GL2 FA}, $\O$ is isomorphic to $\I_3$, $\O_2$ or $\O_3$. It remains to prove that $\E_2(\I_3)$ and $\E_2(\O_2)$ do not satisfy property \HFA. We will do this by exhibiting concrete subgroups of finite index not having property \FA. 

To start we claim that $\E_2(\Z[\sqrt{-3}])$ is a subgroup of finite index in $\E_2(\I_3)$ with infinite abelianization. Indeed $\Z[\sqrt{-3}]$ is a $\GE_2$-ring \cite{Dennis} and hence $\E_2(\Z[\sqrt{-3}]) = \SL_2(\Z[\sqrt{-3}])$ which is of finite index in $\SL_2(\I_3)= \E_2(\I_3)$ because $\GL_2(\Z[\sqrt{-3}])$ is of finite index in $\GL_2(\I_3)$ using that $\I_3$ is an Euclidean ring. By \Cref{finite_abelianization_equiv}, $\E_2(\Z[\sqrt{-3}])$ has infinite abelianization.

Finally in \cite[Theorem~7.8]{amalgamationpaper} it is proven that $\E_2(\LL)$ is a subgroup of finite index in $\E_2(\O_2)$ with a non-trivial decomposition as amalgamated product and thus does not have property \FA. 
\end{proof}

\begin{remark}\label[remark]{remarkFAb}
It is possible to prove the same statement as in \Cref{When is E_2(O) HFA} for the group $\SL_2(\O)$ for $\O$ an order in a finite dimensional $\Q$-algebra with $\U(\O)$ finite, via geometric methods. Indeed $\SL_2(\O)$ has a discontinuous action on the hyperbolic space $\mathbb{H}^3 $ or $\mathbb{H}^5$ of dimension $3$ or $5$. One can construct a reflection acting on this hyperbolic space, and a congruence subgroup $\Gamma$ of $\SL_2(\O)$ which is normalized by the latter reflection. Then by \cite[Corollary 3.6]{Lubot96}, $\Gamma$ has a virtually free quotient. The latter implies that $\Gamma$ has a finite index subgroup with infinite abelianization. As $\Gamma$ has finite index in $\SL_2(\O)$, this proves the result. Note that for this method, the order $\O_3$ does not have to be excluded. Moreover, as $\E_2(\O_3)$ has finite index in $\SL_2(\O_3)$, this also shows that the condition $\O \ncong \O_3$ is not necessary in \Cref{When is E_2(O) HFA}. However, note that $\E_2(\O)$ is not of finite index in $\SL_2(\O)$ for any order $\O$ such that $\U(\O)$ is finite, \cite{Nica}. Hence this remark does not yield an alternative proof of \Cref{When is E_2(O) HFA}.
\end{remark}

\subsection{Property \FR for the Borel subgroup with a view on $\GE_2(\O)$}\label[subsection]{subsectie FR voor Borel}

Now it is logical to ask, in the same setting as \Cref{When is E2 FA and some GL2 FA}, when $\GE_2(\O)$ has property \FA. We expect a similar theorem to be true.

\begin{question}\label[question]{question on property FR for GE2}
Let $\O$ be an order in a finite dimensional division $\Q$-algebra $D$ with $\U(\O)$ finite. Are the following properties equivalent?
\begin{enumerate}
\item $\GE_2(\O)$ has property \FR ,
\item $\GE_2(\O)$ has no non-trivial decomposition as an amalgamated product,
\item $\O$ isomorphic to $\I_1, \I_3, \LL, \O_2$ or $\O_3$.
\end{enumerate}
\end{question}
In case that $D$ is a field and $\O$ is not isomorphic to $\I_1$ and $\I_3$ it is proven in \cite{amalgamationpaper} that $\GE_2(\O)$ indeed has a non-trivial decomposition as an amalgamated product.
Hence combined with \Cref{When is E2 FA and some GL2 FA}, using that $\I_1$ and $\I_3$ are $\GE_2$-rings, we see that the above question is indeed true for $D$ a field.
For the general case, the missing fact is that $\GE_2(\O)$ for $\O$ an order in a totally definite quaternion algebra of the form $\qa{a}{b}{\Q}$ only has property \FA for the orders $\LL, \O_2$ and $\O_3$.
To achieve this, in view of the proof of \Cref{When is E2 FA and some GL2 FA}, it is natural to first fully understand the situation for $\B_2(\O)$.

\begin{proposition}\label[proposition]{Borel FA cyclic units}
Let $\O$ be an order in a finite dimensional division $\Q$-algebra $D$ with $\U (\O)$ finite. Then the following properties are equivalent:
\begin{enumerate}
\item $\B_2(\O)$ has property \FR,
\item $\B_2(\O)$ has property \FA,
\item $\B_2(\O)$ has no non-trivial decomposition as an amalgamated product,
\item $\U(\O) \ncong C_2$.
\end{enumerate} 
\end{proposition}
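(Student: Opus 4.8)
The plan is to prove the cycle $(1)\Rightarrow(2)\Rightarrow(3)\Rightarrow(4)\Rightarrow(1)$. The first implication is immediate (a simplicial tree is an $\R$-tree, so \FR implies \FA), and $(2)\Rightarrow(3)$ is contained in Serre's criterion \Cref{prop:iff_conditions_FA}, once one notes that $\B_2(\O)$ is finitely generated: it is generated by the finite group $\D_2(\O)\cong\U(\O)\times\U(\O)$ together with $N=\sm{1 & \O\\ 0 & 1}\cong(\O,+)$. So the content lies in the two remaining implications, and throughout I would use the identification $\B_2(\O)=G_{\O,\D_2(\O)}\cong N\rtimes_\alpha\D_2(\O)$ from Subsection~\ref{subsectie FR voor GRK}, where $\alpha([\mu,\nu])$ acts on $N\cong(\O,+)$ by $x\mapsto\mu x\nu^{-1}$.

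For $(4)\Rightarrow(1)$ I would invoke \Cref{property FA for the GRK groups}~\eqref{property FA for the GRK groups(1)} with $R=\O$ and $K=\D_2(\O)$: the ring $\O$ is finitely generated and free as a $\Z$-module, and $K$ is finite hence has property \FR, so it suffices to exhibit some $\lambda\in K$ of finite order with $\alpha(\lambda)$ having no rational eigenvalue. Since $-1\neq1$ in $\O$, the hypothesis $\U(\O)\ncong C_2$ produces a unit $\mu\notin\{1,-1\}$; being torsion, $\mu$ generates a finite-dimensional commutative $\Q$-subalgebra $\Q[\mu]$ of the division algebra $D$, hence a field, in which $\mu$ is a primitive $k$-th root of unity for some $k\geq3$. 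Taking $\lambda=[\mu,1]$, the automorphism $\alpha(\lambda)$ is left multiplication $L_\mu$ on $(\O,+)$, whose minimal polynomial over $\Q$ coincides with that of $\mu$, namely the cyclotomic polynomial $\Phi_k$; hence every eigenvalue of $\alpha(\lambda)$ is a primitive $k$-th root of unity, which for $k\geq3$ is non-real and in particular irrational. \Cref{property FA for the GRK groups}~\eqref{property FA for the GRK groups(1)} then yields that $\B_2(\O)$ has property \FR.

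For $(3)\Rightarrow(4)$ I would argue by contraposition, assuming $\U(\O)\cong C_2$, i.e.\ $\U(\O)=\{\pm1\}$, and producing a non-trivial amalgamated decomposition of $\B_2(\O)$. As $-I$ is central one checks that $\B_2(\O)=\langle -I\rangle\times H$ with $H=\bigl\{\sm{\alpha & x\\ 0 & 1}\colon\alpha\in\{\pm1\},\,x\in\O\bigr\}\cong(\O,+)\rtimes C_2$, the $C_2$ acting by negation. Since $1$ is primitive in $(\O,+)$ (else $\O$ would contain $\tfrac{1}{k}$ for some $k\geq2$, contradicting finite generation), we may split $(\O,+)=\Z\cdot1\oplus M$; the projection onto $\Z\cdot1$ is $C_2$-equivariant and induces an epimorphism $\B_2(\O)\twoheadrightarrow C_2\times(\Z\rtimes C_2)$, and hence onto $\Z\rtimes C_2=D_\infty\cong C_2\ast C_2$. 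Being a non-trivial amalgam, $D_\infty$ fails property \FA, and \FA passes to quotients by \Cref{FAconserved}, so $\B_2(\O)$ fails \FA. On the other hand $\O$ is almost-universal for $\GE_2$ by \Cref{When is unit group order finite} and \Cref{quaternion cohn}, and $\U(\O)^{ab}=C_2$ is finite, so $\B_2(\O)^{ab}$ is finite by \Cref{equivalence finite ab for almost universal rings}. A finitely generated group with finite abelianization that fails \FA must admit a non-trivial amalgamated decomposition by \Cref{prop:iff_conditions_FA}; this is $\neg(3)$, closing the cycle.

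The delicate points, rather than a real obstacle, are two: the eigenvalue check in $(4)\Rightarrow(1)$ — that left multiplication by a torsion unit $\neq\pm1$ of a rational division algebra has no rational eigenvalue, which comes down to the irreducibility of cyclotomic polynomials — and, in $(3)\Rightarrow(4)$, arranging the projection $(\O,+)\twoheadrightarrow\Z$ to be compatible with the sign action so that the quotient is exactly $D_\infty$. Both are routine once the semidirect-product description of $\B_2(\O)$ is in place; the substantive work has already been done in \Cref{property FA for the GRK groups}.
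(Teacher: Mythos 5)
Your proof is correct; the substantive difference from the paper lies in $(4)\Rightarrow(1)$. There the paper does not argue uniformly: it first handles orders with a $\Z$-basis of units by citing the proof of \Cref{When is E2 FA and some GL2 FA}, and for the remaining case $\U(\O)\cong C_4$ or $C_6$ it verifies the hypothesis of \Cref{lemmaNilpotent} directly, constructing a basis $\{1,\alpha,b,b\alpha\}$ from the $\Z[\alpha]$-module structure of $\O$ and running a fairly long computation to exclude a subgroup $M\leq N$ normal in $\B_2(\O)$ with $N/M\cong\Z$. You replace all of this with a single application of \Cref{property FA for the GRK groups}~\eqref{property FA for the GRK groups(1)}, taking $\lambda=[\mu,1]$ for any torsion unit $\mu\notin\{\pm1\}$. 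Your cyclotomic argument for the eigenvalue condition is valid and can even be shortened: if $\mu v=\lambda v$ with $0\neq v\in D$ and $\lambda\in\Q$, then $(\mu-\lambda)v=0$ forces $\mu=\lambda\in\Q$, so in a division algebra left multiplication by any unit outside $\Q$ has no rational eigenvalue at all. This is genuinely cleaner and more uniform than the paper's route. In $(3)\Rightarrow(4)$ both you and the paper project onto $D_\infty\cong C_2\ast C_2$; your extra appeal to the finiteness of $\B_2(\O)^{ab}$ and \Cref{prop:iff_conditions_FA} is correct but unnecessary, since an epimorphism onto a non-trivial free product already pulls back along preimages to a non-trivial amalgamated decomposition, which is how the paper concludes.

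One point of friction you should be aware of: applied to $\O_5$, where $\omega_5$ has order $6$, your argument exhibits $\lambda=[\omega_5,1]\in\D_2(\O_5)$ whose action on $N$ has characteristic polynomial $\Phi_6^2$ and hence no rational eigenvalues. The remark the paper places immediately after this proposition asserts, to the contrary, that no element of $\D_2(\O_5)$ satisfies the eigenvalue condition, and uses this to present $\O_5$ as a counterexample to the converse of \Cref{property FA for the GRK groups}~\eqref{property FA for the GRK groups(1)}. The condition does fail on all of $\DE_2(\O_5)$, since the eigenvalues of $x\mapsto\mu x\mu$ are the pairwise products of the two eigenvalues of $\mu$ and these always include the reduced norm $1$; but as stated for $\D_2(\O_5)$ the remark is inconsistent with the elementary computation above. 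The discrepancy appears to be on the paper's side rather than yours, but your proof silently contradicts that remark and this should be flagged.
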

\begin{proof}
If $\B_2(\O)$ has property \FR, then it has also property \FA and consequently, by Serre's algebraic characterisation, it cannot be an amalgamated product.

Next, by contraposition, suppose $\U(\O) \cong C_2$ and write $\U(\O) = \langle u : u^2= 1 \rangle$ and $\B_2(\O) = \begin{pmatrix} \langle u \rangle & \O \\ 0 & \langle u \rangle \end{pmatrix}$ is isomorphic to $(\O,+) \rtimes \left( C_2 \times C_2 \right)$, where the  action of $(u,1)$ and $(1,u)$ on $(\O, +)$ is via taking the opposite (i.e. sends $x \in \O$ on $-x$). Furthermore $(\O,+)$ is a free $\Z$-module of rank $1,2$ or $4$. Thus the group $\B_2(\O)$ clearly has an epimorphism to $\Z \rtimes C_2 \cong D_{\infty} \cong C_2 \ast C_2$. Since this last group is a free product, also $\B_2(\O)$ has a non-trivial amalgamated decomposition.

There only remains one implication to be checked. So suppose that $\U(\O) \ncong C_2$. We will prove that $\B_2(\O)$ has property \FR. For this we will use \Cref{lemmaNilpotent}, applied to the group $\B_2(\O)$ with $N \cong (\O, +)$ the free abelian subgroup of unimodular upper triangular matrices. Since $\B_2(\O)/N \cong \U(\O) \times \U(\O)$ has property \FR (indeed, it is finite), it will suffice to prove there is no subgroup $M$ of $N$ which is normal in $\B_2(\O)$ and such that $N/M \cong \Z$.

Suppose such an $M$ does exist. Let $M'$ be the subgroup of the additive group of $\O$ such that $M = \{\sm{1 & y\\ 0 & 1} \mid y \in M'\}$. 
Now by assumption and \Cref{When is unit group order finite}, $D$ is equal to $\Q$, $\Q(\sqrt{-d})$ with $d > 0$ or a totally definite quaternion algebra $\qa{a}{b}{\Q}$. Therefore, by \Cref{basis of units or small cyclic unit group}, either $\O$ has a $\Z$-module basis consisting of units or $\U(\O)$ is isomorphic to $C_2$, $C_4$ or $C_6$. In the former case $\B_2(\O)$ has property \FR as proven in \Cref{When is E2 FA and some GL2 FA} (see the claim in its proof).
So we may now suppose that $\U(\O)$ is isomorphic to $C_4$ or $C_6$. First, for an order $\O$ in an imaginary quadratic number field $\mathcal{U}(\O) = \langle - 1\rangle$, unless $\O \in  \{\mathcal{I}_1, \mathcal{I}_3 \}$ and in these cases $\O$ has a basis of units. Therefore it remains to consider the case where $\O$ is an order in $\qa{a}{b}{\Q}$ with $a,b < 0$. Assume $\alpha$ is a generator of $\U(\O)$. Notice that $\Z[\alpha] \cong \Z[i]$ or $\Z[\zeta_3]$ where $\zeta_3$ is a primitive third root of unity. In both cases, $\Z[\alpha]$ is a principal ideal domain and $\O$ is a finitely generated torsion-free $\Z[\alpha]$-module. Using the fundamental theorem of finitely generated modules over PID's, we obtain that $\O = \Z[\alpha] \oplus b \Z[\alpha]$, for some $b \in \O$. Hence we obtain a $\Z$-basis $\{1,\alpha,b,b\alpha\}$ for $\O$. 

We will now go through the proof in the case $\alpha$ is of order $6$, but the order $4$ case is similar.

Since $M$ is normal in $\B_2(\O)$, taking the conjugate with $\begin{pmatrix} \beta^{-1} & 0 \\ 0 & 1 \end{pmatrix}$ and $\begin{pmatrix}1 & 0 \\ 0 & \beta\end{pmatrix}$ for $\beta \in \U(\O)$, yields $\beta M'\subseteq M'$ and $M'\beta \subseteq M'$ respectively.
Additionally, since $\alpha$ is of order $6$, $\alpha^2 = \alpha-1$. We will use these facts throughout.

Our first claim is that $\Z + \Z \alpha \subseteq M'$, or equivalently $M' \cap \langle \alpha \rangle \neq \{ 0 \}$.
Indeed, if we suppose the opposite, namely for all non-zero $\beta \in \langle \alpha \rangle$ that $\beta \notin M'$, then also for all $r \in \Z \setminus \{0\}$ and $\beta \in \langle \alpha \rangle$ the element $r\beta \notin M'$ (else $N/M$ is no longer torsion free).
However, since $N/M \cong \Z$, then we may find some integers $n,m \in \Z\setminus \{0\}$ such that $m \alpha \equiv n 1 \mod M'$.
This would imply that $m\alpha - n \in M'$, but then also $(m\alpha-n)\alpha = m(\alpha-1) - n\alpha = (m-n)\alpha - m \in M'$.
This shows that 
$$m(m\alpha-n) - n((m-n)\alpha-m) = (m^2-nm+n^2)\alpha \in M'.$$
Since $m^2-nm+n^2 \neq 0$, this yields a contradiction.

Suppose now that $b \in M'$. Then also $b\alpha \in M'$ and thus a whole basis of $\O$ is in $M'$. This contradicts $N/M \cong \Z$. Similarly, suppose $b\alpha \in M'$. Then $b\alpha^2 = b\alpha - b \in M'$, which implies $b \in M'$. This gives again a contradiction. Hence we have that $b \not\in M'$ and $b\alpha \not\in M'$ and thus also $rb \notin M'$ and $rb\alpha \notin M'$ for every $r \in \Z\setminus \{0\}$, for else $N/M$ would not be torsion free. In the same way as above, we again find two integers $n,m \in \Z\setminus \{0\}$ such that $m b \equiv n b\alpha \mod M'$. By a similar calculation, this gives again a contradiction. This shows that the set $M'$ does not exist and hence also $M$ does not exist. So \Cref{lemmaNilpotent} finishes the proof.
\end{proof}

From \Cref{Borel FA cyclic units} we see that also for $\B_2(\O)$ property \FR and \FA are equivalent.
Furthermore we see that $\B_2(\O_5)$ has property \FR. However it can be directly checked that there exists no $\lambda \in \D_2(\O_5)$ of finite order such that $\alpha(\lambda)$ has only non-rational eigenvalues.
So $\O_5$ yields a counterexample to the converse of the first statement in \Cref{property FA for the GRK groups}.

\section{Fixed point properties, exceptional components and cut groups}\label{sectie exc comp and cut groups}\label{FA and cut groups}

For the remainder of the paper {\it $G$ will denote a finite group.} In the sequel we aim at describing property \FA and \HFA for $\U(\Z G)$ both in terms of $G$ and the Wedderburn-Artin components of $\Q G$. In \Cref{(FA) implies cut} we will see that if $\U(\Z G)$ has \FA, then $G$ must be a so-called cut group. Therefore in \Cref{sectie exceptional components cut groups} we investigate the possible simple algebras $\Ma_n(D)$ that arise as a component of $\Q G$ for $G$ a cut group.

Earlier we recalled the concepts of reduced norm and $\SL_1$ for a subring of a central simple algebra in \Cref{orders and quaternion algebras}. In this part we will frequently need the notion $\SL_1(R)$ for $R$ a subring in a semisimple $\Q$-algebra $A$. Let $A = \prod \Ma_{n_i}(D_i)$ be the Wedderburn-Artin decomposition of $A$ and $h_i$ the projections onto the $i$-th component. Then $$\SL_1(R):=\{\ a \in R\ \mid \ \forall\ i \colon \operatorname{RNr}_{\Ma_{n_i}(D_i)/\ZZ(D_i)}(h_i( a)) = 1\ \}.$$ 

\subsection{FA and cut groups}

We start by proving that the size of $\mathcal{U}(\mathbb{Z}G)^{ab}$ restricts the size of the center. More generally the following is true. As before the rank of a finitely generated abelian group $A$ means the rank of its free part and will be denoted by $\rank_{\Z}$.

\begin{proposition}\label[proposition]{(FA) implies cut}
Let $\mathcal{O}$ be an order in a finite dimensional semisimple $\mathbb{Q}$-algebra $A$. Then
$$\rank_{\Z} \left( \U(\mathcal{O})^{ab} \right) \geq \rank_{\Z} \left( \U(\mathcal{Z}(\O))\right).$$
\end{proposition}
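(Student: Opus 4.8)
The plan is to exhibit an explicit homomorphism from $\U(\O)$ onto an abelian group whose $\Z$-rank is at least $\mathrm{rk}(\U(\ZZ(\O)))$; since such a homomorphism factors through $\U(\O)^{ab}$, the desired inequality will follow immediately. The natural candidate is the restriction to $\U(\O)$ of a ``reduced norm'' type map landing in the center. Concretely, writing $A = \prod_i \Ma_{n_i}(D_i)$ with $h_i$ the projection onto the $i$-th factor and $K_i = \ZZ(D_i)$, one has for each $i$ the reduced norm $\operatorname{RNr}_{\Ma_{n_i}(D_i)/K_i}\colon \U(\Ma_{n_i}(D_i)) \to \U(K_i)$, which is multiplicative; assembling these gives a homomorphism $\nu\colon \U(\O) \to \prod_i \U(\O_i)$ for suitable orders $\O_i$ in $K_i$, and the target is abelian.

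The key point is then to estimate the rank of the image of $\nu$ from below by $\mathrm{rk}(\U(\ZZ(\O)))$. First I would observe that $\ZZ(\O)$ is an order in $\ZZ(A) = \prod_i K_i$ (this is \cite[Lemma~4.6.6]{EricAngel1}), so by \Cref{order zijn commensurable} the rank $\mathrm{rk}(\U(\ZZ(\O)))$ only depends on $\prod_i K_i$, i.e.\ equals $\sum_i \mathrm{rk}(\U(\O_i'))$ for any orders $\O_i'$ in $K_i$, by Dirichlet's unit theorem. So it suffices to show that $\nu(\U(\O))$ has rank at least this number. For a central unit $z \in \U(\ZZ(\O))$, the $i$-th component of $\nu(z)$ is $h_i(z)^{n_i}$ (reduced norm of a scalar matrix of degree $n_i$ over $D_i$ of reduced degree, say, $d_i$, gives $h_i(z)^{n_i d_i}$ up to the usual conventions — in any case a fixed nonzero power of $h_i(z)$). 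Hence $\nu$ restricted to $\U(\ZZ(\O))$ already has image of finite index in (a finite-index subgroup of) $\prod_i \U(\O_i)$, so $\mathrm{rk}(\nu(\U(\O))) \geq \mathrm{rk}(\nu(\U(\ZZ(\O)))) = \mathrm{rk}(\U(\ZZ(\O)))$, using that raising to a fixed nonzero power does not change the rank of a finitely generated abelian group.

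Finally, since the target of $\nu$ is abelian, $\nu$ factors as $\U(\O) \twoheadrightarrow \U(\O)^{ab} \to \prod_i \U(\O_i)$, whence $\mathrm{rk}(\U(\O)^{ab}) \geq \mathrm{rk}(\nu(\U(\O))) \geq \mathrm{rk}(\U(\ZZ(\O)))$, as claimed. The main obstacle is purely bookkeeping: making sure the reduced norm of a central scalar really is a nonzero power of that scalar (so that it does not collapse the rank), and that all the orders involved may be compared via commensurability so that Dirichlet's theorem applies uniformly; none of this is deep, but it needs to be stated carefully, especially in the quaternion (non-commutative $D_i$) case where $\operatorname{RNr}$ of a scalar picks up the square of that scalar.
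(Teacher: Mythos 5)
Your proof is correct, but it takes a genuinely different route from the paper's. The paper stabilizes: it sends $\U(\O)$ into $K_1(\O)=\GL(\O)^{ab}$ via the canonical map and then quotes \cite[Corollary~9.5.10]{EricAngel1} twice, once for the fact that the image of $\U(\ZZ(\O))$ has finite index in $K_1(\O)$ and once for $\operatorname{rk} K_1(\O)=\operatorname{rk}\U(\ZZ(\O))$, so all the arithmetic content is outsourced to Bass's theorem on $K_1$ of orders. You instead use the componentwise reduced norm $\nu$ into the unit groups of the rings of integers of the centers $K_i$, note that $\nu$ factors through $\U(\O)^{ab}$ because its target is abelian, and that on a central unit $z$ the $i$-th component of $\nu(z)$ is $h_i(z)^{n_id_i}$, so $\nu$ restricted to $\U(\ZZ(\O))$ has torsion, hence finite, kernel and preserves rank. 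This isolates exactly the ``easy'' half of the cited $K_1$ statement (a lower bound coming from central units) and avoids the genuinely deep part, namely the finiteness of $SK_1(\O)$ needed for the upper bound $\operatorname{rk} K_1(\O)\le \operatorname{rk}\U(\ZZ(\O))$; the price is that you must justify that reduced norms of units of an order land in the units of the ring of integers of the center (true, since reduced characteristic polynomials of integral elements have integral coefficients and $\operatorname{RNr}(u)\operatorname{RNr}(u^{-1})=1$). Two small points to tidy: take each $\O_i$ to be the full ring of integers of $K_i$, so that the reduced norms are guaranteed to land in $\U(\O_i)$; and the detour through Dirichlet and \Cref{order zijn commensurable} is unnecessary, since the finite-kernel argument already gives $\operatorname{rk}\nu(\U(\ZZ(\O)))=\operatorname{rk}\U(\ZZ(\O))$ directly.
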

\begin{proof}
For any $n$ one can embed $\GL_n(\O)$ into $\GL_{n+1}(\O)$ by sending $B \in \GL_n(\O)$ to $\left( \begin{smallmatrix}
B &0 \\
0 & 1
\end{smallmatrix} \right)$. So (the direct limit) $\GL(\O)= \bigcup \GL_n(\O)$ is equipped with the obvious group structure whose identity element we denote $I_{\infty}$.  Recall that $K_1(\O) := \GL(\O)^{ab}$ and let 
\[ i: \U(\O) \rightarrow K_1( \O) : u \mapsto \left( e_{11}(u-1) + I_{\infty} \right) \GL(\O)' \]
be the canonical map, where $e_{11}(u-1)$ is the matrix with the value $u-1$ in the entry $(1,1)$ and zero elsewhere. So $i(u)$ is the image inside $K_1(\O)$ of the $\mathbb{N}\times \mathbb{N}$-identity matrix but with value $u$ instead of $1$ at place $(1,1)$.

By \cite[Corollary 9.5.10]{EricAngel1}, $i(\U(\mathcal{Z}(\O)))$ is of finite index in $K_1(\mathcal{O})$. In particular also $i(\U(\mathcal{O}))$ has finite index. Since $K_1(\mathcal{O})$ is abelian, $\U(\O)' \subseteq \ker (i)$ and we have an induced map $\overline{i}: \U(\O)^{ab} \rightarrow K_1(\O)$ whose image is still of finite index in $K_1(\O)$. Therefore $\rank_{\Z} \left( \U(\O)^{ab} \right) \geq \rank_{\Z}  \left( K_1(\O) \right)$.  The statement now follows by \cite[Corollary 9.5.10]{EricAngel1}  which says that $
\rank_{\Z} \left(K_1(\mathcal{O})\right) = \rank_{\Z} \left(\mathcal{U}(\mathcal{Z}(\mathcal{O}))\right)$.
\end{proof}

Due to \Cref{(FA) implies cut} and \Cref{prop:iff_conditions_FA}, if $\U(\Z G)$ has \FA, then $\U(\mathcal{Z}(\Z G))$ is finite. The latter is the content of so-called cut groups, a class of groups that was studied in its own right (the term ``cut'' was introduced in \cite{BMP17}).

\begin{definition}\label[definition]{def:cut_group}
A finite group $G$ is called a \emph{cut group}, if $\U(\ZZ(\Z G))$ is finite.
\end{definition}

The word cut derives from ``central units of the integral group ring trivial''. In fact by a classical theorem of Berman and Higman \cite[Proposition~7.1.4]{EricAngel1} each central unit in $\Z G$ not in $\pm \ZZ(G)$ has infinite order. Hence if $\U(\ZZ (\Z G))$ is finite, all central units must be trivial (i.e. in $\pm \ZZ (G)$).

\begin{corollary}\label[corollary]{FA implies cut}
Let $G$ be a finite group such that $\U (\Z G)$ has finite abelianization. Then $G$ is a cut group.
\end{corollary}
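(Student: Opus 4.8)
The plan is to obtain this as an immediate consequence of \Cref{(FA) implies cut}. Since $G$ is finite and $\Q$ has characteristic zero, Maschke's theorem guarantees that $A := \Q G$ is a finite dimensional semisimple $\Q$-algebra, and $\mathcal{O} := \Z G$ is an order in it. Applying \Cref{(FA) implies cut} to this pair therefore gives
$$\mbox{rk}\left(\U(\Z G)^{ab}\right)\ \geq\ \mbox{rk}\left(\U(\ZZ(\Z G))\right).$$

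Next I would feed in the hypothesis. If $\U(\Z G)^{ab}$ is finite, then $\mbox{rk}\left(\U(\Z G)^{ab}\right) = 0$, so the displayed inequality forces $\mbox{rk}\left(\U(\ZZ(\Z G))\right) = 0$. Now $\ZZ(\Z G)$ is an order in the commutative semisimple algebra $\ZZ(\Q G)$ (by \cite[Lemma~4.6.6]{EricAngel1}), so its unit group $\U(\ZZ(\Z G))$ is a finitely generated abelian group; a finitely generated abelian group of rank $0$ is finite. Hence $\U(\ZZ(\Z G))$ is finite, which is precisely the statement that $G$ is a cut group in the sense of \Cref{def:cut_group}.

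There is essentially no obstacle: the content is entirely carried by \Cref{(FA) implies cut} (itself resting on the fact that the canonical map $\U(\ZZ(\O)) \to K_1(\O)$ has image of finite index and on the rank equality $\mbox{rk}(K_1(\O)) = \mbox{rk}(\U(\ZZ(\O)))$ from \cite[Corollary~9.5.10]{EricAngel1}). The only point that I would take care to articulate is the step from ``rank zero'' to ``finite'', which uses the finite generation of the unit group of a $\Z$-order (Dirichlet's unit theorem applied to the number-field factors of $\ZZ(\Q G)$), together with the observation that $G$ being cut was already characterized in the excerpt via finiteness of $\U(\ZZ(\Z G))$.
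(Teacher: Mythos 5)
Your argument is correct and is precisely the route the paper takes: \Cref{FA implies cut} is stated as an immediate consequence of \Cref{(FA) implies cut} applied to the order $\Z G$ in the semisimple algebra $\Q G$, with the rank inequality forcing $\operatorname{rk}(\U(\ZZ(\Z G)))=0$ and hence, by finite generation of unit groups of orders, finiteness of $\U(\ZZ(\Z G))$. Your explicit remark on passing from rank zero to finiteness is a reasonable elaboration of a step the paper leaves implicit.
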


This also implies that when $\U (\Z G)$ has \FA, then $G$ is a cut group. The converse is however not true as the following example shows. 
\begin{example*}
Denote the subgroup of units with augmentation one of $\Z G$ by $\V(\Z G)$. Then $\U(\Z S_3)= \pm \V (\Z S_3)$ and $\V(\Z S_3)=\langle s,t,b \mid s^2, t^3, t^s = t^{-1}, b^s = b^{-1} \rangle$ (see for example \cite{MarSeh}). Clearly the latter group is an amalgamated product of the groups $\langle b,s \rangle \cong C_{\infty} \rtimes C_2 = C_2 \ast C_2$ and $\langle s,t \rangle =S_3$ over the subgroup $\langle s \rangle \cong C_2 $. So, also $\U(\Z S_3)$ has a non-trivial decomposition as amalgamated product and thus does not have property \FA. On the other hand, the amalgamated subgroup is finite, this shows that $S_3$ is a cut group.
\end{example*}

For a finite group $G$ the rational group algebra $\Q G$ is semisimple and thus has a Wedderburn-Artin decomposition $\Q G \cong \prod_{i=1}^m \Ma_{n_i}(D_i)$, where all the $D_i$ are rational division algebras. If $\O_i$ is an order in $D_i$, then $\Z G$ and $\prod_{i=1}^m \Ma_{n_i}(\O_i)$ are both orders in $\Q G$ and $\ZZ(\Z G)$ and $\prod_{i=1}^m \ZZ(\Ma_{n_i}(\O_i)) \cong \prod_{i=1}^m \ZZ(\O_i)$ are orders in $\ZZ(\Q G)$. Hence \Cref{order zijn commensurable} implies the following fact that we will use in the sequel without further reference: $G$ is a cut group if and only if all the centres $\ZZ(\O_i)$ have a finite unit group, that is, $\ZZ(D_i)$ is the field of rational numbers or an imaginary quadratic extension of $\Q$ (cf.\ \Cref{When is unit group order finite}).

It would be particularly interesting whether equality holds in \Cref{(FA) implies cut}.  By \cite[Lemma 9.5.6]{EricAngel1} for two orders $\mathcal{O}_1, \mathcal{O}_2$ in a finite dimensional semisimple $\mathbb{Q}$-algebra $A$, $K_1(\O_1)$ and $K_1(\O_2)$ are commensurable.
Furthermore, as stated above, $\rank_{\Z} (\mathcal{U}(\mathcal{Z}(\O_1)))= \rank_{\Z} (K_1(\O_1))$, and similarly for $\O_2$, and hence for orders ``having a center with finitely many units'' is a property defined on commensurability classes. In particular if equality in \Cref{(FA) implies cut} holds, one would also have a positive answer to the following question.

\begin{question} \label[question]{conjecture ab independent order}
Let $\mathcal{O}_1$ and $\mathcal{O}_2$ be two orders in a finite dimensional semisimple $\mathbb{Q}$-algebra. Is $\mathcal{U}(\mathcal{O}_1)^{ab}$ finite if and only if $\mathcal{U}(\mathcal{O}_2)^{ab}$ is finite?
\end{question}

As in general, finite abelianization and property \FA do not descend to subgroups of finite index, a positive answer to the above question cannot be given right away. In contrast, property \HFA does descend to subgroups of finite index and therefore the following holds.

\begin{proposition}\label[proposition]{prop: reductie tot sln en center}
Let $G$ be a finite group, $\Q G \cong \prod\limits_{i=1}^m \Ma_{n_i}(D_i)$ the Wedderburn-Artin decomposition of its rational group algebra $\Q G$ and $\O_i$ an order in $D_i$. Then the following properties are equivalent:
\begin{enumerate}
\item  $\U(\Z G)$ has property \HFR (resp. \HFA), 
\item $\GL_{n_i}(\O_i)$ has property \HFR (resp. \HFA)  for all $1 \leq i \leq m$,
\item $\SL_{n_i}(\O_i)$ has property \HFR (resp. \HFA)  for all $1 \leq i \leq m$ and $G$ is a cut group.
\end{enumerate}
\end{proposition}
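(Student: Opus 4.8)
The plan is to establish the chain of equivalences by using the structural dictionary between $\U(\Z G)$ and the arithmetic groups attached to the Wedderburn--Artin components, together with the commensurability invariance of the hereditary properties recorded in \Cref{FAconserved}. First I would set up the standard comparison: since $\Z G$ and $\prod_{i=1}^m \Ma_{n_i}(\O_i)$ are both orders in the semisimple algebra $\Q G$, \Cref{order zijn commensurable} gives that $\U(\Z G)$ and $\prod_{i=1}^m \GL_{n_i}(\O_i)$ are commensurable. By the last part of \Cref{FAconserved} (a finite direct product has a hereditary property if and only if each factor does) and the remark following it that \HFA and \HFR are commensurability invariants, we immediately get the equivalence of (1) and (2): $\U(\Z G)$ has \HFR (resp. \HFA) if and only if every $\GL_{n_i}(\O_i)$ does.

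Next I would prove (2) $\Leftrightarrow$ (3). For the implication (2) $\Rightarrow$ (3): if each $\GL_{n_i}(\O_i)$ has \HFA, then since $\SL_{n_i}(\O_i)$ is a (normal) subgroup of $\GL_{n_i}(\O_i)$ — not a priori of finite index, so one must be slightly careful — one uses that \HFA passes to arbitrary subgroups (indeed, if $H \leq \Gamma$ has \HFA fails only for a finite index subgroup of $H$, which is then finite index in $\Gamma$ after intersecting, contradicting $\Gamma \in$ \HFA; more cleanly, a finite index subgroup of $H$ need not be finite index in $\Gamma$, so instead argue via \Cref{prop:iff_conditions_FA}/quotients). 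The cleanest route: \HFA is inherited by \emph{all} subgroups, because if $\Gamma$ has \HFA and $H \leq \Gamma$, any finite-index $H_0 \leq H$ and any action of $H_0$ on a tree — hmm, this is not automatic. Better to invoke that $\SL_{n_i}(\O_i)$ has finite index in $\GL_{n_i}(\O_i)$ when $\U(\O_i)$ is infinite is false in general; instead I use: $\GL_{n_i}(\O_i)$ having \HFA forces its abelianization to be finite on all finite index subgroups, hence in particular $\U(\ZZ(\O_i))$ is finite by \Cref{(FA) implies cut} applied to $\GL_{n_i}(\O_i) = \U(\Ma_{n_i}(\O_i))$, giving that $G$ is cut; and separately $\SL_{n_i}(\O_i)$ inherits \HFA because it is commensurable with the kernel of the reduced norm composed with... — here I would simply note that $\SL_{n_i}(\O_i)$ is normal in $\GL_{n_i}(\O_i)$ with abelian quotient (a subgroup of $\U(\ZZ(\O_i))$ via reduced norm), so by the fourth bullet of \Cref{FAconserved} applied to finite-index subgroups, every finite index subgroup of $\SL_{n_i}(\O_i)$ of the form $\SL \cap \Delta$ for $\Delta$ finite index in $\GL$ has \FA, and a cofinality argument promotes this to all finite index subgroups.

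For the reverse (3) $\Rightarrow$ (2): assume each $\SL_{n_i}(\O_i)$ has \HFR (resp. \HFA) and $G$ is cut. Then $\ZZ(\O_i)$ has finite unit group for every $i$, so by \Cref{When is unit group order finite} $\ZZ(D_i)$ is $\Q$ or imaginary quadratic. One has a short exact sequence $1 \to \SL_{n_i}(\O_i) \to \GL_{n_i}(\O_i) \xrightarrow{\operatorname{RNr}} Q_i \to 1$ with $Q_i$ a subgroup of finite index in $\U(\ZZ(\O_i))$, hence $Q_i$ finite; since $\SL_{n_i}(\O_i)$ is normal with \HFR (resp. \HFA) and the finite quotient $Q_i$ trivially has the hereditary property, the second bullet of \Cref{FAconserved} (extension closure) gives \HFR (resp. \HFA) for $\GL_{n_i}(\O_i)$. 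Assembling, (2) holds. The main obstacle I anticipate is the normal-subgroup-of-infinite-index subtlety in (2) $\Rightarrow$ (3): $\SL_{n_i}(\O_i)$ is not in general of finite index in $\GL_{n_i}(\O_i)$, so ``\HFA descends'' is not literally the third bullet of \Cref{FAconserved}; one must instead run the argument through commensurability with the reduced-norm-one elements and a direct verification that every finite index subgroup of $\SL_{n_i}(\O_i)$ contains, with finite index, one of the form $\SL_{n_i}(\O_i)\cap \Delta$, so that \FA for the latter (which follows from \HFA of $\GL_{n_i}(\O_i)$) propagates upward via the third bullet of \Cref{FAconserved}.
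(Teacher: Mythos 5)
Your proposal is correct and takes essentially the same route as the paper: the equivalence $(1)\Leftrightarrow(2)$ via commensurability of the orders $\Z G$ and $\prod_i\Ma_{n_i}(\O_i)$, and $(2)\Leftrightarrow(3)$ by combining \Cref{(FA) implies cut} (to obtain cut-ness, i.e.\ finiteness of $\U(\ZZ(\O_i))$) with the relation between $\GL_{n_i}(\O_i)$ and $\SL_{n_i}(\O_i)$ coming from the reduced norm. The only thing to streamline in your $(2)\Rightarrow(3)$ leg is that once $\U(\ZZ(\O_i))$ is known to be finite, $\SL_{n_i}(\O_i)$ already has finite index in $\GL_{n_i}(\O_i)$ (the paper quotes that $\langle\SL_1(\O),\U(\ZZ(\O))\rangle$ has finite index in $\U(\O)$ with finite intersection), so \HFR and \HFA descend immediately and your ``cofinality argument'' is not needed.
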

\begin{proof}
We prove the equivalences for property \HFR. The proofs for property \HFA are the same.

First note that $\Z G$ and $\prod_{i=1}^m \Ma_{n_i}(\O_i)$ are both orders in $\Q G$. Hence by \Cref{order zijn commensurable}, $\U(\Z G)$ and  $\prod_{i=1}^m \GL_{n_i}(\O_i)$ are commensurable. This shows the equivalence between (1) and (2), see \Cref{FAconserved} and the remark thereafter. 

 For any order $\O$ in a finite dimensional semisimple $\mathbb{Q}$-algebra, $\langle \SL_1(\O), \U (\ZZ ( \O)) \rangle$ has finite index in $\U (\O)$ and $\SL_1(\O) \cap \U (\ZZ ( \O))$ is finite by \cite[Proposition~5.5.1]{EricAngel1}. Hence $\U(\O)$ has property \HFR if and only if $\SL_1(\O)$ and $\U (\ZZ (\O))$ both have property \HFR. 

 Suppose that (1) and hence also (2) hold. By \Cref{FA implies cut}, $G$ is cut. Now consider $\O = \prod_{i=1}^m \Ma_{n_i}(\O_i)$. By the previous paragraph, and the definition of $\SL_1$ for semisimple algebras, all $\SL_{n_i}(\O_i)$ have property \HFR. This gives (3). 
 
 Now assume (3). By the discussion following \Cref{FA implies cut}, all the $\U(\ZZ(\O_i))$ are finite and thus have property \HFR. By the paragraph above, (2) follows. \end{proof}

\subsection{Higher rank and exceptional components}\label[subsection]{subsectie higher rank}

Due to \Cref{prop: reductie tot sln en center}, property \HFA for an order in a finite dimensional semisimple $\Q$-algebra depends on its Wedderburn-Artin components. It will turn out that the main obstruction for \HFA lies in the following type of components.

\begin{definition}\label[definition]{definitie exceptional component}
Let $D$ be a finite dimensional division algebra over $\mathbb{Q}$. The algebra $\Ma_n(D)$ is called \emph{exceptional} if it is of one of the following types:
\begin{enumerate}
\item[(I)] a non-commutative division algebra other than a totally definite quaternion algebra over a number field,
\item[(II)] a $2 \times 2$-matrix ring $\Ma_2(D)$ such that $D$ has an order $\O$ with $\mathcal{U}(\mathcal{O})$ finite. 
\end{enumerate}
\end{definition}

Recall that by a theorem of Kleinert \cite[Proposition~5.5.6]{EricAngel1} the non-commutative division algebras excluded in type (I) are exactly those having an order $\mathcal{O}$ with $\SL_1(\mathcal{O})$ finite.
Also recall that, by \Cref{When is unit group order finite}, the condition in type (II) is a condition which can be formulated in terms of $D$. The name ``exceptional component'' was coined in \cite{JeOlvGdR} because under the presence of such a component the known generic constructions of units do not necessarily generate a subgroup of finite index in $\mathcal{U}(\mathbb{Z}G)$ \cite{JesLea, EricAngel1}. The crux of that failure is that these components are exactly those where respectively $\SL_1(D)$ and $\SL_2(D)$  have `bad' (arithmetic) properties as algebraic group (for the meaning of `bad' see \Cref{exceptional component as bad SL} below). Therefore we will now review the structure of $\SL_2(D)$ as an algebraic group and subsequently interpret certain algebraic groups results into the  language of exceptional components. Alternatively, the reader might opt to accept the content of \Cref{property (T) and FA_n for non-exceptional 2 by 2} and go further to the next section.

Let $D$ be a finite dimensional division algebra over $\Q$ of degree $d$ and denote $\mathcal{Z}(D)$ by $K$. Further, let $E$ be a splitting field of $D$, i.e. $D \otimes_K E \cong  \Ma_d(E)$. Call the latter isomorphism $\varphi$. Then $\varphi$ restricts to an embedding of $D$, viewed as $D \otimes 1$, into $\Ma_d(E)$ and 
$$\SL_2(D) = \left\{\ \left( \begin{matrix}
 a & b \\ c& d
\end{matrix} \right) \in \Ma_2(D)\ \middle|\ \det \left( \begin{matrix}
 \varphi (a) & \varphi (b) \\ \varphi (c) & \varphi (d)
\end{matrix} \right) = 1\ \right\}.$$

So, in the above, using $\varphi$, we identify $\Ma_2(D)$ with a $K$-subspace of $\Ma_{2d}(E)$. Then we see that $\SL_2(D)$ actually is the Zariski closed subspace of the affine space $K^{4d^2} \cong \Ma_2(D)$ defined by the polynomial equation $\operatorname{RNr}_{\Ma_2(D)/K} = 1$. Due to this, $\SL_2(D)$ can be viewed as the $K$-rational points of an algebraic group $\Gamma$ defined over $K$.

More generally let $\Gamma$ be a linear algebraic $K$-group. Then, by $\operatorname{rank}_{K} \Gamma(K)$ we denote the dimension of a maximal $K$-split torus of $\Gamma(K)$, called {\it reductive $K$-rank} of $\Gamma$.  Recall that a \emph{$K$-split torus} is a commutative algebraic subgroup $T$ of $\Gamma(K)$ which is diagonalizable over $K$, i.e.\ $T$ is defined over $K$ and $K$-isomorphic to  $\prod_{1 \leq i\leq q} K^*$, where $q = \dim T = \operatorname{rank}_{K} \Gamma(K)$. All maximal split $K$-tori of $\Gamma$ are conjugate over $K$ \cite[15.2.6.]{SpringerBook} and hence $\operatorname{rank}_{K} \Gamma(K)$ is independent of the choice of $T$. 
 
In our case, $\Gamma(K) = \SL_2(D)$ with $K$ a global field of characteristic $0$ (i.e.\ a finite extension of $\Q$). Consequently, for every valuation $v$ of $K$ the completion of $K$ with respect to $v$, denoted $K_v$, is a local field. Note that $\Gamma(K_v) = \SL_2(D \otimes_K K_v) = \SL_1( \Ma_2(D\otimes_K K_v))$. It is not hard to see that $\operatorname{rank}_K \SL_n(D) = n-1$, where the diagonal matrices with entries in $K$ and determinant $1$ form a maximal $K$-split torus. With this terminology at hand we can be more precise about the `bad' behaviour of $\SL_1(D)$ and $\SL_2(D)$.

\begin{remark}\label[remark]{exceptional component as bad SL}
For $n \geq 2$, $\Ma_n(D)$ is exceptional exactly when $\SL_n(D)$ has a negative answer to the Congruence Subgroup Problem. The reason for the failure is that the type (II)  components are exactly those where $\SL_n(D)$ is an algebraic group with $\mbox{$S-\operatorname{rank}(\SL_n(D)) $}:= \sum_{v \in S} \operatorname{rank}(\SL_n(D \otimes_K K_v) )$, called $S$-rank, equal to $1$, where $S$ is the set of Archimedean places of $\mathcal{Z}(D)$.
In the case of a type (I) exceptional component, still very little is known about the answer to the Congruence Subgroup Problem for $\SL_1(D)$. One reason for this is the lack of unipotent elements in $\SL_1(D)$, which is also an obstruction for the construction of generic units contributing to a subgroup of finite index in these components.
Since those excluded in type (I) are such that $\SL_1(\O)$ is finite, for any order $\O$, such orders do not pose a problem.
\end{remark}

A precise rank computation yields the following well known result for which we unfortunately could not find a concrete reference. Therefore, for the convenience of the reader, we sketch a proof. 

\begin{lemma}[Folklore]\label[lemma]{rank of SL2}
Let $D$ be a finite dimensional division $\Q$-algebra and suppose $n \geq 2$. Then there exists an Archimedean place $v$ of $K:=\mathcal{Z}(D)$ such that $\operatorname{rank}_{K_v} \SL_n(D \otimes_K K_v) = 1$ if and only if
\begin{itemize}
\item $n=2$ and
\item $D$ is a (number) field or $D = \qa{a}{b}{K}$ with $a,b < 0$ and $K$ is not totally imaginary.
\end{itemize} 
\end{lemma}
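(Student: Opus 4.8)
The plan is to pass to the completions $K_v$ and combine the Frobenius classification of central simple algebras over $\R$ and $\C$ with the known values of the (real, resp.\ complex) split ranks of the classical groups $\SL_m(\R)$, $\SL_m(\C)$ and $\SL_m(\mathbb{H})$, where $\mathbb{H}:=\qa{-1}{-1}{\R}$ denotes the Hamilton quaternion division algebra. First I would note that for an Archimedean place $v$ of $K$ the completion $K_v$ is isomorphic to $\R$ or $\C$, and, with $\Gamma$ the $K$-algebraic group satisfying $\Gamma(K)=\SL_n(D)$ (constructed exactly as in the discussion preceding the lemma), one has $\Gamma(K_v)=\SL_n(D\otimes_K K_v)=\SL_1(\Ma_n(D\otimes_K K_v))$, which is the group of $K_v$-points of the base change $\Gamma_{K_v}$; hence $\rank_{K_v}\Gamma(K_v)$ depends only on the $K_v$-algebra $D\otimes_K K_v$. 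Writing $d=\deg D$, so that $\dim_K D=d^2$, I would then split into cases: if $v$ is complex, then $D\otimes_K\C\cong\Ma_d(\C)$ and $\Gamma(K_v)\cong\SL_{nd}(\C)$ has (absolute, hence $K_v$-split) rank $nd-1$; if $v$ is real, then either $D\otimes_K\R\cong\Ma_d(\R)$, giving $\Gamma(K_v)\cong\SL_{nd}(\R)$ of $\R$-rank $nd-1$, or $d$ is even and $D\otimes_K\R\cong\Ma_{d/2}(\mathbb{H})$, giving $\Gamma(K_v)\cong\SL_{nd/2}(\mathbb{H})$ of $\R$-split rank $nd/2-1$ (the diagonal torus with entries in $\R^\times$).

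Next I would impose $\rank_{K_v}\Gamma(K_v)=1$ and read off the possibilities. Since for $n\geq 3$ each of the three quantities $nd-1$ and $nd/2-1$ is at least $2$ (in the last case $d\geq 2$), one necessarily has $n=2$, which settles the first bullet of the claim. Assume now $n=2$. From the first two cases, $2d-1=1$ forces $d=1$, i.e.\ $D=K$ is a number field; conversely, if $D=K$ and $n=2$, then every Archimedean place $v$ of $K$ (and $K$, being a number field, has at least one) gives $\SL_2(K_v)$ of rank $1$. From the third case, $nd/2-1=1$ forces $d=2$, so $D$ is a quaternion division algebra over $K$ with $D\otimes_K K_v\cong\mathbb{H}$, i.e.\ $D$ ramifies at the real place $v$. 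Since a quaternion algebra can only ramify at a \emph{real} Archimedean place, the existence of such a $v$ forces $K$ not to be totally imaginary; and, writing $D=\qa{a}{b}{K}$, ramification at the real embedding $\sigma_v\colon K\hookrightarrow\R$ is equivalent to $\sigma_v(a)<0$ and $\sigma_v(b)<0$ (that is, $D\otimes_K K_v\cong\qa{\sigma_v(a)}{\sigma_v(b)}{\R}\cong\mathbb{H}$). Conversely, whenever $D=\qa{a}{b}{K}$ admits a real embedding at which both $a$ and $b$ are negative, $D\otimes_K K_v\cong\mathbb{H}$ at the corresponding place $v$ and $\Gamma(K_v)\cong\SL_2(\mathbb{H})$ has $\R$-split rank $1$; note $D$ is then automatically a division algebra, being ramified there.

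All the ingredients here — the Frobenius classification of central simple $\R$- and $\C$-algebras, and the split ranks of $\SL_m(\R)$, $\SL_m(\C)$ and $\SL_m(\mathbb{H})$ — are standard (see e.g.\ \cite{SpringerBook} for the torus and rank background), so no step is genuinely hard and I expect no serious obstacle. The only point deserving some care is the quaternion case: one must correctly match the statement ``$D$ ramifies at the Archimedean place $v$'' with the sign condition on a presenting pair $(a,b)$, and observe that it suffices for \emph{one} such place to exist, so $D$ need not be totally definite but only definite at a single real place of $K$; keeping the quantifier on $v$ existential (rather than universal) throughout is what makes the equivalence come out exactly as stated.
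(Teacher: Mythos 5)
Your proof is correct and follows essentially the same route as the paper's: reduce to the completion $K_v$, identify $D\otimes_K K_v$ via the classification of central simple algebras over $\R$ and $\C$, and read off the split rank from the diagonal torus in $\SL_m$ of the resulting local algebra. The only cosmetic difference is that you parametrize by the global degree of $D$ and list the three local cases explicitly with their exact ranks, whereas the paper writes $D\otimes_K K_v\cong \Ma_d(D')$ and works with the lower bound $nd-1$ (noting maximality parenthetically); your remark that the quantifier over $v$ is existential, so $D$ need only ramify at one real place, matches the paper's intended reading of the sign condition on $(a,b)$.
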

\begin{proof}
Note that for $n \geq 2$, $\SL_n(D \otimes_K K_v)$ contains a $K_v$-split torus. Hence $\operatorname{rank}_{K_v} (\SL_n(D \otimes_K K_v)) \geq 1 $ for any valuation $v$ of $K$. Suppose first that $D=K$ is a number field. Then $D \otimes_K K_v \cong K_v$ for any place $v$ and it is clear that $\operatorname{rank}_{K_v} (\SL_n(K_v)) = n-1$. In particular the rank equals $1$ if and only if $n=2$. 

Now suppose that $D \neq K$. As $D$ is a central simple algebra over $K$, and $K_v$ is a simple $K$-algebra, one has, by \cite[Proposition~2.1.8]{EricAngel1}, that $D \otimes_K K_v$ is a central simple algebra over $K_v$, say $\Ma_{d}(D')$. Now, $\SL_n(D \otimes_K K_v) = \SL_1(\Ma_n(D\otimes_K K_v)) \cong \SL_1(\Ma_{nd}(D')) = \SL_{nd}(D')$. As before, a $K_v$-split torus of $\SL_{nd}(D')$ consists of the diagonal matrices with values in $K_v$ and determinant $1$. Therefore $\operatorname{rank}_{K_v} \SL_{nd}(D') \geq nd-1$ (actually the former torus is maximal and hence equality even holds). In particular, $\operatorname{rank}_{K_v} \SL_{nd}(D')=1$ if and only if $n=2$ and $d=1$.  The latter implies that $K_v$ does not split $D$ and thus $K_v \neq \mathbb{C}$. Consequently we may assume that $K$ is not totally imaginary. Let $v$ be a real place and $D\otimes_K \mathbb{R} = D'$ a non-commutative real division algebra. Then by Frobenius theorem $D' \cong \qa{-1}{-1}{\mathbb{R}} \cong \qa{a}{b}{\mathbb{R}}$ for any $a,b <0$. This finishes the proof because $D'$ was obtained by tensoring $D$ with $K_v$ over its center.
\end{proof}

Let $S$ be a finite set of places of $K =\ZZ(D)$ containing the Archimedean ones. In case $\SL_2(D)$ is of so-called higher rank, i.e.\ $\operatorname{rank}_{K_v} \SL_2(D \otimes_K K_v) \geq 2$ for all $v \in S$, strong fixed point properties hold such as property \T. By Delorme-Guichardet's Theorem \cite[Theorem 2.12.4]{ValetteBook} a countable discrete group $\Gamma$ has \emph{property \T} if and only if every affine isometric action of $\Gamma$ on a real Hilbert space has a fixed point.  For background on property \T we refer the reader to the nicely written book \cite{ValetteBook}. In particular \cite[Theorem~2.12.6]{ValetteBook} shows that property \T implies property \FA and, since property \T descends to finite index subgroups, also \HFA.

In \cite[Theorem~(5.8), page~131]{MargulisBook}, Margulis showed that $S$-arithmetic subgroups of connected semisimple $K$-groups of higher rank have property \T. In \cite[Theorem 1.1.]{Farb}, Farb showed that $S$-arithmetic subgroups of almost simple simply-connected connected $K$-groups of $K$-rank $n \geq2$ have property \HFA[n-1].  In the following theorem we restrict these results to our context and explain in the proof how to deduce it from the original theorems.

\begin{theorem}[Margulis-Farb]\label{property (T) and FA_n for non-exceptional 2 by 2}
Let $D$ be a finite dimensional division $\Q$-algebra with $\mathcal{Z}(D) = \mathbb{Q}(\sqrt{-d})$ where $d \geq 0$ and let $\O$ be an order in $D$. Suppose that $\Ma_n(D)$ is non-exceptional. Then $\SL_n(\O)$ has property \T. If moreover $n \geq 3$ then it also has property \HFA[n-2].
\end{theorem}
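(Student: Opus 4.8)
The plan is to reduce the statement, which is about the arithmetic group $\SL_n(\O)$, to the known theorems of Margulis \cite[Theorem~(5.8)]{MargulisBook} and Farb \cite[Theorem~1.1]{Farb} concerning $S$-arithmetic subgroups of semisimple algebraic groups of higher rank. The main point is to unwind the algebraic-group formalism set up just before the statement: $\SL_n(D)$ is realized as the $K$-rational points of an algebraic group $\Gamma$ defined over $K=\ZZ(D)=\Q(\sqrt{-d})$, and $\SL_n(\O)$ is an $S$-arithmetic subgroup of $\Gamma$, where $S$ consists of the Archimedean places of $K$. Since $K$ is either $\Q$ (when $d=0$, i.e. $d$ a perfect square, so that $\Q(\sqrt{-d})=\Q$) or an imaginary quadratic field, $S$ consists of a single place $v$, which is real if $K=\Q$ and complex otherwise.

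First I would verify the higher-rank hypothesis. By \Cref{rank of SL2}, since $\Ma_n(D)$ is assumed non-exceptional, we are not in the excluded situation $n=2$ with $D$ a field over a non-totally-imaginary center or $D=\qa{a}{b}{K}$ with $a,b<0$ over a non-totally-imaginary $K$. One must check that this rules out exactly the possibility $\operatorname{rank}_{K_v}\SL_n(D\otimes_K K_v)=1$. When $K=\Q$ and $n=2$: $D$ a field gives rank $1$, which is exceptional of type (II) via \Cref{When is unit group order finite}; and $D$ a non-commutative division algebra over $\Q$ is either a totally definite quaternion algebra (then $\SL_2(D\otimes_\Q\R)=\SL_2(\mathbb H(\R))$ has rank $1$ over $\R$, but $\Ma_2(D)$ exceptional of type (II)) or it is exceptional of type (I). When $K$ is imaginary quadratic, $K_v=\C$ splits everything, so $D\otimes_K K_v\cong\Ma_d(\C)$ and $\operatorname{rank}_{\C}\SL_{nd}(\C)=nd-1\geq 2$ unless $nd=2$; the case $nd=2$ with $d=1$ means $D=K$ a field and $n=2$, again exceptional of type (II); the case $n=1$ is excluded since $\Ma_1(D)=D$ is either a (totally definite) quaternion algebra or division of type (I), hence not "$\Ma_n(D)$ non-exceptional" with $n\ge 2$. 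So in all remaining cases $\operatorname{rank}_{K_v}\Gamma(K_v)\geq 2$, i.e. $\Gamma$ has higher $K_v$-rank at the unique place in $S$.

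Next, I would invoke the structure theory: $\Gamma=\SL_{1}(\Ma_n(D))$ is connected, semisimple, and in fact almost simple and simply connected (it is an inner form of $\SL_{nd}$ over $K$), so both Margulis's and Farb's hypotheses apply to it. Margulis's theorem \cite[Theorem~(5.8), page~131]{MargulisBook} then gives that the $S$-arithmetic subgroup $\SL_n(\O)$ has property \T; and by \cite[Theorem~2.12.6]{ValetteBook} property \T passes to \FA, \FAb, and — since it is inherited by finite-index subgroups — to \HFA and \HFR. When $n\geq 3$, the $K$-rank of $\Gamma$ equals $n-1\geq 2$, so Farb's theorem \cite[Theorem~1.1]{Farb} applies and yields property \HFA[n-1], hence a fortiori \HFA[n-2].

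The main obstacle I anticipate is the careful bookkeeping in the first two steps: matching up the Archimedean-place rank computation of \Cref{rank of SL2} with the precise definition of "exceptional component" in \Cref{definitie exceptional component}, keeping track of the two cases $K=\Q$ versus $K$ imaginary quadratic, and confirming that "$\Ma_n(D)$ non-exceptional" together with $n\geq 2$ genuinely forces $\operatorname{rank}_{K_v}\SL_n(D\otimes_K K_v)\geq 2$ for the unique $v\in S$. The algebraic-group prerequisites (connectedness, almost-simplicity, simple-connectedness of $\SL_1(\Ma_n(D))$, so that the cited theorems of Margulis and Farb apply verbatim) are standard and can be quoted, so the only real work is this compatibility check; everything after it is a direct citation.
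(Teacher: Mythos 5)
Your proposal is correct and follows essentially the same route as the paper: use \Cref{rank of SL2} to check that non-exceptionality of $\Ma_n(D)$, together with $\ZZ(D)=\Q(\sqrt{-d})$, forces $\operatorname{rank}_{K_v}\SL_n(D\otimes_K K_v)\geq 2$ at the unique Archimedean place $v$, then quote Margulis for property \T and Farb for the higher fixed point property. One small indexing slip: since $\operatorname{rank}_{K}\SL_n(D)=n-1$ and Farb's theorem gives \HFA[r-1] for $K$-rank $r\geq 2$, the theorem yields \HFA[n-2] directly rather than \HFA[n-1]; your ``a fortiori'' step is therefore unnecessary, but the final conclusion is unaffected.
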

\begin{proof}
Set $K := \mathcal{Z}(D) = \mathbb{Q}(\sqrt{-d})$. Recall that $\SL_n(D)$ is a connected almost $K$-simple algebraic group (i.e.\ all proper connected algebraic $K$-subgroups are finite) due to the assumption on $K$. Furthermore $\SL_n(D)$ is also simply connected (i.e.\ any central isogeny $\varphi:  H \rightarrow \SL_n(D)$, with $H$ a connected algebraic group, is an algebraic group isomorphism). Due to the form of $K$, it has a unique (up to equivalence) Archimedean valuation, say $v$. Note that $K_v = \mathbb{R}$ if $K = \mathbb{Q}$ and $K_v = \mathbb{C}$ if $K = \mathbb{Q}(\sqrt{-d})$ with $d > 0$. Taking $S = \{ v \}$, the set of all Archimedean places, we see that an $S$-arithmetic subgroup of $\SL_n(D)$ is simply an arithmetic subgroup of $\SL_n(D)$ of which $\SL_n(\O)$ is an example. To obtain property \T we invoke the celebrated theorem of Margulis \cite[Theorem~(5.8), page~131]{MargulisBook}. In order to apply the latter we need that $\operatorname{rank}_{K_v} \SL_n(D\otimes_K K_v) \geq 2$ for the unique Archimedean place $v$, which by \Cref{rank of SL2} and the form of the center amounts to say that $\SL_n(D) \notin \left\lbrace \SL_2(\mathbb{Q}(\sqrt{-d})), \SL_2\left(\qa{a}{b}{\mathbb{Q}}\right) \right\rbrace$, where $d \geq 0$ and $a,b < 0$. By \Cref{When is unit group order finite} these are exactly the division algebras having an order with finite unit group. In other words \cite[Theorem~(5.8), page~131]{MargulisBook} can be applied if $\Ma_n(D)$ is non-exceptional.

Now if $n \geq 3$, then $\operatorname{rank}_{K} \SL_n(D) = n-1 \geq 2$.  Hence all the conditions of \cite[Th. 1.1.]{Farb} with $S = \{ v \}$ are also satisfied, implying property \HFA[n-2].
\end{proof}

If $\Ma_n(D)$ is non-exceptional, the groups $\E_n(J)^{(m)}$, for any $m \geq 1$ and $J$ a non-zero ideal in an order $\O$ of $D$, have finite index in $\SL_n(\O)$.  By \cite[Theorem 11.2.3 and Theorem 12.4.3]{EricAngel1} and \Cref{th: elementary matrices FR} one can obtain, in case $n \geq 3$, an alternative proof that $\SL_n(\O)$ and $\E_n(J)^{(m)}$ satisfy property \HFR and \HFA[n-2] without use of \Cref{property (T) and FA_n for non-exceptional 2 by 2} (or more precisely independent of the deep theorems \cite[Th. (5.8), page~131]{MargulisBook} and \cite[Th. 1.1.]{Farb}). 

\begin{corollary}\label[corollary]{HFR voor E_n en SLn met n >3}
Let $\O$ be an order in a finite dimensional division $\Q$-algebra and $J$ a non-zero ideal in $\O$. Then $\E_n(J)^{(m)}$, for any $m\geq 1$, has property \HFR and \HFA[n-2] if $n \geq 3$. In particular, $\SL_n(\O)$ has property \HFR and \HFA[n-2] if $n \geq 3$. 
\end{corollary}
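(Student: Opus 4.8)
The plan is to deduce \Cref{HFR voor E_n en SLn met n >3} from \Cref{th: elementary matrices FR} (equivalently \Cref{theo:E_n(R)_has_FR}) together with the commensurability results imported from \cite{EricAngel1}. First I would reduce the statement about an ideal $I$ to the statement about the full order: using \Cref{lemma generating set} and \Cref{properties En(R)}, every $\E_n(I)$ contains some $\St_n(m^{n-1}\O)$-type subgroup, and more importantly $\E_n(I)^{(m)}$ is sandwiched (\Cref{properties En(R)}(2) and \Cref{E_n(R) commutators}) between $\E_n(\tilde m\,\O)$ for suitable $\tilde m$. By \cite[Theorem 11.2.3]{EricAngel1} the congruence subgroup $\E_n(\mathfrak{a})$, for $\mathfrak{a}$ a non-zero ideal of $\O$, has finite index in $\SL_n(\O)$ when $n\geq 3$; hence $\E_n(I)^{(m)}$ has finite index in $\SL_n(\O)$. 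Since $\O$ is an order in a division $\Q$-algebra, which is finitely generated as a $\Z$-module, \Cref{th: elementary matrices FR} applies to $\St_n(\O)^{(m)}$, and since this Steinberg group maps onto $\E_n(\O)^{(m)}$ (the map $\pi_n$ restricts to a surjection on the $(m)$-subgroups, as both are generated by the images of the same distinguished generators), \Cref{FAconserved} (property preserved under quotients) gives that $\E_n(\O)^{(m)}$ has \FR and \FA[n-2].

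Next I would upgrade ``\FR and \FA[n-2]'' to ``\HFR and \HFA[n-2]'' for the groups $\E_n(I)^{(m)}$ themselves. The key observation is that the family $\{\E_n(I)^{(m)}\}$ is closed, up to finite index, under passing to finite-index subgroups: if $H$ is a finite-index subgroup of $\E_n(I)^{(m)}$, then by \cite[Theorem 11.2.3]{EricAngel1} (or its refinement giving that every finite-index subgroup of $\SL_n(\O)$, $n\geq 3$, contains a congruence subgroup — the solution to the congruence subgroup problem in this setting) $H$ contains a congruence subgroup, which in turn contains some $\E_n(J)^{(m')}$ for a suitable ideal $J$ and integer $m'$. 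The latter group has \FR and \FA[n-2] by the previous paragraph, and by the third bullet of \Cref{FAconserved} (a group with a finite-index subgroup having property $\mathcal P$ itself has property $\mathcal P$) so does $H$. Since $H$ was an arbitrary finite-index subgroup, $\E_n(I)^{(m)}$ has \HFR and \HFA[n-2].

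Finally, the ``in particular'' for $\SL_n(\O)$ follows immediately: $\E_n(\O)^{(m)}$ (take $I=\O$, $m=1$, i.e.\ $\E_n(\O)=\St_n(\O)$'s image, or more safely $\E_n(\O)^{(1)}=\E_n(\O)$) has finite index in $\SL_n(\O)$ for $n\geq 3$ by \cite[Theorem 11.2.3]{EricAngel1}, so by \Cref{FAconserved} again $\SL_n(\O)$ inherits \HFR and \HFA[n-2]; alternatively one invokes \cite[Theorem 12.4.3]{EricAngel1} directly. I expect the main obstacle to be making the sandwiching argument in the second paragraph fully rigorous, namely verifying that every finite-index subgroup of $\E_n(I)^{(m)}$ really does contain a group of the form $\E_n(J)^{(m')}$: this needs the congruence subgroup property for $\SL_n(\O)$ with $n\geq 3$ (available via \cite{EricAngel1}) plus a careful bookkeeping of how the $(m)$-construction interacts with congruence levels, exactly the content of \Cref{properties En(R)} and \Cref{E_n(R) commutators}. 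Everything else is a routine combination of quotient-stability and finite-index-stability of the fixed-point properties, as packaged in \Cref{FAconserved}.
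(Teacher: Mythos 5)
Your first and third paragraphs track the paper's proof closely: the finite-index statements from \cite[Theorems 11.2.3 and 12.4.3]{EricAngel1} reduce everything to $\SL_n(\O)$, and \Cref{th: elementary matrices FR} pushed through the surjection $\pi_n$ gives \FR and \FA[n-2] for $\E_n(\O)^{(m)}$. The problem is your second paragraph. You produce a nice subgroup inside an arbitrary finite-index subgroup $H\le\SL_n(\O)$ by invoking the congruence subgroup property in the form ``every finite-index subgroup contains a congruence subgroup''. That statement is false in general: the solution of the congruence subgroup problem for $\SL_n$, $n\ge 3$, says the congruence kernel is \emph{finite}, not trivial, and by Bass--Milnor--Serre (and its division-algebra analogues) it is non-trivial exactly when the centre is totally imaginary, in which case finite-index subgroups containing no congruence subgroup do exist. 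Since the paper applies this corollary to orders coming from cut groups, whose centres are $\Q$ or imaginary quadratic fields, this is precisely the case you cannot afford to lose. Even granting the CSP step, you would still need \FR and \FA[n-2] for $\E_n(J)^{(m')}$ with $J$ a proper ideal, which your first paragraph does not supply: \Cref{th: elementary matrices FR} is stated for unital rings, so it does not apply to $\St_n(J)^{(m')}$ directly.

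The paper replaces all of this by an elementary observation: if $[\SL_n(\O):H]=m$, then $g^{m}$ (more safely, $g^{m!}$) lies in $H$ for every $g\in\SL_n(\O)$, and since $\E_n(\O)^{(m)}$ is by construction generated by the $m$-th powers of the elementary generators $e_{i,i+1}(r)$, one gets $\E_n(\O)^{(m)}\le H$ (or $\E_n(\O)^{(m!)}\le H$) outright. This subgroup has finite index in $H$, and it has \FR and \FA[n-2] by \Cref{th: elementary matrices FR}, so \Cref{FAconserved} finishes the argument; the case of an ideal $I$ is then handled purely at the level of the hereditary properties via commensurability. This power trick is exactly the purpose for which the $(m)$-subgroups were introduced, and it is the idea your proposal is missing: no congruence subgroup property is needed at this step, only the fact that the distinguished generators of $\E_n(\O)^{(m)}$ are $m$-th powers.
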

\begin{proof}
By \cite[Theorem 11.2.3 and Theorem 12.4.3]{EricAngel1} $\E_n(J)^{(m)}$ has finite index in $\SL_n(\O)$. In particular the former has property \HFR and \HFA[n-2] if and only if the latter does. Let $H$ be a subgroup of $\SL_n(\O)$ of index $[\SL_n(\O) : H] = m < \infty$. Then clearly $\E_n(\O)^{(m)} \leq H$ and $\E_n(\O)^{(m)}$ is of finite index in $H$ by the above. Now since $\O$ is a finitely generated $\Z$-module, using \Cref{theo:E_n(R)_has_FR},  $\E_n(\O)^{(m)}$ has property \FR and \FA[n-2]. Consequently by \Cref{FAconserved} also $H$ has property \FR and \FA[n-2]. 
\end{proof}

\subsection{Describing exceptional components of cut groups}\label[subsection]{sectie exceptional components cut groups}\label[subsection]{descsimple}

In order to describe when $\U(\Z G)$ has property \HFA, by \Cref{prop: reductie tot sln en center}, one has to investigate the components of $\Q G$.  By \Cref{property (T) and FA_n for non-exceptional 2 by 2} we are left with the exceptional components. Moreover, by \Cref{FA implies cut}, whenever $\U(\Z G)$ has property (HFA), $G$ must be a cut group. Therefore, we now investigate the possible exceptional Wedderburn-Artin components of $\Q G$ in case $G$ is a cut group.

We first consider components of type (I), i.e. the exceptional $1\times1$ components, of $\Q G$ for $G$ a cut group. Surprisingly, it turns out that there none. This result will be crucial in the representation theoretical applications later on.

\begin{proposition} \label[proposition]{no excp 1 by 1 for cut groups}
 Let $G$ be a finite cut group. Then $\Q G$ has no exceptional components of type (I).
\end{proposition}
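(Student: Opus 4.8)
The plan is to show that a finite-dimensional \emph{non-commutative} division $\Q$-algebra $D$ that is not a totally definite quaternion algebra can never occur as a Wedderburn-Artin component of $\Q G$ for a cut group $G$. Recall (as noted after \Cref{property (T) and FA_n for non-exceptional 2 by 2}, via Kleinert's theorem \cite[Proposition~5.5.6]{EricAngel1}) that the exceptional components of type (I) are precisely the non-commutative division algebras $D$ admitting an order $\O$ with $\SL_1(\O)$ finite, and that such $D$ fall into a short known list: the totally definite quaternion algebras are explicitly \emph{excluded} from type (I), so the remaining candidates are division algebras whose center $K$ is either an imaginary quadratic field or a totally complex quartic field, or quaternion algebras over an imaginary quadratic field, etc. The key point I would exploit is that for a cut group $G$, every center $\ZZ(D_i)$ of every component must be $\Q$ or an imaginary quadratic field (this is exactly the reformulation of the cut condition recorded just before \Cref{conjecture ab independent order}).

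First I would recall the structural constraint coming from the fact that $D$ is a component of a \emph{group algebra}: the center $K=\ZZ(D)$ is an abelian number field, in fact a cyclotomic-type field generated by character values, and more importantly the Schur index and the ramification of $D$ are tightly controlled. Concretely, for a simple component $\Ma_n(D)$ of $\Q G$ with character $\chi$, the center is $K=\Q(\chi)$ and, by the Benard-Schacher theorem, $\mathrm{Ram}(D)$ is stable under the action of $\mathrm{Gal}(\Q(\zeta_{|G|})/\Q)$ and every ramified prime of $K$ divides $|G|$; moreover $K$ contains a root of unity of order equal to the local Schur index at each ramified place. Combining Benard-Schacher with the cut hypothesis $K\in\{\Q\}\cup\{\Q(\sqrt{-d})\}$ then severely restricts $D$: a non-commutative division algebra with such a center that is exceptional of type (I) would have to be a quaternion division algebra $\qa{a}{b}{K}$ with $K$ imaginary quadratic and with a specific small ramification set.

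Then the main work is to rule out the remaining handful of candidate quaternion division algebras over imaginary quadratic fields. For these, I would invoke the description of type (I) exceptional algebras from \cite{JeOlvGdR} / \cite[Section~5.5]{EricAngel1}: the precise list of division algebras $D$ with $\SL_1$ finite is known and finite, and for each one I would check (a) whether its center can be an imaginary quadratic field and (b) whether, with that center, the global conditions forced by Benard-Schacher are compatible. The expected outcome is that in every case the center is forced to be a \emph{totally real} field (e.g. $\Q$ or a real quadratic field), contradicting that it is imaginary quadratic, or the algebra is forced to be totally definite (hence not of type (I)). A clean way to package this: a quaternion division algebra ramified at a finite place and at least one infinite place of an imaginary quadratic field has no real places, so ``totally definite'' is vacuous/ill-posed, and one shows instead that $\SL_1(\O)$ finite already forces at least one real place of $K$ to exist and be ramified — impossible when $K$ is imaginary quadratic.

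The hard part will be assembling the arithmetic input cleanly — matching the Benard-Schacher constraints against the explicit Kleinert/Amitsur-type list of division algebras with finite $\SL_1$, and making sure no sporadic case (a quaternion algebra over $\Q(\sqrt{-3})$ or $\Q(\sqrt{-1})$ with small ramification) slips through. I expect the cleanest route is: show that an exceptional type-(I) algebra necessarily has a totally real center of degree $\le 2$ (this is essentially in Kleinert's classification, since finiteness of $\SL_1$ forces the algebra to be ``definite'' at all but finitely many infinite places, which over a center with a complex place is impossible), and then simply observe that a totally real field is never an imaginary quadratic field, so the cut hypothesis kills it. I would cite \cite[Proposition~5.5.6]{EricAngel1} for the classification and the reformulation of ``cut'' before \Cref{conjecture ab independent order}, and phrase the proof as a two-line contradiction once those are in hand; the bulk of any subtlety is hidden in the cited classification rather than in new calculation here.
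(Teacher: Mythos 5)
Your argument rests on a premise that is false, and the falsity sits exactly where the content of the proposition lives. You assert that the type (I) exceptional algebras are the non-commutative division algebras admitting an order $\O$ with $\SL_1(\O)$ finite, and you deduce from Kleinert's classification that such an algebra must have a totally real center, so that the cut condition (center $\Q$ or imaginary quadratic) gives a two-line contradiction. But Kleinert's theorem says that for a non-commutative division algebra $D$, $\SL_1(\O)$ is finite precisely when $D$ is a totally definite quaternion algebra --- and these are the algebras \emph{excluded} from type (I) by \Cref{definitie exceptional component}. The type (I) algebras are exactly the non-commutative division algebras with $\SL_1(\O)$ \emph{infinite}, and they need not have totally real centers: a quaternion division algebra over $\Q(\sqrt{-1})$ or $\Q(\sqrt{-3})$ ramified at two finite places, or a division algebra of index $3$ or $6$ with center $\Q(\sqrt{-3})$, is non-commutative, is not a totally definite quaternion algebra, and has an imaginary quadratic center --- perfectly compatible with $G$ being cut and with Benard--Schacher (which over $\Q(\sqrt{-3})$ allows Schur index up to $6$). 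Ruling out precisely these algebras as components of $\Q G$ is the whole point of the proposition, and your proposed contradiction never engages with them. (The paper's own ``Recall'' after the definition contains the same slip of ``finite'' where ``infinite'' is meant, which may have misled you, but the conclusion you extract from it --- that type (I) forces a totally real center --- does not follow under either reading.)

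The missing idea is that a division-algebra component $D=\Q Ge$ is generated over $\Q$ by the \emph{finite} group $H=Ge\leq\U(D)$. Finite subgroups of division algebras are Frobenius complements, $H$ inherits the cut property from $G$ as an epimorphic image, and B\"achle's classification of cut Frobenius complements leaves only $1$, $C_2$, $C_3$, $C_4$, $C_6$, $C_3\rtimes C_4$, $Q_8$, $\SL(2,3)$ and $C_3\times Q_8$; the last does not embed in a division algebra, and the $\Q$-span of each of the others is a field or one of $\qa{-1}{-1}{\Q}$, $\qa{-1}{-3}{\Q}$ --- never a type (I) algebra. This is \Cref{cut and division}, from which the proposition is immediate. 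Purely arithmetic constraints on the Brauer class (Benard--Schacher, ramification data) do not by themselves close the argument; the group-theoretic restriction on what can generate the component is essential.
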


Suppose that $D$ is a $1 \times 1$ component of $\mathbb{Q}G$. Then the proof of \Cref{no excp 1 by 1 for cut groups} consists of the following steps.
\begin{enumerate}
\item\label{enum:1_no_except_1_1_comp} There exists a primitive central idempotent $e$ such that $D = \mathbb{Q}Ge$. The group $H = Ge$ is a finite subgroup of $\U(D)$, hence a Frobenius complement \cite[2.1.2, page~45]{ShiWeh}. If $G$ is cut, also its epimorphic image $H$ is cut. Frobenius complements that are cut were classified by B{\"a}chle \cite[Proposition~4.2]{Bachle}.
\item Some of the groups $H$ obtained in \eqref{enum:1_no_except_1_1_comp} are indeed subgroups of a division algebra. This can be decided using Amitsur's classification \cite{Amitsur}, but we will give a direct argument.
\item For all remaining $H$, the smallest division algebra generated by $H$ and hence also $D$ is determined. 
\end{enumerate}
These steps will be realized in \Cref{cut and division} and, as just explained, \Cref{no excp 1 by 1 for cut groups} follows immediately from this.  

\begin{proposition} \label[proposition]{cut and division} A finite group $G$ is both cut and isomorphic to a subgroup of units of a division $\Q$-algebra $D$ if and only if $G$ is one of the following groups
\begin{enumerate}
 \item\label{it:cut_subgroups_in D_1} $1$, $C_2$, $C_3$, $C_4$, $C_6$,
 \item\label{it:cut_subgroups_in D_2} $C_3 \rtimes C_4$, where the action is by inversion,
 \item\label{it:cut_subgroups_in D_3} $Q_8$,
 \item\label{it:cut_subgroups_in D_4} $\SL(2, 3)$. 
\end{enumerate}
Moreover, the $\Q$-span of these groups in any division algebra is, respectively,
\begin{enumerate}[label=(\Roman*),ref=\Roman*]
 \item\label{it:cut_divalg_in D_1} $\Q$, $\Q$, $\Q(\zeta_3)$, $\Q(\sqrt{-1})$, $\Q(\zeta_3)$,
 \item\label{it:cut_divalg_in D_2} $\left( \frac{-1,-3}{\Q}\right)$,
 \item\label{it:cut_divalg_in D_3} $\left( \frac{-1,-1}{\Q}\right)$,
 \item\label{it:cut_divalg_in D_4} $\left( \frac{-1,-1}{\Q}\right)$. 
\end{enumerate}
\end{proposition}
\begin{proof}
Note that all the groups listed in \eqref{it:cut_subgroups_in D_1} - \eqref{it:cut_subgroups_in D_4} are cut groups\footnote{This can be checked easily using the characterisations \cite[Proposition 2.2 (iii) \& (v)]{Bachle}. Alternatively one may use GAP. An example of a code for this is added in \ref{Appendix tabel met alle data}.}. Further, they are also subgroups of division algebras. Indeed the cyclic groups are subgroups of $\U(\Q(\zeta_{12}))$, $C_3\rtimes C_4 \cong \U(\O_3)$ is a subgroup of $\U (\left( \frac{-1,-3}{\Q}\right))$ and $Q_8 \cong \langle i,j \rangle$ and $\SL(2,3) \cong \langle i, \frac{1+i+j+k}{2}\rangle$ are subgroups of $\U(\left( \frac{-1,-1}{\Q}\right))$.

To prove the last statement (which we will use in the converse implication), we will consider $\Q[G]$ (the subring of $D$ generated by the subgroup $G$) in any division $\Q$-algebra $D$ containing $G$, for each group listed in \eqref{it:cut_subgroups_in D_1} to \eqref{it:cut_subgroups_in D_4}.
For the cyclic groups, it is clear these are the fields listed in (I).
In case of the other groups, we will follow the following strategy: $\Q[G]$ is a simple, epimorphic image of the rational group algebra $\Q G$, so it has to be a division algebra appearing in the Wedderburn-Artin decomposition of $\Q G$.
Moreover, since these groups are not abelian, the division algebra also has to be non-commutative.
Using  \textsf{GAP} \cite{GAP}, it is easy to see that the only non-abelian division algebras appearing in the decomposition of the rational group algebra for the groups \eqref{it:cut_subgroups_in D_2}, \eqref{it:cut_subgroups_in D_3} and \eqref{it:cut_subgroups_in D_4} are respectively \eqref{it:cut_divalg_in D_2}, \eqref{it:cut_divalg_in D_3} and \eqref{it:cut_divalg_in D_4}.

Lastly, from \cite[Proposition 4.2]{Bachle} it follows that the Frobenius complements that are cut groups are exactly the groups in \eqref{it:cut_subgroups_in D_1} - \eqref{it:cut_subgroups_in D_4} together with the group $C_3 \times Q_8$.
Since finite subgroups of division algebras are  Frobenius complements it suffices to prove that $C_3 \times Q_8$ is not embeddable in a skew field.
This can be done via Amitsur's famous classification theorem, but we will provide a more direct proof. Let $D$ be a division $\Q$-algebra such that $G:= C_3 \times Q_8 \leq \U(D)$. Recall now that if $B$ is a finite-dimensional central simple $F$-algebra, contained in an $F$-algebra $A$, then $A = B \otimes \operatorname{C}_A(B)$ (for example see {\cite[Theorem 4.7]{Jacobson}}), hence $\Q[G] = \Q[Q_8] \otimes_\Q \operatorname{C}_{\Q[G]}(\Q[Q_8])$.
Since the centeralizer of $Q_8$ in $G$ is $C_3 \times C_2$, it follows that $\Q[G]$ contains \begin{align*}
\Q[Q_8] \otimes_\Q \Q[\operatorname{C}_G(Q_8)]  = \Q[Q_8] \otimes_\Q \Q[C_3 \times C_2]  \cong \left( \frac{-1,-1}{\Q} \right) \otimes_{\Q} \Q(\zeta_3) \cong \left( \frac{-1,-1}{\Q(\zeta_3)} \right).
\end{align*}
It is well known that this last algebra is split (for example using \cite[Theorem 5.4.4]{Voight}),  which is in contradiction with the fact that $D$ is a division algebra.
\end{proof}

Let us now consider components of type (II).
Surprisingly if one assumes $\Ma_2(D)$ to be an exceptional component of $\mathbb{Q}G$, then the parameters $d$ and $(a,b)$ of $D= \Q(\sqrt{-d})$ (resp.\ $\left( \frac{a,b}{\Q}\right)$) are very limited. It was proven by Eisele, Kiefer and Van Gelder  \cite[Corollary~3.6]{EKVG} that only a finite number of division algebras can occur and moreover the possible parameters were described.

\begin{theorem}[Eisele, Kiefer, Van Gelder]\label{theorem EKVG}
Let $G$ be a finite group and let $e$ be a primitive central idempotent of $\mathbb{Q}G$ such that $\mathbb{Q}Ge$ is an exceptional component of type (II). Then $\mathbb{Q}Ge$ is isomorphic to one of the following algebras
\begin{enumerate}
\item[(i)] $\Ma_2(\Q)$,
\item[(ii)] $\Ma_2(\Q(\sqrt{-d}))$ with $d \in \{ 1,2,3 \}$,
\item[(iii)] $\Ma_2(\mathbb{H}_d)$ with $d \in \{ 2,3,5 \}$.
\end{enumerate}
\end{theorem}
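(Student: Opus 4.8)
The plan is to translate the statement into a classification of finite subgroups of $\GL_2(D)$ that $\Q$-span $\Ma_2(D)$, and then to enumerate them. Let $\rho\colon G\to\GL_2(D)$ be the representation affording the component, obtained by composing $g\mapsto ge$ with a fixed isomorphism $\Q Ge\cong\Ma_2(D)$, and put $H:=\rho(G)$. Then $H$ is a finite group, $\Q Ge\cong\Q[H]$ is a simple Wedderburn component of $\Q H$, and $H$ is a finite subgroup of $\GL_2(D)$ whose $\Q$-linear span is all of $\Ma_2(D)$. Since $\Ma_2(D)$ is exceptional of type (II), $D$ admits an order with finite unit group, so by \Cref{When is unit group order finite} either $D=\Q(\sqrt{-d})$ with $d\ge 0$, or $D=\qa{a}{b}{\Q}$ is a totally definite rational quaternion algebra with $a,b<0$. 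One must show that the mere existence of such an $H$ forces $D$ into the finite list; it suffices to treat the two families separately, and in each the representation-theoretic data attached to $H$ are transparent: if $D=\Q(\sqrt{-d})$ the component is afforded by a faithful irreducible complex character of degree $2$ of Schur index $1$ over its field of values $\Q(\sqrt{-d})$, whereas if $D=\qa{a}{b}{\Q}$ the character has degree $4$, is rational-valued, has Schur index $2$, and the division algebra in its Brauer class over $\Q$ is $D$ itself.

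For $D=\Q(\sqrt{-d})$, fix an embedding $D\hookrightarrow\C$; then $H$ is a finite subgroup of $\GL_2(\C)$ with $\Q$-span $\Ma_2(\Q(\sqrt{-d}))$, hence carries a faithful irreducible degree-$2$ complex representation whose field of character values is $\Q(\sqrt{-d})$ (in particular, not real when $d>0$). I would invoke the classical classification of finite subgroups of $\GL_2(\C)$ --- their images in $\operatorname{PGL}_2(\C)$ being cyclic, dihedral, $A_4$, $S_4$ or $A_5$, with centre cyclic and the two-dimensional representation obtained from a ``projectively standard'' one twisted by a scalar character --- and combine it with the requirement that the character field be an imaginary quadratic field of Schur index $1$. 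Running through the resulting short list (the cyclic case being excluded as it gives a reducible representation, and the dicyclic, semidihedral, $\SL(2,3)$, binary octahedral and $\SL(2,5)$ families together with their central products with cyclic groups being checked one at a time), one finds that $\Ma_2(\Q(\sqrt{-d}))$ with $d>0$ occurs only for $d\in\{1,2,3\}$ (realised, e.g., by $C_4\circ Q_8$, the semidihedral group of order $16$, and $C_3\times Q_8$, the Schur index $2$ of the underlying quaternion character being annihilated by the quadratic extension), while $d=0$ gives $\Ma_2(\Q)$. This yields (i) and (ii).

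For $D=\qa{a}{b}{\Q}$ with $a,b<0$, base change to $\R$ gives $D\otimes_\Q\R\cong\qa{-1}{-1}{\R}$, so $H$ embeds in $\GL_2(\qa{-1}{-1}{\R})\hookrightarrow\GL_4(\C)$ and carries a faithful irreducible degree-$4$ complex representation of quaternionic type, with rational character $\chi$, Schur index $2$, and $[\Q Ge]=[D]$ in $\operatorname{Br}(\Q)$. Here I would combine an arithmetic and a representation-theoretic input. Arithmetically, a simple component of a rational group algebra is, up to Brauer equivalence, cyclotomic (Brauer--Witt), and the Benard--Schacher theorem forces the local Hasse invariants of $D$ at its ramified finite primes to be governed by the roots of unity in the centre $\Q$, hence by $\pm 1$; this severely restricts the discriminant $d_D=\prod_{p\in\operatorname{Ram}(D)\setminus\{\infty\}}p$. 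Representation-theoretically, the finite subgroups of $\GL_2(\qa{-1}{-1}{\R})$ --- equivalently, the finite groups acting faithfully on a rank-$2$ quaternionic lattice --- are classified, and demanding that $H$ span the full $16$-dimensional algebra $\Ma_2(D)$ cuts this to finitely many candidates. Matching the two constraints gives $d_D\in\{2,3,5\}$, i.e.\ $D\cong\mathbb{H}_2,\mathbb{H}_3$ or $\mathbb{H}_5$ in the notation fixed earlier, which is case (iii); reassuringly, these are exactly the totally definite rational quaternion algebras admitting a norm-Euclidean order, and one makes the list sharp by exhibiting (e.g.\ by a search among small groups) explicit $G$ realising each.

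The main obstacle is the quaternion case. In the field case the finite subgroups of $\GL_2(\C)$ form a completely understood and very short list once the imaginary-quadratic and Schur-index-$1$ conditions are imposed, so the bound drops out of a direct case analysis; in the quaternion case the relevant finite groups sit inside the five-dimensional orthogonal group and are far more plentiful, so the finiteness must genuinely be forced by the interaction of the cyclotomy and Hasse-invariant restrictions with the spanning condition, rather than by a naive subgroup enumeration. In both cases the delicate part of the bookkeeping is controlling the central (scalar) factors and the Schur indices, since it is precisely the central-product operation together with the collapse of a Schur index under a small field extension that produce the finite but non-trivial families appearing in the conclusion.
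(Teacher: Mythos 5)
The paper does not prove this statement: it is imported verbatim from Eisele--Kiefer--Van Gelder \cite[Corollary~3.6]{EKVG}, so there is no in-paper argument to compare yours against. Your overall reduction is nonetheless the same one used in the cited source: $H = Ge$ is a finite subgroup of $\GL_2(D)$ whose $\Q$-span is all of $\Ma_2(D)$, and one classifies such subgroups, using the classical description of finite subgroups of $\GL_2(\C)$ in the commutative case and the classification of finite subgroups of $\GL_2$ over the real quaternions (Banieqbal, Nebe) in the totally definite quaternion case. In the field case your sketch is essentially right, though the real work — bounding the infinite metacyclic/dicyclic/semidihedral families by the condition $[\Q(\chi):\Q]\leq 2$ on the character field — is only gestured at.

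The genuine gap is in case (iii). The Benard--Schacher theorem gives you nothing here: for a quaternion division algebra $D$ with $\mathcal{Z}(D)=\Q$ and Schur index $2$, the condition $\Q(\zeta_2)\subseteq\mathcal{Z}(D)$ is vacuous and the local Hasse invariants at the ramified finite primes are automatically equal to $1/2$, so no restriction whatsoever is placed on which primes ramify, i.e.\ on the discriminant. (Benard--Schacher constrains the \emph{distribution} of invariants over the primes of the centre lying above a fixed rational prime, which is empty information when the centre is $\Q$.) Consequently the claim that ``matching the two constraints gives $d_D\in\{2,3,5\}$'' rests on only one constraint, and the entire burden of finiteness falls on the classification of finite subgroups of $\GL_2\left(\qa{-1}{-1}{\R}\right)$ that span a $16$-dimensional $\Q$-algebra. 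You do invoke that classification, so the argument is salvageable, but as written the arithmetic half of your case (iii) is a non sequitur and should be deleted rather than relied upon; the cited incidental fact that $\mathbb{H}_2,\mathbb{H}_3,\mathbb{H}_5$ are exactly the totally definite rational quaternion algebras with a norm-Euclidean order is likewise a consequence, not a mechanism, of the classification.
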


\begin{remark}\label[remark]{exc comp have unique max order}
All the fields and division algebras appearing in \Cref{theorem EKVG} have the peculiar property to contain a norm Euclidean order $\O$ which therefore is maximal and unique up to conjugation \cite[Section 2.3]{CeChLez}. In view of \cite[(21.6), page~189]{Reiner}, this yields that also all the $2\times 2$-matrix algebras in \Cref{theorem EKVG} have, up to conjugation, a unique maximal order, namely $\Ma_2(\O)$. Recall that in case of $\Q(\sqrt{-d})$, with $ d \in \{ 0,1,2,3 \}$, the unique maximal order is their respective ring of integers $\I_d$ and in case of $\mathbb{H}_2, \mathbb{H}_3,\mathbb{H}_5$ the respective maximal orders where described in table (\ref{eq:basis_of_quat_orders}). Note that being norm Euclidean implies that these orders are also $\GE_2$-rings (see \Cref{euclidian are GE2}).
\end{remark}

Furthermore in \cite{EKVG} the authors classified the possibilities for a finite group to admit a faithful embedding in an exceptional component of type (II). More precisely, they found 55 possible groups\footnote{Note that the group with \textsc{SmallGroupID} \texttt{[24, 1]} and structure description $C_3 \rtimes C_8$ also has a faithful embedding in $\Ma_2(\mathbb{Q}(\sqrt{-1}))$, but is missing in \cite[Table 2]{EKVG}. However it is included in the table in \ref{Appendix tabel met alle data}.}, see \cite[Table 2]{EKVG}. In \ref{Appendix tabel met alle data} we add the aforementioned table, along with the information on all the exceptional type (II) components of $\Q H$, for each $H$ in the list, and certain group theoretical properties of $H$. By $G_{m, \ell}$ we denote the group with \textsc{SmallGroupID} $(m, \ell)$ in the Small Groups Library of \textsf{GAP} \cite{GAP}. For a presentation of the groups appearing see \ref{Appendix_presentation}. Using the table from \ref{Appendix tabel met alle data} it is easy to check the following.

\begin{proposition}\label[proposition]{possible 2 times 2 components}
 Let $G$ be a finite cut group. Then the following properties hold.
 \begin{enumerate}
 \item If there exists a primitive central idempotent $e_1$ of $\mathbb{Q}G$ such that $\mathbb{Q}Ge_1 \cong \Ma_2\left(\left( \frac{-1,-3}{\mathbb{Q}} \right) \right)$, then there exists another primitive central idempotent $e_2$ such that $\mathbb{Q}Ge_2 \cong \Ma_2(F)$ with $F= \mathbb{Q}$ or $\mathbb{Q}(\sqrt{-1})$.
 \item  If there exists a primitive central idempotent $e_1$ of $\mathbb{Q}G$ such that $\mathbb{Q}Ge_1 \cong \Ma_2(\mathbb{Q}(\sqrt{-2}))$, then there exists another primitive central idempotent $e_2$ such that $\mathbb{Q}Ge_2 \cong \Ma_2(\mathbb{Q})$.
 \item There exists a primitive central idempotent $e$ of $\mathbb{Q}G$ such that $\mathbb{Q}Ge \cong \Ma_2(\Q)$ if and only if $G$ maps onto $D_8$ or $S_3$.
 \item There exists a primitive central idempotent $e$ of $\mathbb{Q}G$ such that $\mathbb{Q}Ge \cong \Ma_2\left(\qa{-2}{-5}{\Q}\right)$ if and only if $G$ maps onto $G_{240, 90} \cong SL(2,5) \rtimes 2 \cong 2 \cdot S_5^+$, the Schur cover of $S_5$ of plus type.
 \item If $G$ is solvable, there exists no primitive central idempotent $e$ of $\mathbb{Q}G$ such that $\mathbb{Q}Ge \cong \Ma_2\left(\qa{-2}{-5}{\Q}\right)$.
 \item If $G$ is nilpotent, there also exists no primitive central idempotent $e$ of $\mathbb{Q}G$ such that $\mathbb{Q}Ge \cong \Ma_2\left(\qa{-1}{-3}{\Q}\right)$.
 \end{enumerate}
\end{proposition}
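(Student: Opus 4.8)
The plan is to reduce all six assertions to a finite inspection of the table in \ref{Appendix tabel met alle data}, via the standard dictionary between primitive central idempotents and quotient groups. First I would record three elementary facts. (a) If $e$ is a primitive central idempotent of $\Q G$ and $N=\ker(g\mapsto ge)$, then (since $ne=e$ for $n\in N$) $e\widehat{N}=e$ with $\widehat{N}=|N|^{-1}\sum_{n\in N}n$, so $\Q Ge$ is one of the simple direct factors of $\Q G\widehat{N}\cong\Q(G/N)$; moreover $G/N$ embeds into $\U(\Q Ge)$ and $\Q$-spans it. (b) Conversely, every Wedderburn component of $\Q(G/N)$ is of the form $\Q Ge'$ for a primitive central idempotent $e'$ of $\Q G$, obtained by pull-back along $\Q G\twoheadrightarrow\Q(G/N)$. (c) A quotient of a cut group is cut: by the discussion after \Cref{FA implies cut}, cut-ness is equivalent to every Wedderburn component having center $\Q$ or an imaginary quadratic field, and the components of $\Q(G/N)$ form a subfamily of those of $\Q G$.

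Combining this with \Cref{theorem EKVG}: if $G$ is a cut group and $\Q Ge$ is an exceptional component of type (II), then $H:=G/N$ (with $N$ as in (a)) is one of the \emph{cut} groups in the table of \ref{Appendix tabel met alle data}, and $\Q Ge$ is one of the exceptional type (II) components of $\Q H$ recorded there; and conversely any exceptional type (II) component of such a $\Q H$ pulls back, along $G\twoheadrightarrow H$, to a primitive central idempotent of $\Q G$ affording the same algebra (note $\Ma_2(\Q)$ and $\Ma_2(\Q(\sqrt{-1}))$ are themselves exceptional of type (II), hence are listed). With this dictionary each statement becomes a bounded check.

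For (1) and (2): for every cut group $H$ in the table whose list of exceptional components contains $\Ma_2(\mathbb{H}_3)$ (resp.\ $\Ma_2(\Q(\sqrt{-2}))$), one reads off that the same list also contains $\Ma_2(\Q)$ or $\Ma_2(\Q(\sqrt{-1}))$ (resp.\ $\Ma_2(\Q)$); pulling the corresponding idempotent of $\Q H$ back to $\Q G$ along $G\twoheadrightarrow G/N_1=H$ produces the desired $e_2$ \emph{inside} $\Q G$. For (3) and (4): the ``if'' directions are immediate, since $\Q D_8$ and $\Q S_3$ each have $\Ma_2(\Q)$ as a component and $\Q G_{240,90}$ has $\Ma_2(\mathbb{H}_5)$ as a component, so the idempotent pulls back along the given surjection; for ``only if'', the hypothesis forces $H=G/N$ to be a cut group of the table carrying this component, and checking those finitely many $H$ shows each maps onto $D_8$ or $S_3$ (resp.\ onto $G_{240,90}$), whence so does $G$. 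For (5): by (4), such a component would yield $G\twoheadrightarrow G_{240,90}$, which is impossible for solvable $G$ since $G_{240,90}$ is not solvable (it has $A_5$ as a composition factor, which — like the other structural data used — can be read off the table). For (6): as in the ``only if'' part of (3)–(4), a component $\Q Ge\cong\Ma_2(\mathbb{H}_3)$ forces $G/N$ to be a cut group of the table with that component, and $G$ nilpotent forces $G/N$ nilpotent; inspecting the (finitely many) cut groups in the table whose exceptional components include $\Ma_2(\mathbb{H}_3)$, none is nilpotent — contradiction.

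\textbf{Main obstacle.} The difficulty is organizational rather than conceptual: one must be sure the table genuinely records, for each of the $55$ groups, (i) whether it is cut, (ii) the \emph{complete} list of its exceptional type (II) components, and (iii) enough structure (solvability, nilpotency, and which of $D_8$, $S_3$, $G_{240,90}$ it surjects onto) to run the case analysis. Granting that, the argument is a routine but somewhat lengthy verification. A secondary point to watch in (1) and (2) is that the auxiliary idempotent must be produced in $\Q G$ itself, which is precisely why the pull-back step along $G\twoheadrightarrow G/N_1$ — not merely an argument inside $\Q(G/N_1)$ — is needed.
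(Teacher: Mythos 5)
Your proposal is correct and follows the same route as the paper: the paper's proof is precisely "check the table in the appendix," and your preliminary dictionary (faithful quotient $Ge$, pull-back of idempotents along $G\twoheadrightarrow G/N$, quotients of cut groups are cut) is exactly the standard reduction that makes that inspection legitimate. The finite case checks you describe all go through against the table as printed.
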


\section{Unit theorems for units of integral group rings}\label{sectie HFA}
In this section we will prove our main result, \Cref{mainTheoremintro} from the introduction: a  characterization of  property \HFA for $\U (\Z G)$ both in terms of $G$ and the Wedderburn-Artin components of $\Q G$.  First we   observe  that  the problem can be reduced to the groups $\GL_n(\O)$, for $\O$ some order in a finite dimensional rational division algebra. 

Due to the results obtained so far, we are now able to give a short proof of the following characterization of  when $\U(\Z G)$ has property \HFA, both in ring theoretical terms and in function of the quotients of $G$.

\begin{theorem}\label{iff HFA}
Let $G$ be a finite group. Then the following properties are equivalent:
\begin{enumerate}
\item The group $\U(\Z G)$ has property $\HFA$,
\item $G$ is cut and $\mathbb{Q}G$ has no exceptional components,
\item $G$ is cut and $G$ does not map onto one of the following $10$ groups
 \begin{center}
 \begin{tabular}{llllcl}
  $D_8$, & $G_{16,6}$, & $G_{16,13}$, & $G_{32,50}$, &  or & $Q_8 \times C_3$, \\ $S_3$, & $\SL(2,3)$, & $G_{96, 202}$, & $G_{240, 90}$, & or & $G_{384, 618}$.  
 \end{tabular}
 \end{center}
\end{enumerate}
\end{theorem}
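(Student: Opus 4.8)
The plan is to prove the equivalences $(1)\Leftrightarrow(2)$ and $(2)\Leftrightarrow(3)$ separately; the first is a synthesis of the reduction to arithmetic groups in \Cref{prop: reductie tot sln en center} with the higher-rank rigidity of \Cref{property (T) and FA_n for non-exceptional 2 by 2} and the rank-one non-rigidity recorded in \Cref{When is E_2(O) HFA} and \Cref{remarkFAb}, while the second is a finite representation-theoretic verification resting on \Cref{no excp 1 by 1 for cut groups}, \Cref{theorem EKVG}, \Cref{possible 2 times 2 components} and the table in \ref{Appendix tabel met alle data}. For $(2)\Rightarrow(1)$, write $\Q G\cong\prod_{i=1}^m\Ma_{n_i}(D_i)$ and choose orders $\O_i\subseteq D_i$; by \Cref{prop: reductie tot sln en center} it suffices, $G$ being cut, to show that each $\SL_{n_i}(\O_i)$ has property \HFA. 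Since $G$ is cut, every centre $\ZZ(D_i)$ is $\Q$ or an imaginary quadratic field, hence of the form $\Q(\sqrt{-d})$ with $d\geq 0$; as moreover $\Ma_{n_i}(D_i)$ is non-exceptional by hypothesis, \Cref{property (T) and FA_n for non-exceptional 2 by 2} applies and gives that $\SL_{n_i}(\O_i)$ has property \T, in particular \HFA. For $(1)\Rightarrow(2)$, observe that \HFA for $\U(\Z G)$ forces $\U(\Z G)^{ab}$ finite, so $G$ is cut by \Cref{FA implies cut}; if some component $\Ma_{n_i}(D_i)$ of $\Q G$ were exceptional, it could not be of type (I) by \Cref{no excp 1 by 1 for cut groups} (here using that $G$ is cut), hence would be of type (II), so $n_i=2$ and $D_i$ contains an order with finite unit group, whence $\U(\O_i)$ is finite; but then $\SL_2(\O_i)$ does not have property \HFA by \Cref{remarkFAb}, contradicting \Cref{prop: reductie tot sln en center}.

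For $(2)\Leftrightarrow(3)$, fix a cut group $G$. By \Cref{no excp 1 by 1 for cut groups} no exceptional component of type (I) occurs, so ``$\Q G$ has no exceptional component'' means ``$\Q G$ has no exceptional component of type (II)''. If $e$ is a primitive central idempotent with $\Q Ge$ exceptional of type (II), then $H:=Ge$ is a quotient of $G$ with $\Q[H]=\Q Ge$ one of the algebras listed in \Cref{theorem EKVG}, so by the Eisele--Kiefer--Van Gelder classification $H$ is one of the finitely many groups recorded in the table of \ref{Appendix tabel met alle data}. Conversely, if $G$ surjects onto a group $H'$ for which $\Q H'$ carries an exceptional type (II) component, that simple algebra is again a simple epimorphic image of $\Q G$, hence a Wedderburn component of $\Q G$, which is then exceptional. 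Thus $\Q G$ has an exceptional component if and only if $G$ surjects onto one of the groups in that table, and it remains to check that each such group in turn surjects onto one of the ten groups in the statement, and that each of the ten itself carries an exceptional type (II) component; these ten are precisely the quotient-minimal members of the table. Here \Cref{possible 2 times 2 components} does the bulk of the work: parts (3)--(4) handle the $\Ma_2(\Q)$ and $\Ma_2(\mathbb{H}_5)$ cases, parts (1)--(2) show that the presence of $\Ma_2(\mathbb{H}_3)$ or $\Ma_2(\Q(\sqrt{-2}))$ always drags along a ``smaller'' exceptional component, and the cases $\Ma_2(\Q(\sqrt{-1}))$, $\Ma_2(\Q(\sqrt{-3}))$ and $\Ma_2(\mathbb{H}_2)$ are read off the appendix directly.

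The main obstacle is the equivalence $(2)\Leftrightarrow(3)$, and within it the reduction from the full list of \cite{EKVG} to the ten groups above: this is not conceptually deep but requires care, both in verifying that ``carrying an exceptional type (II) component'' genuinely pulls back along arbitrary quotient maps (clear, since a simple epimorphic image of $\Q H'$ is a simple epimorphic image of $\Q H$ whenever $H\twoheadrightarrow H'$) and in running through the appendix group by group to exhibit, for each entry, a quotient among the ten. By contrast the analytic content of the theorem is entirely imported: \Cref{property (T) and FA_n for non-exceptional 2 by 2} already packages the Margulis--Farb higher-rank rigidity and \Cref{remarkFAb} the degenerate rank-one non-rigidity, so beyond the bookkeeping connecting them through \Cref{prop: reductie tot sln en center} no new fixed-point argument is needed.
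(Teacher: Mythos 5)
Your proposal is correct and follows the same overall architecture as the paper: reduce to the groups $\SL_{n_i}(\O_i)$ via \Cref{prop: reductie tot sln en center}, use \Cref{FA implies cut} and \Cref{no excp 1 by 1 for cut groups} to rule out type (I) components, import Margulis--Farb rigidity through \Cref{property (T) and FA_n for non-exceptional 2 by 2} for the non-exceptional components, and translate the absence of type (II) components into forbidden quotients via \Cref{theorem EKVG} and the appendix table. The one genuine divergence is in how you kill the exceptional type (II) components in $(1)\Rightarrow(2)$: you invoke \Cref{remarkFAb}, i.e.\ the geometric argument that $\SL_2(\O)$ always has a finite-index subgroup with infinite abelianization (valid for every order with $\U(\O)$ finite, including $\O_3$), whereas the paper's proof deliberately avoids leaning on that remark --- whose proof is only sketched --- and instead uses \Cref{possible 2 times 2 components} to produce a second exceptional component not isomorphic to $\Ma_2\bigl(\qa{-1}{-3}{\Q}\bigr)$, so that the fully proved \Cref{When is E_2(O) HFA} applies to $\E_2(\O_i)$, which has finite index in $\SL_2(\O_i)$ because these orders are $\GE_2$-rings (\Cref{exc comp have unique max order}). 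Your route is shorter and is in fact the one the paper itself uses later in the proof of \Cref{prop (T) and (HFA) equivalent for ZG}, but it rests on a sketched remark rather than on the self-contained results of \Cref{sectie FA voor GL2's}; the paper's detour through \Cref{possible 2 times 2 components} exists precisely to avoid that dependence. A minor presentational point: in $(2)\Rightarrow(1)$ you apply \Cref{property (T) and FA_n for non-exceptional 2 by 2} uniformly to all components, whereas the paper treats $n_i=1$ separately (there $\SL_1(\O_i)$ is simply finite by \Cref{When is unit group order finite}); the theorem's statement does formally cover $n=1$, so nothing breaks, but the finiteness observation is the honest reason that case works.
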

\begin{proof}
Let $\Q G =\prod_{i = 1}^n \Ma_{n_i}(D_i)$ be the Wedderburn-Artin decomposition of $\Q G$. For each $i$, let $\O_i$ be a maximal order in $D_i$. By \Cref{prop: reductie tot sln en center}, $\U(\Z G)$ has property \HFA if and only if $G$ is a cut group and all $\SL_{n_i}(\O_i)$ have property \HFA. So in (1) we may also assume that $G$ is cut.

If $n_i \geq 3$, then $\SL_{n_i}(\O_i)$ has property \HFA by \Cref{HFR voor E_n en SLn met n >3}. Next, if $n_i = 1$, then $D_i$ is a number field or a totally definite quaternion $\Q$-algebra by \Cref{no excp 1 by 1 for cut groups}. Furthermore, $\mathcal{Z}(D_i) = \Q(\sqrt{-d})$ with $d \geq 0$, since we may assume $G$ is cut by the above. Consequently, by \Cref{When is unit group order finite}, $\SL_1(\O_i)$ is finite and hence has property \HFA.  At this stage we have that $\U(\Z G)$ has property \HFA if and only if $G$ is a cut group and for each $2\times 2$-component $\Ma_2(D_i)$ of $\Q G$, the corresponding $\SL_2(\O_i)$ has property \HFA.

If $\Ma_2(D_i)$ is non-exceptional, then $\SL_2(\O_i)$ has property \HFA by \Cref{property (T) and FA_n for non-exceptional 2 by 2}. Suppose now there exists a primitive central idempotent $e_{i_0}$ in $\Q G$ such that $\Q G e_{i_0} \cong \Ma_2(D_{i_0})$ is an exceptional component of type (II). By \Cref{possible 2 times 2 components} there is a primitive central idempotent $e_i$ such that $\Q G e_i = \Ma_2(D_i)$ is also an exceptional component of type (II), but $\Q G e_i \not\cong \Ma_2\left(\qa{-1}{-3}{\Q}\right)$. Then, by \Cref{When is E_2(O) HFA}, $\E_2(\O_i)$ does not have property \HFA. As $\O_i$ is a $\GE_2$-ring (cf.\ \Cref{exc comp have unique max order}), it follows that $\E_2(\O_i)$ has finite index in $\SL_2(\O_i)$ and hence $\SL_2(\O_i)$ also does not have property \HFA.

By the above, it remains to describe the condition ``no exceptional $2\times 2$-components'' in terms of forbidden quotients of $G$. If $\Q G e$ is an exceptional component then $H= Ge$ must appear in the table in \ref{Appendix tabel met alle data}.  In a first instance one has to filter out the non-cut groups. In this list, certain groups have another smaller (in size) group in the remaining list as epimorphic image. These groups may also be filtered out. Eventually, one is left with the groups listed in the statement.
\end{proof}

Note that, by \Cref{no excp 1 by 1 for cut groups} we may substitute statement $(2)$ in \Cref{iff HFA} by the statement
\begin{enumerate}
\item[$(2')$] $G$ is cut and $\Q G$ has no exceptional components of type (II).
\end{enumerate}
It would be interesting to have a characterisation in terms of the internal structure of $G$, instead of in terms of quotients.

\begin{remark}
One can obtain a similar result as above for $\mathbb{Q}(\zeta_n)G$, where $\zeta_n$ is a primitive complex $n$th root of unity. Due to \Cref{(FA) implies cut} and Dirichlet's unit theorem, one obtains readily that if $\mathcal{U}(\mathbb{Z}(\zeta_n)G)$ has \FA, then $n$ divides $4$ or $6$. For all of these values $n$ there will be again no exceptional $1 \times 1$ component. Furthermore if $n=4$ only $\Ma_2(\mathbb{Q}(\sqrt{-1}))$ can occur as $2\times2$ exceptional component and if $n = 3$, then only $\Ma_2(\mathbb{Q}(\sqrt{-3}))$; recall that $\mathbb{Z}[\zeta_6] = \mathbb{Z}[\zeta_3]$. Using the table in \cite{BCvG} one can again describe, in terms of quotients of $G$, when such components occur. 
\end{remark}

As a consequence of \Cref{iff HFA}, we get a result on the subgroup of $\U(\Z G)$ generated by bicyclic units. Recall that a bicyclic unit of an integral group ring $\Z G$, for a finite group $G$, is a unit of the type 
$$b(g,\tilde{h}) = 1 + (1-h)g\tilde{h} \textrm{ or } b(\tilde{h},g) = 1 + \tilde{h}g(1-h),$$
where $g,h \in G$ and $\tilde{h}= \sum_{g \in \langle h \rangle} g \in \Z G$. 
Also recall that a finite group $G$ is called {\it fixed point free} if\footnote{This class of groups coincide with the Frobenius complements \cite[Proposition 11.4.6]{EricAngel1} and hence this could serve as a group theoretical definition.} it has a complex representation $\rho$ such that $1$ is not an eigenvalue of $\rho(g)$ for all $1 \neq g \in G$.

\begin{corollary}\label[corollary]{corollarybicyclic}
If the properties of \Cref{iff HFA} are satisfied and $G$ has no non-abelian homomorphic image which is fixed point free, then the subgroup of $\U(\Z G)$ generated by all the bicyclic units is neither a non-trivial amalgamated product nor an HNN extension.  
\end{corollary}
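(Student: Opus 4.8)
The plan is to deduce \Cref{corollarybicyclic} from \Cref{iff HFA} together with the classical result that, under the stated hypothesis on $G$, the group $B$ generated by all bicyclic units has finite index in $\U(\Z G)$. First I would recall that by a theorem of Jespers--Leal (see \cite[Theorem~11.2.3]{EricAngel1}), if $\Q G$ has no exceptional components and $G$ has no non-abelian fixed point free homomorphic image, then the Bass units together with the bicyclic units generate a subgroup of finite index in $\U(\Z G)$; moreover, under these hypotheses the Bass units all lie in the subgroup generated by the bicyclic units up to finite index, or more directly one uses the refined statement that $B$ itself is of finite index. The hypotheses of \Cref{iff HFA} guarantee precisely that $\Q G$ has no exceptional components (condition (2)), so this index result applies.

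The key steps, in order, are then as follows. Step one: invoke \Cref{iff HFA} to conclude that $\U(\Z G)$ has property \HFA. Step two: observe that $B$, being of finite index in $\U(\Z G)$, is in particular a subgroup of finite index in a group with property \HFA, so by the definition of \HFA (every finite index subgroup has \FA), $B$ has property \FA. Here I would be careful that \HFA is stated for the overgroup and $B$ is a finite index subgroup of it, so \FA for $B$ is immediate; one does not even need the commensurability-stability of \HFA, just the very definition. Step three: since $B$ is finitely generated (being of finite index in the finitely generated group $\U(\Z G)$ -- recall $\U(\Z G)$ is finitely generated as it is an arithmetic group) and has property \FA, \Cref{FAHNN} applies and yields that $B$ is neither an HNN extension nor a non-trivial amalgamated product.

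I expect the main (and really only) obstacle to be citing the correct index theorem for the bicyclic units: one must ensure that the precise hypothesis ``$G$ has no non-abelian homomorphic image which is fixed point free'' together with ``$\Q G$ has no exceptional components'' is exactly what is needed for $B$ (the group generated by bicyclic units alone, not bicyclic plus Bass units) to have finite index in $\U(\Z G)$. If in the literature this is only established for the subgroup generated by Bass \emph{and} bicyclic units, one would instead argue that under the cut hypothesis the Bass units contribute nothing to the rank (cut means trivial central units), or more simply replace $B$ by the group $B'$ generated by Bass and bicyclic units, note $B'$ has finite index, apply the same \FA argument to $B'$, and then separately argue $B = B'$ up to finite index using that in the cut case the relevant Bass units are torsion or central. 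Either way the geometric content is entirely contained in \Cref{iff HFA}; the remaining work is bookkeeping on which generic units generate a finite index subgroup.
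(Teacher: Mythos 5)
Your overall strategy coincides with the paper's: show that the group $B$ generated by the bicyclic units has finite index in $\U(\Z G)$, deduce property \FA for $B$ from \HFA of $\U(\Z G)$, and conclude via \Cref{FAHNN}. The second and third steps are correct as you state them. The gap is in the first step, and you have correctly located it: there is no off-the-shelf ``refined statement'' that $B$ alone, without the Bass units, has finite index, and the fallback ``in the cut case the relevant Bass units are torsion or central'' is not justified and fails in general. The cut property constrains the centre of $\U(\Z G)$, not the unit groups of the cyclic subrings $\Z\langle g\rangle$; a cut group may well contain elements $g$ of order $n$ with $\U(\Z C_n)$ infinite (already $n=5$ occurs among cut groups), and a Bass unit based on such a $g$ has infinite order and is central only when $g$ is. So the Bass units cannot be discarded elementwise, and ``$B=B'$ up to finite index'' is exactly what still has to be proved.

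What closes the gap is your other fallback --- that the cut hypothesis kills the central contribution --- made precise component by component, which is what the paper does. Since $G$ is cut, $\ZZ(\U(\Z G))$ is finite, and by \cite[Corollary 5.5.3]{EricAngel1} it then suffices to exhibit, for each primitive central idempotent $e_i$, a subgroup $U_i \leq \U(\Z G)$ such that $1 - e_i + U_i e_i$ has finite index in $\SL_1(\Z G e_i)$; the product $\prod U_i$ is then of finite index in $\U(\Z G)$. For components $\Ma_{n_i}(D_i)$ with $n_i \geq 2$ (all non-exceptional by hypothesis), \cite[Theorem 11.2.5]{EricAngel1} together with the proof of \cite[Theorem 11.1.3]{EricAngel1} produces such a $U_i$ inside $B$ --- this is where the hypothesis on fixed point free images enters. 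For components with $n_i = 1$, the absence of type (I) exceptional components (\Cref{no excp 1 by 1 for cut groups}) and the cut property force $D_i$ to be $\Q$, an imaginary quadratic field, or a totally definite quaternion algebra, so $\SL_1(\Z G e_i)$ is finite and $U_i = 1$ suffices. This places a finite-index subgroup of $\U(\Z G)$ inside $B$ without any appeal to Bass units, after which your \HFA argument and \Cref{FAHNN} finish the proof exactly as you propose.
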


\begin{proof}

Let $\lbrace e_i \mid 1 \leq i \leq q \rbrace$ be the primitive central idempotents of $\Q G$.
Denote by $B$ the group generated by the bicyclic units. By the properties of \Cref{iff HFA}, $\U(\Z G)$ has no exceptional components and $G$ is cut.

In particular the center of $\U(\Z G)$ is finite and, by \cite[Corollary 5.5.3]{EricAngel1}, $\prod U_i$ is a subgroup of finite index in $\U(\Z G)$, where $U_i$ is a subgroup in $\U (\Z G)$ such that $1 - e_i + U_i e_i$ is of finite index in $\SL_1(\Z Ge_i)$ for every $1 \leq i \leq q$. If $e_i$ is such that $\Q Ge_i \cong \Ma_{n_i}(D_i)$ with $n_i \geq 2$, then by \cite[Theorem 11.2.5]{EricAngel1} and the proof of \cite[Theorem 11.1.3]{EricAngel1} such a group $U_i$ exists within the group $B$. If $\Q Ge_i \cong D_i$, then (since no exceptional components exist) $D_i$ is a field or a totally definite quaternion algebra. If $D_i$ is a field, then, since $G$ is cut, it is $\Q$ or an imaginary quadratic extension of $\Q$. Hence $\SL_1(D_i)$ is always finite and thus for these $e_i$ the trivial subgroup $U_i$ can be taken. Hence we have found a subgroup of  $B$ that is of finite index in $\U(\Z G)$.

Again by the conditions of \Cref{iff HFA} $B$ has property \FA. Hence, by \Cref{FAHNN}, it is neither a non-trivial amalgamated product nor an HNN extension.
\end{proof}

Another property of interest, which is weaker than \HFA, is the so-called \FAb property.
\begin{definition}
A group $\Gamma$ is said to have \emph{property \FAb} if every subgroup of finite index has finite abelianization.
\end{definition}
Clearly property \FAb is also defined on commensurability classes. We can now deduce the following.

\begin{corollary}\label[corollary]{prop (T) and (HFA) equivalent for ZG}
Let $G$ be a finite group. Then the following properties are equivalent:
\begin{enumerate}
\item $\U(\Z G)$ has property \T,
\item $\U(\Z G)$ has property \HFR,
\item $\U(\Z G)$ has property \HFA,
\item $\U(\Z G)$ has property \FAb.
\end{enumerate}
Moreover, in these cases, if $\Q G$ has no $2\times 2$-components, $\mathcal{U}(\mathbb{Z}G)$ has property \HFA[m-2] with 
$$m =  \min \{\ n \not= 1 \mid \Ma_n(D) \text{ is an epimorphic image of }\Q G \text{, with } D \text{ a f.d. $\Q$-division algebra} \ \}.$$
\end{corollary}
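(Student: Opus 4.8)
The plan is to deduce the equivalence of (1)--(4) by chaining together implications that are either standard or already established in the paper. The implication $(1)\Rightarrow(2)$ is classical: property \T\ implies property \FR\ for every action on a real tree (by \cite[Theorem~2.12.6]{ValetteBook} applied on each finite index subgroup, using that \T\ passes to finite index subgroups), and hence property \HFR. The implication $(2)\Rightarrow(3)$ is immediate since a simplicial tree is a $1$-dimensional CAT(0) cell complex, so \FR\ implies \FA\ and \HFR\ implies \HFA. The implication $(3)\Rightarrow(4)$ is also immediate: \FA\ implies finite abelianization by \Cref{prop:iff_conditions_FA}, so \HFA\ implies \FAb. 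Thus everything reduces to the closing implication $(4)\Rightarrow(1)$, and this is where the real content lies.

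For $(4)\Rightarrow(1)$ I would argue via the characterization already obtained in \Cref{iff HFA}. First, if $\U(\Z G)$ has \FAb, then in particular $\U(\Z G)$ has finite abelianization, so by \Cref{FA implies cut} the group $G$ is cut. Next I claim $\Q G$ has no exceptional components. If it had an exceptional component of type (I), that would contradict \Cref{no excp 1 by 1 for cut groups} (using that $G$ is cut). If it had an exceptional component $\Ma_2(D_{i_0})$ of type (II), then by \Cref{possible 2 times 2 components} there is a primitive central idempotent $e_i$ with $\Q G e_i\cong\Ma_2(D_i)$ exceptional of type (II) but $\Q G e_i\not\cong\Ma_2\bigl(\qa{-1}{-3}{\Q}\bigr)$; by \Cref{When is E_2(O) HFA} the group $\E_2(\O_i)$ does not have \HFA, hence has a finite index subgroup without \FA, hence (by \Cref{prop:iff_conditions_FA} and \Cref{FAHNN}) a finite index subgroup with infinite abelianization. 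Since $\O_i$ is a $\GE_2$-ring (cf.\ \Cref{exc comp have unique max order}), $\E_2(\O_i)$ has finite index in $\SL_2(\O_i)$, which in turn is, up to commensurability, a direct factor of a finite index subgroup of $\U(\Z G)$ via \Cref{prop: reductie tot sln en center} and \Cref{order zijn commensurable}. Thus $\U(\Z G)$ would have a finite index subgroup with infinite abelianization, contradicting \FAb. Therefore $G$ is cut and $\Q G$ has no exceptional components, so by \Cref{iff HFA} the group $\U(\Z G)$ has \HFA. To upgrade \HFA\ to \T, I would invoke \Cref{property (T) and FA_n for non-exceptional 2 by 2} and \Cref{HFR voor E_n en SLn met n >3}: under the no-exceptional-component hypothesis, each $\SL_{n_i}(\O_i)$ with $n_i\geq 2$ has property \T\ (Margulis), each $\SL_1(\O_i)$ is finite since $G$ is cut, and $\U(\Z(\Z G))$ is finite; property \T\ is stable under commensurability and finite direct products, and under the extension $\langle\SL_1,\U(\Z(\cdot))\rangle\leq\U$ of finite index, so $\U(\Z G)$ has \T.

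For the final ``moreover'' statement, assume $\Q G$ has no $2\times 2$-components. Then every non-commutative matrix component $\Ma_{n_i}(D_i)$ with $n_i\neq 1$ has $n_i\geq 3$, and the minimum such $n_i$, call it $m$, satisfies $m\geq 3$. I would observe that the commutative components and the $1\times 1$ non-commutative components contribute only finite groups to $\U(\Z G)$ up to commensurability (the former because $G$ is cut, the latter by \Cref{no excp 1 by 1 for cut groups} together with \Cref{When is unit group order finite}), so by \Cref{prop: reductie tot sln en center} and \Cref{order zijn commensurable} the group $\U(\Z G)$ is commensurable with $\prod_{n_i\geq 3}\GL_{n_i}(\O_i)$. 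By \Cref{HFR voor E_n en SLn met n >3} each $\SL_{n_i}(\O_i)$, and hence each $\GL_{n_i}(\O_i)$ (again using the finite-index arguments and that \FA[k]\ is inherited by finite-index overgroups and preserved under finite direct products), has property \HFA[n_i-2]; since \FA[k]\ implies \FA[k']\ for $k'\leq k$, the whole product has \HFA[m-2], and therefore so does $\U(\Z G)$.

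The main obstacle, as always in this circle of results, is the passage $(4)\Rightarrow$ ``no exceptional components'': one must produce, from an exceptional type (II) component, an explicit finite index subgroup of $\U(\Z G)$ with infinite abelianization, and this rests entirely on the hard input \Cref{When is E_2(O) HFA} (itself relying on the amalgam decompositions of $\E_2(\LL)$ and $\SL_2(\Z[\sqrt{-3}])$ from \cite{amalgamationpaper}) together with the bookkeeping in \Cref{possible 2 times 2 components} that guarantees the ``bad'' component $\Ma_2\bigl(\qa{-1}{-3}{\Q}\bigr)$ never appears alone. Everything else is a formal chase through commensurability invariance and the stability properties of \Cref{FAconserved}.
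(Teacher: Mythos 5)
Your overall architecture coincides with the paper's: the chain $(1)\Rightarrow(2)\Rightarrow(3)\Rightarrow(4)$ is formal, and all the content sits in $(4)\Rightarrow(1)$, which you reduce (as the paper does) to showing that a group with \FAb is cut and has no exceptional components, and then invoking the Margulis input for the remaining components. The "moreover" clause is also handled exactly as in the paper.

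However, there is a genuine gap in your exclusion of exceptional type (II) components. You argue that since $\E_2(\O_i)$ does not have \HFA, it "has a finite index subgroup without \FA, hence (by \Cref{prop:iff_conditions_FA} and \Cref{FAHNN}) a finite index subgroup with infinite abelianization". That inference is invalid: by Serre's criterion (\Cref{prop:iff_conditions_FA}) a finitely generated group can fail \FA either because its abelianization is infinite or because it is a non-trivial amalgam, and the second alternative is compatible with finite abelianization. This is not a hypothetical worry -- it is exactly what happens in the case $D_i \cong \qa{-1}{-1}{\Q}$: the witness to non-\HFA produced in the proof of \Cref{When is E_2(O) HFA} is $\E_2(\LL)$, which is a non-trivial amalgamated product but has \emph{finite} abelianization by \Cref{finite_abelianization_equiv}. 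So "$\E_2(\O_i)$ fails \HFA" does not by itself contradict \FAb for $\U(\Z G)$. The paper closes precisely this gap with \Cref{remarkFAb}: for any order $\O$ in a finite dimensional division $\Q$-algebra with $\U(\O)$ finite, $\SL_2(\O)$ acts discontinuously on hyperbolic $3$- or $5$-space, and a congruence subgroup normalized by a suitable reflection has a virtually free quotient by Lubotzky's result, hence a finite index subgroup with infinite abelianization; thus $\SL_2(\O)$ fails \FAb (not merely \HFA), which is the statement your argument actually needs. Note this also renders the detour through \Cref{possible 2 times 2 components} (to avoid the order $\O_3$) unnecessary, since \Cref{remarkFAb} applies to all exceptional type (II) components uniformly. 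With that substitution, the rest of your commensurability bookkeeping goes through as in the paper.
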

\begin{proof}Suppose $\U(\Z G)$ has property \FAb, then, by \Cref{FA implies cut}, $G$ is a cut group. We will show $\U(\Z G)$ has property \T. Since property \T is defined on commensurability classes it is enough to prove that $\Gamma = \prod_{i \in I} \GL_{n_i}(\mathcal{O}_i)$ has property \T, where $\mathbb{Q}G = \prod_{i\in I} \Ma_{n_i}(D_i)$ and $\mathcal{O}_i$ is an order in $D_i$ a finite dimensional division $\Q$-algebra. The group $\Gamma$ has property \T if and only if all factors do. Since $G$ is a cut group, whenever $n_i= 1$, we have, by \Cref{no excp 1 by 1 for cut groups} and \Cref{When is unit group order finite} that $\mathcal{U}(\O_i)$ is finite, in particular it has property \T.
Furthermore, $\mathcal{Z}(D) = \Q(\sqrt{-d})$ with $d \geq 0$, for every Wedderburn-Artin component $\Ma_n(D)$ of $\Q G$. Therefore \Cref{property (T) and FA_n for non-exceptional 2 by 2}, all the non-exceptional components have property \T.

Next we show that no non-exceptional components of type (II) appear as component of $\Q G$. Recall that those exceptional components are described by \Cref{theorem EKVG}.
Since \FAb is a property of commensurability classes, we know that all $\SL_{n_i}(\O_i)$ have \FAb. However, by \Cref{remarkFAb}, no exceptional component of type (II) has property \FAb.
Hence no exceptional component of type (II) exists as a component of $\Q G$, which finishes the proof of $(4) \Rightarrow (1)$.

Since property \T implies property \HFR, cf.\ \cite[Chapter~6., Proposition~11]{dlHV}, property \HFR implies property \HFA and \HFA implies property \FAb, this finishes the proof of the four equivalences.

The last part of the result follows from \Cref{property (T) and FA_n for non-exceptional 2 by 2}, the assumption on $\Q G$ and the well behaviour of the property under direct products.
\end{proof}

\begin{remark}
Property \T, \HFR, \HFA and \FAb are all properties defined on commensurability classes. In particular, \Cref{prop (T) and (HFA) equivalent for ZG} and \Cref{iff HFA} are valid for arbitrary orders in $\Q G$.
\end{remark}

\begin{corollary}\label[corollary]{odd-order cut}
Let $G$ be a group without exceptional components of type (II) (e.g.\ $|G|$ odd). Then the following properties are equivalent:
\begin{enumerate}
\item $\mathcal{U}(\mathbb{Z}G)$ has property \HFA,
\item $\mathcal{U}(\mathbb{Z}G)$ has property \FAb,
\item\label{it:noII_abfin} $\mathcal{U}(\mathbb{Z}G)^{ab}$ is finite,
\item\label{it:noII_cut} $G$ is a cut group.
\end{enumerate}
\end{corollary}
\begin{proof}
Due to \Cref{(FA) implies cut} it only remains to prove that if $G$ is cut, then $\mathcal{U}(\mathbb{Z}G)$ has property \HFA. By assumption, $G$ has no $2\times 2$ exceptional components and due to the cut property, cf.\ \Cref{no excp 1 by 1 for cut groups}, also no exceptional $1\times1$ components. Hence \Cref{iff HFA} applies. 
\end{proof}

Let $\Ma_2(D)$ be an  exceptional component of type (II) actually appearing in $\Q G$ for a finite group $G$ (see \Cref{theorem EKVG}) and let $\O$ be an order in $D$. Then $\GL_2(\O)$ has finite abelianization by \Cref{GE2Oab_finite}. So also in the presence of exceptional components of type (II) one might anticipate that \eqref{it:noII_abfin} and \eqref{it:noII_cut} in \Cref{odd-order cut} are still equivalent. Interestingly, as proven in \Cref{Trichotomy and equivalent problems}, this is equivalent to the following trichotomy.

\begin{question}\label[question]{Trichotomy conjecture}
Let $G$ be a finite cut group. Does exactly one of the following properties hold?
\begin{enumerate}
\item $\mathcal{U}(\mathbb{Z}G)$ has property \HFA.
\item $\mathcal{U}(\mathbb{Z}G)$ has property \FA but not \HFA.
\item $\mathcal{U}(\mathbb{Z}G)$ has a non-trivial amalgamated decomposition and finite abelianization.
\end{enumerate}
\end{question}

In \cite[Theorem~8.5 and Remark~8.6]{amalgamationpaper} we prove that a dichotomy holds for $\U (\Z G)$: for $G$ a finite cut group that is solvable or has an order not divisible by $5$, $\U(\Z G)$ has property \HFA or it is commensurable with a non-trivial amalgamated product.

\begin{proposition}\label[proposition]{Trichotomy and equivalent problems}
Let $G$ be a finite group and $\O$ a maximal order of $\mathbb{Q}G$ containing $\mathbb{Z}G$. Then the following properties are equivalent:
\begin{enumerate}
\item  If $\mathcal{U}(\O)^{ab}$ is finite, then $\mathcal{U}(\mathbb{Z}G)^{ab}$ is finite,
\item  If $G$ is cut, then $\mathcal{U}(\Z G)^{ab}$ is finite,
\item \Cref{Trichotomy conjecture} has a positive answer.
\end{enumerate}
\end{proposition}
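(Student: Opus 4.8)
The plan is to reduce everything to the single statement
\begin{equation*}
\U(\O)^{ab}\ \text{is finite}\ \iff\ G\ \text{is a cut group}.
\end{equation*}
Granting this, the equivalence (1) $\Leftrightarrow$ (2) is immediate: when $G$ is cut, both assert that $\U(\Z G)^{ab}$ is finite, and when $G$ is not cut, both implications are vacuous. It then remains to establish (2) $\Leftrightarrow$ (3), which I would also carry out group by group, noting again that both statements are vacuous unless $G$ is cut. So the two real tasks are the displayed fact and the trichotomy equivalence.

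For the displayed fact, the direction ``$\Rightarrow$'' follows from \Cref{(FA) implies cut}: if $\U(\O)^{ab}$ is finite then $\mbox{rk}\left(\U(\ZZ(\O))\right)=0$, so by Dirichlet's unit theorem $\U(\ZZ(\O))$ is finite, and since $\ZZ(\O)$ and $\ZZ(\Z G)$ are commensurable orders in $\ZZ(\Q G)$ (\Cref{order zijn commensurable}) this says exactly that $G$ is cut. For ``$\Leftarrow$'', I would write $\Q G=\prod_i \Ma_{n_i}(D_i)$ and use that the primitive central idempotents, being integral, lie in every maximal order, so $\O=\prod_i \O e_i$ with each $\O e_i$ a maximal order of $\Ma_{n_i}(D_i)$ and $\U(\O)^{ab}\cong\prod_i \GL_{n_i}(\O e_i)^{ab}$; it suffices to show each factor is finite, and for a cut group the components split into three cases. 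If $n_i=1$, then \Cref{no excp 1 by 1 for cut groups} rules out type (I) exceptional components, so $D_i$ is commutative or a totally definite quaternion algebra, and as $\ZZ(D_i)\in\{\Q,\ \text{imaginary quadratic}\}$ we get $\U(\O_i)$ finite by \Cref{When is unit group order finite}. If $\Ma_{n_i}(D_i)$ is non-exceptional (in particular whenever $n_i\geq 3$), then \Cref{property (T) and FA_n for non-exceptional 2 by 2} together with commensurability invariance of property \T gives property \T, hence \FAb, for $\SL_{n_i}$ of any order in $D_i$; since the reduced norm of a unit of an order is an integral unit of $\ZZ(D_i)$ and $G$ is cut, $\SL_{n_i}(\O e_i)$ has finite index in $\GL_{n_i}(\O e_i)$, so by commensurability invariance of \FAb the latter has \FAb, in particular finite abelianization. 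Finally, if $\Ma_2(D_i)$ is exceptional of type (II), then \Cref{exc comp have unique max order} identifies $\O e_i$ up to conjugacy with $\Ma_2(\O_i)$, where $\O_i$ is the norm Euclidean maximal order of $D_i$ and hence a $\GE_2$-ring by \Cref{euclidian are GE2}; therefore $\GL_2(\O e_i)^{ab}\cong \GE_2(\O_i)^{ab}$, which is finite by \Cref{GE2Oab_finite}.

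For (2) $\Leftrightarrow$ (3) I would fix a cut group $G$, so that (2) reads ``$\U(\Z G)^{ab}$ is finite'', and show this is equivalent to \Cref{Trichotomy conjecture} holding for $G$. The first observation is that the three options there are always \emph{pairwise} mutually exclusive: \HFA forces property \FA for $\U(\Z G)$ itself, and property \FA---via Serre's criterion \Cref{prop:iff_conditions_FA}, applicable since $\U(\Z G)$ is finitely generated---forbids a non-trivial amalgamated decomposition and forces finite abelianization, so (i) excludes (iii), (ii) excludes (iii), and (i) excludes (ii) by definition. Hence the trichotomy holds for $G$ if and only if at least one of the three options holds. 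If $\U(\Z G)^{ab}$ is infinite, each option would force finite abelianization, so none holds; conversely, if $\U(\Z G)^{ab}$ is finite, then either $\U(\Z G)$ admits a non-trivial amalgamated decomposition, placing us in option (iii), or it does not, in which case \Cref{prop:iff_conditions_FA} yields property \FA and we land in option (i) or (ii) according to whether \HFA holds. Thus the trichotomy holds for $G$ exactly when $\U(\Z G)^{ab}$ is finite, which is (2).

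The proof is in large part an assembly of results established above, so there is no single deep step; the point requiring genuine care is the treatment of the exceptional type (II) blocks of $\O$, where one must know the relevant maximal order is a $\GE_2$-ring so that \Cref{GE2Oab_finite} controls $\GL_2$ itself and not merely its elementary subgroup, and the bookkeeping needed to pass freely between an arbitrary maximal order $\O$ and the model $\prod_i \Ma_{n_i}(\O_i)$ via commensurability invariance of \T and \FAb.
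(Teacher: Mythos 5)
Your proof is correct and follows essentially the same route as the paper: the same splitting of the Wedderburn components into $1\times 1$, exceptional type (II) and non-exceptional blocks, the same use of the unique norm-Euclidean maximal order together with \Cref{GE2Oab_finite} for the exceptional blocks, and the same Serre-dichotomy argument for the equivalence with the trichotomy. The only difference is organizational: you extract the biconditional ``$\U(\O)^{ab}$ is finite $\Leftrightarrow$ $G$ is cut'' as a standalone lemma (and treat the non-exceptional blocks directly via commensurability invariance of \FAb rather than citing \Cref{ab GL versus ab SL}), whereas the paper interleaves the two directions into its proofs of $(1)\Rightarrow(2)$ and $(2)\Rightarrow(1)$.
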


For the proof we will need the following proposition.

\begin{proposition}\label[proposition]{ab GL versus ab SL}
Let $D$ be a finite dimensional division algebra over $\mathbb{Q}$ and $\O$ an order in $D$. If $\Ma_n(D)$ is non-exceptional, then the following are equivalent:
\begin{enumerate}
\item $\mathcal{Z}(D)=\Q(\sqrt{-d})$ with $d \geq 0$,
\item $\GL_n(\O)^{ab}$ is finite,
\item $\U (\O')^{ab}$ is finite for {\it some} order $\O'$ in $\Ma_n(D)$
\item $\U (\O')^{ab}$ is finite for {\it every} order $\O'$ in $\Ma_n(D)$.
\end{enumerate}
\end{proposition}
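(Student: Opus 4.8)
The plan is to first establish the unconditional implication "$\SL_n(\O)^{ab}$ finite $\Rightarrow$ $\GL_n(\O)^{ab}$ finite" (under the hypothesis on the center), and then to prove the chain of four equivalences in the non-exceptional case by combining this with the higher-rank results already available. For the first part, recall from the proof of \Cref{prop: reductie tot sln en center} that $\langle \SL_1(\O), \U(\ZZ(\O))\rangle$ has finite index in $\U(\O) = \GL_n(\O)$ and that $\SL_1(\O) \cap \U(\ZZ(\O))$ is finite, using \cite[Proposition~5.5.1]{EricAngel1}; here $\SL_1(\O) = \SL_n(\O)$. Since having finite abelianization is a commensurability invariant only up to the subtlety that \FAb, not plain finite abelianization, descends to finite index subgroups, I would instead argue directly: the group $\SL_n(\O) \cdot \U(\ZZ(\O))$ is a quotient of $\SL_n(\O) \times \U(\ZZ(\O))$ by a finite central subgroup, hence its abelianization is finite once $\SL_n(\O)^{ab}$ is finite, because $\U(\ZZ(\O))$ is finite: indeed $\ZZ(\O)$ is an order in $\ZZ(D) = \Q(\sqrt{-d})$ with $d \geq 0$, so $\U(\ZZ(\O))$ is finite by \Cref{When is unit group order finite}. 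Finally $\GL_n(\O)/(\SL_n(\O)\cdot\U(\ZZ(\O)))$ is finite, and finite abelianization is inherited from a finite-index subgroup up to a finite group, so $\GL_n(\O)^{ab}$ is finite.

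For the equivalences in the non-exceptional case, the strategy is a cycle. The implication $(1)\Rightarrow(2)$: if $\mathcal{Z}(D) = \Q(\sqrt{-d})$ and $\Ma_n(D)$ is non-exceptional, then by \Cref{property (T) and FA_n for non-exceptional 2 by 2} the group $\SL_n(\O)$ has property \T, hence property \FAb, hence in particular finite abelianization; now apply the first part of the proposition to conclude $\GL_n(\O)^{ab}$ is finite. The implication $(2)\Rightarrow(3)$ is immediate since $\GL_n(\O) = \U(\O)$. For $(3)\Rightarrow(4)$: if $\O'$ is another order in $\Ma_n(D)$, then by \Cref{order zijn commensurable} $\U(\O)$ and $\U(\O')$ are commensurable; since property \T (equivalently \FAb in this arithmetic setting) passes between commensurable groups, and $\U(\O)$ has property \T by the argument just given — or more elementarily, since we are in the non-exceptional case one could route through the fact that $\U(\O)^{ab}$ finite forces, via \Cref{(FA) implies cut}, that $\mathcal{Z}(D)$ has finite unit group — one obtains $\U(\O')^{ab}$ finite. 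Actually the cleanest route for $(3)\Rightarrow(4)$ is: $(3)$ together with non-exceptionality will turn out to force $(1)$ (see below), and then $(1)$ gives property \T for $\SL_n(\O')$ and the first part gives finiteness of $\GL_n(\O')^{ab} = \U(\O')^{ab}$. The implication $(4)\Rightarrow(1)$: take $\O' = \O$; if $\U(\O)^{ab}$ is finite, then by \Cref{(FA) implies cut} applied to the order $\Ma_n(\O)$ in the simple algebra $\Ma_n(D)$ we get $\rk(\U(\ZZ(\O))) = 0$, i.e.\ $\U(\ZZ(D))$ is finite, which by \Cref{When is unit group order finite} means $\ZZ(D) = \Q(\sqrt{-d})$ with $d \geq 0$, giving $(1)$.

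The main obstacle I anticipate is the bookkeeping around "finite abelianization" versus the hereditary property \FAb: finite abelianization of a group does not in general pass to or from finite-index subgroups or commensurable groups, so each transfer step must be justified either by exhibiting an explicit finite-index inclusion with finite cokernel (as in the $\SL$-to-$\GL$ comparison via \cite[Proposition~5.5.1]{EricAngel1}) or by upgrading to property \T / \FAb first and only then reading off finite abelianization. The one genuinely external input is \Cref{property (T) and FA_n for non-exceptional 2 by 2} (Margulis–Farb), which supplies property \T for $\SL_n(\O)$ precisely in the non-exceptional case with the right center; everything else is assembly of \Cref{When is unit group order finite}, \Cref{order zijn commensurable}, and \Cref{(FA) implies cut}. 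I would present the first (unconditional) statement as a short standalone paragraph, then give the equivalences as the cycle $(1)\Rightarrow(2)\Rightarrow(3)\Rightarrow(1)$ plus $(3)\Leftrightarrow(4)$ via the commensurability invariance once $(1)$ is in hand.
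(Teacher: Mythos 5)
Your proposal is correct and follows essentially the same route as the paper: the first statement via finiteness of $\U(\ZZ(\O))$ (\Cref{When is unit group order finite}) forcing $\SL_n(\O)$ to have finite index in $\GL_n(\O)$ by \cite[Proposition~5.5.1]{EricAngel1}; $(1)\Rightarrow(2)$ via Margulis--Farb (\Cref{property (T) and FA_n for non-exceptional 2 by 2}); $(3),(4)\Rightarrow(1)$ via \Cref{(FA) implies cut}; and the passage to arbitrary orders via \Cref{order zijn commensurable} combined with upgrading to property \T before transferring finiteness of the abelianization, which is exactly the subtlety the paper also navigates (there by intersecting $\U(\O')$ with $\SL_n(\tilde{\O})$ rather than citing commensurability invariance of \T wholesale). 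Only cosmetic caveat: an arbitrary order $\O'$ in $\Ma_n(D)$ need not be of the form $\Ma_n(\tilde\O)$, so your alternative ``cleanest route'' phrasing involving $\GL_n(\O')$ is notationally off, but your primary commensurability argument handles this correctly.
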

\begin{proof}
To start note that $\SL_n(\O)^{ab}$ is finite when $\Ma_n(D)$ is non-exceptional. Indeed by \Cref{property (T) and FA_n for non-exceptional 2 by 2} (or alternatively \Cref{HFR voor E_n en SLn met n >3} if $n \geq 3$) and the fact that property (T) implies finite abelianization \cite[Corollary~1.3.6]{ValetteBook}. 

Now suppose that $\mathcal{Z}(D)=\Q(\sqrt{-d})$, with $ d \geq 0$, then, by \Cref{When is unit group order finite}, $\mathcal{U}( \mathcal{Z}( \O))$ is finite. Hence in this case, $\SL_n(\O) $ has finite index in $\GL_n(\O)$  (using \cite[Proposition~5.5.1]{EricAngel1}) and consequently $\GL_n(\O)^{ab}$ is finite due to the finiteness of $\SL_n(\O)^{ab}$. In short, $(1)$ indeed implies $(2)$.

As $\Ma_n(\O)$ is an example of an order in $\Ma_n(D)$, $(2)$ implies $(3)$. We will now prove that $(3)$ implies $(1)$. So let $\O$' be an order in $\Ma_n(D)$ such that $\U (\O')^{ab}$ is finite. Then from \Cref{(FA) implies cut} it follows that $\mathcal{U}(\mathcal{Z}(\O'))$ is also finite. Consider now the order $\Ma_n(\O)$. Then $\mathcal{Z}(\O')$ and $\mathcal{Z}(\Ma_n(\O))$ are both orders in the finite dimensional semisimple $\Q$-algebra $\mathcal{Z}(\Ma_n(D))$ and hence by \Cref{order zijn commensurable} the unit group of the two orders are commensurable. In particular also $\U (\mathcal{Z}(\Ma_n(\O))) = \U (\mathcal{Z}(\O))$ is finite and thus by \Cref{When is unit group order finite}, $\mathcal{Z}(D)=\Q(\sqrt{-d})$ with $d \geq 0$, as desired.
Hence the first three items are equivalent. 

We now prove $(2)$ implies $(4)$. Suppose $\GL_n(\O)^{ab}$ is finite and let $\O'$ be an arbitrary order in $\Ma_n(D)$. As we have already shown that the first three conditions are equivalent, we already know that $\mathcal{Z}(D)=\Q(\sqrt{-d})$ for some $d \geq 0$. Recall that $\langle \SL_1(\O'), \U (\mathcal{Z}(\O')) \rangle$ is of finite index in $\U (\O')$ (see \cite[Proposition~5.5.1]{EricAngel1}). Also $\U( \O' \cap \Ma_n(\O))$ equals $\U(\O') \cap \GL_n(\O)$ and it is of finite index in both $\U(\O')$ and $\GL_n(\O)$, since the unit groups of two orders are commensurable by \Cref{order zijn commensurable}. So altogether we obtain that $\U(\O') \cap \SL_n(\O)$ is of finite index in $\U(\O')$ and $\SL_n(\O)$, which has property (T) by \Cref{property (T) and FA_n for non-exceptional 2 by 2}. In particular, $\U(\O') \cap \SL_n(\O)$ also has property \T and thus finite abelianization. This implies that also $\U (\O')^{ab}$ is finite, as desired.
The remaining implication $(4)$ to $(3)$ is trivial. 
\end{proof}

Note that \Cref{ab GL versus ab SL} yields a positive answer to \Cref{conjecture ab independent order} for non-exceptional finite dimensional simple $\Q$-algebras. 

\begin{proof}[Proof of \Cref{Trichotomy and equivalent problems}]
First we prove that (1) implies (2).
Let $\Q G = \prod_{i \in \mathcal{I}} \Ma_{n_i}(D_i)$ be the Wedderburn-Artin decomposition of $\Q G$, $e_i$ the primitive central idempotent corresponding to $\Ma_{n_i}(D_i)$ and $\O_i$ an order in $\Ma_{n_i}
(D_i)$.
Write $\mathcal{I}$ as the (disjoint) union of three sets $I_1,I_2$ and $I_3$ where $I_1$ are those indices corresponding to $1\times 1$ components, $I_2$ those with exceptional $2\times 2$ components and $I_3$ consisting of the remaining components.
Suppose $G$ is cut, then there are no exceptional $1\times 1$ component by \Cref{no excp 1 by 1 for cut groups}. Consequently, by \Cref{When is unit group order finite}, $\mathcal{U}(\O_i)^{ab}$ is finite for all $i \in I_1$.
Also $\U(\O_i)^{ab}$ is finite for any order $\O_i$ in $\Ma_n(D_i)$ with $i \in I_3$ by \Cref{ab GL versus ab SL} (recall that $G$ cut implies that $\ZZ (D_i) = \Q (\sqrt{-d})$ with $d \geq 0$). 

Let now $i \in I_2$.
Then, by \Cref{theorem EKVG} and \Cref{exc comp have unique max order}, $D_i$ has up to conjugation a unique maximal order, say $\O_{max, i}$, which is right norm Euclidean and hence $\GE_2(\O_{max, i}) = \GL_2(\O_{max, i})$.
By \Cref{GE2Oab_finite}, $\GE_2(\O_{max,i})^{ab}$ and hence also $\GL_2(\O_{max, i})^{ab}$ are finite.
Altogether, if $G$ is cut, then for any choice of orders $\O_i$ in $\Ma_{n_i}(D_i)$, when $i \in I_1 \cup I_3$ we have that 
\begin{equation}\label[equation]{eq: order with finite ab in QG}
\left| \prod_{i \in I_1 \cup I_3} \U(\O_i)^{ab} \times \prod_{j \in I_2} \GL_2(\O_{max,j})^{ab} \right| < \infty.
\end{equation}
As, by assumption, $\O$ is a maximal order of $\Q G$ containing $\Z G$, $\O \cong \prod_{i \in \I} \O_i$ with $\O_i = \O e_i$ a maximal order in $\Ma_{n_i}(D_i)$. As mentioned above, for $i\in I_2$, the maximal order $\O_i$ is conjugate to $\Ma_2(\O_{max, i})$. Since the size of the abelianization is preserved under conjugation we may assume that $\O = \prod_{i \in I_1 \cup I_3} \O_i \times \prod_{j \in I_2} \Ma_2(\O_{max,j})$ which has a unit group with finite abelianization by \eqref{eq: order with finite ab in QG}. Consequently, by (1), also $\mathcal{U}(\Z G)^{ab}$ is finite.  This finishes the proof of (1) implies (2).

We now prove that (2) implies (1). Hence assume statement (2) is true. Let $\O$ be a maximal order of $\Q G$ and suppose that $\mathcal{U}(\O)^{ab}$ is finite. Then by \Cref{(FA) implies cut}, $\mathcal{U}(\mathcal{Z}(\O))$ is finite. Consequently, since $\mathcal{Z}(\Z G)$ and $\mathcal{Z}(\O)$ are both orders in $\mathcal{Z}(\Q G)$, also $\mathcal{U}(\mathcal{Z}(\Z G))$ is finite. Hence, $G$ is cut and thus by (2), $\mathcal{U}(\Z G)$ is finite, as desired. So, we have proved that (1) and (2) are equivalent.

We will now prove that (2) implies a positive answer to \Cref{Trichotomy conjecture}. Suppose that $G$ is cut and hence that $\mathcal{U}(\Z G)^{ab}$ is finite by (2). Then if $\mathcal{U}(\Z G)$ does not have property (FA), it must have a non-trivial amalgamated decomposition by \Cref{prop:iff_conditions_FA} as desired. Conversely, in all the cases when \Cref{Trichotomy conjecture} has a positive answer, the abelianization of $\mathcal{U}(\Z G)$ is finite, so (3) clearly implies (2). 
\end{proof}

\section{Unit groups of group rings and Property \FA}\label{sectie FA voor UZG}

In this section  we consider when  $\mathcal{U}(\Z G)$ has \FA and when it has \FA but not \HFA.

\begin{theorem}\label{U(ZG) FA voor willekeurig groepen}
Let $G$ be a finite solvable group and assume that $\mathcal{U}(\Z G)$ has \FA. Then the following properties hold:
\begin{enumerate}
\item $G$ does not map epimorphically on $D_8$ and  $S_3$.
\item $\mathcal{U}(\Z G)$ does not satisfy \HFA if and only if $G$ maps onto one of the following $7$ groups
\begin{align*}  G_{16,6},\quad G_{16,13},\quad Q_8 \times C_3, \quad \SL(2,3),\quad G_{32,50},\quad G_{96, 202},\quad \text{and} \quad G_{384, 618}.
 \end{align*}
\end{enumerate}
\end{theorem}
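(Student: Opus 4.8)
The plan is to deduce the whole statement from the hereditary classification \Cref{iff HFA} and the component analysis \Cref{possible 2 times 2 components}, after first ruling out $D_8$ and $S_3$ as quotients. Throughout we may assume $G$ is a cut group: property \FA forces finite abelianization by \Cref{prop:iff_conditions_FA}, whence \Cref{FA implies cut} applies. Part (1) is then used as an input for part (2).

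For part (1) I would argue by contradiction. Suppose $G$ surjects onto $H\in\{D_8,S_3\}$. Since $\Q D_8$ and $\Q S_3$ each have $\Ma_2(\Q)$ as a Wedderburn component (cf.\ \Cref{possible 2 times 2 components}) and components pull back along the surjection $\Q G\twoheadrightarrow\Q H$, there is a primitive central idempotent $e\in\Q G$ with $\Q Ge\cong\Ma_2(\Q)$. The image $\Z Ge$ of $\Z G$ under $x\mapsto xe$ is an order in $\Ma_2(\Q)$, and $\Z G$ lies with finite index in the order $\Z Ge\times\Z G(1-e)$ of $\Q G$; hence by \Cref{order zijn commensurable} the projection $\U(\Z G)\to\U(\Z Ge)$ has image of finite index. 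Again by \Cref{order zijn commensurable}, $\U(\Z Ge)$ is commensurable with $\GL_2(\Z)$, so it is an infinite, finitely generated, virtually free group (for $H=S_3$ one even has the explicit amalgam $\V(\Z S_3)=\langle b,s\rangle\ast_{\langle s\rangle}\langle s,t\rangle$ discussed following \Cref{FA implies cut}); the same is true of each of its finite-index subgroups, and such groups split non-trivially, hence fail \FA. Thus $\U(\Z G)$ has a quotient without \FA, contradicting \Cref{FAconserved} and the hypothesis. So $G$ maps onto neither $D_8$ nor $S_3$.

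For part (2): since $G$ is cut, \Cref{iff HFA} says that $\U(\Z G)$ has \HFA exactly when $G$ does not surject onto any of the ten groups $D_8,\ G_{16,6},\ G_{16,13},\ G_{32,50},\ Q_8\times C_3,\ S_3,\ \SL(2,3),\ G_{96,202},\ G_{240,90},\ G_{384,618}$; equivalently, $\U(\Z G)$ fails \HFA iff $G$ maps onto one of these ten. Under the present hypotheses three of them drop out: by part (1), $G\not\twoheadrightarrow D_8,S_3$; and since $G$ is solvable so is every quotient of it, whereas $G_{240,90}$ is not solvable — indeed by \Cref{possible 2 times 2 components} a solvable group has no Wedderburn component isomorphic to $\Ma_2(\qa{-2}{-5}{\Q})$, while $\Q G_{240,90}$ has precisely such a component, so $G_{240,90}$ cannot be a quotient of $G$. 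The remaining seven groups are exactly those listed in the statement, and conversely if $G$ maps onto any of those seven it maps onto one of the ten, so $\U(\Z G)$ fails \HFA by \Cref{iff HFA}. This yields the claimed equivalence.

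The genuinely new content here is modest: the hard work is entirely carried by \Cref{iff HFA} and \Cref{possible 2 times 2 components}, plus the structural fact that $\GL_2(\Z)$ — and hence $\U(\Z D_8)$ and $\U(\Z S_3)$ — is virtually free, so that it and its finite-index subgroups lack \FA. The single step demanding care is the reduction in part (1): checking that $\U(\Z G)\to\U(\Z Ge)$ has finite-index image. Here one must remember that $e$ need not lie in $\Z G$, so the splitting $\Q G=\Q Ge\times\Q G(1-e)$ only places $\Z G$ as a finite-index suborder of $\Z Ge\times\Z G(1-e)$; it is \Cref{order zijn commensurable} (commensurability of unit groups of orders) that then pushes this finite index forward through the projection.
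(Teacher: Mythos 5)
Your proposal is correct and follows essentially the same route as the paper: both arguments reduce part (1) to the observation that a component $\Ma_2(\Q)$ would force $\U(\Z G)$ to virtually surject onto a group commensurable with $\GL_2(\Z)$, which fails \FA (the paper phrases this as ``$\GL_2(\O_i)$ must have \FA for every exceptional component, but $\GL_2(\Z)$ is a non-trivial amalgam''), and then invoke \Cref{possible 2 times 2 components}(3); part (2) is in both cases just \Cref{iff HFA} combined with part (1) and the non-solvability of $G_{240,90}$. Your extra care in checking that the projection $\U(\Z G)\to\U(\Z Ge)$ has finite-index image via \Cref{order zijn commensurable} is exactly the commensurability step the paper performs when it passes from $\Z G$ to the order $\prod_{i}\Ma_2(\O_i)\times\prod_j\Z Ge_j$.
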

\begin{proof}
Let $\{ e_i \mid i \in I \}$ be a full set of primitive central idempotents of $\Q G$ with $\Q Ge_i \cong \Ma_{n_i}(D_i)$ for $i \in I$ and decompose $I = I_1 \cup I_2$ in such a way that $\Q G e_i$ is an exceptional component for all $i \in I_1$ and non-exceptional for all $i \in I_2$.
Since, by assumption, $\mathcal{U}(\Z G)$ has property \FA, $G$ must be cut by \Cref{FA implies cut} and consequently, by \Cref{no excp 1 by 1 for cut groups}, all exceptional components are $2\times2$ matrix rings. The latter have, by \Cref{theorem EKVG} and \Cref{exc comp have unique max order}, up to conjugation in $\Q G$, a unique maximal order.
So,  without loss of generality, we may assume that $\Z G$ is a subring of the order $\prod_{i \in I_1} \Ma_2(\O_i) \times \prod_{j \in I_2} \Z G e_j$, where $\O_i$ is a maximal order of $D_i$. Since orders have commensurable unit groups, $\mathcal{U}(\Z G)$ has finite index in $\prod_{i \in I_1} \GL_2(\O_i) \times \prod_{j \in I_2} \mathcal{U}(\Z G e_j)$. Therefore, the latter also enjoys property \FA, and thus $\GL_2(\O_i)$ has \FA for $i \in I_1$ (also $\mathcal{U}(\Z G e_j)$ has \FA for all $j \in I_2$, however as $G$ is cut, it follows from \Cref{property (T) and FA_n for non-exceptional 2 by 2} that they all even have \HFA and hence they do not add any restriction). 

Now recall that $\GL_2(\Z)$ is a non-trivial amalgamated product, see \cite[Proposition~25]{AmalgamGL2Z}. Thus for all $i \in I_1$, $\O_i \not\cong \Z$ and so by \Cref{possible 2 times 2 components}, $G$ cannot map onto $S_3$ or $D_8$, proving (1). Now (2) follows from \Cref{iff HFA} and the fact that $G_{240, 90}$ is not solvable.
\end{proof}

In case of nilpotent groups a more precise statement can be given.

\begin{corollary}
Let $G$ be a finite nilpotent group and assume that $\mathcal{U}(\Z G)$ has \FA. Then the following properties hold:
\begin{enumerate}
\item $G$ does not map epimorphically on $D_8$.
\item $\mathcal{U}(\Z G)$ does not satisfy \HFA if and only if $G$ has $G_{16,6}$, $G_{16,13}$, $G_{32,50}$ or $Q_8 \times C_3$ as epimorphic image.
\end{enumerate}
\end{corollary}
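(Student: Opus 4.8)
The plan is to deduce this corollary from \Cref{U(ZG) FA voor willekeurig groepen} together with the elementary fact that every epimorphic image of a finite nilpotent group is again nilpotent. Part (1) is then immediate: a nilpotent group is solvable, so \Cref{U(ZG) FA voor willekeurig groepen}(1) already forbids $D_8$ as an epimorphic image of $G$ (and $S_3$, which is not nilpotent anyway).

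For part (2) I would invoke \Cref{U(ZG) FA voor willekeurig groepen}(2): under the standing hypothesis that $\mathcal{U}(\Z G)$ has \FA, the group $\mathcal{U}(\Z G)$ fails to have \HFA precisely when $G$ surjects onto one of the seven groups $G_{16,6}$, $G_{16,13}$, $Q_8 \times C_3$, $\SL(2,3)$, $G_{32,50}$, $G_{96,202}$, $G_{384,618}$. Since a quotient of the nilpotent group $G$ is again nilpotent, $G$ can only surject onto the nilpotent members of this list, so the next step is to single those out. The groups $G_{16,6}$, $G_{16,13}$ and $G_{32,50}$ are $2$-groups and $Q_8 \times C_3$ is a direct product of a $2$-group with a $3$-group, hence all four are nilpotent; on the other hand $\SL(2,3) \cong Q_8 \rtimes C_3$ is not nilpotent, and neither are $G_{96,202}$ and $G_{384,618}$. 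This non-nilpotency is the only point that requires a verification, and it is a finite one: it can be read off the data recorded in Appendix~\ref{Appendix tabel met alle data}, or, for whichever of $G_{96,202}$ and $G_{384,618}$ carries an $\Ma_2\bigl(\qa{-1}{-3}{\Q}\bigr)$-component, it also follows from \Cref{possible 2 times 2 components}(6), since such a component cannot occur in the rational group algebra of a nilpotent group. Once this is in place, "$G$ surjects onto one of the seven groups of \Cref{U(ZG) FA voor willekeurig groepen}(2)" is equivalent to "$G$ surjects onto one of $G_{16,6}$, $G_{16,13}$, $G_{32,50}$, $Q_8\times C_3$", which is exactly the assertion of (2).

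I do not expect any genuine obstacle: the substantive work is already contained in \Cref{U(ZG) FA voor willekeurig groepen} and in the structural input that feeds it (\Cref{no excp 1 by 1 for cut groups}, \Cref{theorem EKVG}, \Cref{possible 2 times 2 components}). The only care needed is bookkeeping — checking that the $D_8$/$S_3$ obstructions of the solvable case specialise correctly and that exactly three of the seven groups in the solvable \HFA-obstruction fail to be nilpotent — so the final write-up will amount to essentially the short paragraph sketched above.
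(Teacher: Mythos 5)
Your proof is correct and follows exactly the same route as the paper's: apply \Cref{U(ZG) FA voor willekeurig groepen} (nilpotent implies solvable), then use the fact that quotients of nilpotent groups are nilpotent to discard $S_3$ in part (1) and the three non-nilpotent groups $\SL(2,3)$, $G_{96,202}$, $G_{384,618}$ in part (2). The only cosmetic difference is that you spell out how to certify non-nilpotency (via the table in \Cref{Appendix tabel met alle data}), which the paper simply asserts.
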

\begin{proof}
For the first statement simply apply \Cref{U(ZG) FA voor willekeurig groepen} and note that $S_3$  is not nilpotent and hence cannot be a quotient of the nilpotent group $G$. A similar reasoning can be given for the second statement. Indeed $\SL(2,3), G_{96, 202}$ and  $G_{384, 618}$ are not nilpotent.
\end{proof}

It is natural to ask whether in \Cref{U(ZG) FA voor willekeurig groepen} and its corollary the converse of the first statement holds.
This problem is connected to the problem whether \FA for $\mathcal{U}(\Z G)$ is fully determined by the Wedderburn-Artin components of $\Q G$ (as in the hereditary case). In order to formulate some concrete questions we fix the following notations: $\Q G \cong \prod \Ma_{n_i}(D_i)$, $\Ma_{n_i}(D_i) = \Q G e_i$ with $e_i$ a central primitive idempotent of $\Q G$ and $\O_i$ a maximal order in $\Ma_{n_i}(D_i)$. 

By the proof of \Cref{U(ZG) FA voor willekeurig groepen} we know that if $\U (\Z G)$ has property $\FA$, then $\mathcal{U}(\O_i)$ has property \FA for all $i$.

\begin{question} \label[question]{questions_FA}
With notations as above:
\begin{enumerate}
\item Does $\mathcal{U}(\mathbb{Z}G)$ have property \FA if and only if $\mathcal{U}(\O_i)$ has property \FA for all $i$?
\item Does $\mathcal{U}(\mathbb{Z}G)$ have property \FA if and only if $\mathcal{U}(\mathbb{Z}G e_i)$ has property \FA for all $i$? 
\end{enumerate}
\end{question}

Note that the previous questions are connected to all the properties in \Cref{Trichotomy and equivalent problems}. Unfortunately, in general property \FA is dependant on the chosen order (in contrast to having finite center or finite $K_1$).
Indeed $\Ma_2(\mathbb{Z}[\sqrt{-3}])$ and $\Ma_2(\mathcal{I}_3)$ are both orders in $\Ma_2(\Q(\sqrt{-3}))$ but $\SL_2(\mathbb{Z}[\sqrt{-3}])$ is an amalgamated product by \cite[Theorem~4.2]{amalgamationpaper} whereas $\SL_2(\mathcal{I}_3)$  has property \FA by the remark just before \Cref{When is E2 FA and some GL2 FA}.

\begin{remark}
We expect \Cref{questions_FA} (1) to not be true in general. For example suppose that the only exceptional components are of type $\Ma_2(D)$ with $D \in \{ \Q(\sqrt{-1}), \Q(\sqrt{-3}),\mathbb{H}_2, \mathbb{H}_3 \}$ and let $\O$ be the unique maximal order in $D$. The projection $\U(\Z Ge_i)$ of $\U(\Z G)$ into that exceptional component $\Q Ge_i = \Ma_2(D)$ will be a subgroup of finite index in $\GL_2(\O)$, however usually not of index $1$. Now the group $\GL_2(\O)$ has a subgroup of very small index which has not property \FA. So it seems very plausible that even with exceptional components as above, $\U(\Z Ge_i)$ sometimes will not have property \FA and hence also not $\U(\Z G)$.
\end{remark}

This last remark also ties into the following very natural question.

\begin{question}\label[question]{question_notHFA}
Is there a finite cut group $G$, such that $\U(\Z G)$ has property \FA, but does not have property \HFA?
\end{question}

An explicit positive answer to this last question could also be used to study \Cref{questions_FA}. A negative answer on the other hand has consequences on \Cref{Trichotomy conjecture}, making the trichotomy into a dichotomy.

In order to show some properties of unit groups $\U(R$), it is common in the literature (and in our \Cref{sectie FA voor GL2's}) to blow up the group to a significantly larger group. For example, $\GE_2(R)$ and $\B_2(R)$ often help in studying properties for $\U(R)$.
In the case of $\U(\Z G)$ however, this will not work.

\begin{proposition}\label[proposition]{GE2(ZG) not FA}
Let $G$ be a finite group. Then $\B_2(\Z G)$, $\E_2(\Z G)$ and $\GE_2(\Z G)$ do not have property \FA. 
\end{proposition}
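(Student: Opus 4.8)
The plan is to show that each of $\B_2(\Z G)$, $\E_2(\Z G)$ and $\GE_2(\Z G)$ admits an epimorphism onto a group with infinite abelianization — equivalently, that each of them itself has infinite abelianization — so that \Cref{prop:iff_conditions_FA} (or simply the first bullet of Serre's characterization, that \FA forces finite abelianization) immediately rules out property \FA. The point is that the ring $R=\Z G$ always surjects onto $\Z$ via the augmentation map $\varepsilon\colon \Z G\to\Z$, and functoriality of the linear-group constructions in the ring then produces the surjections we want.

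First I would treat $\E_2(\Z G)$ and $\GE_2(\Z G)$. The augmentation $\varepsilon$ induces surjective group homomorphisms $\E_2(\Z G)\twoheadrightarrow \E_2(\Z)=\SL_2(\Z)$ and $\GE_2(\Z G)\twoheadrightarrow\GE_2(\Z)=\GL_2(\Z)$ (here $\GE_2(\Z)=\GL_2(\Z)$ since $\Z$ is Euclidean, by \Cref{euclidian are GE2}; and $\E_2(\Z)=\SL_2(\Z)$ by \cite[I,~4.2.(c)]{Serre}). Since property \FA passes to quotients (\Cref{FAconserved}), it suffices that $\SL_2(\Z)$ and $\GL_2(\Z)$ fail \FA; but $\SL_2(\Z)\cong C_4\ast_{C_2}C_6$ is a non-trivial amalgamated product (used already in the proof of \Cref{When is E2 FA and some GL2 FA}), and $\GL_2(\Z)$ is likewise a non-trivial amalgamated product by \cite[Proposition~25]{AmalgamGL2Z}; alternatively $\SL_2(\Z)^{ab}\cong C_{12}$ is finite but $\E_2(\Z G)$ need not be finitely generated, so one should argue via the amalgam decomposition rather than abelianization. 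By \Cref{prop:iff_conditions_FA} neither can have property \FA, hence neither does $\E_2(\Z G)$ nor $\GE_2(\Z G)$.

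For $\B_2(\Z G)$ the same idea applies: $\varepsilon$ induces a surjection $\B_2(\Z G)\twoheadrightarrow \B_2(\Z)$, the group of invertible upper-triangular $2\times2$ matrices over $\Z$. Now $\U(\Z)=C_2$, so by \Cref{Borel FA cyclic units} (with $\O=\Z$, which is an order in the division $\Q$-algebra $\Q$ with $\U(\Z)=C_2$) the group $\B_2(\Z)$ does \emph{not} have property \FA — indeed $\B_2(\Z)\cong (\Z,+)\rtimes(C_2\times C_2)$ surjects onto $\Z\rtimes C_2\cong D_\infty\cong C_2\ast C_2$, a non-trivial free product. Since \FA is inherited by quotients (\Cref{FAconserved}), $\B_2(\Z G)$ cannot have property \FA either. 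This completes all three cases.

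The main obstacle — really the only subtlety — is that $\E_2(\Z G)$, $\B_2(\Z G)$ and $\GE_2(\Z G)$ are typically \emph{not} finitely generated (and $\Z G$ is not a $\GE_2$-ring in general), so one cannot naively invoke a ``finite abelianization'' criterion; the argument must go through the explicit non-trivial amalgamated (or free-product) decompositions of the $\Z$-specializations $\SL_2(\Z)$, $\GL_2(\Z)$ and $\B_2(\Z)$, together with the fact that property \FA descends along surjections. One should also double-check that $\varepsilon$ really does carry the standard generators $E(x)$ (resp. the elementary matrices, resp. the upper-triangular units) onto the corresponding generators over $\Z$, which is immediate from $\varepsilon(1)=1$ and $\varepsilon$ being a ring homomorphism; no finite-index hypothesis is needed anywhere.
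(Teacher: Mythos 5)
Your proof is correct and follows essentially the same route as the paper: apply the augmentation map to get surjections onto $\GE_2(\Z)=\GL_2(\Z)$, $\E_2(\Z)=\SL_2(\Z)$ and $\B_2(\Z)$, observe that each of these is a non-trivial amalgamated product, and conclude via \Cref{prop:iff_conditions_FA} and the fact that \FA passes to quotients. (Only your opening framing via ``infinite abelianization'' is off target, since $\SL_2(\Z)^{ab}$ and $\GL_2(\Z)^{ab}$ are finite, but you correctly abandon that and argue through the amalgam decompositions, exactly as the paper does.)
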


\begin{proof}
The augmentation map $$\omega : \Z G \rightarrow \Z : \displaystyle\sum\limits_{g\in G} a_g g \mapsto  \displaystyle\sum\limits_{g\in G} a_g, $$ is an epimorphism of rings. We may extend this morphism to an epimorphism of groups $\Omega : \GE_2(\Z G) \rightarrow \GE_2(\Z)$. Since  $\GE_2(\Z) = \GL_2(\Z)$ does not have property \FA, $\GE_2(\Z G)$ also does not have property \FA. The same reasoning works for $\E_2(\Z G)$ and $\E_2(\Z)=\SL_2(\Z)$. The augmentation map $\omega$ also induces an epimorphism from $\B_2(\Z G)$ to $\B_2(\Z)$ by letting $\omega$ act entry wise. However $\B_2(\Z)$ is non-trivially an amalgamated product by \Cref{Borel FA cyclic units}. Hence also $\B_2(\Z G)$ does not have property \FA
\end{proof}

Remark that for $\E_2(\Z G)$ and $\GE_2(\Z G)$ we only used that $\Z G$ has a ring epimorphism to $\Z$, so in those cases the proof works for any ring $R$ with a ring epimorphism to $\Z$. More generally the following holds.

\begin{proposition}\label[proposition]{Fa voor borel when basis of units ring theoretically}
Let $R$ be a unital ring that has a finite basis as $\Z$-module consisting of units. Then $\B_2(R)$ has property \FA if and only if $\U (R)$ has property \FA and $R$ has no ring epimorphism onto $\Z$.
\end{proposition}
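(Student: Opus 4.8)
The plan is to combine the structural short exact sequence for $G_{R,K}$-type groups with the criterion from \Cref{property FA for the GRK groups} and the Borel-specific analysis from \Cref{Borel FA cyclic units}. Recall that $\B_2(R)$ sits in a split short exact sequence $1 \to N \to \B_2(R) \to \D_2(R) \to 1$ with $N \cong (R,+)$ abelian and $\D_2(R) \cong \U(R) \times \U(R)$. First I would treat the forward implication: assume $\B_2(R)$ has property \FA. By \Cref{property FA for the GRK groups}~\eqref{property FA for the GRK groups(2)} (applied with $K = \D_2(R)$), $\D_2(R) \cong \U(R)\times \U(R)$ has property \FA, hence so does $\U(R)$ (by \Cref{FAconserved}, since it is a retract, i.e. a quotient, of $\D_2(R)$). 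It remains to show that $R$ has no ring epimorphism onto $\Z$. If $\omega \colon R \twoheadrightarrow \Z$ were such an epimorphism, then applying $\omega$ entrywise yields a group epimorphism $\B_2(R) \twoheadrightarrow \B_2(\Z)$; since $\B_2(\Z)$ is a non-trivial amalgamated product by (the proof of) \Cref{Borel FA cyclic units}, it does not have \FA, and property \FA is inherited by quotients (\Cref{FAconserved}), contradiction. This proves the forward direction, and it does not use the hypothesis that $R$ has a $\Z$-basis of units.

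For the reverse implication, assume $\U(R)$ has property \FA and $R$ has no ring epimorphism onto $\Z$; I want to conclude $\B_2(R)$ has \FA. Here I would invoke \Cref{lemmaNilpotent} (in its \FA-flavoured form via \cite[I.6.5., Exercise 4]{Serre}, as used in the proof of \Cref{Borel FA cyclic units}) applied to $\B_2(R)$ with the normal nilpotent — in fact abelian — subgroup $N \cong (R,+)$ and quotient $\B_2(R)/N \cong \U(R)\times\U(R)$, which has \FA by \Cref{FAconserved} since $\U(R)$ does. The task reduces to showing there is \emph{no} subgroup $M \le N$ that is normal in $\B_2(R)$ with $N/M \cong \Z$. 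Suppose such an $M$ exists; writing $M'$ for the corresponding additive subgroup of $R$, normality under conjugation by $\operatorname{diag}(\mu^{-1},1)$ and $\operatorname{diag}(1,\mu)$ for $\mu\in\U(R)$ forces $\mu M' \subseteq M'$ and $M'\mu\subseteq M'$ for all units $\mu$. Using the hypothesis that $R$ has a $\Z$-module basis $\mathcal{B}$ consisting of units, I would argue that $M'$ is then a two-sided $\U(R)$-submodule; since $\U(R)$ contains $\mathcal{B}$, the additive group generated by $\U(R)$-multiples of any single nonzero element of $M'$ already contains a finite-index subgroup of $R$ (indeed, it contains $\mathbb{Z}\mathcal{B} = R$ up to finite index once it meets one coordinate nontrivially, by an argument paralleling the claims $\Z+\Z\alpha\subseteq M'$ etc. in \Cref{Borel FA cyclic units}). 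Hence if $M' \ne 0$ then $N/M$ is finite, contradicting $N/M\cong\Z$; and if $M' = 0$ then $N/M \cong N$ has rank $\rank_\Z R \ge 1$, but this would give a nontrivial homomorphism $\B_2(R)\to \Z$, hence (composing with a projection to one $\Z$-summand) a ring-like quotient — more directly, $N/M\cong\Z$ with $M$ normal means $R/\{0\}=R\cong\Z$ as additive groups only when $\rank_\Z R = 1$, i.e. $R$ is an order in $\Q$, and then $R$ does surject onto $\Z$ (either $R=\Z$, or the reduction gives the contradiction), contradicting the hypothesis. Thus no such $M$ exists and \Cref{lemmaNilpotent} delivers \FA for $\B_2(R)$.

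The delicate point — and the main obstacle — is the reverse direction: ruling out a $\B_2(R)$-normal $M\le N$ with $N/M\cong\Z$ when $R$ is a general unital ring with a unit basis (not necessarily an order in a division algebra, so the clean number-theoretic arguments of \Cref{Borel FA cyclic units} do not apply verbatim). I expect one must show: if $M' \subsetneq R$ is an additive subgroup stable under left and right multiplication by all units, and $R = \mathbb{Z}\mathcal{B}$ with $\mathcal{B}\subseteq\U(R)$, then $R/M'$ is finite \emph{unless} $M' = \bigcap_{\text{some ideals}}$ equals the kernel of a surjection $R \twoheadrightarrow \Z$ realized additively — but the hypothesis "$R$ has no ring epimorphism onto $\Z$" must be leveraged to exclude precisely that residual case. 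Concretely: $M'$ being two-sided $\U(R)$-stable and of corank one forces $R/M'$ to be a rank-one quotient on which all units act trivially from both sides; since $R$ is generated as a ring by $\mathcal{B}\subseteq\U(R)$, this action being trivial means $R/M'$ is a ring quotient (the multiplication descends), i.e. $M'$ is a two-sided ideal and $R/M' \cong \Z$ as rings — exactly a ring epimorphism onto $\Z$, excluded by hypothesis. I would spell out this last implication carefully, as it is where the two hypotheses (unit basis; no ring epimorphism onto $\Z$) interlock, and it is the crux that makes the equivalence clean.
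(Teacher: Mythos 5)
Your proof is correct in substance but follows a genuinely different route from the paper's in the sufficiency direction. The paper does not invoke \Cref{lemmaNilpotent} here; instead it verifies Serre's criterion (\Cref{prop:iff_conditions_FA}) directly: it first shows $\B_2(R)^{ab}$ is finite (using $N^2\le\B_2(R)'$ and the finiteness of $\U(R)^{ab}$, which follows from $\U(R)$ having \FA), and then rules out a decomposition $\B_2(R)\cong A\ast_U B$ by analysing the position of the normal abelian subgroup $N$ inside the amalgam, via de Cornulier's theorem on the maximal normal subgroup without free subgroups and Karass--Solitar's classification of subgroups of amalgams. In the only surviving case of that analysis the paper arrives at exactly your punchline: a corank-one additive subgroup $\widetilde{M}\le R$ stable under left and right multiplication by units is a two-sided ideal (because $R=\Z\mathcal{B}$ with $\mathcal{B}\subseteq\U(R)$) and $R/\widetilde{M}\cong\Z$ as rings, contradicting the hypothesis. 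Your route via the \FA-version of \Cref{lemmaNilpotent} is shorter, avoids Bass--Serre subgroup theory entirely, and mirrors how the paper treats $\B_2(\O)$ in \Cref{Borel FA cyclic units}; the reduction it demands (no $\B_2(R)$-normal $M\le N$ with $N/M\cong\Z$) is settled by the same ideal argument. Your necessity direction coincides with the paper's.

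Two caveats on the write-up. First, the claim in your second paragraph that "if $M'\ne 0$ then $N/M$ is finite" is false for general $R$: in $R=\Z[t]/(t^2-1)$, which has the unit basis $\{1,t\}$, the two-sided ideal generated by $1-t$ is $\Z(1-t)$, of infinite index. Your third paragraph correctly identifies and repairs this — the right statement is not a finiteness dichotomy, but that any corank-one additive subgroup stable under left and right multiplication by units is a two-sided ideal with quotient ring isomorphic to $\Z$. Second, the units need not "act trivially" on $R/M'\cong\Z$ (they act by $\pm1$); this is harmless, since the ideal property already follows from stability together with $R=\Z\mathcal{B}$, and any unital ring whose additive group is infinite cyclic is isomorphic to $\Z$ because the identity element is forced to be an additive generator.
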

\begin{proof}
For the same reasons as in the proof of \Cref{GE2(ZG) not FA}, the conditions that $R$ has no ring epimorphism onto $\Z$ is necessary. Moreover, $\U (R)$ is an epimorphic image of $\B_2(R)$, so also this condition is necessary. We will now prove that they are also sufficient. 

Note that in the implication $(1) \Rightarrow (2)$ of the proof of \Cref{equivalence finite ab for almost universal rings}, the fact that the ring is almost-universal is not used. So, since $\U(R)$ has \FA, it has finite abelianization which in turn implies that $\B_2(R)$ has finite abelianization.

Suppose $(R,+) \cong (\Z^n,+)$, i.e. $R$ has $\Z$-module basis of cardinality $n$. Recall that $\B_2(R) \cong N \rtimes \D_2(R)$ is a semi-direct product where $N$ is isomorphic to the additive group of $R$. As in the proof of \Cref{equivalence finite ab for almost universal rings} one proves that $N^2 \leq \B_2(R)'$. Since also $D_2(R)' \leq \B_2(R)'$, this shows that $\B_2(R)^{ab}$ is an epimorphic image of the group $C_2^{n} \times D_2(R)^{ab} \cong C_2^{n} \times \U(R)^{ab} \times \U(R)^{ab}$.
 Hence, the assumption provides that $\B_2(R)$ has finite abelianization. In order to prove it has property \FA, it suffices thus to show that it is not an amalgam. Suppose, by contradiction, that $\B_2(R) \cong A \ast_U B$ for some subgroups $A,B,U \leq \B_2(R)$.
 We will show this is impossible by considering the abelian normal subgroup $N$.
 
 This subgroup $N$, being abelian, is contained in the maximal normal subgroup of $\B_2(R)$ not containing any free subgroup, denoted by $NF(\B_2(R))$ (see \cite{deCornulier}) and is well-defined. This implies that $N \leq U$, by \cite[Proposition $7$]{deCornulier} or that the amalgam decomposition is so-called degenerate meaning that $U$ has index $2$ in both $A$ and $B$ and thus is normal in the whole group.
 
 Assume the first. Now we can on the one hand consider $\B_2(R) /N \cong A/N \ast_{U/N} B/N$ as a non-trivial amalgamated product, but on the other hand $\B_2(R) /N  \cong \D_2(R) \cong \U(R) \times \U(R)$, which is a group having \FA by assumption.
 This is a contradiction.
 
 If $N \nleq U$, then $U$ is of index $2$ in $A$ and $B$ and thus also normal in the whole group. Since $N$ is not a non-trivial amalgamated product (indeed, it is abelian), we know by the work of Karass and Solitar \cite[Corollary of Theorem $6$]{KarSol} that $N$ is one of the following three types of groups. \begin{enumerate}
 \item $N$ is contained in a conjugate of $A$ or $B$,
 \item $N = \bigcup\limits_{i=1}^{\infty} \left(U^{\alpha_i} \cap N \right)$ is an infinite ascending union for some $\alpha_i \in \B_2(R)$,
 \item $N = \langle z \rangle \times M$, with $z$ an element of infinite order and $M = N\cap U \cong C_{\infty}^{n-1}$.
 \end{enumerate}
 
 If $N$ is contained in a conjugate of $A$ or $B$, it should be in $A$ or $B$ since it is normal. Using the fact that it is normal would even imply that $N \leq U$, which is a contradiction.
 
 Suppose the second case is true, then $N = \bigcup\limits_{i=1}^{\infty} \left(U^{\alpha_i} \cap N \right) = \bigcup\limits_{i=1}^{\infty} \left(U \cap N \right) = U \cap N$ since $U$ is normal, but this again contradicts the fact that $N \nleq U$.
 
In the last case, the subgroup $M$ is moreover normal in $\B_2(R)$. Denote $M = \left( \begin{matrix}
1 & \widetilde{M} \\ 0 & 1
\end{matrix} \right) $. Then $\widetilde{M} \cong \Z^{n-1}$ and $(R,+) \cong \Z \oplus \widetilde{M}$. Now since $R$ has a $\Z$-module basis consisting of units and $M$ is normal in $\B_2(R)$, we get that $\widetilde{M}$ is a two-sided ideal of $R$. Indeed, it suffices to remark that, for any units $g$ and $h$ of $R$:$$\begin{pmatrix} g & 0 \\ 0 & h \end{pmatrix}^{-1} M \begin{pmatrix} g & 0 \\ 0 & h \end{pmatrix} = \begin{pmatrix} 1 & g^{-1}\widetilde{M}h \\ 0 & 1 \end{pmatrix}.$$
Therefore we may form $R/\widetilde{M}$ which is easily seen to be also isomorphic to $\Z$ as rings. However this contradicts the fact that there is no ring epimorphism from $R$ to $\Z$. So $\B_2(R)$ is also not an amalgamated product and thus altogether has property \FA, as needed.
\end{proof}

\begin{remark}
Let $R$ be a ring as in \Cref{Fa voor borel when basis of units ring theoretically}. Then $R$ is an epimorphic image of the group ring $\Z \U(R)$. We can extend this morphism to a group morphism from $\GE_2(\Z \U(R))$ (or $\E_2(\Z \U(R))$) to $\GE_2(R)$ (or $\E_2(R)$). It might thus be tempting to deduce property \FA for $\GE_2(R)$ and $\E_2(R)$ from the same properties of the same groups over the universal object $\Z \U(R)$. However, \Cref{GE2(ZG) not FA} shows that this argument is too simple and should $\GE_2(R)$ and $\E_2(R)$ have property \FA, then it is for more subtle reasons. This also shows why, in \Cref{sectie FA voor GL2's}, we did not use this universal object.
\end{remark}

\vspace{1cm}

\noindent \textbf{Acknowledgment.} We would very much like to thank John Voight for the interesting clarifications on orders in quaternion algebras. A special thanks is also required for Jan De Beule and Aurel Page and their assistance with computer algebra programs. We are also thankful to Norbert Kraemer and Alexander D. Rahm for their insights they shared with us. Moreover, we thank Shengkui Ye for bringing \cite{Ye} to our attention. Finally, we are very grateful to the referee for carefully reading the paper and for numerous valuable suggestions.

\appendix

\def\sectionmark#1{\markboth{APPENDIX}{\thesection\quad\uppercase{#1}}} 

\addtocontents{toc}{\protect\setcounter{tocdepth}{1}}

\clearpage
\section{Groups with faithful exceptional $2 \times 2$ components}\label{Appendix tabel met alle data}

In this appendix we reproduce \cite[Table~2]{EKVG} listing those finite groups $G$ that have a faithful exceptional component of type (II) in the Wedderburn-Artin decomposition of the rational group algebra $\mathbb{Q}G$ (see \Cref{definitie exceptional component})\footnote{including the group with \textsc{SmallGroupID} \texttt{[24, 1]} that was accidentally omitted in the original table}. We also add certain attributes relevant for us. For each group $G$ such that $\mathbb{Q}G$ has at least one exceptional $2 \times 2$ component in which $G$ embedds (``faithful component'') the columns of the table contain the following information:

\vspace{.5cm}

\noindent\begin{tabular}{p{0.25\linewidth}p{0.7\linewidth}}
 \textsc{SmallGroup ID}: & the identifier of the group $G$ in the small group library \\[.4cm] 
 Structure: & the structure description of the group. Colons indicate split extensions, a period an extension that is (possibly) non-split \\[.4cm]
 cut: & indicates whether the group is a cut group (see \Cref{def:cut_group}) \\[.4cm]
 d$\ell$: & derived length of the group; $\infty$ for non-solvable groups \\[.2cm]
 c$\ell$: & the nilpotency class of the group; $\infty$ indicates that the group is not nilpotent (omitted for non-solvable groups) \\[.2cm]
 exceptional components of type (II): & exceptional components of type (II) (not necessarily faithful) of the group algebra $\mathbb{Q}G$ (with multiplicity) \\[.2cm]
 quotients: & small group IDs of non-trivial quotients of $G$ that also appear in this table.
 
\end{tabular}\vspace{0,1cm}

\noindent Recall that we use the following shorthands for some algebras appearing in the table:
\[\Q(i) = \Q(\sqrt{-1}), \quad \mathbb{H}_2 = \qa{-1}{-1}{\mathbb{Q}},\quad \mathbb{H}_3 = \qa{-1}{-3}{\mathbb{Q}}\quad \mbox{ and }\quad \mathbb{H}_5 = \qa{-2}{-5}{\mathbb{Q}}.\] \vspace{0,1cm}

\noindent One way to check whether a group has the cut-property is via the following GAP code:

\begin{verbatim}
IsCutGroup := function(G)
return
  ForAll( List( ConjugacyClasses(G) , Representative ),
    x ->
      ForAll( Filtered( [2..Order(x)-1], j -> Gcd(j, Order(x)) = 1 ),
        j -> IsConjugate (G, x^j, x) or IsConjugate(G, x^j, x^-1)
      )
  );
end;
\end{verbatim}

\newgeometry{left=0cm,right=0cm,top=0cm,bottom=0cm}
 \begin{landscape}
 {\scriptsize
\begin{longtable}{@{}llcccll@{}} \\ \toprule[1.5pt]
\textsc{SmallGroupID} & Structure & cut & d$\ell$ & c$\ell$ & exceptional components of type (II) & quotients  \\ \midrule 
\endfirsthead \toprule[1.5pt] \textsc{SmallGroupID} & Structure & cut & d$\ell$ & c$\ell$ & exceptional components of type (II) & quotients  \\ \midrule 
\endhead \hline \multicolumn{6}{c}{continued}\\ \midrule[1.5pt]\endfoot\bottomrule[1.5pt]\endlastfoot
\textsf{{[}6, 1{]}} & $ S_3 $ & $\checkmark$ & $2$ & $\infty$ & $1 {\times} M_2(\mathbb{Q})$  &  \\
\textsf{{[}8, 3{]}} & $ D_8 $ & $\checkmark$ & $2$ & $2$ & $1 {\times} M_2(\mathbb{Q})$  &  \\
\textsf{{[}12, 4{]}} & $ D_{12} $ & $\checkmark$ & $2$ & $\infty$ & $2 {\times} M_2(\mathbb{Q})$  & \textsf{{[}6, 1{]}}  \\
\textsf{{[}16, 6{]}} & $ C_8 :  C_2$ & $\checkmark$ & $2$ & $2$ & $1 {\times} M_2(\mathbb{Q}(i))$  &  \\
\textsf{{[}16, 8{]}} & $QD16$ & $\checkmark$ & $2$ & $3$ & $1 {\times} M_2(\mathbb{Q})$, $1 {\times} M_2(\mathbb{Q}(\sqrt{-2}))$  & \textsf{{[}8, 3{]}}  \\
\textsf{{[}16, 13{]}} & $( C_4  \times   C_2) :  C_2$ & $\checkmark$ & $2$ & $2$ & $1 {\times} M_2(\mathbb{Q}(i))$  &  \\
\textsf{{[}18, 3{]}} & $ C_3  \times   S_3 $ & $\checkmark$ & $2$ & $\infty$ & $1 {\times} M_2(\mathbb{Q})$, $1 {\times} M_2(\mathbb{Q}(\sqrt{-3}))$  & \textsf{{[}6, 1{]}}  \\
\textsf{{[}24, 1{]}} & $ C_3 :  C_8$ & $\times$ & $2$ & $\infty$ & $1 {\times} M_2(\mathbb{Q})$, $1 {\times} M_2(\mathbb{Q}(i))$  & \textsf{{[}6, 1{]}}  \\
\textsf{{[}24, 3{]}} & $ {\rm SL} (2,3)$ & $\checkmark$ & $3$ & $\infty$ & $1 {\times} M_2(\mathbb{Q}(\sqrt{-3}))$  &  \\
\textsf{{[}24, 5{]}} & $ C_4  \times   S_3 $ & $\checkmark$ & $2$ & $\infty$ & $2 {\times} M_2(\mathbb{Q})$, $1 {\times} M_2(\mathbb{Q}(i))$  & \textsf{{[}6, 1{]}}, \textsf{{[}12, 4{]}}  \\
\textsf{{[}24, 8{]}} & $( C_6  \times   C_2) :  C_2$ & $\checkmark$ & $2$ & $\infty$ & $3 {\times} M_2(\mathbb{Q})$, $1 {\times} M_2(\mathbb{Q}(\sqrt{-3}))$  & \textsf{{[}6, 1{]}}, \textsf{{[}8, 3{]}}, \textsf{{[}12, 4{]}}  \\
\textsf{{[}24, 10{]}} & $ C_3  \times   D_8 $ & $\checkmark$ & $2$ & $2$ & $1 {\times} M_2(\mathbb{Q})$, $1 {\times} M_2(\mathbb{Q}(\sqrt{-3}))$  & \textsf{{[}8, 3{]}}  \\
\textsf{{[}24, 11{]}} & $ C_3  \times   Q_8 $ & $\checkmark$ & $2$ & $2$ & $1 {\times} M_2(\mathbb{Q}(\sqrt{-3}))$  &  \\
\textsf{{[}32, 8{]}} & $ ( C_2  \times   C_2) . ( C_4  \times   C_2)$ & $\checkmark$ & $2$ & $3$ & $2 {\times} M_2(\mathbb{Q})$, $1 {\times} M_2(\mathbb{H}_2)$  & \textsf{{[}8, 3{]}}  \\
\textsf{{[}32, 11{]}} & $( C_4  \times   C_4) :  C_2$ & $\checkmark$ & $2$ & $3$ & $2 {\times} M_2(\mathbb{Q})$, $2 {\times} M_2(\mathbb{Q}(i))$  & \textsf{{[}8, 3{]}}  \\
\textsf{{[}32, 44{]}} & $( C_2  \times   Q_8 ) :  C_2$ & $\checkmark$ & $2$ & $3$ & $2 {\times} M_2(\mathbb{Q})$  & \textsf{{[}8, 3{]}}  \\
\textsf{{[}32, 50{]}} & $( C_2  \times   Q_8 ) :  C_2$ & $\checkmark$ & $2$ & $2$ & $1 {\times} M_2(\mathbb{H}_2)$  &  \\
\textsf{{[}36, 6{]}} & $ C_3  \times  ( C_3 :  C_4)$ & $\times$ & $2$ & $\infty$ & $1 {\times} M_2(\mathbb{Q})$, $2 {\times} M_2(\mathbb{Q}(\sqrt{-3}))$  & \textsf{{[}6, 1{]}}, \textsf{{[}18, 3{]}}  \\
\textsf{{[}36, 12{]}} & $ C_6  \times   S_3 $ & $\checkmark$ & $2$ & $\infty$ & $2 {\times} M_2(\mathbb{Q})$, $2 {\times} M_2(\mathbb{Q}(\sqrt{-3}))$  & \textsf{{[}6, 1{]}}, \textsf{{[}12, 4{]}}, \textsf{{[}18, 3{]}}  \\
\textsf{{[}40, 3{]}} & $ C_5 :  C_8$ & $\times$ & $2$ & $\infty$ & $1 {\times} M_2(\mathbb{H}_5)$  &  \\
\textsf{{[}48, 16{]}} & $( C_3 :  Q_8 ) :  C_2$ & $\checkmark$ & $2$ & $\infty$ & $3 {\times} M_2(\mathbb{Q})$, $1 {\times} M_2(\mathbb{Q}(\sqrt{-2}))$, $1 {\times} M_2(\mathbb{Q}(\sqrt{-3}))$, $1 {\times} M_2(\mathbb{H}_2)$  & \textsf{{[}6, 1{]}}, \textsf{{[}8, 3{]}}, \textsf{{[}12, 4{]}}, \textsf{{[}16, 8{]}}, \textsf{{[}24, 8{]}}  \\
\textsf{{[}48, 18{]}} & $ C_3 :  Q_{16} $ & $\times$ & $2$ & $\infty$ & $3 {\times} M_2(\mathbb{Q})$, $1 {\times} M_2(\mathbb{Q}(\sqrt{-3}))$, $1 {\times} M_2(\mathbb{H}_3)$  & \textsf{{[}6, 1{]}}, \textsf{{[}8, 3{]}}, \textsf{{[}12, 4{]}}, \textsf{{[}24, 8{]}}  \\
\textsf{{[}48, 28{]}} & $  {\rm SL} (2,3) .  C_2$ & $\times$ & $4$ & $\infty$ & $1 {\times} M_2(\mathbb{Q})$, $1 {\times} M_2(\mathbb{H}_3)$  & \textsf{{[}6, 1{]}}  \\
\textsf{{[}48, 29{]}} & $ {\rm GL} (2,3)$ & $\checkmark$ & $4$ & $\infty$ & $1 {\times} M_2(\mathbb{Q})$, $1 {\times} M_2(\mathbb{Q}(\sqrt{-2}))$  & \textsf{{[}6, 1{]}}  \\
\textsf{{[}48, 33{]}} & $(( C_4  \times   C_2) :  C_2) :  C_3$ & $\times$ & $3$ & $\infty$ & $1 {\times} M_2(\mathbb{Q}(i))$  &  \\
\textsf{{[}48, 39{]}} & $( C_4  \times   S_3 ) :  C_2$ & $\checkmark$ & $2$ & $\infty$ & $4 {\times} M_2(\mathbb{Q})$, $1 {\times} M_2(\mathbb{Q}(i))$, $1 {\times} M_2(\mathbb{H}_3)$  & \textsf{{[}6, 1{]}}, \textsf{{[}12, 4{]}}, \textsf{{[}16, 13{]}}  \\
\textsf{{[}48, 40{]}} & $ Q_8   \times   S_3 $ & $\checkmark$ & $2$ & $\infty$ & $4 {\times} M_2(\mathbb{Q})$, $1 {\times} M_2(\mathbb{H}_2)$  & \textsf{{[}6, 1{]}}, \textsf{{[}12, 4{]}}  \\
\textsf{{[}64, 37{]}} & $ ( C_4  \times   C_2) . ( C_4  \times   C_2)$ & $\checkmark$ & $2$ & $4$ & $2 {\times} M_2(\mathbb{Q})$, $2 {\times} M_2(\mathbb{H}_2)$  & \textsf{{[}8, 3{]}}  \\
\textsf{{[}64, 137{]}} & $( C_4 :  Q_8 ) :  C_2$ & $\checkmark$ & $2$ & $3$ & $6 {\times} M_2(\mathbb{Q})$, $2 {\times} M_2(\mathbb{H}_2)$  & \textsf{{[}8, 3{]}}  \\
\textsf{{[}72, 19{]}} & $( C_3  \times   C_3) :  C_8$ & $\times$ & $2$ & $\infty$ & $2 {\times} M_2(\mathbb{H}_3)$  &  \\
\textsf{{[}72, 20{]}} & $( C_3 :  C_4)  \times   S_3 $ & $\checkmark$ & $2$ & $\infty$ & $4 {\times} M_2(\mathbb{Q})$, $1 {\times} M_2(\mathbb{Q}(i))$, $1 {\times} M_2(\mathbb{H}_3)$  & \textsf{{[}6, 1{]}}, \textsf{{[}12, 4{]}}, \textsf{{[}24, 5{]}}  \\
\textsf{{[}72, 22{]}} & $( C_6  \times   S_3 ) :  C_2$ & $\checkmark$ & $2$ & $\infty$ & $5 {\times} M_2(\mathbb{Q})$, $2 {\times} M_2(\mathbb{Q}(\sqrt{-3}))$, $1 {\times} M_2(\mathbb{H}_3)$  & \textsf{{[}6, 1{]}}, \textsf{{[}8, 3{]}}, \textsf{{[}12, 4{]}}, \textsf{{[}24, 8{]}}  \\
\textsf{{[}72, 24{]}} & $( C_3  \times   C_3) :  Q_8 $ & $\times$ & $2$ & $\infty$ & $4 {\times} M_2(\mathbb{Q})$, $1 {\times} M_2(\mathbb{H}_3)$  & \textsf{{[}6, 1{]}}, \textsf{{[}12, 4{]}}  \\
\textsf{{[}72, 25{]}} & $ C_3  \times   {\rm SL} (2,3)$ & $\checkmark$ & $3$ & $\infty$ & $4 {\times} M_2(\mathbb{Q}(\sqrt{-3}))$  & \textsf{{[}24, 3{]}}  \\
\textsf{{[}72, 30{]}} & $ C_3  \times  (( C_6  \times   C_2) :  C_2)$ & $\checkmark$ & $2$ & $\infty$ & $3 {\times} M_2(\mathbb{Q})$, $6 {\times} M_2(\mathbb{Q}(\sqrt{-3}))$  & \textsf{{[}6, 1{]}}, \textsf{{[}8, 3{]}}, \textsf{{[}12, 4{]}}, \textsf{{[}18, 3{]}}, \textsf{{[}24, 8{]}}, \textsf{{[}24, 10{]}}, \textsf{{[}36, 12{]}}  \\
\textsf{{[}96, 67{]}} & $ {\rm SL} (2,3) :  C_4$ & $\checkmark$ & $4$ & $\infty$ & $1 {\times} M_2(\mathbb{Q})$, $2 {\times} M_2(\mathbb{Q}(i))$  & \textsf{{[}6, 1{]}}  \\
\textsf{{[}96, 190{]}} & $( C_2  \times   {\rm SL} (2,3)) :  C_2$ & $\checkmark$ & $4$ & $\infty$ & $2 {\times} M_2(\mathbb{Q})$  & \textsf{{[}6, 1{]}}, \textsf{{[}12, 4{]}}  \\
\textsf{{[}96, 191{]}} & $  {\rm SL} (2,3) .  C_2) :  C_2$ & $\times$ & $4$ & $\infty$ & $2 {\times} M_2(\mathbb{Q})$  & \textsf{{[}6, 1{]}}, \textsf{{[}12, 4{]}}  \\
\textsf{{[}96, 202{]}} & $(( C_2  \times   Q_8 ) :  C_2) :  C_3$ & $\checkmark$ & $3$ & $\infty$ & $1 {\times} M_2(\mathbb{H}_2)$  &  \\
\textsf{{[}120, 5{]}} & $ {\rm SL} (2,5)$ & $\times$ & $\infty$ &  & $1 {\times} M_2(\mathbb{H}_3)$  &  \\
\textsf{{[}128, 937{]}} & $( Q_8   \times   Q_8 ) :  C_2$ & $\checkmark$ & $3$ & $4$ & $6 {\times} M_2(\mathbb{Q})$, $4 {\times} M_2(\mathbb{H}_2)$  & \textsf{{[}8, 3{]}}  \\
\textsf{{[}144, 124{]}} & $  {\rm SL} (2,3) .  C_2)$ & $\times$ & $4$ & $\infty$ & $4 {\times} M_2(\mathbb{Q})$, $4 {\times} M_2(\mathbb{H}_3)$  & \textsf{{[}6, 1{]}}, \textsf{{[}48, 28{]}}  \\
\textsf{{[}144, 128{]}} & $ S_3   \times   {\rm SL} (2,3)$ & $\checkmark$ & $3$ & $\infty$ & $1 {\times} M_2(\mathbb{Q})$, $3 {\times} M_2(\mathbb{Q}(\sqrt{-3}))$, $1 {\times} M_2(\mathbb{H}_2)$  & \textsf{{[}6, 1{]}}, \textsf{{[}18, 3{]}}, \textsf{{[}24, 3{]}}  \\
\textsf{{[}144, 135{]}} & $( C_3  \times   C_3) : ( C_8 :  C_2)$ & $\checkmark$ & $2$ & $\infty$ & $1 {\times} M_2(\mathbb{Q}(i))$, $4 {\times} M_2(\mathbb{H}_3)$  & \textsf{{[}16, 6{]}}  \\
\textsf{{[}144, 148{]}} & $( C_3  \times   C_3) : (( C_4  \times   C_2) :  C_2)$ & $\checkmark$ & $2$ & $\infty$ & $8 {\times} M_2(\mathbb{Q})$, $1 {\times} M_2(\mathbb{Q}(i))$, $4 {\times} M_2(\mathbb{H}_3)$  & \textsf{{[}6, 1{]}}, \textsf{{[}12, 4{]}}, \textsf{{[}16, 13{]}}, \textsf{{[}48, 39{]}}  \\
\textsf{{[}160, 199{]}} & $(( C_2  \times   Q_8 ) :  C_2) :  C_5$ & $\times$ & $3$ & $\infty$ & $1 {\times} M_2(\mathbb{H}_2)$  &  \\
\textsf{{[}192, 989{]}} & $( {\rm SL} (2,3) :  C_4) :  C_2$ & $\checkmark$ & $4$ & $\infty$ & $3 {\times} M_2(\mathbb{Q})$, $1 {\times} M_2(\mathbb{Q}(\sqrt{-3}))$, $2 {\times} M_2(\mathbb{H}_2)$  & \textsf{{[}6, 1{]}}, \textsf{{[}8, 3{]}}, \textsf{{[}12, 4{]}}, \textsf{{[}24, 8{]}}  \\
\textsf{{[}240, 89{]}} & $  {\rm SL} (2,5) .  C_2$ & $\times$ & $\infty$ &  & $1 {\times} M_2(\mathbb{H}_5)$  &  \\
\textsf{{[}240, 90{]}} & $ {\rm SL} (2,5) :  C_2$ & $\checkmark$ & $\infty$ &  & $1 {\times} M_2(\mathbb{H}_5)$  &  \\
\textsf{{[}288, 389{]}} & $( C_3  \times   C_3) : (( C_4  \times   C_4) :  C_2)$ & $\checkmark$ & $3$ & $\infty$ & $2 {\times} M_2(\mathbb{Q})$, $2 {\times} M_2(\mathbb{Q}(i))$, $2 {\times} M_2(\mathbb{H}_3)$  & \textsf{{[}8, 3{]}}, \textsf{{[}32, 11{]}}  \\
\textsf{{[}320, 1581{]}} & $ ((( C_2  \times   Q_8 ) :  C_2) :  C_5) .  C_2$ & $\times$ & $4$ & $\infty$ & $2 {\times} M_2(\mathbb{H}_2)$  &  \\
\textsf{{[}384, 618{]}} & $(( Q_8   \times   Q_8 ) :  C_2) :  C_3$ & $\checkmark$ & $3$ & $\infty$ & $1 {\times} M_2(\mathbb{H}_2)$  &  \\
\textsf{{[}384, 18130{]}} & $(( Q_8   \times   Q_8 ) :  C_3) :  C_2$ & $\checkmark$ & $4$ & $\infty$ & $1 {\times} M_2(\mathbb{Q})$, $1 {\times} M_2(\mathbb{H}_2)$  & \textsf{{[}6, 1{]}}  \\
\textsf{{[}720, 409{]}} & $ {\rm SL} (2,9)$ & $\times$ & $\infty$ &  & $2 {\times} M_2(\mathbb{H}_3)$  &  \\
\textsf{{[}1152, 155468{]}} & $((( Q_8   \times   Q_8 ) :  C_3) :  C_2) :  C_3$ & $\checkmark$ & $4$ & $\infty$ & $1 {\times} M_2(\mathbb{Q})$, $1 {\times} M_2(\mathbb{Q}(\sqrt{-3}))$, $1 {\times} M_2(\mathbb{H}_2)$  & \textsf{{[}6, 1{]}}, \textsf{{[}18, 3{]}}  \\
\textsf{{[}1920, 241003{]}} & $ C_2 . (( C_2  \times   C_2  \times   C_2  \times   C_2) :  A_5 )$ & $\times$ & $\infty$ &  & $1 {\times} M_2(\mathbb{H}_2)$  &  \\
\end{longtable}
 }
  \end{landscape}
\restoregeometry
\newpage

\clearpage
\section{Some Group Presentations}\label{Appendix_presentation}

We give the presentations of certain groups apperaing in \Cref{iff HFA} (the indices indicate their \textsc{SmallGroup ID}s). We start with the following nilpotent groups:
\begin{align*}
 G_{16,6} =  \langle\ a, b\ |\  & a^8 = b^2 = 1,\ a^b = a^5\ \rangle \cong C_8 \rtimes C_2, \\ 
 G_{16,13} =  \langle\ a, b, c\ |\  & a^4 = b^2 = c^2 = 1 = (a, b) = (a, c),\ b^c = a^2b \ \rangle \cong (C_4 \times C_2) \rtimes C_2, \\
 G_{32,50} =  \langle\ i, j, a, b\ |\  & i^4 = 1,\ i^2 = j^2,\ i^j = i^{-1},\ a^2 = 1, (i, a) = (j, a) = 1,\\  & b^2 = 1,\ i^b = i^{-1},\ j^b = j^{-1},\ a^b = i^2a \ \rangle \cong (Q_8 \times C_2) \rtimes C_2.
\end{align*}

The group $G_{16,13} \cong D_8 \Ydown C_4$ is the central product of $D_8$ and $C_4$ (central subgroups of order $2$ identified) and $G_{32,50} \cong Q_8 \Ydown D_8$ is the central product of $Q_8$ with $D_8$. The group $G_{16,6}$ is sometimes called the modular group of order $16$.

We also need the following non-nilpotent groups:
\begin{align*}
 G_{96, 202} =  \langle\ i, j, b, t, a\ |\  & i^4 = 1,\ i^2 = j^2, i^j = i^{-1},\ b^3 = 1,\ i^b = j,\ j^b = ij, \\
 & t^2 = 1,\ (i, t) = (j, t) = (b, t) = 1, \\ 
 & a^2 = 1,\ (i, a) = (j, a) = (b, a) = 1,\ t^a = i^2t \ \rangle, \\ 
 G_{240, 90} =  \langle\ x, y, z, a\ |\  & x^3 = y^5 = z^2 = 1,\ (x, z) = (y, z) = 1,\ (xy)^2 = z, \\ & a^2 = 1,\ (z, a) = 1,\ x^a = x^2,\ y^a = (xy^3)^2 \ \rangle, \\
 G_{384, 618} =  \langle\ i_1, j_1, i_2, j_2, a \ |\  & i_1^4 = 1,\ i_1^2 = j_1^2,\ i_1^{j_1} = i_1^{-1},\ i_2^4 = 1,\ i_2^2 = j_2^2,\ i_2^{j_2} = i_2^{-1},\ \\
    & (i_1, i_2) = (i_1, j_2) = (j_1, i_2) = (j_1, j_2) = 1,\ a^6 = 1, \\ 
    & i_1^a = j_2^{-1},\ j_1^a = (i_2j_2)^{-1},\ i_2^a = j_1^{-1},\ j_2^a = (i_1j_1)^{-1} \ \rangle.
\end{align*}
They have the following structures: $G_{96, 202} \cong (\SL(2, 3) \times C_2) \rtimes C_2$, $G_{240, 90} \cong SL(2,5) \rtimes 2 \cong 2 \cdot S_5^+$, the Schur cover of $S_5$ of plus type, and $G_{384, 618} \cong (Q_8 \times Q_8) \rtimes C_6$.

\clearpage

\bibliographystyle{plain}
\bibliography{FA}

\end{document}